\documentclass[degree=master,fontset=fandol,language=english]{thuthesis}

\usepackage{amsthm}

\thusetup{
  math-font = xits,
}

\usepackage{threeparttable}
\usepackage{multirow}
\usepackage{longtable}
\usepackage{algorithm}
\usepackage{algorithmic}
\usepackage{siunitx}
\usepackage{tikz}
\usepackage[sort]{natbib}

\graphicspath{{figures/}}

\usepackage{hyperref}

\newcommand{\ArxivTitle}{Quantitative Homogenization Theory for Lam\'e-Stokes Coupled Systems}
\newcommand{\ArxivAuthor}{Wang Beichen}
\newcommand{\ArxivAffiliation}{Mathematics, Tsinghua University}
\newcommand{\ArxivDate}{June 2026}

\begin{document}
\hypersetup{
  pdftitle={Quantitative Homogenization Theory for Lame-Stokes Coupled Systems},
  pdfauthor={Wang Beichen},
  pdfsubject={Mathematics - Analysis of PDEs},
  pdfkeywords={Lame-Stokes coupled system, homogenization, convergence rate, regularity estimates, Babuska-Brezzi theory},
}

\frontmatter

\begin{center}
  {\LARGE\bfseries \ArxivTitle\par}
  \vspace{1.2em}
  {\large \ArxivAuthor\par}
  \vspace{0.4em}
  {\normalsize \ArxivAffiliation\par}
  {\normalsize \ArxivDate\par}
\end{center}

\vspace{1.5em}

\noindent\textbf{Abstract.}
In the mechanics of composite materials and in porous media theory, composites
consisting of an elastic matrix with periodically distributed fluid inclusions
constitute an important class of multiscale models. As the bulk modulus of the
inclusions tends to infinity, the original problem degenerates into a
Lam\'e-Stokes coupled system: the elastic region satisfies the Lam\'e equations,
the fluid region satisfies the Stokes equations with a local incompressibility
constraint, and the two phases are coupled through continuity of displacement
and traction across the interface.

\medskip

The quantitative homogenization analysis of this system faces two main
difficulties. On the one hand, the incompressibility constraint holds only
inside the fluid inclusions, so the problem lies beyond the direct scope of
standard elliptic homogenization theory. On the other hand, the nonstandard
coupled structure near the interface makes the regularity analysis of the
correctors more delicate. To address these issues, this work develops a
quantitative homogenization theory for the Lam\'e-Stokes coupled system. The
main results are as follows:

\begin{enumerate}
  \item Using the Babu\v{s}ka-Brezzi theory, we prove the well-posedness of the
  mixed variational formulation and establish inf-sup stability and a priori
  estimates independent of the microscale parameter $\varepsilon$.

  \item Combining formal asymptotic expansion with two-scale convergence, we
  rigorously derive the homogenized limit equation. We prove that as
  $\varepsilon \to 0$, the microscopic displacement field converges weakly in
  $H^1$ to the solution of the corresponding effective elasticity equation; the
  associated effective elasticity tensor is determined by a cell problem,
  satisfies the natural symmetry conditions, and is strongly elliptic on
  symmetric matrices. The microscopic incompressibility constraint and the
  pressure variable no longer appear explicitly in the macroscopic equation.

  \item Under a suitable smoothness assumption on the interface, we establish
  piecewise higher-order regularity for the cell correctors. Through interface
  localization, flattening transformations, and local a priori estimates, we
  prove that the correctors enjoy arbitrary-order Sobolev regularity in the two
  phases separately, which yields the $L^\infty$ boundedness of their
  gradients. Based on this regularity, we further establish
  $O(\sqrt{\varepsilon})$ convergence rates for the displacement in the $H^1$
  norm and for the pressure in the $L^2$ norm over the fluid region.
\end{enumerate}

\medskip

These results establish the effective equation, corrector regularity, and
convergence-rate estimates for Lam\'e-Stokes coupled systems with local
incompressibility constraints, and provide a theoretical basis for the
quantitative analysis of this class of high-contrast fluid-structure
interaction models.

\medskip

\noindent\textbf{Keywords.} Lam\'e-Stokes coupled system; homogenization;
convergence rate; regularity estimates; Babu\v{s}ka-Brezzi theory

\tableofcontents

\mainmatter


\chapter{Introduction}

\section{Background and Problem Formulation}\label{sec:background}

\subsection{Physical and Engineering Background}

Fluid-structure interaction (FSI) phenomena in heterogeneous porous media are ubiquitous and play an important role in modern geophysical exploration, the mechanics of composite materials, and biomedical engineering \cite{DuGunzburger2003}. Typical examples include the seepage of oil and groundwater through rock pores, perfusion and transport in capillary networks or alveolar tissue \cite{Baffico2008}, and wave propagation in elastic sound-absorbing materials containing microscopic fluid inclusions. A common geometric feature of these models is their pronounced multiscale nature: the characteristic size $\varepsilon$ of the microscopic pores is much smaller than the characteristic size of the macroscopic domain.

Of particular interest is the fact that, in many of these applications, the solid skeleton and the fluid inclusions exhibit high contrast in their physical properties. For instance, in rock mechanics the solid skeleton and the pore fluid may differ by several orders of magnitude in their shear and bulk responses. Mathematically, such a discrepancy is reflected in an anomalous distribution of the eigenvalues of the coefficient matrix, which in turn leads to the degeneration of the spectral gap associated with the Neumann-Poincar\'e operator \cite{FuJing2024,Jing2021}.

Although existing theories for high-contrast elliptic systems cover certain limiting regimes, extending them directly to Lam\'e-Stokes fluid-solid coupled systems with a mixed structure still involves substantial theoretical difficulties. Therefore, establishing a unified mathematical theory that is uniform with respect to the material contrast and that captures the explicit dependence on the microscopic scale $\varepsilon$ would not only shed light on the microscopic mechanical mechanisms in complex media, but also broaden the existing high-contrast analytical framework from single elliptic systems to structurally more complicated fluid-structure coupled systems with singular parameters \cite{FuJing2025}.

\subsection{Main Mathematical Difficulties}

Although periodic homogenization theory for elliptic operators has developed into a rather complete framework, the corresponding quantitative analysis for high-contrast Lam\'e-Stokes coupled systems has so far received comparatively less systematic treatment. In his monograph \cite{Shen2018}, Shen established a systematic quantitative theory for periodic elliptic homogenization and obtained explicit $\varepsilon$-dependent error bounds. However, extending this framework to Lam\'e-Stokes systems faces some difficulties.

\textbf{Core question: how can one establish quantitative error estimates under a local incompressibility constraint?}

The source of the difficulty is the following. Formally, a Stokes fluid may be regarded as an incompressible limit corresponding to the bulk modulus tending to infinity ($\lambda \to \infty$). In the present setting, this manifests itself as the incompressibility condition $\nabla \cdot \mathbf{u} = 0$ holding only inside the fluid inclusions $D_\varepsilon$. This local divergence constraint means that the problem no longer fits into the standard elliptic-system framework, but instead takes the form of a Lam\'e-Stokes coupled system with discontinuous coefficients and a local incompressibility constraint. Existing quantitative estimates for elliptic systems usually depend on ellipticity constants, and therefore do not directly provide uniform control in such a degenerate parameter regime.

Motivated by this observation, the main objective of this paper is to adapt and extend the quantitative methods developed by Shen for periodic elliptic systems to Lam\'e-Stokes coupled systems with local incompressibility constraints, and to establish a unified theory whose error constants are independent of the material contrast. As a technical prerequisite for this program, we also develop higher-order regularity theory for the cell correctors so as to ensure the $L^\infty$ boundedness of the corrector gradients appearing in the convergence-rate analysis.

\section{Literature Review}

Homogenization theory, as a principal tool for multiscale problems with microscopic structure, has evolved over the past fifty years from qualitative convergence theory to quantitative error analysis. For the high-contrast Lam\'e-Stokes coupled system studied in this paper, the relevant literature may be reviewed from the following aspects.

\subsection{Periodic Homogenization: Qualitative Theory and Quantitative Estimates}

The development of periodic homogenization has broadly proceeded from formal asymptotic analysis to rigorous convergence theory and then to quantitative error estimates.

Classical works laid the foundations of homogenization theory. Bensoussan, Lions, and Papanicolaou \cite{BensoussanLionsPapanicolaou1978} systematically developed multiscale asymptotic analysis for periodic structures; Jikov, Kozlov, and Oleinik \cite{JikovKozlovOleinik1994} gave a comprehensive account of homogenization theory for differential operators and integral functionals. In his monograph, Shen \cite{Shen2018} presents one of the standard proofs of qualitative homogenization for periodic elliptic systems using the classical Div-Curl lemma \cite{Tartar1979}, thereby identifying the weak limit of the fluxes and the limiting equation. Another rigorous framework is based on two-scale convergence: Nguetseng \cite{Nguetseng1989} first introduced the idea, and Allaire \cite{Allaire1992} further developed the method; Cioranescu, Damlamian, and Griso \cite{CioranescuDamlamianGriso2008} presented the periodic unfolding method in detail, providing another formulation and proof tool closely related to two-scale convergence for periodic homogenization problems.

As the subject advanced, the emphasis shifted from qualitative convergence to quantitative issues such as convergence rates, explicit constants, and uniform regularity estimates. Avellaneda and Lin \cite{AvellanedaLin1987} developed compactness methods and established an $\varepsilon$-uniform regularity theory for periodic elliptic operators under suitable regularity assumptions, including H\"older and Lipschitz-type estimates. They later studied $L^p$ bounds for singular integral operators arising in periodic homogenization, which yield corresponding gradient $L^p$ estimates and Riesz transform estimates \cite{AvellanedaLin1991}. The compactness method of Avellaneda--Lin yields important uniform estimates, but its proofs rely on compactness and limiting arguments and are usually not aimed at tracking the precise dependence of constants. In the case of Neumann boundary conditions, Kenig, Lin, and Shen \cite{KenigLinShen2013} established uniform $W^{1,p}$, Lipschitz, and nontangential maximal-function estimates for periodic elliptic systems.

To address the dependence of constants and error terms on $\varepsilon$, Shen \cite{Shen2018} systematically organized and developed a quantitative error-analysis framework for periodic elliptic systems. In convergence-rate estimates, Shen employed $\varepsilon$-smoothing operators, boundary-layer cut-off functions, flux correctors, and duality methods to obtain error estimates with explicit $\varepsilon$-dependence for bounded measurable periodic coefficients; related tools and operator error estimates may be found in \cite{JikovKozlovOleinik1994,Suslina2013Dirichlet,Shen2018}. For uniform $W^{1,p}$ and related $L^p$ estimates, Shen adapted the real-variable method of Caffarelli and Peral for $W^{1,p}$ estimates of elliptic equations in divergence form \cite{CaffarelliPeral1998} to periodic homogenization, obtaining uniform estimates with constants independent of $\varepsilon$.

For Lipschitz-type estimates, large-scale regularity offers a route distinct from the compactness method. Armstrong and Smart \cite{ArmstrongSmart2016} developed a large-scale regularity scheme in stochastic homogenization. In Chapter 6 of his monograph, Shen \cite{Shen2018} adapted this idea to study boundary H\"older, $W^{1,p}$, and Lipschitz estimates for the Neumann problem of periodic elliptic systems. Related boundary estimates and convergence rates for systems of elasticity may also be found in Shen \cite{ShenBoundary2017} and Shen and Zhuge \cite{ShenZhuge2017Elasticity}.

The preceding theory of periodic homogenization provides an important background for high-contrast problems. More directly related to the present work, uniform estimates for high-contrast coefficients, perforated domains, and soft/stiff inclusion limits have also formed an important line of research. Schweizer \cite{Schweizer2000b} studied uniform estimates in periodic perforated domains and obtained Lipschitz-type estimates independent of the perforation scale. In the periodic high-contrast setting, Shen \cite{Shen2021} established large-scale Lipschitz estimates for elasticity systems that are uniform with respect to the contrast parameter and cover both soft- and stiff-inclusion limits. Fu and Jing \cite{FuJing2024,FuJing2025} further established convergence-rate estimates and Lipschitz regularity estimates that are uniform with respect to the material parameters for linear elasticity systems with high-contrast coefficients.

\subsection{High-Contrast Elasticity and the Incompressible Limit}

From the viewpoint of continuum mechanics, the coupled problem considered here may be understood within the framework of elasticity systems with high-contrast coefficients. As explained in \S\ref{sec:background}, under this interpretation the Stokes fluid is not treated simply as a viscous fluid, but is modeled mathematically as a special material with finite shear modulus and infinite bulk modulus, that is, as an incompressible limit.

Existing homogenization studies for such high-contrast elasticity systems are largely organized according to the limiting behavior of the material parameters:

\begin{itemize}
  \item \textbf{Soft-inclusion limit}: when the elastic modulus of the inclusions tends to zero relative to the matrix, as in porous media with bubbles or very soft rubber inclusions, Baffico et al. \cite{Baffico2008} proved that the effective equation may contain nonlocal terms depending on the microscopic geometry, corresponding to eigenvalue concentration near the bottom of the spectrum of the elasticity operator.

  \item \textbf{Stiff-inclusion limit}: when the inclusion modulus tends to infinity relative to the matrix, Fu and Jing \cite{FuJing2024} established Lipschitz estimates that remain uniform with respect to the stiffness parameter by means of layer-potential methods.

  \item \textbf{Hybrid incompressible limit}: this is the regime studied in the present paper, where a compressible elastic skeleton is coupled with an incompressible degenerate phase of Stokes type. Schweizer \cite{Schweizer2000b} investigated uniform estimates for periodic homogenization problems of this general type, but for mixed systems with a local divergence constraint ($\nabla \cdot u = 0$ only in one phase), the available elliptic theory does not readily yield explicit quantitative error bounds depending on $\varepsilon$.
\end{itemize}

Although Berm\'udez, Dur\'an, and Rodr\'iguez \cite{BermudezDuranRodriguez1998} studied finite element approximations for such mixed systems, analytical error estimates in the multiscale continuum setting still appear to deserve further systematic study. It is therefore of clear theoretical interest to develop a systematic framework that can simultaneously handle high-contrast parameters and local incompressibility constraints.

\section{Main Contributions}

For the Lam\'e-Stokes coupled system with a local incompressibility constraint, the main contributions of this paper may be summarized as follows:

\begin{enumerate}
  \item \textbf{Rigorous derivation of the homogenized system}: for the Lam\'e-Stokes coupled model with a local incompressibility constraint, we extend the two-scale convergence method and prove that, as $\varepsilon \to 0$, the microscopic solutions converge to the solution of a homogenized system. We also derive explicit formulas for the effective coefficients.

  \item \textbf{Higher-order regularity theory for cell correctors}: using Caccioppoli inequalities and tangential difference quotients, we prove piecewise $H^k$ regularity of all orders for solutions of the cell problem in the elastic and fluid phases of the unit cell, and further establish the $L^\infty$ boundedness of the corrector gradients. This provides the regularity needed for the subsequent quantitative analysis.

  \item \textbf{Convergence-rate estimates}: by adapting and extending techniques from quantitative periodic homogenization, and by incorporating the Steklov smoothing operator, boundary-layer cut-off functions, and a pressure-error analysis compatible with the mixed variational structure, we establish an $O(\sqrt{\varepsilon})$ convergence rate for the displacement error in the $H^1$ norm and for the pressure error in the $L^2$ norm.
\end{enumerate}

Precise mathematical statements of these contributions are given in Chapter 2, \S\ref{sec:main-results}.

\section{Organization of the Paper}

This paper consists of six chapters, whose contents are summarized as follows.

\textbf{Chapter 1} introduces the research background, the central mathematical problem, the existing literature, and the main contributions of the paper.

\textbf{Chapter 2} presents the mathematical preliminaries and the problem formulation. We introduce the geometric assumptions and function spaces, formulate the Lam\'e-Stokes system in mixed variational form, state the main results of the paper, and prove existence, uniqueness, and a priori estimates by means of the Babu\v{s}ka-Brezzi theory.

\textbf{Chapter 3} is devoted to qualitative homogenization theory. We derive the cell problem and the effective coefficients by formal asymptotic expansion, and then provide a rigorous convergence proof using Allaire's two-scale convergence theory.

\textbf{Chapter 4} establishes convergence-rate estimates. By adapting and extending Shen's quantitative methods for periodic elliptic systems, introducing the Steklov smoothing operator and boundary-layer cut-off functions, and combining them with a mixed variational treatment of the pressure error, we prove $O(\sqrt{\varepsilon})$ convergence rates for the displacement and pressure errors in their natural norms.

\textbf{Chapter 5} studies the regularity of the cell correctors. We show that solutions of the cell problem enjoy piecewise Sobolev regularity of arbitrary order in the elastic and fluid phases of the unit cell, and further establish the $L^\infty$ boundedness of the corrector gradients. The main techniques include the Caccioppoli inequality, tangential difference quotients, and the algebraic structure of the coupled system.

\textbf{Chapter 6} concludes the paper with a summary of the main contributions and a discussion of future research directions.


\chapter{Preliminaries and Problem Formulation}

This chapter establishes the mathematical framework for the Lam\'e-Stokes coupled system. As discussed in Chapter 1, we study a high-contrast problem consisting of periodically distributed incompressible inclusions embedded in an elastic matrix: when the bulk modulus of the inclusions tends to infinity, the limiting interface problem takes the form of a coupled Lam\'e-Stokes system. The goal of this chapter is to give a precise mathematical formulation of this system, state the main results of the paper, and prove well-posedness by means of the Babu\v{s}ka-Brezzi theory \cite{Brezzi1974,BoffiBrezziFortin2013}.

The chapter is organized as follows. We first introduce the geometric assumptions and function spaces (\S\ref{sec:geometry}), then define the Lam\'e-Stokes coupled system (\S\ref{sec:lame-stokes}), next state the main results of the paper (\S\ref{sec:main-results}), and finally prove well-posedness (\S\ref{sec:well-posedness}). The main notation used throughout this paper is introduced as needed.

\section{Geometric Setting and Function Spaces}\label{sec:geometry}

Let $D \subset \Omega \subset \mathbb{R}^{d}$ ($d \geq 2$), where $\Omega$ denotes the overall domain and $D$ the high-contrast inclusion region. We impose the following geometric assumptions.

\textbf{Geometric setting.} For the domains $\Omega$ and $D$, we impose the following assumptions.

\textbf{(A1)} $\Omega \subset \mathbb{R}^d$ ($d \geq 2$) is a bounded open set with connected $C^\infty$ boundary $\partial\Omega$. The set $D \subset \Omega$ is an open set with $C^\infty$ boundary $\partial D$, where $\partial D$ has finitely many connected components, say $N_0$ of them. Moreover, $\Omega \setminus \overline{D}$ has $C^\infty$ boundary $\partial\Omega \cup \partial D$ and remains connected. We enumerate the connected components of $D$ by $D_i$, $i = 1, \ldots, N_0$.

\textbf{(A2)} The domain $\Omega$ satisfies \textup{(A1)}. For a given $\varepsilon \in (0,1)$, the set $D = D_\varepsilon$ is formed by $\varepsilon$-periodically distributed small inclusions, constructed as follows.

Let $Y = (-\frac{1}{2}, \frac{1}{2})^d$ be the unit cell, and let $\omega \subset Y$ be an open subset with connected $C^\infty$ boundary such that $\mathrm{dist}(\omega,\partial Y) > 0$; for simplicity, we assume that $\omega$ is simply connected. Here $\omega$ represents the model inclusion in the unit cell, while $Y \setminus \overline{\omega}$ is the corresponding elastic skeleton region. For $\varepsilon > 0$ and $\mathbf{n} \in \mathbb{Z}^d$, write $\varepsilon(\mathbf{n}+Y)$ and $\varepsilon(\mathbf{n}+\omega)$ as $Y_\varepsilon^\mathbf{n}$ and $\omega_\varepsilon^\mathbf{n}$, respectively. Define $\Pi_\varepsilon$ to be the set of lattice points $\mathbf{n}$ such that $\overline{Y_\varepsilon^\mathbf{n}} \subset \Omega$, namely,
\begin{equation}
  \Pi_\varepsilon := \left\{ \mathbf{n} \in \mathbb{Z}^d : \overline{Y^\mathbf{n}_\varepsilon} \subset \Omega \right\}.
\end{equation}
Then the inclusion region $D = D_\varepsilon$ and the background region $\Omega_\varepsilon$ are defined by
\begin{equation}\label{eq:D-eps-def}
  D_\varepsilon := \bigcup_{\mathbf{n} \in \Pi_\varepsilon} \omega^\mathbf{n}_\varepsilon, \quad
  \Omega_\varepsilon := \Omega \setminus \overline{D_\varepsilon}.
\end{equation}
For fixed $\varepsilon$, the number of connected components of $D_\varepsilon$ is $N_\varepsilon := |\Pi_\varepsilon|$. We further define $Y_\varepsilon$ and $K_\varepsilon$ by
\begin{equation}
  K_\varepsilon = \Omega \setminus \left( \bigcup_{\mathbf{n} \in \Pi_\varepsilon} \overline{Y^\mathbf{n}_\varepsilon} \right), \quad
  Y_\varepsilon = \Omega \setminus \overline{K_\varepsilon}.
\end{equation}
Intuitively, $Y_\varepsilon$ is the union of all $\varepsilon$-cells $Y_\varepsilon^\mathbf{n}$ contained in $\Omega$, and $K_\varepsilon$ is the corresponding buffer region; see Figure~\ref{fig:cell-stacking}.

\begin{figure}[htbp]
  \centering
  \includegraphics[width=0.7\textwidth]{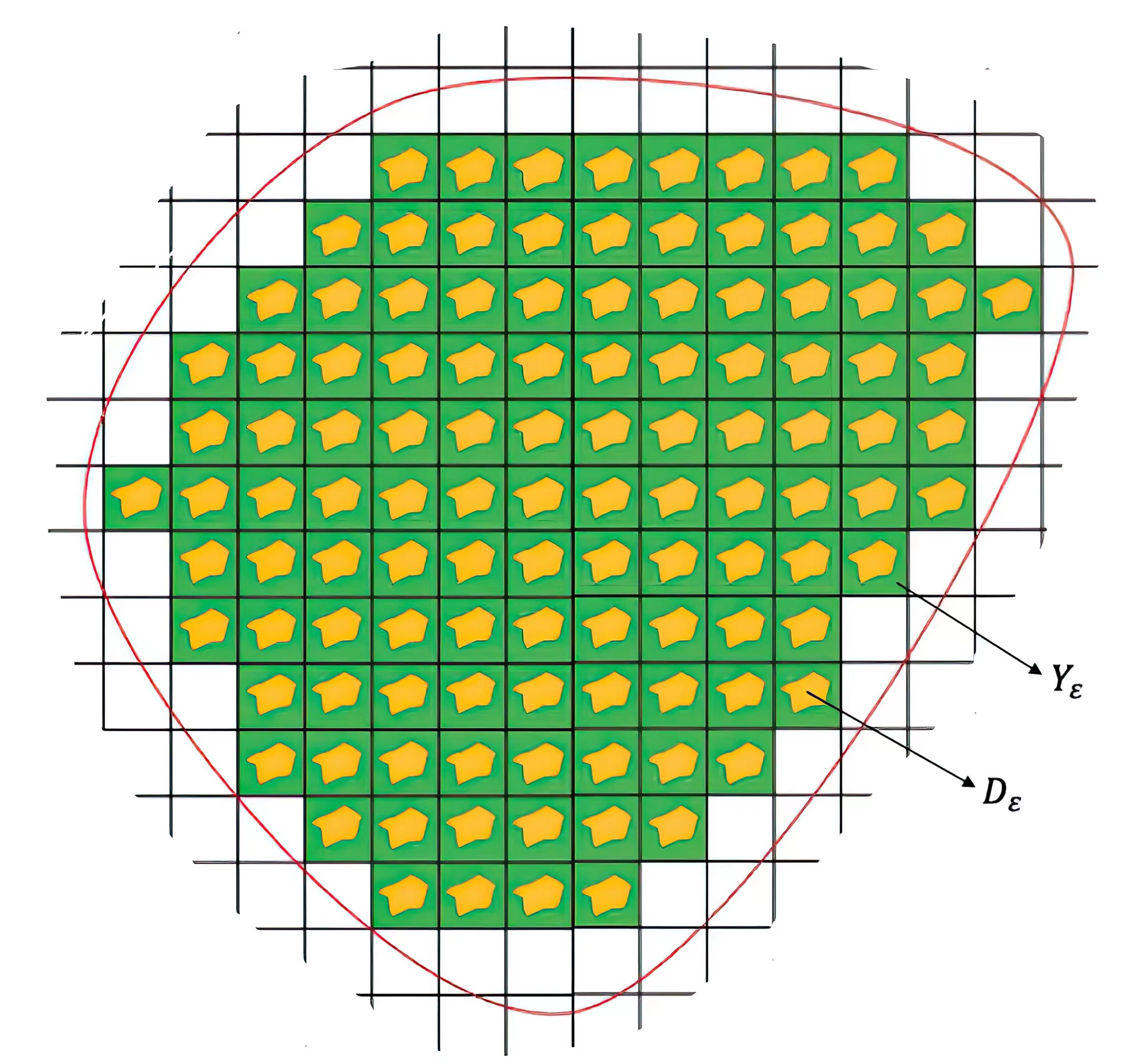}
  \caption[Cell-stacking region $Y_\varepsilon$ and inclusion region $D_\varepsilon$]{Schematic of the cell-stacking region $Y_\varepsilon$ and the inclusion region $D_\varepsilon$. The green-and-yellow region denotes $Y_\varepsilon$, while the yellow region denotes $D_\varepsilon$. Adapted from Figure 1(b) in \cite{FuJing2025}.}
  \label{fig:cell-stacking}
\end{figure}

For each fixed $\varepsilon > 0$, the set $D = D_\varepsilon$ constructed in \textup{(A2)} satisfies the assumptions in \textup{(A1)}. Throughout this paper we assume \textup{(A1)}. When discussing estimates or convergence results uniform with respect to $\varepsilon > 0$, we consider the family of geometries $(\Omega,D_\varepsilon)$ satisfying \textup{(A2)} and aim to derive results independent of $\varepsilon$.

\textbf{Function spaces.}
We use the standard notation $H^1(\Omega)$ and its vector-valued counterpart $H^1(\Omega;\mathbb{R}^d)$, as well as the boundary trace space $H^{1/2}(\partial\Omega)$, and write $H^{-1/2}(\partial\Omega)$ for the dual of $H^{1/2}(\partial\Omega)$. Let $\mathcal{R}$ denote the space of rigid motions in $\mathbb{R}^d$, defined by
\begin{equation}
  \mathcal{R} := \left\{ \mathbf{r} = (r_{1}, \ldots, r_{d})^{T} : \mathcal{D}(\mathbf{r}) = 0 \text{ in } \mathbb{R}^{d} \right\},
\end{equation}
which has dimension $d(d+1)/2$ and is spanned by
\begin{equation}
  \mathbf{e}_{1}, \ldots, \mathbf{e}_{d}, \quad x_{j} \mathbf{e}_{i} - x_{i} \mathbf{e}_{j}, \quad (1 \leq i < j \leq d).
\end{equation}
Here $\mathbf{e}_i$ denotes the standard basis vector in $\mathbb{R}^d$. These basis vectors are denoted by $\mathbf{r}_j$, $j = 1, \ldots, d(d+1)/2$. We define $H_{\mathcal{R}}^{-1/2}(\partial D)$ to be the subspace of $H^{-1/2}(\partial D)$ orthogonal to $\mathcal{R}$, namely,
\begin{equation}
  H_{\mathcal{R}}^{-1/2}(\partial D) := \left\{ \phi \in H^{-1/2}(\partial D): \int_{\partial D_i} \phi \cdot \mathbf{r} = 0, \, \forall \mathbf{r} \in \mathcal{R}, \, 1 \leq i \leq N_0 \right\}.
\end{equation}
The integral above is understood as the dual pairing between $H^{1/2}$ and $H^{-1/2}$. Similarly, $H_{\mathcal{R}}^{1/2}(\partial D)$ denotes the subspace of $H^{1/2}(\partial D)$ orthogonal to $\mathcal{R}$. The spaces $H_{\mathcal{R}}^{1/2}(\partial \Omega)$ and $H_{\mathcal{R}}^{-1/2}(\partial \Omega)$ are defined in the same way.

\section{Lam\'e-Stokes Coupled System}\label{sec:lame-stokes}

A pair of real numbers $(\lambda,\mu)$ is called an \emph{admissible Lam\'e pair} if
\begin{equation}\label{eq:elliptic-condition}
  \mu > 0, \quad d\lambda + 2\mu > 0.
\end{equation}
For an admissible Lam\'e pair $(\lambda,\mu)$, the static elasticity system (the Lam\'e system) is given by
\begin{equation}
  \mathcal{L}_{\lambda,\mu}\mathbf{u} := \mu\Delta\mathbf{u} + (\lambda + \mu)\nabla\operatorname{div}\mathbf{u},
\end{equation}
where $\mathbf{u} = (u^1, \ldots, u^d)$ denotes the displacement field. The admissibility condition guarantees ellipticity of the Lam\'e operator, and physical constitutive laws ensure that natural materials always satisfy this condition \cite{LandauLifshitz1986}. The Lam\'e operator may also be written as $\nabla \cdot \sigma(\mathbf{u})$, where
\begin{equation}
  \sigma(\mathbf{u}) := \lambda(\operatorname{div} \mathbf{u})\mathbb{I} + 2\mu\mathcal{D}(\mathbf{u})
\end{equation}
is the stress tensor. Here and below, $\mathbb{I}$ denotes the $d \times d$ identity matrix, and $\mathcal{D}$ is the symmetric gradient operator
\begin{equation}
  \mathcal{D}(\mathbf{u}) = \frac{1}{2}(\nabla \mathbf{u} + \nabla \mathbf{u}^T) = \frac{1}{2}(\partial_i u^j + \partial_j u^i)_{ij}.
\end{equation}
The superscript $T$ denotes matrix transposition. If $E$ is a Lipschitz domain in $\mathbb{R}^d$, then the conormal derivative (boundary traction) on $\partial E$ is defined by
\begin{equation}
  \left.\frac{\partial \mathbf{u}}{\partial \nu_{(\lambda,\mu)}}\right|_{\partial E} := \sigma(\mathbf{u})N = \lambda(\operatorname{div} \mathbf{u})N + 2\mu\mathcal{D}(\mathbf{u})N.
\end{equation}

\textbf{Transmission problem.}
Consider the following interface problem:
\begin{equation}\label{eq:transmission}
  \begin{cases}
    \mathcal{L}_{\lambda, \mu} \mathbf{u} = 0 & \text{in } \Omega \setminus \overline{D}, \\
    \mathcal{L}_{\widetilde{\lambda}, \widetilde{\mu}} \mathbf{u} = 0 & \text{in } D, \\
    \mathbf{u}|_{-} = \mathbf{u}|_{+}, \quad
    \left.\dfrac{\partial \mathbf{u}}{\partial \nu_{(\widetilde{\lambda}, \widetilde{\mu})}}\right|_{-} = \left.\dfrac{\partial \mathbf{u}}{\partial \nu_{(\lambda, \mu)}}\right|_{+} & \text{on } \partial D, \\
    \left.\dfrac{\partial \mathbf{u}}{\partial \nu_{(\lambda, \mu)}}\right|_{\partial \Omega} = \mathbf{g} \in H_{\mathcal{R}}^{-1/2}(\partial \Omega), \quad \mathbf{u}|_{\partial \Omega} \in H_{\mathcal{R}}^{1/2}(\partial \Omega).
  \end{cases}
\end{equation}
Here and below, the background Lam\'e pair is fixed as $(\lambda,\mu)$, while the Lam\'e pair inside the inclusion is $(\widetilde{\lambda},\widetilde{\mu})$; both are assumed admissible. The symbol $|_+$ denotes the trace taken from the exterior of $D$ (that is, from $\Omega \setminus \overline{D}$), while $|_-$ denotes the trace taken from the interior of $D$. A standard verification based on the interface conditions on $\partial D$ shows that this problem is equivalent to
\begin{equation}
  \begin{cases}
    \mathcal{L}_{\lambda(x),\mu(x)} \mathbf{u} = \nabla \cdot \left[\lambda(x)(\operatorname{div} \mathbf{u})\mathbb{I} + 2\mu(x)\mathcal{D}(\mathbf{u})\right] = 0 & \text{in } \Omega, \\
    \left.\dfrac{\partial \mathbf{u}}{\partial \nu_{(\lambda,\mu)}}\right|_{\partial \Omega} = \mathbf{g}, \quad \mathbf{u}|_{\partial \Omega} \in H_{\mathcal{R}}^{1/2}(\partial \Omega),
  \end{cases}
\end{equation}
where $\lambda(x)$ is the piecewise constant function $\lambda \mathbf{1}_{\Omega \setminus D} + \widetilde{\lambda} \mathbf{1}_D$, and $\mu(x)$ is defined similarly. Since a pure traction elasticity problem has rigid motions as its kernel, one cannot speak of uniqueness on the full space $H^1(\Omega;\mathbb{R}^d)$. After imposing the boundary orthogonality condition with respect to rigid motions, the bilinear form becomes coercive on the corresponding constrained space, and the existence and uniqueness of weak solutions follow from the Lax-Milgram theorem. Equivalently, one may first solve the problem in the quotient space $H^1(\Omega;\mathbb{R}^d)/\mathcal{R}$ and then select the unique representative determined by the boundary orthogonality condition. It should be emphasized that the restriction on the traction boundary data $\mathbf{g}$ on $\partial\Omega$ is a necessary compatibility condition, whereas the restriction on $\mathbf{u}|_{\partial\Omega}$ is imposed only to fix the rigid-motion indeterminacy.

\textbf{The incompressible inclusion limit.}
Let $\widetilde{\lambda} \to \infty$ while keeping $\widetilde{\mu}$ fixed. Then the limiting interface problem is the coupled Lam\'e-Stokes system
\begin{equation}\label{eq:lame-stokes}
  \begin{cases}
    \mathcal{L}_{\lambda, \mu} \mathbf{u}_{\varepsilon} = 0 & \text{in } \Omega_\varepsilon, \\
    \mathcal{L}_{\infty, \widetilde{\mu}}(\mathbf{u}_{\varepsilon}, p_{\varepsilon}) = 0, \quad \operatorname{div} \mathbf{u}_{\varepsilon} = 0 & \text{in } D_{\varepsilon}, \\
    \mathbf{u}_{\varepsilon}|_- = \mathbf{u}_{\varepsilon}|_+, \quad
    \left. \dfrac{\partial (\mathbf{u}_{\varepsilon}, p_{\varepsilon})}{\partial \nu_{(\infty, \widetilde{\mu})}} \right|_- = \left. \dfrac{\partial \mathbf{u}_{\varepsilon}}{\partial \nu_{(\lambda, \mu)}} \right|_+ & \text{on } \partial D_{\varepsilon}, \\
    \left. \dfrac{\partial \mathbf{u}_{\varepsilon}}{\partial \nu_{(\lambda, \mu)}} \right|_{\partial \Omega} = \mathbf{g} \in H_{\mathcal{R}}^{-1/2}(\partial \Omega), \quad \mathbf{u}_{\varepsilon}|_{\partial \Omega} \in H_{\mathcal{R}}^{1/2}(\partial \Omega),
  \end{cases}
\end{equation}
where $\mathcal{L}_{\infty, \widetilde{\mu}}(\mathbf{u}, p) = \widetilde{\mu}\Delta\mathbf{u} + \nabla p$ denotes the Stokes operator with viscosity coefficient $\widetilde{\mu}$, and $p$ is the pressure field. As in elasticity, we define the fluid (Stokes) stress tensor by
\begin{equation}\label{eq:stokes-stress}
  \sigma(\mathbf{u}, p) := 2\widetilde{\mu} \mathcal{D}(\mathbf{u}) + p \mathbb{I},
\end{equation}
so that the Stokes operator can be written as $\mathcal{L}_{\infty, \widetilde{\mu}}(\mathbf{u}, p) = \nabla \cdot \sigma(\mathbf{u}, p)$. The associated conormal derivative is
\begin{equation}
  \frac{\partial (\mathbf{u}, p)}{\partial \nu_{(\infty, \widetilde{\mu})}} \bigg|_{-} := \sigma(\mathbf{u}, p) N = 2\widetilde{\mu} \mathcal{D}(\mathbf{u}) N + p N.
\end{equation}

\begin{remark}
  In the Lam\'e-Stokes coupled system \eqref{eq:lame-stokes}, the Stokes fluid may be regarded as a special elastic material with finite shear modulus but infinite bulk modulus. The incompressibility constraint $\operatorname{div} \mathbf{u}_{\varepsilon} = 0$ holds only in the fluid region $D_\varepsilon$, whereas the displacement field in the elastic region $\Omega_\varepsilon$ is allowed to be compressible. From a physical point of view, this corresponds to an elastic matrix containing an incompressible inclusion phase.
\end{remark}

\section{Main Results}\label{sec:main-results}

In this section we summarize the main results of the paper for the Lam\'e-Stokes coupled system \eqref{eq:lame-stokes}.

\begin{theorem}[Homogenization Limit]\label{thm:homogenization}
  Assume that \textup{(A1)}--\textup{(A2)} hold, $\mathbf{g} \in H_{\mathcal{R}}^{-1/2}(\partial\Omega)$, and that $(\mathbf{u}_\varepsilon, p_\varepsilon)$ is a solution of the Lam\'e-Stokes coupled system \eqref{eq:lame-stokes}. Then, as $\varepsilon \to 0$, the sequence $\mathbf{u}_\varepsilon$ converges weakly in $H^1(\Omega)$ to
  \[
    \mathbf{u}_0 \in V^\varepsilon := \left\{ \mathbf{v} \in H^1(\Omega; \mathbb{R}^d) : \mathbf{v}|_{\partial\Omega} \in H_{\mathcal{R}}^{1/2}(\partial\Omega) \right\},
  \]
  where $\mathbf{u}_0$ satisfies the following homogenized elasticity system:
  \begin{equation}
    \begin{cases}
      \nabla \cdot (\hat{A} : \mathcal{D}(\mathbf{u}_0)) = 0 & \text{in } \Omega, \\
      (\hat{A} : \mathcal{D}(\mathbf{u}_0)) \cdot n = \mathbf{g} & \text{on } \partial\Omega.
    \end{cases}
  \end{equation}
  Here $\hat{A}$ is the effective elasticity tensor determined by the cell problem. It satisfies the usual symmetry conditions and is strongly elliptic on symmetric matrices; consequently, the associated macroscopic bilinear form has a positive lower bound on symmetric gradients, and the well-posedness of the above problem in the space $V^\varepsilon$ follows by Korn's inequality.
\end{theorem}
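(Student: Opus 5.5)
We prove the statement by two-scale convergence applied to the mixed variational formulation of \eqref{eq:lame-stokes}. The formulation reads as follows. Find $(\mathbf{u}_\varepsilon,p_\varepsilon)\in V^\varepsilon\times L^2(D_\varepsilon)$ such that
\[
\int_{\Omega_\varepsilon}\sigma(\mathbf{u}_\varepsilon):\mathcal{D}(\mathbf{v})+\int_{D_\varepsilon}2\widetilde\mu\,\mathcal{D}(\mathbf{u}_\varepsilon):\mathcal{D}(\mathbf{v})+\int_{D_\varepsilon}p_\varepsilon\operatorname{div}\mathbf{v}=\langle \mathbf{g},\mathbf{v}\rangle_{\partial\Omega},\qquad \int_{D_\varepsilon}q\operatorname{div}\mathbf{u}_\varepsilon=0,
\]
for all $\mathbf{v}\in V^\varepsilon$ and $q\in L^2(D_\varepsilon)$. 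The pressure here has no mean-zero normalization per inclusion, because traction is continuous across $\partial D_\varepsilon$.

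\textbf{Step 1: uniform bounds.} Testing with $\mathbf{v}=\mathbf{u}_\varepsilon$ eliminates the pressure term, since $\operatorname{div}\mathbf{u}_\varepsilon=0$ in $D_\varepsilon$. The energy on the right is bounded by $\|\mathbf{g}\|_{H^{-1/2}}\|\mathbf{u}_\varepsilon\|_{H^1}$. Korn's inequality on $\Omega$ for fields orthogonal to $\mathcal R$ on $\partial\Omega$ then gives $\|\mathbf{u}_\varepsilon\|_{H^1(\Omega)}\le C$ with $C$ independent of $\varepsilon$. This uses only the global Korn inequality, so no perforated-domain estimate is needed.

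For the pressure I use the $\varepsilon$-uniform inf-sup bound from the Babu\v{s}ka--Brezzi analysis of \S\ref{sec:well-posedness}. It rests on a rescaled cell construction: for each $q\in L^2(\omega^{\mathbf n}_\varepsilon)$ one builds $\mathbf v$ supported in $Y^{\mathbf n}_\varepsilon$ with $\operatorname{div}\mathbf v=q$ in the inclusion and $\|\nabla\mathbf v\|\le C\|q\|$, which is possible because $\operatorname{dist}(\omega,\partial Y)>0$. This gives $\|p_\varepsilon\|_{L^2(D_\varepsilon)}\le C$. Extracting subsequences, I obtain $\mathbf{u}_\varepsilon\rightharpoonup\mathbf{u}_0$ in $H^1$, $\nabla\mathbf{u}_\varepsilon\to\nabla\mathbf{u}_0+\nabla_y\mathbf{u}_1(x,y)$ two-scale with $\mathbf{u}_1\in L^2(\Omega;H^1_{per}(Y)/\mathbb R^d)$, and $\mathbf 1_{D_\varepsilon}p_\varepsilon\to \mathbf 1_\omega(y)p_0(x,y)$ two-scale. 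The boundary orthogonality of $\mathbf u_0$ passes to the limit by compactness of the trace, so $\mathbf u_0\in V^\varepsilon$.

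\textbf{Step 2: two-scale limit system.} Passing to the limit in $\int_{D_\varepsilon} q(x,x/\varepsilon)\operatorname{div}\mathbf u_\varepsilon=0$ gives
\[
\operatorname{tr}\mathcal D(\mathbf u_0)+\operatorname{div}_y\mathbf u_1=0\quad\text{in }\Omega\times\omega.
\]
Next I test with $\mathbf v=\boldsymbol\phi(x)+\varepsilon\boldsymbol\psi(x,x/\varepsilon)$, where $\boldsymbol\psi$ is smooth, $Y$-periodic in $y$, and compactly supported in $x$. Since the inclusions in the buffer region $K_\varepsilon$ are absent, the coefficient $\mathbf 1_{D_\varepsilon}$ two-scale converges to $\mathbf 1_\omega$, and the set $K_\varepsilon$ has vanishing measure. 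The limit is the two-scale system: the piecewise stress of $\mathcal D(\mathbf u_0)+\mathcal D_y(\mathbf u_1)$ (Lam\'e in $Y\setminus\omega$, Stokes with $p_0$ in $\omega$) tested against $\mathcal D(\boldsymbol\phi)+\mathcal D_y(\boldsymbol\psi)$ equals $\langle\mathbf g,\boldsymbol\phi\rangle$. Choosing $\boldsymbol\phi=0$ yields the cell problem. Given $\xi$ symmetric, find $(\chi_\xi,\pi_\xi)\in H^1_{per}(Y)/\mathbb R^d\times L^2(\omega)$ solving the Lam\'e--Stokes transmission problem in $Y$ with macroscopic strain $\xi$ and constraint $\operatorname{tr}\xi+\operatorname{div}_y\chi_\xi=0$ in $\omega$. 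This problem is well posed by Babu\v{s}ka--Brezzi on the cell: coercivity on the kernel follows from the periodic Korn inequality, and inf-sup holds on $\omega$ since $\omega\Subset Y$. By linearity and uniqueness, $\mathbf u_1=\chi_{\mathcal D(\mathbf u_0)}$ and $p_0=\pi_{\mathcal D(\mathbf u_0)}$. I then define
\[
\hat A\xi:\eta=\int_{Y\setminus\omega}\sigma_{\lambda,\mu}(\xi+\mathcal D_y\chi_\xi):(\eta+\mathcal D_y\chi_\eta)+\int_\omega 2\widetilde\mu(\xi+\mathcal D_y\chi_\xi):(\eta+\mathcal D_y\chi_\eta).
\]
In the $\omega$ part the pressure terms cancel because both test strains satisfy the constraint. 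Choosing $\boldsymbol\psi=0$ gives the homogenized weak form with tensor $\hat A$.

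\textbf{Step 3: properties of $\hat A$ and conclusion.} The energy representation above makes $\hat A$ symmetric, with the minor symmetries inherited from the dependence on $\xi$ only through its symmetric part. For ellipticity, $\hat A\xi:\xi\ge c\int_Y|\xi+\mathcal D_y\chi_\xi|^2$ with $c=\min(2\mu,2\widetilde\mu,d\lambda+2\mu)$-type constants on the elastic part. The periodic average gives $\int_Y(\xi+\mathcal D_y\chi_\xi)=\xi$, so Jensen's inequality yields $\ge c|\xi|^2$. Some care is needed when $\lambda<0$; there I would use the pointwise bound $\lambda(\operatorname{tr}e)^2+2\mu|e|^2\ge\min(2\mu,d\lambda+2\mu)|e|^2$.

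Korn's inequality in $V^\varepsilon$ then gives uniqueness of $\mathbf u_0$, so the whole sequence converges. The main obstacle I expect is Step 1's $\varepsilon$-uniform inf-sup constant, together with correctly identifying the limit constraint and the pressure in the cell problem. I would handle the inf-sup by building the Bogovskii-type right inverse on each cell $Y\setminus$-neighborhood of $\omega$ and scaling, then summing over cells.
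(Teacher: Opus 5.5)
Your proposal is correct and follows essentially the same route as the paper. It uses $\varepsilon$-uniform bounds from coercivity plus the cellwise Bogovski\u{\i} inf-sup construction, two-scale compactness, and the oscillating test functions $\boldsymbol{\phi}(x)+\varepsilon\boldsymbol{\psi}(x,x/\varepsilon)$. It then splits the limit system into the cell problem ($\boldsymbol{\phi}=0$) and the macroscopic equation ($\boldsymbol{\psi}=0$), and gets symmetry and ellipticity of $\hat A$ from the energy representation and Jensen's inequality.

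The one real difference is how you obtain convergence of the whole sequence. You use uniqueness of the homogenized problem (Korn's inequality plus ellipticity of $\hat A$), whereas the paper proves uniqueness of the full two-scale system $(\mathbf{u}_0,\mathbf{u}_1,p_0)$ through a separate two-scale inf-sup lemma and Babu\v{s}ka--Brezzi. Your ordering makes that extra lemma unnecessary, because $\mathbf{u}_1$ and $p_0$ are then fixed by the well-posed cell problem.
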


In this paper, we derive the homogenized system by both formal asymptotic expansion \cite{BensoussanLionsPapanicolaou1978} and two-scale convergence \cite{Allaire1992,Nguetseng1989}, and verify that the two approaches are consistent.

The regularity of the cell problem solution $(\chi,r)$ is crucial for the convergence-rate analysis:

\begin{theorem}[Higher-Order Regularity of the Cell Correctors]\label{thm:regularity}
  Assume that $\partial\omega \in C^\infty$. Then the weak solution $(\chi, r) \in H^1_{\#,0}(Y; \mathbb{R}^d) \times L^2(\omega)$ of the cell problem satisfies
  \begin{equation}
    \chi|_{Y\setminus\overline{\omega}} \in H^k(Y\setminus\overline{\omega}; \mathbb{R}^d), \quad
    \chi|_{\omega} \in H^k(\omega; \mathbb{R}^d), \quad
    r \in H^{k-1}(\omega), \quad \forall k \geq 1.
  \end{equation}
  In particular,
  \[
    \chi|_{Y\setminus\overline{\omega}} \in C^\infty(Y\setminus\overline{\omega}; \mathbb{R}^d),
    \qquad
    \chi|_{\omega} \in C^\infty(\omega; \mathbb{R}^d),
  \]
  and hence $\chi \in W^{1,\infty}(Y)$.
\end{theorem}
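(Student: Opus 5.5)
The plan is to treat the cell problem as a periodic elliptic transmission problem of mixed type and prove piecewise regularity by induction on $k$. We separate interior points from points near the interface $\partial\omega$. The cell problem is, for a fixed symmetric matrix $P$ (or basis element $e_{ij}$), to find a periodic $\chi$ with zero mean and a pressure $r\in L^2(\omega)$ such that:
\begin{itemize}
  \item the Lam\'e equation $\nabla\cdot\sigma(\chi+Py)=0$ holds in $Y\setminus\overline{\omega}$;
  \item the Stokes system $\nabla\cdot(2\widetilde\mu\mathcal{D}(\chi+Py)+r\mathbb{I})=0$, $\operatorname{div}(\chi+Py)=0$, holds in $\omega$;
  \item displacement and traction are continuous across $\partial\omega$.
\end{itemize}
Since the $Py$ terms are smooth, they only contribute smooth data on $\partial\omega$ (constant-coefficient jumps of the form $(\sigma(Py)-\widetilde\sigma(Py))N$), together with a constant divergence datum $-\operatorname{tr}P$ in $\omega$. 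The base case $k=1$ is the weak solution given by Babu\v{s}ka--Brezzi, with the inf-sup condition for the pressure on $\omega$.

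\textbf{Interior step.} Away from $\partial\omega$, which includes points near $\partial Y$ handled by periodic extension since $\mathrm{dist}(\omega,\partial Y)>0$, the regularity is classical. The Lam\'e system with constant admissible coefficients is strongly elliptic, so $\chi$ is $C^\infty$ there by Caccioppoli plus difference quotients in all directions. Inside $\omega$ the Stokes system with constant viscosity gives $\chi\in H^k_{loc}$ and $r\in H^{k-1}_{loc}$ for all $k$, by the standard interior Stokes estimates (Cattabriga/Galdi type).

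\textbf{Interface step.} This is the main work. Cover $\partial\omega$ by finitely many charts, flatten with a $C^\infty$ diffeomorphism $\Phi$, and multiply by a cutoff $\eta$. In flattened coordinates we get a transmission problem on $B^+\cup B^-$ with smooth variable coefficients, a divergence constraint $\operatorname{div}_\Phi$ on one side only, and smooth jump data. First take tangential difference quotients $\Delta_h^\tau$, $\tau$ parallel to the flat interface. These commute with the transmission conditions, and $\Delta_h^\tau\chi$ is an admissible test function, except that its divergence in $B^-$ is no longer zero but equals lower-order terms coming from the variable coefficients. Correct this with a Bogovski\u{\i}-type right inverse of the divergence on $\omega$ (its existence follows from the same inf-sup condition). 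This yields a Caccioppoli estimate for $\Delta_h^\tau\nabla\chi$ in $L^2$ on both sides and for $\Delta_h^\tau r$ in $L^2$, uniformly in $h$, so the tangential derivatives of $\nabla\chi$ and of $r$ lie in $L^2$.

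\textbf{Normal derivatives and induction.} I expect recovering the normal derivatives from the algebraic structure of the equations to be the main obstacle, particularly on the fluid side, where $\partial_N r$ and $\partial_{NN}\chi$ must be solved for simultaneously. On the elastic side, strong ellipticity gives $\partial_{NN}\chi$ from the equation, because the coefficient matrix of $\partial_{NN}$, namely $\mu\mathbb{I}+(\lambda+\mu)N\otimes N$, is invertible. On the fluid side, differentiate the constraint $\operatorname{div}\chi=\text{const}$ (plus lower-order terms) in the normal direction to obtain $\partial_{NN}\chi\cdot N$ from tangential quantities. The normal component of the momentum equation then gives $\partial_N r$. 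The tangential components give $\widetilde\mu\,\partial_{NN}\chi_\tau$. This shows $\chi\in H^2$ on each side near the interface and $r\in H^1$. For the induction, apply tangential difference quotients of order $k-1$ to the differentiated system, which keeps the same structure with right-hand sides controlled by the $H^{k-1}$ norms already obtained, and repeat the algebraic recovery of normal derivatives. After patching charts we get $\chi\in H^k$ on each phase and $r\in H^{k-1}(\omega)$ for every $k$.

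\textbf{Conclusion.} Sobolev embedding with $k>d/2+1$ on each smooth phase, using Lipschitz domains up to the boundary, gives $\nabla\chi\in L^\infty$ on each side and smoothness up to $\partial\omega$. Since $\chi$ is continuous across $\partial\omega$ (the traces agree) and is piecewise $W^{1,\infty}$, it follows that $\chi\in W^{1,\infty}(Y)$.
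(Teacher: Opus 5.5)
Your proposal is correct and follows essentially the same route as the paper: remove the inhomogeneity with the linear field $p^{ij}$ (your $Py$), flatten the interface, take tangential difference quotients with a Bogovski\u{\i} correction that eliminates the pressure term, recover $\partial_d^2\chi$ and $\partial_d r$ algebraically, induct on $k$, and finish with Sobolev embedding on each phase together with the fact that $\chi\in H^1(Y)$ has no jump across $\partial\omega$. There are two points to tighten. First, your sequential fluid-side recovery is only the frozen flat-interface case of the paper's $(d+1)\times(d+1)$ block system in $(\partial_d^2\chi,\partial_d r)$; in general coordinates that system is invertible because its Schur complement $-a_{d\cdot}A_{2,dd}^{-1}a_{d\cdot}^T$ is negative. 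Second, the Bogovski\u{\i} correction must be built locally in $H^1_0(B_t)$ rather than on the fluid side alone, because a right inverse supported in the fluid region needs zero-mean data, which the cut-off difference quotient does not supply; the paper handles this by extending the datum by a constant into $B_t^+$ and applying the operator on the whole ball.
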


The proof of Theorem~\ref{thm:regularity} combines several classical regularity techniques with a structural analysis of the present coupled transmission problem. One first uses the Piola transform to flatten the curved interface into a hyperplane, then combines a Caccioppoli inequality with the Bogovski\u{\i} operator to obtain an $L^2 \to H^1$ estimate, next applies the tangential difference quotient method to derive an $H^1 \to H^2$ estimate, and finally extracts the normal second derivatives from the algebraic structure of the coupled system. By differentiating the equation repeatedly and iterating the same argument, one obtains piecewise $H^k$ regularity of arbitrary order by induction.

\begin{theorem}[Convergence Rates for the Displacement and Pressure]\label{thm:convergence-rate}
  Assume that \textup{(A1)}--\textup{(A2)} hold and that $\mathbf{u}_0 \in H^2(\Omega; \mathbb{R}^d)$. Then for all $0 < \varepsilon < 1$,
  \begin{equation}\label{eq:convergence-rate}
    \left\| \mathbf{u}_\varepsilon - \mathbf{u}_0 - \varepsilon \chi^{ij}\left(\frac{x}{\varepsilon}\right) (\mathcal{D}_x\mathbf{u}_0)^{ij} \right\|_{H^1(\Omega)} \leq C \sqrt{\varepsilon} \, \|\mathbf{u}_0\|_{H^2(\Omega)},
  \end{equation}
  and
  \begin{equation}\label{eq:pressure-rate-summary}
    \left\| p_\varepsilon - r^{ij}\left(\frac{x}{\varepsilon}\right) (\mathcal{D}_x\mathbf{u}_0)^{ij} \right\|_{L^2(D_\varepsilon)} \leq C \sqrt{\varepsilon} \, \|\mathbf{u}_0\|_{H^2(\Omega)}.
  \end{equation}
  Here the constant $C$ depends only on $d$, the material parameters $\lambda, \mu, \widetilde{\mu}$, the domain $\Omega$, and the cell geometry $\omega$, but is independent of $\varepsilon$.
\end{theorem}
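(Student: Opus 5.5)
The plan follows Shen's quantitative framework, adapted to the mixed structure. Write $\mathbf{w}_\varepsilon := \mathbf{u}_\varepsilon - \mathbf{u}_0 - \varepsilon\eta_\varepsilon\,\chi^{ij}(x/\varepsilon)\,S_\varepsilon\big((\mathcal{D}\mathbf{u}_0)^{ij}\big)$ and $q_\varepsilon := p_\varepsilon - \eta_\varepsilon\, r^{ij}(x/\varepsilon)\, S_\varepsilon\big((\mathcal{D}\mathbf{u}_0)^{ij}\big)$ on $D_\varepsilon$. Here $S_\varepsilon$ is the Steklov smoothing operator, applied after an $H^2$ extension of $\mathbf{u}_0$ to $\mathbb{R}^d$. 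The function $\eta_\varepsilon$ is a cut-off equal to $1$ away from an $O(\varepsilon)$-neighbourhood of $\partial\Omega$ (say outside $K_\varepsilon$ plus one cell layer), with $|\nabla\eta_\varepsilon|\le C/\varepsilon$.

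Because only cells $\mathbf{n}\in\Pi_\varepsilon$ carry inclusions, I will choose $\eta_\varepsilon$ supported in $Y_\varepsilon$, so that the corrector and pressure are used only where the periodic geometry is genuine. Theorem~\ref{thm:regularity} gives $\chi\in W^{1,\infty}(Y)$ and $r\in L^\infty(\omega)$ (taking $k$ large with Sobolev embedding in each phase). It then suffices to show
\[
\|\mathbf{w}_\varepsilon\|_{H^1}+\|q_\varepsilon\|_{L^2(D_\varepsilon)}\le C\sqrt\varepsilon\|\mathbf{u}_0\|_{H^2}.
\]
The differences between the stated quantities in \eqref{eq:convergence-rate}--\eqref{eq:pressure-rate-summary} and $\mathbf{w}_\varepsilon, q_\varepsilon$ are controlled by the standard estimates $\|f-S_\varepsilon f\|_{L^2}\le C\varepsilon\|\nabla f\|_{L^2}$ and $\|g(x/\varepsilon)S_\varepsilon f\|_{L^2}\le C\|g\|_{L^2(Y)}\|f\|_{L^2}$, together with the boundary-layer bound $\|\nabla\mathbf{u}_0\|_{L^2(\Omega\setminus\{\eta_\varepsilon=1\})}\le C\sqrt\varepsilon\|\mathbf{u}_0\|_{H^2}$.

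\textbf{Residual computation.} I will test the mixed formulation (from the well-posedness section) with $\mathbf{v}\in V^\varepsilon$ and $s\in L^2(D_\varepsilon)$. The pair $(\mathbf{w}_\varepsilon,q_\varepsilon)$ satisfies
\[
a_\varepsilon(\mathbf{w}_\varepsilon,\mathbf{v})+b(\mathbf{v},q_\varepsilon)=\langle R_\varepsilon,\mathbf{v}\rangle, \qquad b(\mathbf{w}_\varepsilon,s)=\langle \rho_\varepsilon,s\rangle ,
\]
where $\rho_\varepsilon=-\operatorname{div}(\mathbf{u}_0+\varepsilon\eta_\varepsilon\chi S_\varepsilon\mathcal{D}\mathbf{u}_0)$ on $D_\varepsilon$. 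The cell problem gives $\operatorname{div}_y\chi^{ij}=-\delta_{ij}$ in $\omega$, so the leading term cancels against $\operatorname{div}\mathbf{u}_0=\operatorname{tr}\mathcal{D}\mathbf{u}_0$. What remains is $(1-\eta_\varepsilon)\operatorname{div}\mathbf{u}_0+\eta_\varepsilon\operatorname{tr}(\mathcal{D}\mathbf{u}_0-S_\varepsilon\mathcal{D}\mathbf{u}_0)+O(\varepsilon|\nabla\eta_\varepsilon|+\varepsilon|\nabla S_\varepsilon\nabla\mathbf{u}_0|)$, which is $O(\sqrt\varepsilon)$ in $L^2$.

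For $R_\varepsilon$, I will introduce the piecewise stress $B(y)=\sigma_y(e^{ij}+\nabla_y\chi^{ij})$, with the pressure $r^{ij}$ added inside $\omega$; the cell problem makes $B$ periodic, divergence-free in $y$, with continuous traction across $\partial\omega$, and of mean $\hat A$. The key algebraic step is the construction of a flux corrector: a skew-symmetric periodic $\phi_{kij}$ with $\partial_{y_k}\phi_{kij}=B-\hat A$, obtained by solving $\Delta_y f=B-\hat A$ on the torus. Since $B\in L^2(Y)$ only (it is discontinuous across $\partial\omega$), I only get $\phi\in H^1(Y)$, and I will bound $\phi$ in $L^\infty$ via the piecewise regularity (or simply use $L^2$ bounds combined with $S_\varepsilon$). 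With this, $R_\varepsilon$ becomes a sum of terms of the form $\varepsilon\,\phi(x/\varepsilon)\nabla S_\varepsilon\nabla\mathbf{u}_0$, $(1-\eta_\varepsilon)$-terms, and $(\mathcal{D}\mathbf{u}_0-S_\varepsilon\mathcal{D}\mathbf{u}_0)$-terms. The Neumann data $\mathbf{g}$ cancels exactly, since $\eta_\varepsilon=0$ near $\partial\Omega$ and $\mathbf{u}_0$ carries the same traction. Hence $\|R_\varepsilon\|_{(V^\varepsilon)^*}\le C\sqrt\varepsilon\|\mathbf{u}_0\|_{H^2}$.

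\textbf{Conclusion and main obstacle.} The $\varepsilon$-uniform inf--sup stability and a priori estimate from the Babu\v{s}ka--Brezzi analysis then give $\|\mathbf{w}_\varepsilon\|_{H^1}+\|q_\varepsilon\|_{L^2(D_\varepsilon)}\le C(\|R_\varepsilon\|_*+\|\rho_\varepsilon\|_{L^2})$. One detail needs care: $\mathbf{w}_\varepsilon$ must lie in $V^\varepsilon$, which I arrange by subtracting a rigid motion, harmless since its size is controlled by $\|\mathbf{w}_\varepsilon\|$ on $\partial\Omega$ via Korn. I expect the main obstacle to be the pressure part of the residual: the $b$-coupling makes the incompressibility defect $\rho_\varepsilon$ live only on $D_\varepsilon$, and its control requires the uniform inf--sup constant on the perforated family. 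I will rely on the earlier stability result, which rests on a cell-wise Bogovski\u{\i} operator scaled to each $\omega_\varepsilon^{\mathbf{n}}$. The cut-off must also vanish on entire inclusions rather than cut through them, so I will choose $\eta_\varepsilon$ constant on each $\omega_\varepsilon^{\mathbf{n}}$, which keeps $\operatorname{div}_y\chi=-\operatorname{tr}$ intact where $\eta_\varepsilon\neq0$.
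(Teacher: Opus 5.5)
Your proposal is correct and follows the paper's proof essentially step for step. You use the same cut-off and Steklov-smoothed pair $(w_\varepsilon,q_\varepsilon)$ with $\eta_\varepsilon$ vanishing on $K_\varepsilon$, the same residual decomposition through the flux corrector and its bounded skew-symmetric potential obtained from the periodic Poisson problem, the same cancellation $\operatorname{div}_y\chi^{ij}=-\delta_{ij}$ in the divergence defect, and the same closing step via $\varepsilon$-uniform Babu\v{s}ka--Brezzi stability. The one genuine difference is that the paper smooths with $S_\varepsilon^2$ where you use a single $S_\varepsilon$, which is harmless here since $u_0\in H^2$ and $\chi$ and the flux-corrector potential are bounded.

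Two of your precautions are unnecessary. First, no rigid motion has to be subtracted: since $\eta_\varepsilon=0$ near $\partial\Omega$, you already have $w_\varepsilon|_{\partial\Omega}=(u_\varepsilon-u_0)|_{\partial\Omega}\in H^{1/2}_{\mathcal R}(\partial\Omega)$, and bounding such a component ``via Korn'' would not close the estimate by itself. Second, $\eta_\varepsilon$ need not be constant on each inclusion, because $\operatorname{div}_y\chi^{ij}=-\delta_{ij}$ holds throughout $\omega$ and the cut-off only produces the boundary-layer term $\varepsilon\chi^{ij}(x/\varepsilon)\cdot\nabla\eta_\varepsilon\,S_\varepsilon((\mathcal{D}u_0)^{ij})$.
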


The proof of Theorem~\ref{thm:convergence-rate} is based on Shen's quantitative framework for periodic homogenization \cite{Shen2018}. The main ingredients are the introduction of a boundary cut-off function $\eta_\varepsilon$ and a Steklov smoothing operator $S_\varepsilon$ to construct a corrected approximation, the use of the zero-mean property of the flux corrector to handle rapidly oscillating terms, the control of the pressure error through the mixed variational structure, and the boundedness of the relevant corrector terms ensured by the regularity $\chi \in W^{1,\infty}$ and $r \in L^\infty(\omega)$ obtained in Theorem~\ref{thm:regularity}.

\begin{remark}
  The rate $O(\sqrt{\varepsilon})$ is typical for problems of this kind \cite{Shen2018,KenigLinShen2013}. The convergence result in Theorem~\ref{thm:convergence-rate} is consistent with existing high-contrast results in \cite{FuJing2024,FuJing2025}, and also agrees well with the corresponding numerical observations.
\end{remark}

\section{Well-Posedness}\label{sec:well-posedness}

The goal of this section is to prove existence and uniqueness of solutions to the Lam\'e-Stokes coupled system \eqref{eq:lame-stokes}. Since the system contains the incompressibility constraint $\operatorname{div}\mathbf{u}_\varepsilon = 0$ in $D_\varepsilon$, the classical Lax-Milgram theorem is no longer directly applicable. We therefore use the standard tool for constrained variational problems, namely the Babu\v{s}ka-Brezzi theory \cite{Brezzi1974,BoffiBrezziFortin2013}.

The structure of this section is as follows. We first introduce the mixed variational formulation (\S\ref{subsec:mixed-formulation}), then recall the general Babu\v{s}ka-Brezzi framework (\S\ref{subsec:bb-theory}), and finally verify its assumptions for the present problem and derive the well-posedness result (\S\ref{subsec:verification}).

\subsection{Mixed Variational Formulation}\label{subsec:mixed-formulation}

To transform the strong boundary value problem \eqref{eq:lame-stokes} into a variational problem, we first define the relevant function spaces.

\begin{definition}[Displacement Space and Pressure Space]
  Define the displacement space by
  \begin{equation}\label{eq:V-eps-def}
    V^\varepsilon := \left\{ \mathbf{v} \in H^1(\Omega; \mathbb{R}^d) : \mathbf{v}|_{\partial\Omega} \in H_{\mathcal{R}}^{1/2}(\partial\Omega) \right\},
  \end{equation}
  equipped with the norm $\|\cdot\|_{V^\varepsilon} := \|\cdot\|_{H^1(\Omega)}$. Define the pressure space by
  \begin{equation}
    M^\varepsilon := L^2(D_\varepsilon).
  \end{equation}
\end{definition}

\begin{remark}[Physical Meaning of the Space $V^\varepsilon$]
  The condition $\mathbf{v}|_{\partial\Omega} \in H_{\mathcal{R}}^{1/2}(\partial\Omega)$ in the space $V^\varepsilon$ requires the boundary trace of the displacement field to be orthogonal to rigid motions. This excludes pure rigid-body modes and guarantees uniqueness of the solution. From the mathematical viewpoint, this is the standard way to remove the kernel under pure Neumann boundary conditions. Since $H_{\mathcal{R}}^{1/2}(\partial\Omega)$ is the kernel of finitely many continuous linear functionals, $V^\varepsilon$ is a closed subspace of $H^1(\Omega;\mathbb{R}^d)$ and hence a Hilbert space under the $H^1$ norm.
\end{remark}

We next define the bilinear forms. For $\mathbf{u}, \boldsymbol{\varphi} \in V^\varepsilon$ and $\psi \in M^\varepsilon$, define
\begin{align}
  a(\mathbf{u}, \boldsymbol{\varphi}) &:= \int_{\Omega_\varepsilon} \left[ \lambda(\nabla \cdot \mathbf{u})\mathbb{I} + 2\mu \mathcal{D}(\mathbf{u}) \right] : \nabla\boldsymbol{\varphi} \, \mathrm{d}x + \int_{D_\varepsilon} 2\widetilde{\mu} \mathcal{D}(\mathbf{u}) : \nabla\boldsymbol{\varphi} \, \mathrm{d}x, \label{eq:bilinear-a} \\
  b(\mathbf{u}, \psi) &:= \int_{D_\varepsilon} (\nabla \cdot \mathbf{u}) \psi \, \mathrm{d}x. \label{eq:bilinear-b}
\end{align}

Using these definitions, we rewrite the strong boundary value problem \eqref{eq:lame-stokes} as the following equivalent mixed variational problem.

\begin{problem}[Mixed Variational Problem]\label{prob:mixed-variational}
  Find $(\mathbf{u}_\varepsilon, p_\varepsilon) \in V^\varepsilon \times M^\varepsilon$ such that
  \begin{equation}\label{eq:mixed-variational}
    \begin{cases}
      a(\mathbf{u}_\varepsilon, \boldsymbol{\varphi}) + b(\boldsymbol{\varphi}, p_\varepsilon) = \displaystyle\int_{\partial\Omega} \mathbf{g} \cdot \boldsymbol{\varphi} \, \mathrm{d}S & \forall \boldsymbol{\varphi} \in V^\varepsilon, \\[2mm]
      b(\mathbf{u}_\varepsilon, \psi) = 0 & \forall \psi \in M^\varepsilon.
    \end{cases}
  \end{equation}
\end{problem}

\begin{proposition}[Equivalence Between the Strong and Variational Formulations]\label{prop:equivalence}
  Assume that $(\mathbf{u}_\varepsilon, p_\varepsilon)$ is sufficiently smooth. Then $(\mathbf{u}_\varepsilon, p_\varepsilon)$ solves the strong problem \eqref{eq:lame-stokes} if and only if it solves the mixed variational problem \eqref{eq:mixed-variational}.
\end{proposition}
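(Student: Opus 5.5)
The proof is a standard integration-by-parts argument carried out separately in each phase, with the interface and boundary terms handled through the transmission conditions. There is one sign issue to settle first. The fluid stress \eqref{eq:stokes-stress} is $\sigma(\mathbf{u},p)=2\widetilde{\mu}\mathcal{D}(\mathbf{u})+p\mathbb{I}$, so integrating by parts in $D_\varepsilon$ gives $\int_{D_\varepsilon}\sigma(\mathbf{u},p):\nabla\boldsymbol{\varphi} = \int_{D_\varepsilon}2\widetilde{\mu}\mathcal{D}(\mathbf{u}):\nabla\boldsymbol{\varphi} + \int_{D_\varepsilon}p\,\nabla\cdot\boldsymbol{\varphi}$. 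This is exactly the $D_\varepsilon$ part of $a(\mathbf{u},\boldsymbol{\varphi})+b(\boldsymbol{\varphi},p)$, so the signs in \eqref{eq:mixed-variational} match the paper's convention for the pressure and no sign change is needed.

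\emph{Strong $\Rightarrow$ weak.} Take $\boldsymbol{\varphi}\in V^\varepsilon$, multiply $\nabla\cdot\sigma(\mathbf{u}_\varepsilon)=0$ in $\Omega_\varepsilon$ and $\nabla\cdot\sigma(\mathbf{u}_\varepsilon,p_\varepsilon)=0$ in $D_\varepsilon$ by $\boldsymbol{\varphi}$, and apply the divergence theorem in each region. Both regions have smooth boundaries by (A1)--(A2). The boundary of $\Omega_\varepsilon$ is $\partial\Omega\cup\partial D_\varepsilon$, and there the outward normal on $\partial D_\varepsilon$ is $-N$, where $N$ is the outward normal of $D_\varepsilon$. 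This produces
\[
a(\mathbf{u}_\varepsilon,\boldsymbol{\varphi})+b(\boldsymbol{\varphi},p_\varepsilon)=\int_{\partial\Omega}\sigma(\mathbf{u}_\varepsilon)n\cdot\boldsymbol{\varphi}+\int_{\partial D_\varepsilon}\Big(\sigma(\mathbf{u}_\varepsilon,p_\varepsilon)N|_- - \sigma(\mathbf{u}_\varepsilon)N|_+\Big)\cdot\boldsymbol{\varphi}.
\]
Here I use $\sigma(\mathbf{u}):\nabla\boldsymbol{\varphi}$ written in the form of \eqref{eq:bilinear-a}, which is legitimate because $\sigma$ is symmetric. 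The traction transmission condition removes the interface integral. The Neumann datum turns the first term into $\int_{\partial\Omega}\mathbf{g}\cdot\boldsymbol{\varphi}$. Since $\boldsymbol{\varphi}\in H^1(\Omega)$ has no jump across the interface, only one-sided traces appear, so there is no ambiguity in the boundary terms. The second equation of \eqref{eq:mixed-variational} is immediate from $\operatorname{div}\mathbf{u}_\varepsilon=0$ in $D_\varepsilon$. Finally, $\mathbf{u}_\varepsilon\in V^\varepsilon$ because the strong problem imposes both continuity of displacement and the trace condition on $\partial\Omega$.

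\emph{Weak $\Rightarrow$ strong.}
\begin{enumerate}
  \item The second equation holds for all $\psi\in L^2(D_\varepsilon)$. Choosing $\psi=\nabla\cdot\mathbf{u}_\varepsilon$ gives $\operatorname{div}\mathbf{u}_\varepsilon=0$ in $D_\varepsilon$.
  \item Membership $\mathbf{u}_\varepsilon\in H^1(\Omega)$ gives continuity of displacement across $\partial D_\varepsilon$ and the $H^{1/2}_{\mathcal R}$ condition on $\partial\Omega$.
  \item Take $\boldsymbol{\varphi}\in C_c^\infty(\Omega_\varepsilon)$ and then $\boldsymbol{\varphi}\in C_c^\infty(D_\varepsilon)$. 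These lie in $V^\varepsilon$ because their trace on $\partial\Omega$ vanishes. Reversing the integration by parts gives the Lamé equations in $\Omega_\varepsilon$ and the Stokes equations in $D_\varepsilon$.
  \item Take $\boldsymbol{\varphi}$ smooth, supported near the interface, and vanishing near $\partial\Omega$. The identity above then reduces to $\int_{\partial D_\varepsilon}(\text{traction jump})\cdot\boldsymbol{\varphi}=0$. Since such traces are dense in $L^2(\partial D_\varepsilon)$, the traction transmission condition follows.
\end{enumerate}

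The main obstacle is recovering the Neumann condition on $\partial\Omega$. Test functions in $V^\varepsilon$ have traces restricted to $H^{1/2}_{\mathcal R}(\partial\Omega)$, so the identity only yields $\int_{\partial\Omega}(\sigma(\mathbf{u}_\varepsilon)n-\mathbf{g})\cdot\boldsymbol{\varphi}=0$ for $\boldsymbol{\varphi}|_{\partial\Omega}\perp\mathcal{R}$. Hence $\sigma(\mathbf{u}_\varepsilon)n-\mathbf{g}$ equals the restriction to $\partial\Omega$ of some rigid motion $\mathbf{r}\in\mathcal{R}$.

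To show $\mathbf{r}=0$, I use the compatibility condition $\mathbf{g}\in H^{-1/2}_{\mathcal R}(\partial\Omega)$ together with the global equilibrium $\int_{\partial\Omega}\sigma(\mathbf{u}_\varepsilon)n\cdot\mathbf{r}'=0$ for every $\mathbf{r}'\in\mathcal{R}$. The global equilibrium is proved by testing the already-established strong equations with $\mathbf{r}'$, or equivalently by integrating by parts phase-wise and using the interface traction balance. Two facts make this work:
\begin{itemize}
  \item $\nabla\mathbf{r}'$ is antisymmetric, so it annihilates the symmetric stresses and the $p\mathbb{I}$ term;
  \item $\nabla\cdot\mathbf{r}'=0$.
\end{itemize}
It follows that $\int_{\partial\Omega}\mathbf{r}\cdot\mathbf{r}'=0$ for all $\mathbf{r}'\in\mathcal{R}$. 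Taking $\mathbf{r}'=\mathbf{r}$ gives $\int_{\partial\Omega}|\mathbf{r}|^2=0$, and since a nonzero rigid motion cannot vanish on the hypersurface $\partial\Omega$, we conclude $\mathbf{r}=0$. This recovers the Neumann condition and completes the equivalence.
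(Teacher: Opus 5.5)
Your proposal is correct and follows essentially the same route as the paper's proof in Appendix~\ref{sec:equivalence-proof}: phase-wise integration by parts, recovery of the interior equations and the interface traction balance from localized test functions, and elimination of the rigid-motion component of $\sigma(\mathbf{u}_\varepsilon)\mathbf{n}-\mathbf{g}$ by testing with rigid motions and using $\mathbf{g}\in H^{-1/2}_{\mathcal R}(\partial\Omega)$. Your last step (taking $\mathbf{r}'=\mathbf{r}$ and noting that a nonzero rigid motion cannot vanish on a hypersurface) makes explicit the fact the paper uses when it asserts $H^{-1/2}_{\mathcal R}(\partial\Omega)\cap\mathrm{span}\{\mathbf{r}_j|_{\partial\Omega}\}=\{0\}$.
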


The proof of this equivalence is standard. The necessity follows by integrating by parts in each subregion, while the sufficiency follows from the arbitrariness of the test functions, which allows one to recover the equations, boundary conditions, and transmission conditions in strong form. A detailed proof is given in Appendix~\ref{sec:equivalence-proof}.

\begin{remark}
  In the mixed variational problem \eqref{eq:mixed-variational}, the first equation is the weak form of momentum balance, where the pressure $p_\varepsilon$ appears as a Lagrange multiplier enforcing the incompressibility constraint; the second equation is equivalent, in the variational sense, to $\nabla \cdot \mathbf{u}_\varepsilon = 0$ in $D_\varepsilon$.
\end{remark}

\subsection{Babu\v{s}ka-Brezzi Theory}\label{subsec:bb-theory}

The well-posedness analysis of mixed variational problems relies on the Babu\v{s}ka-Brezzi theory (see \cite{Brezzi1974,BoffiBrezziFortin2013}), which is the fundamental tool for constrained variational problems. We first recall its general form.

\begin{theorem}[Babu\v{s}ka-Brezzi Theorem \cite{Brezzi1974,BoffiBrezziFortin2013}]\label{thm:babuska-brezzi}
  Let $H$ and $Q$ be real Hilbert spaces, and let $a: H \times H \to \mathbb{R}$ and $b: H \times Q \to \mathbb{R}$ be continuous bilinear forms. For given $F \in H'$ and $G \in Q'$, consider the mixed variational problem of finding $(\sigma, u) \in H \times Q$ such that
  \begin{equation}\label{eq:abstract-mixed}
    \begin{cases}
      a(\sigma, \tau) + b(\tau, u) = F(\tau) & \forall \tau \in H, \\
      b(\sigma, v) = G(v) & \forall v \in Q.
    \end{cases}
  \end{equation}
  Assume that the following two conditions hold:
  \begin{enumerate}
    \item \textbf{Coercivity on the kernel}: define
    \[
      \mathcal{K} := \{ \tau \in H : b(\tau, v) = 0,\ \forall v \in Q \},
    \]
    Then there exists a constant $\alpha > 0$ such that
    \begin{equation}\label{eq:coercivity-condition}
      a(\tau, \tau) \geq \alpha \|\tau\|_H^2, \quad \forall \tau \in \mathcal{K};
    \end{equation}
    \item \textbf{Inf-sup condition} (also called the LBB condition): there exists a constant $\beta > 0$ such that
    \begin{equation}\label{eq:inf-sup-condition}
      \inf_{v \in Q \setminus \{0\}} \sup_{\tau \in H \setminus \{0\}} \frac{b(\tau, v)}{\|\tau\|_H \|v\|_Q} \geq \beta;
    \end{equation}
  \end{enumerate}
  Then problem \eqref{eq:abstract-mixed} admits a unique solution $(\sigma, u) \in H \times Q$, and the following stability estimate holds:
  \begin{equation}\label{eq:stability-estimate}
    \|\sigma\|_H + \|u\|_Q \leq C \left( \|F\|_{H'} + \|G\|_{Q'} \right),
  \end{equation}
  where the constant $C > 0$ depends only on $\|a\|$, $\alpha$, and $\beta$.
\end{theorem}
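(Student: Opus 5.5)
The plan is the classical proof: reduce the saddle-point problem to a coercive problem on the kernel $\mathcal{K}$, then recover the multiplier $u$ from the inf-sup condition via the closed range theorem. Define $B: H \to Q'$ by $\langle B\tau, v\rangle := b(\tau,v)$, with adjoint $B^*: Q \to H'$. Since $\mathcal{K} = \ker B$ is closed, we have $H = \mathcal{K} \oplus \mathcal{K}^\perp$. The first step is to translate the inf-sup condition \eqref{eq:inf-sup-condition} into two operator statements. The direct reading is $\|B^*v\|_{H'} \ge \beta\|v\|_Q$, so $B^*$ is injective with closed range. By the closed range theorem, $\operatorname{Ran}B^* = (\ker B)^\circ = \mathcal{K}^\circ$, the annihilator of $\mathcal{K}$ in $H'$. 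Dually, $B$ restricted to $\mathcal{K}^\perp$ is an isomorphism onto $Q'$, with $\|\tau\|_H \le \beta^{-1}\|B\tau\|_{Q'}$ for all $\tau \in \mathcal{K}^\perp$.

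\textbf{Construction of $\sigma$.} First choose $\sigma_0 \in \mathcal{K}^\perp$ with $B\sigma_0 = G$. This gives $\|\sigma_0\|_H \le \beta^{-1}\|G\|_{Q'}$. Next seek $\sigma_1 \in \mathcal{K}$ such that
\[
a(\sigma_1,\tau) = F(\tau) - a(\sigma_0,\tau) \quad \forall \tau \in \mathcal{K}.
\]
By the coercivity \eqref{eq:coercivity-condition} and Lax--Milgram on the Hilbert space $\mathcal{K}$, this has a unique solution, with
\[
\|\sigma_1\|_H \le \alpha^{-1}\bigl(\|F\|_{H'} + \|a\|\,\|\sigma_0\|_H\bigr).
\]
Setting $\sigma := \sigma_0 + \sigma_1$, the second equation of \eqref{eq:abstract-mixed} holds.

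\textbf{Construction of $u$.} The functional $\ell := F - a(\sigma,\cdot) \in H'$ vanishes on $\mathcal{K}$, so $\ell \in \mathcal{K}^\circ = \operatorname{Ran}B^*$. Hence there is $u \in Q$ with $B^*u = \ell$, which is exactly the first equation of \eqref{eq:abstract-mixed}. The inf-sup bound then gives
\[
\|u\|_Q \le \beta^{-1}\|\ell\|_{H'} \le \beta^{-1}\bigl(\|F\|_{H'} + \|a\|\,\|\sigma\|_H\bigr).
\]
Chaining the three estimates produces \eqref{eq:stability-estimate}, with $C$ depending only on $\|a\|$, $\alpha$ and $\beta$.

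\textbf{Uniqueness.} Take $F = G = 0$. Then $\sigma \in \mathcal{K}$, and testing the first equation with $\tau = \sigma$ gives $a(\sigma,\sigma) = 0$ (because $b(\sigma,u) = 0$). Coercivity forces $\sigma = 0$. Then $B^*u = 0$, and injectivity of $B^*$ gives $u = 0$. Alternatively, uniqueness follows directly from the linear stability estimate.

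\textbf{Main obstacle.} The only nontrivial step is the functional-analytic identification $\operatorname{Ran}B^* = \mathcal{K}^\circ$ together with surjectivity of $B$ onto $Q'$. Here I would invoke Banach's closed range theorem. A closed range follows because the lower bound $\|B^*v\|_{H'} \ge \beta\|v\|_Q$ makes any Cauchy sequence in the range pull back to a Cauchy sequence in $Q$. Surjectivity of $B$ then follows from $\operatorname{Ran}B$ being closed with trivial annihilator $\ker B^* = \{0\}$. All remaining steps are routine bookkeeping.
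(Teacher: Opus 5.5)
Your proposal is correct. It is the classical proof from Brezzi (1974) and Boffi--Brezzi--Fortin: split $H=\mathcal{K}\oplus\mathcal{K}^\perp$, lift $G$ through the isomorphism $B|_{\mathcal{K}^\perp}:\mathcal{K}^\perp\to Q'$, solve on $\mathcal{K}$ by Lax--Milgram, and recover $u$ from $\operatorname{Ran}B^*=\mathcal{K}^\circ$. This is exactly what the paper relies on by citation; it does not reproduce a proof. The one step you assert without proof is the sharp lifting bound $\|\tau\|_H\le\beta^{-1}\|B\tau\|_{Q'}$ on $\mathcal{K}^\perp$. It follows in one line by writing $\tau=R_HB^*v$ with $R_H$ the Riesz map, so that $\|B\tau\|_{Q'}\ge b(\tau,v)/\|v\|_Q=\|\tau\|_H^2/\|v\|_Q\ge\beta\|\tau\|_H$. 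Use this rather than the open mapping theorem, so that $C$ genuinely depends only on $\|a\|$, $\alpha$, $\beta$.
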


\subsection{Verification of the Babu\v{s}ka-Brezzi Conditions}\label{subsec:verification}

To apply the Babu\v{s}ka-Brezzi theorem~\ref{thm:babuska-brezzi}, we need to verify coercivity of the bilinear form $a$ on the kernel space and the inf-sup condition for $b$. We do this in turn.

\textbf{Verification of the coercivity condition.}
Define the microscopic elasticity tensor $A = (A_{ij}^{\alpha\beta})$ by
\begin{equation}\label{eq:micro-elastic-tensor}
  A_{ij}^{\alpha\beta}(x) = \left( \lambda \delta_{i\alpha} \delta_{j\beta} + \mu (\delta_{ij} \delta_{\alpha\beta} + \delta_{i\beta} \delta_{j\alpha}) \right) \mathbf{1}_{x \in \Omega_\varepsilon} + \widetilde{\mu} (\delta_{ij} \delta_{\alpha\beta} + \delta_{i\beta} \delta_{j\alpha}) \mathbf{1}_{x \in D_\varepsilon}.
\end{equation}
A direct computation yields
\begin{equation}\label{eq:A-expand}
  A\nabla\mathbf{u} : \nabla\mathbf{u} = \left( \lambda (\nabla \cdot \mathbf{u})^2 + 2\mu |\mathcal{D}(\mathbf{u})|^2 \right) \mathbf{1}_{\Omega_\varepsilon} + 2\widetilde{\mu} \, |\mathcal{D}(\mathbf{u})|^2 \, \mathbf{1}_{D_\varepsilon}.
\end{equation}
By the Cauchy--Schwarz inequality, $(\nabla \cdot \mathbf{u})^2 = (\mathrm{tr} \, \mathcal{D}(\mathbf{u}))^2 \leq d \, |\mathcal{D}(\mathbf{u})|^2$. Writing $c_0 = \min\{d\lambda + 2\mu, \, 2\mu, \, 2\widetilde{\mu}\} > 0$, we obtain
\begin{equation}\label{eq:A-positivity}
  a(\mathbf{u}, \mathbf{u}) = \int_\Omega A\nabla\mathbf{u} : \nabla\mathbf{u} \, \mathrm{d}x \geq c_0 \|\mathcal{D}(\mathbf{u})\|_{L^2(\Omega)}^2.
\end{equation}
Since the functions in $V^\varepsilon$ have boundary traces orthogonal to rigid motions, the second Korn inequality (Appendix Theorem~\ref{thm:korn}) implies that there exists a constant $C_K > 0$, depending only on $\Omega$, such that $\|\mathcal{D}(\mathbf{u})\|_{L^2(\Omega)} \geq C_K \|\mathbf{u}\|_{H^1(\Omega)}$. Combining the above estimates and choosing $\alpha = c_0 C_K^2$, we obtain the coercivity condition
\begin{equation}\label{eq:coercivity-result}
  a(\mathbf{u}, \mathbf{u}) \geq \alpha \|\mathbf{u}\|_{H^1(\Omega)}^2, \quad \forall \mathbf{u} \in V^\varepsilon.
\end{equation}
Note that the constant $\alpha$ depends only on the material parameters and the geometry of $\Omega$, and is therefore independent of $\varepsilon$.

\textbf{Verification of the inf-sup condition.}
The verification of the inf-sup condition is the main technical point of this section. The key idea is that, for any given $\psi \in L^2(D_\varepsilon)$, one constructs $\mathbf{v} \in V^\varepsilon$ such that $\nabla \cdot \mathbf{v} = \psi$ in $D_\varepsilon$, while $\|\mathbf{v}\|_{H^1}$ is controlled by $\|\psi\|_{L^2}$.

\begin{lemma}[Inf-Sup Condition]\label{lem:inf-sup}
  There exists a constant $\beta > 0$, depending only on the geometry of $\Omega$, $\omega$, and $Y$, such that
  \begin{equation}\label{eq:inf-sup-verified}
    \inf_{\psi \in L^2(D_\varepsilon) \setminus \{0\}} \sup_{\mathbf{v} \in V^\varepsilon \setminus \{0\}} \frac{b(\mathbf{v}, \psi)}{\|\mathbf{v}\|_{H^1(\Omega)} \|\psi\|_{L^2(D_\varepsilon)}} \geq \beta.
  \end{equation}
  In particular, $\beta$ is independent of $\varepsilon$.
\end{lemma}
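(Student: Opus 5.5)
To verify the inf-sup condition, I would construct, for each $\psi \in L^2(D_\varepsilon)$, a field $\mathbf{v}\in V^\varepsilon$ with $\nabla\cdot\mathbf{v}=\psi$ in $D_\varepsilon$ and $\|\mathbf{v}\|_{H^1(\Omega)}\le C\|\psi\|_{L^2(D_\varepsilon)}$, with $C$ independent of $\varepsilon$. Taking this $\mathbf{v}$ in the supremum then gives $b(\mathbf{v},\psi)=\|\psi\|^2_{L^2(D_\varepsilon)}$, so $\beta=1/C$.

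The construction is cell-by-cell. The key point is that $\mathbf{v}$ is \emph{not} required to be divergence free in $\Omega_\varepsilon$, so $\psi$ need not have zero mean on each inclusion. I would first solve a reference problem on a fixed smooth domain. Fix an open set $\omega'$ with $\overline{\omega}\subset\omega'$, $\overline{\omega'}\subset Y$, and $\partial\omega'$ smooth; this is possible since $\mathrm{dist}(\omega,\partial Y)>0$. For $\phi\in L^2(\omega)$, extend $\phi$ to $\omega'$ by $\tilde\phi=\phi$ on $\omega$ and $\tilde\phi=-\frac{1}{|\omega'\setminus\omega|}\int_\omega\phi$ on $\omega'\setminus\overline\omega$. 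Then $\int_{\omega'}\tilde\phi=0$ and $\|\tilde\phi\|_{L^2(\omega')}\le C\|\phi\|_{L^2(\omega)}$. Bogovski\u{\i}'s operator on the Lipschitz (indeed smooth) domain $\omega'$ gives $\mathbf{w}\in H^1_0(\omega';\mathbb{R}^d)$ with $\nabla\cdot\mathbf{w}=\tilde\phi$ and $\|\mathbf{w}\|_{H^1(\omega')}\le C_{\omega,\omega'}\|\phi\|_{L^2(\omega)}$. Extending $\mathbf{w}$ by zero to $Y$ defines a bounded linear map $\phi\mapsto\mathbf{w}\in H^1_0(Y)$.

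Next I rescale. For $\mathbf{n}\in\Pi_\varepsilon$, set $\phi_\mathbf{n}(y)=\psi(\varepsilon(\mathbf{n}+y))$ on $\omega$, let $\mathbf{w}_\mathbf{n}$ be the corresponding field, and define $\mathbf{v}(x)=\varepsilon\,\mathbf{w}_\mathbf{n}(x/\varepsilon-\mathbf{n})$ on $Y_\varepsilon^\mathbf{n}$ and $\mathbf{v}=0$ on $K_\varepsilon$. Each piece vanishes on $\partial Y_\varepsilon^\mathbf{n}$, so $\mathbf{v}\in H^1_0(\Omega)$. In particular its trace on $\partial\Omega$ is zero, hence lies in $H^{1/2}_{\mathcal R}(\partial\Omega)$, and $\mathbf{v}\in V^\varepsilon$. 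By the chain rule, $\nabla\cdot\mathbf{v}(x)=\tilde\phi_\mathbf{n}(x/\varepsilon-\mathbf{n})$, which equals $\psi$ on $\omega^\mathbf{n}_\varepsilon$.

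For the scaling estimates: $\nabla\mathbf{v}(x)=(\nabla\mathbf{w}_\mathbf{n})(x/\varepsilon-\mathbf{n})$, so $\|\nabla\mathbf{v}\|^2_{L^2(Y^\mathbf{n}_\varepsilon)}=\varepsilon^d\|\nabla\mathbf{w}_\mathbf{n}\|^2_{L^2(Y)}\le C\varepsilon^d\|\phi_\mathbf{n}\|^2_{L^2(\omega)}=C\|\psi\|^2_{L^2(\omega^\mathbf{n}_\varepsilon)}$, with the same constant in every cell. The $L^2$ part carries an extra factor $\varepsilon^2\le1$; alternatively, Poincaré's inequality on $H^1_0(\Omega)$ controls it. Summing over the disjoint cells gives $\|\mathbf{v}\|_{H^1(\Omega)}\le C\|\psi\|_{L^2(D_\varepsilon)}$, where $C$ depends only on $\omega$, $Y$, $\Omega$ and $d$.

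The main obstacle is ensuring $\varepsilon$-uniformity of the Bogovski\u{\i} constant. Using the map on $\omega$ itself would force zero mean of $\psi$ on each inclusion, which fails in general. Building a global right inverse of the divergence on $\Omega$ would lose the local scaling. The enlarged reference cell $\omega'$ with the compensating constant in the elastic collar $\omega'\setminus\omega$ resolves this. Its only requirements are that $|\omega'\setminus\omega|>0$ and that $\omega'$ be a fixed Lipschitz domain, both guaranteed by (A2).
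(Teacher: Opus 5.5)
Your proposal is correct and follows essentially the same argument as the paper: extend $\psi$ on each inclusion by a compensating constant to a zero-mean function on a fixed enlarged reference set, apply the Bogovski\u{\i} operator there, and glue the zero-extended fields cell by cell. The enlarged set is your smooth $\omega'$ in place of the paper's cube $\tilde{Y}$ with $\omega \subset \tilde{Y} \subset Y$, and the gradient bounds are $\varepsilon$-invariant. The only differences are cosmetic: you rescale explicitly from the reference cell, whereas the paper invokes scale invariance of the Bogovski\u{\i} constant on $\varepsilon(k+\tilde{Y})$; also, you should take $\omega'$ connected (e.g.\ a smoothed neighborhood of $\overline{\omega}$) for the Bogovski\u{\i} operator to apply, which the paper's cube gives for free.
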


\begin{proof}
  Let $\psi \in L^2(D_\varepsilon)$ be given. By the definition of $D_\varepsilon$, we may decompose $\psi$ as $\psi = \sum_{k \in \Pi_\varepsilon} \psi_k$, where $\psi_k := \psi|_{\omega_\varepsilon^k}$ is supported in $\omega_\varepsilon^k$.

  Since $\overline{\omega} \subset Y^\circ$ by the geometric assumption, we have $\mathrm{dist}(\omega,\partial Y) > 0$. Hence there exists a cube $\tilde{Y}$ such that $\omega \subset \tilde{Y} \subset Y$ and $\mathrm{dist}(\tilde{Y},\partial Y) > 0$. Write $\tilde{Y}_k^\varepsilon = \varepsilon(k+\tilde{Y})$. Then the sets $\tilde{Y}_k^\varepsilon$ are pairwise disjoint and satisfy $\omega_\varepsilon^k \subset \tilde{Y}_k^\varepsilon$.

  In order to apply the surjectivity of the divergence operator, we extend each $\psi_k$ to a zero-mean function. Define
  \begin{equation}\label{eq:psi-extension}
    \hat{\psi}_k(x) := \begin{cases}
      \psi_k(x), & x \in \omega_\varepsilon^k, \\[2mm]
      -\dfrac{1}{|\tilde{Y}_k^\varepsilon \setminus \omega_\varepsilon^k|} \displaystyle\int_{\omega_\varepsilon^k} \psi_k \, \mathrm{d}x, & x \in \tilde{Y}_k^\varepsilon \setminus \omega_\varepsilon^k.
    \end{cases}
  \end{equation}
  A direct verification shows that $\int_{\tilde{Y}_k^\varepsilon} \hat{\psi}_k \, \mathrm{d}x = 0$. By H\"older's inequality,
  \begin{equation}\label{eq:psi-hat-bound}
    \|\hat{\psi}_k\|_{L^2(\tilde{Y}_k^\varepsilon)} \leq C_1 \|\psi_k\|_{L^2(\omega_\varepsilon^k)},
  \end{equation}
  where $C_1 = \sqrt{1 + |\omega|/|\tilde{Y} \setminus \omega|}$ depends only on the relevant geometric ratio and is independent of $\varepsilon$.

  By the surjectivity of the divergence operator (see Appendix Theorem~\ref{thm:bogovskii}), for each $k \in \Pi_\varepsilon$ there exists $\hat{\mathbf{d}}_k \in H_0^1(\tilde{Y}_k^\varepsilon; \mathbb{R}^d)$ such that $\nabla \cdot \hat{\mathbf{d}}_k = \hat{\psi}_k$ in $\tilde{Y}_k^\varepsilon$, and
  \begin{equation}\label{eq:bogovskii-estimate}
    \|\nabla \hat{\mathbf{d}}_k\|_{L^2(\tilde{Y}_k^\varepsilon)} \leq C_2 \|\hat{\psi}_k\|_{L^2(\tilde{Y}_k^\varepsilon)}.
  \end{equation}
  Since $\tilde{Y}_k^\varepsilon$ is a translation and $\varepsilon$-scaling of $\tilde{Y}$, the constant $C_2$ is invariant under scaling and therefore independent of $\varepsilon$.

  Extend each $\hat{\mathbf{d}}_k$ by zero to $\Omega$, denote the extension by $\tilde{\mathbf{d}}_k$, and let $\tilde{\mathbf{d}} = \sum_{k \in \Pi_\varepsilon} \tilde{\mathbf{d}}_k$. Since the sets $\tilde{Y}_k^\varepsilon$ are pairwise disjoint and $\hat{\mathbf{d}}_k \in H_0^1(\tilde{Y}_k^\varepsilon; \mathbb{R}^d)$, the zero extension preserves $H^1$ regularity, so $\tilde{\mathbf{d}} \in H^1(\Omega; \mathbb{R}^d)$. Moreover, because $\tilde{\mathbf{d}}|_{\partial\Omega} = 0$, it satisfies the boundary condition in the definition of $V^\varepsilon$, and hence $\tilde{\mathbf{d}} \in V^\varepsilon$.

  We now estimate the relevant quantities. Since the sets $\tilde{Y}_k^\varepsilon$ are pairwise disjoint,
  \begin{equation}\label{eq:d-tilde-bound}
    \|\nabla \tilde{\mathbf{d}}\|_{L^2(\Omega)}^2 = \sum_k \|\nabla \hat{\mathbf{d}}_k\|_{L^2(\tilde{Y}_k^\varepsilon)}^2 \leq C_2^2 \sum_k \|\hat{\psi}_k\|_{L^2(\tilde{Y}_k^\varepsilon)}^2 \leq C_1^2 C_2^2 \|\psi\|_{L^2(D_\varepsilon)}^2.
  \end{equation}
  On the other hand, since $\hat{\psi}_k|_{\omega_\varepsilon^k} = \psi_k$,
  \begin{equation}\label{eq:b-d-psi}
    b(\tilde{\mathbf{d}}, \psi) = \int_{D_\varepsilon} (\nabla \cdot \tilde{\mathbf{d}}) \psi \, \mathrm{d}x = \sum_k \int_{\omega_\varepsilon^k} \hat{\psi}_k \, \psi_k \, \mathrm{d}x = \|\psi\|_{L^2(D_\varepsilon)}^2.
  \end{equation}

  Combining \eqref{eq:d-tilde-bound} and \eqref{eq:b-d-psi}, and using the Poincar\'e inequality for $\tilde{\mathbf{d}} \in H_0^1(\Omega;\mathbb{R}^d)$, which makes $\|\tilde{\mathbf{d}}\|_{H^1(\Omega)}$ equivalent to $\|\nabla \tilde{\mathbf{d}}\|_{L^2(\Omega)}$, we obtain
  \begin{equation}
    \frac{b(\tilde{\mathbf{d}}, \psi)}{\|\tilde{\mathbf{d}}\|_{H^1(\Omega)} \|\psi\|_{L^2(D_\varepsilon)}} \geq \beta > 0,
  \end{equation}
  where $\beta$ depends only on the geometry of $\Omega$, $\omega$, and $Y$, and is independent of $\varepsilon$.
\end{proof}

Collecting the above analysis, we obtain the main well-posedness theorem.

\begin{theorem}[Well-Posedness]\label{thm:well-posedness}
  Assume that $\mathbf{g} \in H_{\mathcal{R}}^{-1/2}(\partial\Omega)$. Then Problem~\ref{prob:mixed-variational} admits a unique solution $(\mathbf{u}_\varepsilon, p_\varepsilon) \in V^\varepsilon \times L^2(D_\varepsilon)$, and the following a priori estimate holds:
  \begin{equation}\label{eq:a-priori-estimate}
    \|\mathbf{u}_\varepsilon\|_{H^1(\Omega)} + \|p_\varepsilon\|_{L^2(D_\varepsilon)} \leq C \|\mathbf{g}\|_{H^{-1/2}(\partial\Omega)},
  \end{equation}
  where $C > 0$ is independent of $\varepsilon$.
\end{theorem}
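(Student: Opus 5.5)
The plan is to apply the Babu\v{s}ka-Brezzi theorem (Theorem~\ref{thm:babuska-brezzi}) with $H = V^\varepsilon$ (with the $H^1(\Omega)$ norm), $Q = M^\varepsilon = L^2(D_\varepsilon)$, the bilinear forms $a$ and $b$ from \eqref{eq:bilinear-a}--\eqref{eq:bilinear-b}, $F(\boldsymbol{\varphi}) = \int_{\partial\Omega}\mathbf{g}\cdot\boldsymbol{\varphi}\,\mathrm{d}S$, and $G = 0$. The two structural hypotheses have already been verified in an $\varepsilon$-uniform way. Coercivity \eqref{eq:coercivity-result} holds on all of $V^\varepsilon$, hence in particular on the kernel $\mathcal{K}$, with $\alpha = c_0 C_K^2$. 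The inf-sup condition holds by Lemma~\ref{lem:inf-sup}, with $\beta$ depending only on $\Omega,\omega,Y$. What remains is to check continuity of $a$, $b$, and $F$ with $\varepsilon$-independent bounds, and then to read off \eqref{eq:a-priori-estimate} from \eqref{eq:stability-estimate}.

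\emph{Continuity of $a$.} Since the coefficients are piecewise constant with values among $\lambda,\mu,\widetilde{\mu}$, Cauchy--Schwarz gives
\[
|a(\mathbf{u},\boldsymbol{\varphi})| \le (d|\lambda| + 2\mu + 2\widetilde{\mu})\,\|\nabla\mathbf{u}\|_{L^2(\Omega)}\|\nabla\boldsymbol{\varphi}\|_{L^2(\Omega)}.
\]
This uses $|\nabla\cdot\mathbf{u}|\le\sqrt d|\nabla\mathbf{u}|$ and $|\mathcal{D}(\mathbf{u})|\le|\nabla\mathbf{u}|$, and the resulting constant is independent of how $\Omega_\varepsilon$ and $D_\varepsilon$ are arranged.

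\emph{Continuity of $b$.} Similarly,
\[
|b(\mathbf{u},\psi)| \le \sqrt d\,\|\mathbf{u}\|_{H^1(\Omega)}\|\psi\|_{L^2(D_\varepsilon)}.
\]

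\emph{Continuity of $F$.} The trace theorem on the fixed domain $\Omega$ gives $\|\boldsymbol{\varphi}\|_{H^{1/2}(\partial\Omega)}\le C_{\mathrm{tr}}\|\boldsymbol{\varphi}\|_{H^1(\Omega)}$. Hence $\|F\|_{(V^\varepsilon)'}\le C_{\mathrm{tr}}\|\mathbf{g}\|_{H^{-1/2}(\partial\Omega)}$, where $C_{\mathrm{tr}}$ depends only on $\Omega$ and not on $\varepsilon$.

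With these in hand, Theorem~\ref{thm:babuska-brezzi} gives existence and uniqueness of $(\mathbf{u}_\varepsilon,p_\varepsilon)$ and the bound
\[
\|\mathbf{u}_\varepsilon\|_{H^1}+\|p_\varepsilon\|_{L^2(D_\varepsilon)}\le C\|F\|\le C\,C_{\mathrm{tr}}\|\mathbf{g}\|_{H^{-1/2}(\partial\Omega)}.
\]
Here $C$ depends only on $\|a\|,\alpha,\beta$. All three are uniform in $\varepsilon$, so the estimate \eqref{eq:a-priori-estimate} holds with an $\varepsilon$-independent constant. To make the dependence explicit, one can also track it directly. Coercivity on $\mathcal{K}$ gives $\|\mathbf{u}_\varepsilon\|\le\alpha^{-1}\|F\|$. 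Then the inf-sup condition applied to the first equation gives $\beta\|p_\varepsilon\|\le\|F\|+\|a\|\,\|\mathbf{u}_\varepsilon\|$, because $\mathbf{u}_\varepsilon\in\mathcal{K}$ and testing with $\mathbf{u}_\varepsilon$ kills the $b$-term.

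One point needs care: the role of the compatibility condition $\mathbf{g}\in H^{-1/2}_{\mathcal R}(\partial\Omega)$. In the variational problem it is not needed for solvability, since the constraint is placed on the space $V^\varepsilon$. It is needed to show that the variational solution tested against all of $H^1(\Omega;\mathbb{R}^d)$, not only $V^\varepsilon$, recovers the traction condition. Every $\boldsymbol{\varphi}\in H^1$ splits as $\boldsymbol{\varphi} = \boldsymbol{\varphi}_0 + \mathbf{r}$ with $\boldsymbol{\varphi}_0\in V^\varepsilon$ and $\mathbf{r}\in\mathcal{R}$. For the rigid part, $a(\mathbf{u}_\varepsilon,\mathbf{r})=0$ since $\mathcal{D}(\mathbf{r})=0$, $b(\mathbf{r},p_\varepsilon)=0$ since $\operatorname{div}\mathbf{r}=0$, and $\int_{\partial\Omega}\mathbf{g}\cdot\mathbf{r}=0$ by compatibility. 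So the identity extends to all test functions, which links the result back to \eqref{eq:lame-stokes} via Proposition~\ref{prop:equivalence}.

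I expect no genuine obstacle, since the inf-sup lemma was the hard part. The only subtle step is to make sure every constant ($\|a\|$, $C_{\mathrm{tr}}$, $C_K$, $\beta$) depends on the fixed domain $\Omega$ and the cell geometry, and never on $D_\varepsilon$.
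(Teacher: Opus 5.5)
Your proposal is correct and follows the paper's proof: you apply the Babu\v{s}ka-Brezzi theorem on $V^\varepsilon\times L^2(D_\varepsilon)$, using the $\varepsilon$-uniform coercivity constant $\alpha=c_0C_K^2$ and the inf-sup constant from Lemma~\ref{lem:inf-sup}. You go beyond the paper by explicitly checking the $\varepsilon$-independent continuity bounds for $a$, $b$, and the boundary functional, which the paper leaves implicit in $\|a\|$; your remark is also accurate that the condition $\mathbf{g}\in H^{-1/2}_{\mathcal R}(\partial\Omega)$ is needed only to recover the traction condition, not for variational solvability.
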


\begin{proof}
  The coercivity condition and the inf-sup condition have already been verified, and the corresponding constants $\alpha$ and $\beta$ are independent of $\varepsilon$. Therefore, by the Babu\v{s}ka-Brezzi theorem~\ref{thm:babuska-brezzi}, the problem admits a unique solution. The constant $C$ in the stability estimate depends only on $\|a\|$, $\alpha$, and $\beta$, and is therefore independent of $\varepsilon$.
\end{proof}

\begin{remark}
  The fact that the constant $C$ in the a priori estimate of Theorem~\ref{thm:well-posedness} is independent of $\varepsilon$ guarantees that, as $\varepsilon \to 0$, the solution sequence $\{(\mathbf{u}_\varepsilon, p_\varepsilon)\}$ remains uniformly bounded. This makes it possible to extract weakly convergent subsequences and provides the basis for the homogenization analysis in Chapter 3.
\end{remark}


\chapter{Qualitative Homogenization Theory}

This chapter establishes the qualitative homogenization theory for the Lam\'e-Stokes coupled system. We first derive the equations at different orders by means of formal asymptotic expansion, then establish the well-posedness theory for the cell problems, analyze the resulting hierarchy, and finally give a rigorous proof by two-scale convergence.

\section{Formal Derivation and Properties of the Homogenized Coefficients}\label{sec:analysis}

\paragraph{Formal Asymptotic Expansion.}

The central idea of homogenization is to introduce a two-scale structure. We retain the notation of Chapter 2: $Y$ denotes the unit cell and $\omega \subset Y$ the fluid inclusion region. Define the periodic function space
\begin{equation}
  C^\infty_\#(Y) := \{ \varphi \in C^\infty(\mathbb{R}^d) : \varphi \text{ is } Y\text{-periodic} \},
\end{equation}
and write $L^2_\#(Y)$ and $H^1_\#(Y)$ for the closures of $C^\infty_\#(Y)$ under the corresponding norms. Similarly, let $C_\#(Y)$ denote the space of continuous periodic functions. For vector-valued functions, we naturally define $H^1_\#(Y; \mathbb{R}^d) := [H^1_\#(Y)]^d$. To remove the translational indeterminacy of periodic functions, we introduce the zero-mean space
\begin{equation}
  H^1_{\#,0}(Y) := \left\{ \varphi \in H^1_\#(Y) : \int_Y \varphi \, \mathrm{d}y = 0 \right\},
\end{equation}
or, equivalently, use the quotient space $H^1_\#(Y)/\mathbb{R}$.

For a two-variable function $\hat{\mathbf{u}}: \overline{\Omega} \times Y \to \mathbb{R}^d$, assume that $\hat{\mathbf{u}}(\cdot,y) \in C^1(\overline{\Omega})$ for each $y \in Y$, and $\hat{\mathbf{u}}(x,\cdot) \in C^1_\#(Y)$ for each $x \in \Omega$. Define the one-variable function
\begin{equation}\label{eq:two-scale-composition}
  \mathbf{u}_\varepsilon(x) := \hat{\mathbf{u}}\left(x, \frac{x}{\varepsilon}\right), \quad x \in \Omega.
\end{equation}
Taking the gradient of $\mathbf{u}_\varepsilon$ and applying the chain rule, we obtain
\begin{equation}\label{eq:gradient-chain-rule}
  \nabla \mathbf{u}_\varepsilon(x) = \left( \nabla_x \hat{\mathbf{u}} + \varepsilon^{-1} \nabla_y \hat{\mathbf{u}} \right)\bigg|_{y = x/\varepsilon},
\end{equation}
where $\nabla_x$ means differentiation with respect to $x$ while keeping $y$ fixed, and $\nabla_y$ means differentiation with respect to $y$ while keeping $x$ fixed. More generally, a differential operator $\partial$ acting on two-scale functions decomposes according to
\begin{equation}\label{eq:diff-operator-decomp}
  \partial \to \partial_x + \varepsilon^{-1} \partial_y.
\end{equation}

Assume that the displacement field and the pressure field admit the following formal two-scale asymptotic expansions:
\begin{align}
  \mathbf{u}_\varepsilon(x) &= \mathbf{u}_0\left(x, \frac{x}{\varepsilon}\right) + \varepsilon \mathbf{u}_1\left(x, \frac{x}{\varepsilon}\right) + \varepsilon^2 \mathbf{u}_2\left(x, \frac{x}{\varepsilon}\right) + O(\varepsilon^3), \label{eq:ansatz-u} \\
  p_\varepsilon(x) &= \varepsilon^{-1} p_{-1}\left(x, \frac{x}{\varepsilon}\right) + p_0\left(x, \frac{x}{\varepsilon}\right) + \varepsilon p_1\left(x, \frac{x}{\varepsilon}\right) + O(\varepsilon^2), \label{eq:ansatz-p}
\end{align}
where each coefficient $\mathbf{u}_k: \overline{\Omega} \times Y \to \mathbb{R}^d$ and $p_k: \overline{\Omega} \times Y \to \mathbb{R}$ is $Y$-periodic in $y$. The pressure expansion starts at order $\varepsilon^{-1}$, which is dictated by the structure of the incompressibility constraint.

By \eqref{eq:diff-operator-decomp}, the symmetric gradient and divergence operators decompose as
\begin{align}
  \mathcal{D}(\mathbf{u}_\varepsilon) &= \left( \mathcal{D}_x(\mathbf{u}) + \varepsilon^{-1} \mathcal{D}_y(\mathbf{u}) \right)\bigg|_{y = x/\varepsilon}, \label{eq:symmetric-gradient-decomp} \\
  \nabla \cdot \mathbf{u}_\varepsilon &= \left( \nabla_x \cdot \mathbf{u} + \varepsilon^{-1} \nabla_y \cdot \mathbf{u} \right)\bigg|_{y = x/\varepsilon}. \label{eq:divergence-decomp}
\end{align}

To simplify the notation, we introduce two-scale stress tensors. In the elastic region, define
\begin{equation}\label{eq:sigma-y-elastic}
  \sigma_y(\mathbf{u}) := \lambda (\nabla_y \cdot \mathbf{u}) \mathbb{I} + 2\mu \mathcal{D}_y(\mathbf{u}), \quad
  \sigma_x(\mathbf{u}) := \lambda (\nabla_x \cdot \mathbf{u}) \mathbb{I} + 2\mu \mathcal{D}_x(\mathbf{u}).
\end{equation}
In the fluid region, define
\begin{equation}\label{eq:sigma-y-fluid}
  \sigma_y(\mathbf{u}, p) := 2\widetilde{\mu} \mathcal{D}_y(\mathbf{u}) + p \mathbb{I}, \quad
  \sigma_x(\mathbf{u}, p) := 2\widetilde{\mu} \mathcal{D}_x(\mathbf{u}) + p \mathbb{I}.
\end{equation}

Substituting the asymptotic expansions \eqref{eq:ansatz-u}--\eqref{eq:ansatz-p} into the Lam\'e-Stokes system and collecting terms according to powers of $\varepsilon$, we obtain a hierarchy of cell problems.

\paragraph{The $O(\varepsilon^{-2})$ equation.}
Collecting the terms of order $\varepsilon^{-2}$, we obtain the homogeneous problem for $(\mathbf{u}_0,p_{-1})$:
\begin{equation}\label{eq:order-minus2}
\left\{
\begin{aligned}
  & \nabla_y \cdot \sigma_y(\mathbf{u}_0) = 0 && \text{in } Y \setminus \omega, \\
  & \nabla_y \cdot \sigma_y(\mathbf{u}_0, p_{-1}) = 0, \quad \nabla_y \cdot \mathbf{u}_0 = 0 && \text{in } \omega, \\
  & \left[ \sigma_y(\mathbf{u}_0, p_{-1}) - \sigma_y(\mathbf{u}_0) \right] \cdot N = 0 && \text{on } \partial\omega, \\
  & \mathbf{u}_0|_+ = \mathbf{u}_0|_- && \text{on } \partial\omega, \\
  & \mathbf{u}_0, p_{-1} \text{ are } Y\text{-periodic in } y.
\end{aligned}
\right.
\end{equation}
The unique solution of this problem is $\mathbf{u}_0 = \mathbf{u}_0(x)$, independent of $y$, and $p_{-1} = 0$; this will be proved rigorously in Section~\ref{sec:analysis} by the well-posedness theory.

\paragraph{The $O(\varepsilon^{-1})$ equation.}
Collecting the terms of order $\varepsilon^{-1}$ and using $\mathbf{u}_0 = \mathbf{u}_0(x)$, we obtain the problem for $(\mathbf{u}_1,p_0)$:
\begin{equation}\label{eq:order-minus1}
\left\{
\begin{aligned}
  & \nabla_y \cdot \sigma_y(\mathbf{u}_1) = -\nabla_y \cdot \sigma_x(\mathbf{u}_0) && \text{in } Y \setminus \omega, \\
  & \nabla_y \cdot \sigma_y(\mathbf{u}_1, p_0) = -\nabla_y \cdot (2\widetilde{\mu} \mathcal{D}_x(\mathbf{u}_0)), \quad \nabla_y \cdot \mathbf{u}_1 = -\nabla_x \cdot \mathbf{u}_0 && \text{in } \omega, \\
  & \left[ \sigma_y(\mathbf{u}_1, p_0) - \sigma_y(\mathbf{u}_1) \right] \cdot N = \left[ \sigma_x(\mathbf{u}_0) - (2\widetilde{\mu} \mathcal{D}_x(\mathbf{u}_0)) \right] \cdot N && \text{on } \partial\omega, \\
  & \mathbf{u}_1|_+ = \mathbf{u}_1|_- && \text{on } \partial\omega, \\
  & \mathbf{u}_1, p_0 \text{ are } Y\text{-periodic in } y.
\end{aligned}
\right.
\end{equation}
This is a nonhomogeneous cell problem driven by the macroscopic symmetric gradient $\mathcal{D}_x(\mathbf{u}_0)$. Its solution will be obtained in Section~\ref{sec:analysis} by separation of variables.

\paragraph{The $O(\varepsilon^{0})$ equation.}
Collecting the terms of order $\varepsilon^0$, we obtain the problem for $(\mathbf{u}_2,p_1)$:
\begin{equation}\label{eq:order-zero}
\left\{
\begin{aligned}
  & \nabla_y \cdot \sigma_y(\mathbf{u}_2) = -\nabla_y \cdot \sigma_x(\mathbf{u}_1) - \nabla_x \cdot \sigma_x(\mathbf{u}_0) - \nabla_x \cdot \sigma_y(\mathbf{u}_1) && \text{in } Y \setminus \omega, \\
  & \nabla_y \cdot \sigma_y(\mathbf{u}_2, p_1) = -\nabla_y \cdot (2\widetilde{\mu} \mathcal{D}_x(\mathbf{u}_1)) \\
  & \phantom{\nabla_y \cdot \sigma_y(\mathbf{u}_2, p_1) =} {}- \nabla_x \cdot \sigma_x(\mathbf{u}_0, p_0) - \nabla_x \cdot (2\widetilde{\mu} \mathcal{D}_y(\mathbf{u}_1)) && \text{in } \omega, \\
  & \nabla_y \cdot \mathbf{u}_2 + \nabla_x \cdot \mathbf{u}_1 = 0 && \text{in } \omega, \\
  & \left[ \sigma_y(\mathbf{u}_2, p_1) - \sigma_y(\mathbf{u}_2) \right] \cdot N = \left[ \sigma_x(\mathbf{u}_1) - (2\widetilde{\mu} \mathcal{D}_x(\mathbf{u}_1)) \right] \cdot N && \text{on } \partial\omega, \\
  & \mathbf{u}_2|_+ = \mathbf{u}_2|_- && \text{on } \partial\omega, \\
  & \mathbf{u}_2, p_1 \text{ are } Y\text{-periodic in } y.
\end{aligned}
\right.
\end{equation}
The key point of this equation lies in its solvability condition with respect to $(\mathbf{u}_2,p_1)$; a further analysis of this condition yields the macroscopic homogenized equation.

\paragraph{Well-Posedness of the Cell Problem.}

Before analyzing the hierarchy above, we first establish the well-posedness theory for a general cell problem. This result will be used to prove the uniqueness of the $O(\varepsilon^{-2})$ problem and the existence and uniqueness of the $O(\varepsilon^{-1})$ problem. The proof is analogous to that in Chapter 2 and is based on the Babu\v{s}ka-Brezzi theory \cite{Brezzi1974}; a detailed proof is given in Appendix~\ref{sec:cell-wellposedness-proof}.

Consider the following general Lam\'e-Stokes cell problem posed in the unit cell $Y$: given source terms $\mathbf{F}_1 \in L^2(Y\setminus\omega; \mathbb{R}^d)$, $\mathbf{F}_2 \in L^2(\omega; \mathbb{R}^d)$, $f \in L^2(\omega)$, and interface data $\mathbf{G} \in H^{-1/2}(\partial\omega; \mathbb{R}^d)$, find $(\mathbf{u}, p)$ such that
\begin{equation}\label{eq:cell-problem-general}
  \begin{cases}
    \nabla_y \cdot \sigma_y(\mathbf{u}) = \mathbf{F}_1 & \text{in } Y \setminus \omega, \\
    \nabla_y \cdot \sigma_y(\mathbf{u}, p) = \mathbf{F}_2, \quad \nabla_y \cdot \mathbf{u} = f & \text{in } \omega, \\
    \left[ \sigma_y(\mathbf{u}, p) - \sigma_y(\mathbf{u}) \right] \cdot N = \mathbf{G} & \text{on } \partial\omega, \\
    \mathbf{u}|_+ = \mathbf{u}|_- & \text{on } \partial\omega,
  \end{cases}
\end{equation}
where $\mathbf{u}$ and $p$ are $Y$-periodic in $y$, and $\int_Y \mathbf{u} \, \mathrm{d}y = 0$.

Define the function spaces
\begin{equation}\label{eq:cell-space-VM}
  V = H^1_{\#,0}(Y; \mathbb{R}^d), \qquad
  M = L^2(\omega).
\end{equation}

Define the bilinear forms $a: V \times V \to \mathbb{R}$ and $b: V \times M \to \mathbb{R}$ by
\begin{align}
  a(\mathbf{u}, \boldsymbol{\varphi}) &:= \int_{Y \setminus \omega} \left[ \lambda (\nabla_y \cdot \mathbf{u})(\nabla_y \cdot \boldsymbol{\varphi}) + 2\mu \mathcal{D}_y(\mathbf{u}) : \mathcal{D}_y(\boldsymbol{\varphi}) \right] \mathrm{d}y \notag \\
  &\quad + \int_{\omega} 2\widetilde{\mu} \mathcal{D}_y(\mathbf{u}) : \mathcal{D}_y(\boldsymbol{\varphi}) \, \mathrm{d}y, \label{eq:cell-bilinear-a} \\
  b(\mathbf{u}, \psi) &:= \int_{\omega} (\nabla_y \cdot \mathbf{u}) \psi \, \mathrm{d}y. \label{eq:cell-bilinear-b}
\end{align}

The variational formulation of \eqref{eq:cell-problem-general} is: find $(\mathbf{u}, p) \in V \times M$ such that
\begin{equation}\label{eq:cell-variational}
  \begin{cases}
    a(\mathbf{u}, \boldsymbol{\varphi}) + b(\boldsymbol{\varphi}, p) = -\langle \mathbf{F}, \boldsymbol{\varphi} \rangle + \displaystyle\int_{\partial\omega} \mathbf{G} \cdot \boldsymbol{\varphi} \, \mathrm{d}S & \forall \boldsymbol{\varphi} \in V, \\[2mm]
    b(\mathbf{u}, \psi) = \displaystyle\int_{\omega} f \psi \, \mathrm{d}y & \forall \psi \in M,
  \end{cases}
\end{equation}
where $\langle \mathbf{F}, \boldsymbol{\varphi} \rangle = \int_{Y \setminus \omega} \mathbf{F}_1 \cdot \boldsymbol{\varphi} \, \mathrm{d}y + \int_{\omega} \mathbf{F}_2 \cdot \boldsymbol{\varphi} \, \mathrm{d}y$.

The necessary compatibility condition for the variational problem is
\begin{equation}\label{eq:compatibility-condition}
  \int_{Y \setminus \omega} \mathbf{F}_1 \, \mathrm{d}y + \int_{\omega} \mathbf{F}_2 \, \mathrm{d}y - \int_{\partial\omega} \mathbf{G} \, \mathrm{d}S = 0,
\end{equation}
which follows by integrating the strong equations over $Y\setminus\omega$ and $\omega$, and then using the divergence theorem together with the interface conditions.

\begin{theorem}[Well-Posedness of the Cell Problem]\label{thm:cell-well-posedness}
  Assume that $\mathbf{F}_1 \in L^2(Y\setminus\omega; \mathbb{R}^d)$, $\mathbf{F}_2 \in L^2(\omega; \mathbb{R}^d)$, $f \in L^2(\omega)$, and $\mathbf{G} \in H^{-1/2}(\partial\omega; \mathbb{R}^d)$ satisfy the compatibility condition \eqref{eq:compatibility-condition}. Then the variational problem \eqref{eq:cell-variational} admits a unique solution $(\mathbf{u}, p) \in V \times M$, and the estimate
  \begin{equation}\label{eq:cell-estimate}
    \|\mathbf{u}\|_{H^1(Y)} + \|p\|_{L^2(\omega)} \leq C \left( \|\mathbf{F}\|_{V^*} + \|\mathbf{G}\|_{H^{-1/2}(\partial\omega)} + \|f\|_{L^2(\omega)} \right)
  \end{equation}
  holds.
\end{theorem}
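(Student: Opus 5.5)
The proof applies the Babu\v{s}ka-Brezzi theorem (Theorem~\ref{thm:babuska-brezzi}) with $H = V = H^1_{\#,0}(Y;\mathbb{R}^d)$, $Q = M = L^2(\omega)$, the bilinear forms \eqref{eq:cell-bilinear-a}--\eqref{eq:cell-bilinear-b}, and data
\[
  F(\boldsymbol{\varphi}) = -\langle \mathbf{F}, \boldsymbol{\varphi}\rangle + \int_{\partial\omega}\mathbf{G}\cdot\boldsymbol{\varphi}\,\mathrm{d}S,
  \qquad
  G(\psi) = \int_\omega f\psi\,\mathrm{d}y .
\]
It mirrors the proof of Theorem~\ref{thm:well-posedness}. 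Continuity is routine. $F$ is bounded on $V$ by Cauchy--Schwarz for the volume terms and by the trace theorem $H^1(Y)\to H^{1/2}(\partial\omega)$ for the interface term. $G$ is bounded by $\|f\|_{L^2(\omega)}$, and $a$, $b$ are bounded with constants depending on $\lambda,\mu,\widetilde{\mu},d$.

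The compatibility condition \eqref{eq:compatibility-condition} enters only through the constants. The space $V$ already excludes constant vectors by the zero-mean normalization. The strong-form test of $F$ against constants, which is where \eqref{eq:compatibility-condition} arises, then guarantees that the solution on $V$ solves the problem for \emph{all} periodic test functions $\boldsymbol{\varphi}\in H^1_\#(Y;\mathbb{R}^d)$. Indeed, any such $\boldsymbol{\varphi}$ splits as $\boldsymbol{\varphi} = (\boldsymbol{\varphi} - \bar{\boldsymbol{\varphi}}) + \bar{\boldsymbol{\varphi}}$, and both $a(\mathbf{u},\bar{\boldsymbol{\varphi}})$ and $b(\bar{\boldsymbol{\varphi}},p)$ vanish. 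I will state this explicitly so the variational problem is equivalent to the strong problem \eqref{eq:cell-problem-general}.

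\textbf{Coercivity.} As in \eqref{eq:A-positivity}, the pointwise bound $(\nabla_y\cdot\mathbf{u})^2\le d|\mathcal{D}_y\mathbf{u}|^2$ gives $a(\mathbf{u},\mathbf{u})\ge c_0\|\mathcal{D}_y\mathbf{u}\|_{L^2(Y)}^2$ with $c_0=\min\{d\lambda+2\mu,2\mu,2\widetilde{\mu}\}$ (if $\lambda\ge0$ one may simply drop the $\lambda$ term). I then need a periodic Korn inequality, $\|\mathbf{u}\|_{H^1(Y)}\le C\|\mathcal{D}_y\mathbf{u}\|_{L^2(Y)}$ for $\mathbf{u}\in H^1_{\#,0}(Y;\mathbb{R}^d)$. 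For this I would take the Fourier route. For a periodic $\mathbf{u}$ with Fourier coefficients $\hat{\mathbf{u}}(k)$,
\[
  \|\mathcal{D}\mathbf{u}\|_{L^2}^2 = \tfrac12\|\nabla\mathbf{u}\|_{L^2}^2 + \tfrac12\|\nabla\cdot\mathbf{u}\|_{L^2}^2 \ge \tfrac12\|\nabla\mathbf{u}\|_{L^2}^2,
\]
since $\int\partial_iu^j\,\partial_ju^i = \int(\nabla\cdot\mathbf{u})^2$ by periodic integration by parts. Poincar\'e--Wirtinger for zero-mean functions then gives the full $H^1$ norm. Thus periodic rigid motions reduce to constants, which $V$ excludes, and coercivity holds on all of $V$, hence on the kernel.

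\textbf{Inf-sup.} Given $\psi\in L^2(\omega)$, I reuse the construction from Lemma~\ref{lem:inf-sup} on a single cell. Choose a cube $\tilde{Y}$ with $\omega\subset\tilde{Y}\Subset Y$ and extend $\psi$ to $\hat\psi$ on $\tilde Y$ by the constant $-|\tilde Y\setminus\omega|^{-1}\int_\omega\psi$, so that $\hat\psi$ has zero mean and $\|\hat\psi\|_{L^2}\le C_1\|\psi\|_{L^2}$. The Bogovski\u{\i} operator (Appendix Theorem~\ref{thm:bogovskii}) gives $\hat{\mathbf d}\in H^1_0(\tilde Y)$ with $\nabla\cdot\hat{\mathbf d}=\hat\psi$ and $\|\nabla\hat{\mathbf d}\|\le C_2\|\hat\psi\|$. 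Extend it by zero and periodize. The one point to check is the zero mean required by $V$: set $\mathbf{v}=\hat{\mathbf d}-\bar{\hat{\mathbf d}}$. Subtracting a constant changes neither the divergence nor the gradient, and $\|\mathbf{v}\|_{H^1}\le C\|\nabla\hat{\mathbf d}\|$. Then $b(\mathbf{v},\psi)=\|\psi\|^2_{L^2(\omega)}$, which yields $\beta>0$.

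\textbf{Conclusion.} Theorem~\ref{thm:babuska-brezzi} now gives existence, uniqueness, and the stability bound \eqref{eq:stability-estimate}. Inserting the bounds $\|F\|_{V'}\le\|\mathbf{F}\|_{V^*}+C\|\mathbf{G}\|_{H^{-1/2}(\partial\omega)}$ and $\|G\|_{M'}\le\|f\|_{L^2(\omega)}$ yields \eqref{eq:cell-estimate}.

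The main obstacle is the periodic Korn inequality together with the handling of constants and compatibility. Everything else is a direct transcription of Chapter~2. If the Fourier argument is considered insufficient, the fallback is to apply the second Korn inequality (Appendix Theorem~\ref{thm:korn}) on $Y$ modulo rigid motions and use periodicity to show that the only periodic rigid motions are constants.
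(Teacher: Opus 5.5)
Your proof is correct and has the same skeleton as the paper's: Babu\v{s}ka--Brezzi on $V\times M$, coercivity from $(\operatorname{tr}M)^2\le d|M|^2$ plus a periodic Korn inequality, and inf-sup from a right inverse of the divergence applied to a zero-mean extension of $\psi$. The two ingredients are obtained differently, though.

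\emph{Korn inequality.} The paper (Theorem~\ref{thm:periodic-korn}) uses the compactness-based second Korn inequality of Theorem~\ref{thm:korn} on $Y$. It then checks that a periodic, mean-zero rigid motion $\mathbf{a}+By$ must vanish, which is exactly your fallback. Your identity $\|\mathcal{D}_y\mathbf{u}\|_{L^2(Y)}^2=\tfrac12\|\nabla_y\mathbf{u}\|_{L^2(Y)}^2+\tfrac12\|\nabla_y\cdot\mathbf{u}\|_{L^2(Y)}^2$, justified by density of smooth periodic fields, followed by Poincar\'e--Wirtinger, is shorter. It avoids the contradiction argument and gives an explicit constant.

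\emph{Inf-sup.} The paper builds a periodic Bogovski\u{\i} operator $\mathcal{B}_{\mathrm{per}}f=-\nabla_y\phi$ from the periodic Poisson problem, with $H^2$ regularity via Fourier series. It extends $\psi$ by a constant over all of $Y\setminus\omega$, so the resulting field is automatically periodic with zero mean. This operator is set up once and reused fiberwise in Lemma~\ref{lem:two-scale-inf-sup}. You instead transplant the Chapter~2 construction: classical Bogovski\u{\i} operator on a cube, zero extension, periodization, then mean subtraction. This needs only the cited classical operator. The interior cube is not even needed here, since an $H_0^1(Y)$ field already periodizes to an $H^1_{\mathrm{loc}}$ field.

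\emph{Compatibility condition.} The paper's proof never uses hypothesis \eqref{eq:compatibility-condition}. Your remark explains its role: it is exactly what makes the $V$-solution satisfy the variational identity against constant test fields, hence against all of $H^1_\#(Y;\mathbb{R}^d)$.
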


The proof follows the same strategy as that of Theorem~\ref{thm:well-posedness} in Chapter 2: one first verifies coercivity of the bilinear form $a(\cdot,\cdot)$ on $V$ by using the positivity of the microscopic elasticity tensor \eqref{eq:A-positivity} together with the periodic Korn inequality (see Appendix~\ref{sec:korn-inequality}), and then verifies the inf-sup condition by using the Bogovskii operator. The complete proof is given in Appendix~\ref{sec:cell-wellposedness-proof}.

\begin{corollary}[Uniqueness of the Homogeneous Problem]\label{cor:homogeneous-uniqueness}
  If $\mathbf{F}_1 = \mathbf{F}_2 = \mathbf{G} = 0$ and $f = 0$, then the unique solution of the variational problem \eqref{eq:cell-variational} is $\mathbf{u} = 0$, $p = 0$.
\end{corollary}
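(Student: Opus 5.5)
This follows directly from Theorem~\ref{thm:cell-well-posedness}. The plan is to check that the zero data are admissible, note that $(\mathbf{0},0)$ solves the problem, and invoke uniqueness. As a safeguard, I would also give a short direct energy argument that does not rely on the full Babu\v{s}ka-Brezzi machinery.

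First, with $\mathbf{F}_1=\mathbf{F}_2=\mathbf{G}=0$ and $f=0$, the compatibility condition \eqref{eq:compatibility-condition} holds trivially, since every term vanishes. So Theorem~\ref{thm:cell-well-posedness} applies and the variational problem \eqref{eq:cell-variational} has exactly one solution in $V\times M$. The pair $(\mathbf{u},p)=(\mathbf{0},0)$ lies in $V\times M$ and makes both equations of \eqref{eq:cell-variational} read $0=0$, because $a$ and $b$ are bilinear. By uniqueness, it is the solution. Equivalently, the stability estimate \eqref{eq:cell-estimate} gives $\|\mathbf{u}\|_{H^1(Y)}+\|p\|_{L^2(\omega)}\le 0$ directly.

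For the direct argument, take any solution $(\mathbf{u},p)$.
\begin{enumerate}
\item Testing the second equation with $\psi=p$ gives $b(\mathbf{u},p)=0$; more generally $b(\mathbf{u},\psi)=0$ for all $\psi$, so $\nabla_y\cdot\mathbf{u}=0$ in $\omega$.
\item Testing the first equation with $\boldsymbol{\varphi}=\mathbf{u}$ gives $a(\mathbf{u},\mathbf{u})=-b(\mathbf{u},p)=0$.
\item The pointwise positivity \eqref{eq:A-positivity} then gives $\mathcal{D}_y(\mathbf{u})=0$ in $Y$.
\item The periodic Korn inequality on $H^1_{\#,0}(Y;\mathbb{R}^d)$ yields $\mathbf{u}=0$. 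Concretely, $\mathbf{u}$ is a periodic rigid motion, hence a constant, and the zero mean forces it to vanish.
\item The first equation now reduces to $b(\boldsymbol{\varphi},p)=0$ for all $\boldsymbol{\varphi}\in V$. The inf-sup condition from the appendix proof, built with the Bogovski\u{\i} operator exactly as in Lemma~\ref{lem:inf-sup}, then gives $\beta\|p\|_{L^2(\omega)}\le 0$, so $p=0$.
\end{enumerate}

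The only point needing care is the last step. The inf-sup condition must hold on all of $L^2(\omega)$, not just on zero-mean pressures. This works because the cell problem allows $\mathbf{u}$ to be nonzero on $Y\setminus\omega$: a pressure with nonzero mean on $\omega$ can be compensated in $Y\setminus\omega$, as in the extension \eqref{eq:psi-extension}. So no pressure constant is left undetermined, and $p=0$ holds exactly rather than modulo constants.
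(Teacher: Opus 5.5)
Your proposal is correct and matches the paper, which states this corollary without separate proof as an immediate consequence of Theorem~\ref{thm:cell-well-posedness}: zero data trivially satisfy the compatibility condition, $(\mathbf{0},0)$ is a solution, and uniqueness (equivalently, the stability bound \eqref{eq:cell-estimate}) forces it to be the only one. Your supplementary direct argument is also sound. It simply unpacks the Babu\v{s}ka--Brezzi uniqueness step: coercivity on the kernel via the periodic Korn inequality gives $\mathbf{u}=0$, and the inf-sup condition on all of $L^2(\omega)$ then gives $p=0$.
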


\paragraph{Analysis of the Order-by-Order Equations.}

Using the well-posedness theory established in the previous section, we now analyze the equations at the various orders.

We begin with the $O(\varepsilon^{-2})$ equation \eqref{eq:order-minus2}. This is a homogeneous cell problem. Decompose $\mathbf{u}_0$ into its $y$-average part and its zero-mean part:
\[
  \mathbf{u}_0(x, y) = \langle \mathbf{u}_0 \rangle_Y(x) + \widetilde{\mathbf{u}}_0(x, y),
\]
where $\widetilde{\mathbf{u}}_0 \in V$. By Corollary~\ref{cor:homogeneous-uniqueness}, we have $\widetilde{\mathbf{u}}_0 = 0$ and $p_{-1} = 0$. Hence $\mathbf{u}_0 = \mathbf{u}_0(x)$ is independent of $y$.

We next analyze the $O(\varepsilon^{-1})$ equation. Substituting $\mathbf{u}_0 = \mathbf{u}_0(x)$ and $p_{-1} = 0$ into \eqref{eq:order-minus1}, the system reduces to
\begin{equation}\label{eq:order-minus1-simplified}
  \begin{cases}
    \nabla_y \cdot \sigma_y(\mathbf{u}_1) = 0 & \text{in } Y \setminus \omega, \\
    \nabla_y \cdot \sigma_y(\mathbf{u}_1, p_0) = 0, \quad \nabla_y \cdot \mathbf{u}_1 = -\nabla_x \cdot \mathbf{u}_0 & \text{in } \omega, \\
    \mathbf{u}_1|_+ = \mathbf{u}_1|_- & \text{on } \partial\omega, \\
    \left[ \sigma_y(\mathbf{u}_1, p_0) - \sigma_y(\mathbf{u}_1) \right] \cdot N = \mathbf{G}^{ij} (\mathcal{D}_x \mathbf{u}_0)^{ij} & \text{on } \partial\omega,
  \end{cases}
\end{equation}
where $N$ is the outward unit normal on $\partial\omega$ pointing toward $Y \setminus \omega$, and the jump in the interface traction is driven by the macroscopic strain:
\begin{equation}\label{eq:interface-jump}
  \mathbf{G}^{ij} = (\mu - \widetilde{\mu}) (E^{ij} + E^{ji}) N + \lambda \delta_{ij} N,
\end{equation}
with $E^{ij} = \frac{1}{2}(\mathbf{e}_i \otimes \mathbf{e}_j + \mathbf{e}_j \otimes \mathbf{e}_i)$ denoting the unit strain tensor.

\paragraph{Cell correctors.}
Equation \eqref{eq:order-minus1-simplified} is linear with respect to the symmetric gradient $\mathcal{D}_x(\mathbf{u}_0)$. Accordingly, we look for $(\mathbf{u}_1,p_0)$ in the form
\begin{equation}\label{eq:separation}
  \mathbf{u}_1(x, y) = (\mathcal{D}_x \mathbf{u}_0)^{ij} \chi^{ij}(y), \quad
  p_0(x, y) = (\mathcal{D}_x \mathbf{u}_0)^{ij} r^{ij}(y),
\end{equation}
where $(\chi^{ij}, r^{ij}) \in H^1_{\#,0}(Y; \mathbb{R}^d) \times L^2(\omega)$ are correctors defined on the unit cell $Y$.

Substituting \eqref{eq:separation} into \eqref{eq:order-minus1-simplified}, we find that the correctors $(\chi^{ij}, r^{ij})$ satisfy the cell problem
\begin{equation}\label{eq:cell-problem}
  \begin{cases}
    \nabla_y \cdot \sigma_y(\chi^{ij}) = 0 & \text{in } Y \setminus \omega, \\
    \nabla_y \cdot \sigma_y(\chi^{ij}, r^{ij}) = 0, \quad \nabla_y \cdot \chi^{ij} = -\delta_{ij} & \text{in } \omega, \\
    \left[ \sigma_y(\chi^{ij}, r^{ij}) - \sigma_y(\chi^{ij}) \right] \cdot N = \mathbf{G}^{ij} & \text{on } \partial\omega, \\
    \chi^{ij}|_+ = \chi^{ij}|_- & \text{on } \partial\omega,
  \end{cases}
\end{equation}
where $\chi^{ij}$ and $r^{ij}$ are $Y$-periodic in $y$ and satisfy $\int_Y \chi^{ij} \, \mathrm{d}y = 0$.

The cell problem \eqref{eq:cell-problem} is a special case of the general cell problem \eqref{eq:cell-problem-general}, obtained by taking $\mathbf{F}_1 = \mathbf{F}_2 = 0$, $f = -\delta_{ij}$, and $\mathbf{G} = \mathbf{G}^{ij}$. The compatibility condition \eqref{eq:compatibility-condition} follows directly from the definition \eqref{eq:interface-jump} of $\mathbf{G}^{ij}$ and the divergence theorem. Hence, by Theorem~\ref{thm:cell-well-posedness}, the problem admits a unique solution $(\chi^{ij}, r^{ij}) \in V \times M$, and the estimate
\[
  \|\chi^{ij}\|_{H^1(Y)} + \|r^{ij}\|_{L^2(\omega)} \leq C
\]
holds, where $C$ depends only on $\lambda$, $\mu$, $\widetilde{\mu}$, and the geometry of $\omega$.

\begin{remark}[Symmetry of the Correctors]
  By the structure of the cell problem, the correctors satisfy the symmetry relations
  \begin{equation}\label{eq:corrector-symmetry}
    \chi^{ij} = \chi^{ji}, \quad r^{ij} = r^{ji}.
  \end{equation}
\end{remark}

\paragraph{The homogenized equation.}
We next consider the $O(\varepsilon^{0})$ equation \eqref{eq:order-zero}. For solvability with respect to $(\mathbf{u}_2,p_1)$, the compatibility condition \eqref{eq:compatibility-condition} must hold. To derive it, we integrate the equations over $Y \setminus \omega$ and $\omega$, respectively, and then add the resulting identities; see \cite{BensoussanLionsPapanicolaou1978,CioranescuDonato1999}.

Applying the divergence theorem to every term of the form $\nabla_y \cdot (\cdot)$, the contributions on opposite faces of $\partial Y$ cancel out because of periodicity, and the interface terms cancel as well by continuity of the traction across $\partial\omega$. The remaining terms are
\begin{align}\label{eq:solvability}
  &|Y \setminus \omega| \nabla_x \cdot \sigma_x(\mathbf{u}_0) + \int_{Y \setminus \omega} \nabla_x \cdot \sigma_y(\mathbf{u}_1) \, \mathrm{d}y \notag \\
  &\quad + |\omega| \nabla_x \cdot (2\widetilde{\mu} \mathcal{D}_x(\mathbf{u}_0)) + \int_{\omega} \nabla_x \cdot \sigma_y(\mathbf{u}_1, p_0) \, \mathrm{d}y = 0.
\end{align}

Substituting the representation \eqref{eq:separation}
\[
  \mathbf{u}_1 = (\mathcal{D}_x \mathbf{u}_0)^{ij} \chi^{ij}, \quad p_0 = (\mathcal{D}_x \mathbf{u}_0)^{ij} r^{ij}
\]
into \eqref{eq:solvability}, we note that $\nabla_x$ acts only on $(\mathcal{D}_x \mathbf{u}_0)^{ij}$ and not on $\chi^{ij}(y)$. After rearranging the terms, we obtain the macroscopic homogenized equation
\begin{equation}\label{eq:homogenized-equation}
  \frac{\partial}{\partial x_i} \left( \hat{a}_{ij}^{\alpha\beta} (\mathcal{D}_x \mathbf{u}_0)_{\alpha\beta} \right) = 0 \quad \text{in } \Omega,
\end{equation}
that is,
\begin{equation}
  \nabla_x \cdot \left( \hat{A} : \mathcal{D}_x \mathbf{u}_0 \right) = 0 \quad \text{in } \Omega,
\end{equation}
where $\hat{A} = (\hat{a}_{ij}^{\alpha\beta})$ is the fourth-order effective elasticity tensor. To write $\hat{A}$ more explicitly, we introduce the following notation.

Define the linear displacement field
\begin{equation}\label{eq:p-ij-def}
  p^{ij}(y) = \frac{1}{2}(y_j \mathbf{e}_i + y_i \mathbf{e}_j),
\end{equation}
whose symmetric gradient is
\[
  \mathcal{D}_y(p^{ij}) = \frac{1}{2}(\mathbf{e}_i \otimes \mathbf{e}_j + \mathbf{e}_j \otimes \mathbf{e}_i) = E^{ij}.
\]

\begin{definition}[Effective Elasticity Tensor]
  The effective elasticity tensor $\hat{A} = (\hat{a}_{ij}^{\alpha\beta})$ is defined by
  \begin{equation}\label{eq:effective-tensor}
    \hat{a}_{ij}^{\alpha\beta} = a(p^{i\alpha} + \chi^{i\alpha}, p^{j\beta} + \chi^{j\beta}),
  \end{equation}
  where $p^{ij}$ is defined by \eqref{eq:p-ij-def}, $\chi^{ij}$ is the cell corrector, and $a(\cdot,\cdot)$ is the bilinear form defined in \eqref{eq:cell-bilinear-a}.
\end{definition}

Expanding \eqref{eq:effective-tensor}, we obtain the explicit formula
\begin{align}\label{eq:effective-tensor-explicit}
  \hat{a}_{ij}^{\alpha\beta} &= |Y \setminus \omega| \left( \lambda \delta_{i\alpha} \delta_{j\beta} + \mu (\delta_{i\beta} \delta_{j\alpha} + \delta_{ij} \delta_{\alpha\beta}) \right) \notag \\
  &\quad + \int_{Y \setminus \omega} \left[ \lambda \nabla_y \cdot \chi^{j\beta} \delta_{i\alpha} + \mu \left( \frac{\partial \chi_\alpha^{j\beta}}{\partial y_i} + \frac{\partial \chi_i^{j\beta}}{\partial y_\alpha} \right) \right] \mathrm{d}y \notag \\
  &\quad + |\omega| \widetilde{\mu} (\delta_{ij} \delta_{\alpha\beta} + \delta_{i\beta} \delta_{j\alpha}) \notag \\
  &\quad + \int_{\omega} \left[ \widetilde{\mu} \left( \frac{\partial \chi_\alpha^{j\beta}}{\partial y_i} + \frac{\partial \chi_i^{j\beta}}{\partial y_\alpha} \right) + r^{j\beta} \delta_{i\alpha} \right] \mathrm{d}y.
\end{align}

We next verify the basic properties of the effective elasticity tensor $\hat{A}$.

\begin{proposition}[Symmetry]\label{prop:symmetry}
  The effective elasticity tensor satisfies
  \[
    \hat{a}_{ij}^{\alpha\beta} = \hat{a}_{\alpha j}^{i\beta} = \hat{a}_{\beta\alpha}^{ji}.
  \]
\end{proposition}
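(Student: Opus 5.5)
The plan is to work directly from the definition \eqref{eq:effective-tensor}, $\hat{a}_{ij}^{\alpha\beta} = a(p^{i\alpha} + \chi^{i\alpha}, p^{j\beta} + \chi^{j\beta})$, and to use only two structural facts: the bilinear form $a(\cdot,\cdot)$ of \eqref{eq:cell-bilinear-a} is symmetric, and the pair $(p^{ij}+\chi^{ij})$ is symmetric in $(i,j)$. First I note that $a$ involves $\mathbf{u}$ and $\boldsymbol{\varphi}$ only through $\nabla_y\cdot$ and $\mathcal{D}_y$. It therefore makes sense for the non-periodic affine fields $p^{ij}$: their symmetric gradients are the constants $E^{ij}$. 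The integrand $\lambda(\nabla_y\cdot\mathbf{u})(\nabla_y\cdot\boldsymbol{\varphi}) + 2\mu\mathcal{D}_y(\mathbf{u}):\mathcal{D}_y(\boldsymbol{\varphi})$, and its fluid analogue, is manifestly symmetric under $\mathbf{u}\leftrightarrow\boldsymbol{\varphi}$. Hence $a(\mathbf{w},\mathbf{z}) = a(\mathbf{z},\mathbf{w})$ for all $\mathbf{w},\mathbf{z}$ in $\mathrm{span}\{p^{ij}\} + V$.

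Second, I justify the corrector symmetry \eqref{eq:corrector-symmetry} rather than merely quoting the remark. The data of the cell problem \eqref{eq:cell-problem} for the index pair $(i,j)$ are $f=-\delta_{ij}$ and $\mathbf{G}^{ij} = (\mu-\widetilde{\mu})(E^{ij}+E^{ji})N + \lambda\delta_{ij}N$. Both are invariant under $i\leftrightarrow j$, because $E^{ij}=E^{ji}$. So $(\chi^{ij},r^{ij})$ and $(\chi^{ji},r^{ji})$ solve the same variational problem \eqref{eq:cell-variational}. The uniqueness part of Theorem~\ref{thm:cell-well-posedness} then gives $\chi^{ij}=\chi^{ji}$ and $r^{ij}=r^{ji}$. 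Trivially $p^{ij}=p^{ji}$ from \eqref{eq:p-ij-def}. Hence $P^{ij}:=p^{ij}+\chi^{ij}$ satisfies $P^{ij}=P^{ji}$.

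With these facts both identities follow. For the first,
\[
  \hat{a}_{ij}^{\alpha\beta} = a(P^{i\alpha},P^{j\beta}) = a(P^{\alpha i},P^{j\beta}) = \hat{a}_{\alpha j}^{i\beta}.
\]
For the second,
\[
  \hat{a}_{\beta\alpha}^{ji} = a(P^{\beta j},P^{\alpha i}) = a(P^{j\beta},P^{i\alpha}) = a(P^{i\alpha},P^{j\beta}) = \hat{a}_{ij}^{\alpha\beta}.
\]
Here the middle step uses the symmetry of $P$, and the next step uses the symmetry of $a$.

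The only point I expect to need care is consistency with the explicit formula \eqref{eq:effective-tensor-explicit}. That formula contains the pressure $r^{j\beta}$, whereas the definition \eqref{eq:effective-tensor} does not, so symmetry is not visible in the explicit form. I will not rely on \eqref{eq:effective-tensor-explicit}. If a reader wants the link, one tests the corrector equation \eqref{eq:cell-variational} for $(\chi^{j\beta},r^{j\beta})$ against $\boldsymbol{\varphi}=\chi^{i\alpha}$. Using $\nabla_y\cdot\chi^{i\alpha}=-\delta_{i\alpha}$ in $\omega$ converts the $b$-term into $-\delta_{i\alpha}\int_\omega r^{j\beta}$. The interface term combines with the $p$-contributions via the divergence theorem, which recovers the $r$-term in the explicit expansion. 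The symmetry proof itself rests entirely on the energy form of the definition.
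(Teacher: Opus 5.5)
Your proof is correct and follows essentially the same route as the paper. Both arguments rest on the energy representation $\hat a_{ij}^{\alpha\beta}=a(p^{i\alpha}+\chi^{i\alpha},p^{j\beta}+\chi^{j\beta})$; the paper spends most of its proof reconciling this with the pressure-containing explicit formula, using the identity $a(\chi^{i\alpha},p^{j\beta}+\chi^{j\beta})=\delta_{i\alpha}\int_\omega r^{j\beta}\,\mathrm{d}y$ that you sketch, and then calls the symmetry immediate, whereas you spell out that last step (symmetry of $a$ together with $\chi^{ij}=\chi^{ji}$ from uniqueness of the cell problem with $(i,j)$-symmetric data), which the paper only asserts in its remark on corrector symmetry.
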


\begin{proof}
  Define the bilinear form
  \begin{equation}
    a(\varphi, \psi) = \int_{Y \setminus \omega} \left[ \lambda (\nabla_y \cdot \varphi)(\nabla_y \cdot \psi) + 2\mu \mathcal{D}(\varphi) : \mathcal{D}(\psi) \right] \mathrm{d}y + \int_{\omega} 2\widetilde{\mu} \mathcal{D}(\varphi) : \mathcal{D}(\psi) \, \mathrm{d}y.
  \end{equation}
  We claim that
  \[
    \hat{a}_{ij}^{\alpha\beta} = a(p^{i\alpha} + \chi^{i\alpha}, p^{j\beta} + \chi^{j\beta}),
  \]
  where $p^{i\alpha} = \frac{1}{2}(y_\alpha \mathbf{e}_i + y_i \mathbf{e}_\alpha)$.

  Compute
  \begin{align}
    a(p^{i\alpha}, p^{j\beta}) &= \int_{Y \setminus \omega} \left[ \lambda (\nabla_y \cdot p^{i\alpha})(\nabla_y \cdot p^{j\beta}) + 2\mu \mathcal{D}_y(p^{i\alpha}) : \mathcal{D}_y(p^{j\beta}) \right] \mathrm{d}y \notag \\
    &\quad + \int_{\omega} 2\widetilde{\mu} \mathcal{D}_y(p^{i\alpha}) : \mathcal{D}_y(p^{j\beta}) \, \mathrm{d}y.
  \end{align}
  \begin{align}
    a(p^{i\alpha}, \chi^{j\beta}) &= \int_{Y \setminus \omega} \left[ \lambda (\nabla_y \cdot p^{i\alpha}) (\nabla_y \cdot \chi^{j\beta}) + 2\mu \mathcal{D}_y(p^{i\alpha}) : \mathcal{D}_y(\chi^{j\beta}) \right] \mathrm{d}y \notag \\
    &\quad + \int_{\omega} 2\widetilde{\mu} \mathcal{D}_y(p^{i\alpha}) : \mathcal{D}_y(\chi^{j\beta}) \, \mathrm{d}y \notag \\
    &= \int_{\partial\omega} \left[ -\lambda (\nabla_y \cdot p^{i\alpha}) \chi^{j\beta} N - 2\mu \mathcal{D}_y(p^{i\alpha}) \chi^{j\beta} N \right] \mathrm{d}S \notag \\
    &\quad + \int_{\partial\omega} 2\widetilde{\mu} \mathcal{D}_y(p^{i\alpha}) \chi^{j\beta} N \, \mathrm{d}S.
  \end{align}
  \begin{align}
    a(\chi^{i\alpha}, \chi^{j\beta}) &= \int_{Y \setminus \omega} \left[ \lambda (\nabla_y \cdot \chi^{i\alpha})(\nabla_y \cdot \chi^{j\beta}) + 2\mu \mathcal{D}_y(\chi^{i\alpha}) : \mathcal{D}_y(\chi^{j\beta}) \right] \mathrm{d}y \notag \\
    &\quad + \int_{\omega} 2\widetilde{\mu} \mathcal{D}_y(\chi^{i\alpha}) : \mathcal{D}_y(\chi^{j\beta}) \, \mathrm{d}y \notag \\
    &= \int_{\partial\omega} \left[ -\lambda \chi^{i\alpha} (\nabla_y \cdot \chi^{j\beta}) - 2\mu \chi^{i\alpha} \cdot \mathcal{D}_y(\chi^{j\beta}) \right] \cdot N \, \mathrm{d}S \notag \\
    &\quad + \int_{\partial\omega} 2\widetilde{\mu} \chi^{i\alpha} \cdot \mathcal{D}_y(\chi^{j\beta}) \cdot N \, \mathrm{d}S + \int_{\partial\omega} \chi^{i\alpha} r^{j\beta} N \, \mathrm{d}S + \int_{\omega} \delta_{i\alpha} r^{j\beta} \, \mathrm{d}y.
  \end{align}

  Observe that
  \[
    a(p^{i\alpha}, p^{j\beta} + \chi^{j\beta}) = \hat{a}_{ij}^{\alpha\beta} - \int_\omega \delta_{i\alpha} r^{j\beta} \, \mathrm{d}y
  \]
  and
  \[
    a(\chi^{i\alpha}, p^{j\beta} + \chi^{j\beta}) = \int_\omega \delta_{i\alpha} r^{j\beta} \, \mathrm{d}y,
  \]
  using the weak formulation of the cell problem satisfied by $(p^{j\beta} + \chi^{j\beta}, r^{j\beta})$ together with the interface conditions. Hence
  \[
    \hat{a}_{ij}^{\alpha\beta} = a(p^{i\alpha} + \chi^{i\alpha}, p^{j\beta} + \chi^{j\beta}),
  \]
  from which the symmetry
  \[
    \hat{a}_{ij}^{\alpha\beta} = \hat{a}_{\alpha j}^{i\beta} = \hat{a}_{\beta\alpha}^{ji}
  \]
  follows immediately.
\end{proof}

\begin{proposition}[Strong Ellipticity]\label{prop:ellipticity}
  The effective elasticity tensor $\hat{a}_{ij}^{\alpha\beta}$ is strongly elliptic on symmetric matrices: there exists a constant $c > 0$ such that for any symmetric matrix $\zeta=(\zeta_{i\alpha}) \in \mathbb{R}^{d\times d}_{\mathrm{sym}}$,
  \begin{equation}
    \hat{a}_{ij}^{\alpha\beta} \zeta_{i\alpha} \zeta_{j\beta} \geq c \, |\zeta|^2.
  \end{equation}
  In particular, the bilinear form associated with the homogenized equation is coercive on symmetric gradients.
\end{proposition}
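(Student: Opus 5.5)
The plan is to use the energy representation $\hat{a}_{ij}^{\alpha\beta} = a(p^{i\alpha}+\chi^{i\alpha}, p^{j\beta}+\chi^{j\beta})$ established in the proof of Proposition~\ref{prop:symmetry}. For a symmetric matrix $\zeta$, linearity gives
\[
  \hat{a}_{ij}^{\alpha\beta}\zeta_{i\alpha}\zeta_{j\beta} = a(\mathbf{w}_\zeta,\mathbf{w}_\zeta), \qquad \mathbf{w}_\zeta := \zeta_{i\alpha}\big(p^{i\alpha}+\chi^{i\alpha}\big) = \zeta y + \boldsymbol{\phi}_\zeta ,
\]
where $\boldsymbol{\phi}_\zeta := \zeta_{i\alpha}\chi^{i\alpha} \in H^1_{\#,0}(Y;\mathbb{R}^d)$ is periodic. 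Here I use $\zeta_{i\alpha}p^{i\alpha}(y) = \zeta y$, which holds because $\zeta$ is symmetric.

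By the pointwise positivity \eqref{eq:A-positivity}, the cell version of $a$ satisfies $a(\mathbf{w},\mathbf{w}) \ge c_0 \|\mathcal{D}_y(\mathbf{w})\|^2_{L^2(Y)}$ for every $\mathbf{w}\in H^1_{\mathrm{loc}}$. This is valid even for the non-periodic $\mathbf{w}_\zeta$, since the inequality is pointwise in the integrand. Therefore
\[
  \hat{a}_{ij}^{\alpha\beta}\zeta_{i\alpha}\zeta_{j\beta} \ge c_0 \int_Y |\zeta + \mathcal{D}_y(\boldsymbol{\phi}_\zeta)|^2\,\mathrm{d}y .
\]

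Next I expand the square. The cross term is $2\zeta : \int_Y \mathcal{D}_y(\boldsymbol{\phi}_\zeta)\,\mathrm{d}y$, and it vanishes: by periodicity, $\int_Y \partial_{y_k}\phi^l\,\mathrm{d}y = 0$ for each component, so the mean of $\mathcal{D}_y(\boldsymbol{\phi}_\zeta)$ is zero. Hence
\[
  \int_Y |\zeta + \mathcal{D}_y(\boldsymbol{\phi}_\zeta)|^2\,\mathrm{d}y = |Y|\,|\zeta|^2 + \|\mathcal{D}_y(\boldsymbol{\phi}_\zeta)\|^2_{L^2(Y)} \ge |\zeta|^2 ,
\]
using $|Y| = 1$. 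This gives the claim with $c = c_0 = \min\{d\lambda+2\mu, 2\mu, 2\widetilde{\mu}\}$.

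The main point requiring care is the first step: justifying the identity $\hat{a}\,\zeta\zeta = a(\mathbf{w}_\zeta,\mathbf{w}_\zeta)$ with the correct index pairing, i.e., checking that contracting the defining formula \eqref{eq:effective-tensor} against $\zeta_{i\alpha}\zeta_{j\beta}$ is exactly the quadratic form of $\mathbf{w}_\zeta$. I will take this from Proposition~\ref{prop:symmetry}, where the pressure terms $\int_\omega \delta_{i\alpha} r^{j\beta}$ were shown to cancel via the weak cell problem. Because $a$ here is the cell bilinear form written with symmetric gradients, the pressure does not enter the quadratic form, and $a$ extends to non-periodic affine-plus-periodic fields with the same integrand.

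Finally, the coercivity of the macroscopic bilinear form on symmetric gradients follows by applying the pointwise inequality with $\zeta = \mathcal{D}_x\mathbf{u}_0(x)$ and integrating over $\Omega$.
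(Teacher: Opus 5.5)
Your proposal is correct and follows essentially the same route as the paper. You write $\hat{a}_{ij}^{\alpha\beta}\zeta_{i\alpha}\zeta_{j\beta}=a(p_\zeta+\chi_\zeta,p_\zeta+\chi_\zeta)$, bound the integrand below by $c_0|\zeta+\mathcal{D}_y(\chi_\zeta)|^2$ via $(\operatorname{tr}M)^2\le d|M|^2$, and use the zero mean of periodic derivatives; where the paper invokes Jensen, you expand the square, which is equivalent. The step you flagged as delicate is in fact immediate: \eqref{eq:effective-tensor} defines $\hat{a}_{ij}^{\alpha\beta}$ as $a(p^{i\alpha}+\chi^{i\alpha},p^{j\beta}+\chi^{j\beta})$, so contracting with $\zeta_{i\alpha}\zeta_{j\beta}$ and using bilinearity gives the identity without any appeal to the pressure cancellation in Proposition~\ref{prop:symmetry}.
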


\begin{proof}
  For any $\zeta=(\zeta_{i\alpha}) \in \mathbb{R}^{d\times d}_{\mathrm{sym}}$, define
  \[
    p_\zeta(y) := \zeta_{i\alpha} p^{i\alpha}(y), \qquad
    \chi_\zeta(y) := \zeta_{i\alpha} \chi^{i\alpha}(y).
  \]
  By \eqref{eq:p-ij-def}, we have $\mathcal{D}_y(p_\zeta)=\zeta$. By the definition \eqref{eq:effective-tensor} of the effective tensor and the bilinearity of $a(\cdot,\cdot)$,
  \begin{equation}\label{eq:effective-energy-E}
    \hat{a}_{ij}^{\alpha\beta} \zeta_{i\alpha} \zeta_{j\beta}
    =
    a(p_\zeta+\chi_\zeta, p_\zeta+\chi_\zeta).
  \end{equation}

  Set
  \[
    M_\zeta := \zeta + \mathcal{D}_y(\chi_\zeta).
  \]
  Then
  \begin{equation}\label{eq:effective-energy-expand}
    a(p_\zeta+\chi_\zeta, p_\zeta+\chi_\zeta)
    =
    \int_{Y \setminus \omega} \left[\lambda (\operatorname{tr} M_\zeta)^2 + 2\mu |M_\zeta|^2 \right] \mathrm{d}y
    +
    \int_{\omega} 2\widetilde{\mu} |M_\zeta|^2 \, \mathrm{d}y.
  \end{equation}

  For any symmetric matrix $M$, the estimate used before \eqref{eq:A-positivity} in Chapter 2 gives
  \[
    (\operatorname{tr} M)^2 \le d\,|M|^2.
  \]
  Hence
  \[
    \lambda (\operatorname{tr} M)^2 + 2\mu |M|^2 \geq \min\{d\lambda + 2\mu,\, 2\mu\}\,|M|^2.
  \]
  Therefore, if we write
  \[
    c_0 := \min\{d\lambda + 2\mu,\, 2\mu,\, 2\widetilde{\mu}\} > 0,
  \]
  then \eqref{eq:effective-energy-expand} immediately yields
  \begin{equation}\label{eq:effective-energy-lower}
    \hat{a}_{ij}^{\alpha\beta} \zeta_{i\alpha} \zeta_{j\beta}
    \geq c_0 \int_Y |M_\zeta(y)|^2 \,\mathrm{d}y.
  \end{equation}

  Since $\chi_\zeta$ is $Y$-periodic and has zero mean, every derivative has zero mean over the cell, and therefore
  \[
    \int_Y \mathcal{D}_y(\chi_\zeta)\,\mathrm{d}y = 0.
  \]
  Using $|Y|=1$ and Jensen's inequality, we obtain
  \[
    \int_Y |M_\zeta(y)|^2 \,\mathrm{d}y
    \geq \left| \int_Y M_\zeta(y)\,\mathrm{d}y \right|^2
    = |\zeta|^2.
  \]
  Substituting this into \eqref{eq:effective-energy-lower}, we conclude that
  \[
    \hat{a}_{ij}^{\alpha\beta} \zeta_{i\alpha} \zeta_{j\beta} \geq c_0 |\zeta|^2.
  \]
\end{proof}

\begin{corollary}[Legendre--Hadamard Ellipticity]\label{cor:LH-ellipticity}
  The effective elasticity tensor $\hat{a}_{ij}^{\alpha\beta}$ satisfies the Legendre--Hadamard ellipticity condition: there exists a constant $c_{\mathrm{LH}} > 0$ such that for all $\xi, \eta \in \mathbb{R}^d$,
  \begin{equation}
    \hat{a}_{ij}^{\alpha\beta} \xi_i \eta_\alpha \xi_j \eta_\beta \geq c_{\mathrm{LH}} \, |\xi|^2 |\eta|^2.
  \end{equation}
  This follows immediately from Proposition~\ref{prop:symmetry}, Proposition~\ref{prop:ellipticity}, and the choice $\zeta=\frac12(\xi\otimes\eta+\eta\otimes\xi)$.
\end{corollary}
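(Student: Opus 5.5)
The approach is to specialize the strong ellipticity of Proposition~\ref{prop:ellipticity} to the symmetric rank-one-type matrix $\zeta=\frac12(\xi\otimes\eta+\eta\otimes\xi)$. The symmetries of Proposition~\ref{prop:symmetry} then convert the contraction against $\zeta$ into the Legendre--Hadamard contraction. The proof therefore has three short steps: an algebraic symmetrization, an application of Proposition~\ref{prop:ellipticity}, and an elementary lower bound for $|\zeta|^2$.

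\textbf{Step 1: symmetrization.} Fix $\xi,\eta\in\mathbb{R}^d$ and set $\zeta_{i\alpha}=\frac12(\xi_i\eta_\alpha+\eta_i\xi_\alpha)$. Expanding $\hat a_{ij}^{\alpha\beta}\zeta_{i\alpha}\zeta_{j\beta}$ gives four terms. By the minor symmetry $\hat a_{ij}^{\alpha\beta}=\hat a_{\alpha j}^{i\beta}$ (and its counterpart in $(j,\beta)$, which follows by combining it with the major symmetry $\hat a_{ij}^{\alpha\beta}=\hat a_{\beta\alpha}^{ji}$), we may exchange $i\leftrightarrow\alpha$ and $j\leftrightarrow\beta$. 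Each of the four terms is then equal to $\hat a_{ij}^{\alpha\beta}\xi_i\eta_\alpha\xi_j\eta_\beta$. Hence
\[
\hat a_{ij}^{\alpha\beta}\xi_i\eta_\alpha\xi_j\eta_\beta=\hat a_{ij}^{\alpha\beta}\zeta_{i\alpha}\zeta_{j\beta}.
\]
This step is the only point needing care: one must confirm that the listed identities really generate both index swaps. Alternatively, one can argue directly from the representation $\hat a_{ij}^{\alpha\beta}=a(p^{i\alpha}+\chi^{i\alpha},p^{j\beta}+\chi^{j\beta})$. Since $p^{i\alpha}=p^{\alpha i}$ and $\chi^{i\alpha}=\chi^{\alpha i}$ by \eqref{eq:corrector-symmetry}, this expression is manifestly invariant under $i\leftrightarrow\alpha$ and under $j\leftrightarrow\beta$. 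I would present this direct argument to be safe.

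\textbf{Step 2: apply Proposition~\ref{prop:ellipticity}.} Since $\zeta$ is symmetric, the proposition gives a lower bound of $c_0|\zeta|^2$.

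\textbf{Step 3: bound $|\zeta|^2$ from below.} A direct computation gives
\[
|\zeta|^2=\tfrac12\big(|\xi|^2|\eta|^2+(\xi\cdot\eta)^2\big)\ge\tfrac12|\xi|^2|\eta|^2.
\]
This yields the corollary with $c_{\mathrm{LH}}=c_0/2$.
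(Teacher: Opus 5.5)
Your proposal is correct and follows the route the paper indicates. The paper's one-line justification specializes Proposition~\ref{prop:ellipticity} to $\zeta=\frac12(\xi\otimes\eta+\eta\otimes\xi)$ and uses Proposition~\ref{prop:symmetry} to identify $\hat a_{ij}^{\alpha\beta}\zeta_{i\alpha}\zeta_{j\beta}$ with the Legendre--Hadamard form. You supply the details the paper omits: the four-term symmetrization, which you justify correctly either by composing the two listed identities or directly from $p^{i\alpha}=p^{\alpha i}$ and $\chi^{i\alpha}=\chi^{\alpha i}$, and the bound $|\zeta|^2\ge\frac12|\xi|^2|\eta|^2$, which gives $c_{\mathrm{LH}}=c_0/2$.
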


\section{Two-Scale Convergence and Rigorous Derivation of the Homogenized Limit}

Formal asymptotic expansion provides an intuitive derivation of the homogenized equation, but it lacks full mathematical rigor. In this section, we use the two-scale convergence theory of Nguetseng \cite{Nguetseng1989} and Allaire \cite{Allaire1992} to obtain a rigorous proof.

We begin by recalling the basic definition from \cite[Definition~1.1]{Allaire1992}.

\begin{definition}[Two-Scale Convergence]\label{def:two-scale-convergence}
  Let $\{v^\varepsilon\}_{\varepsilon > 0} \subset L^2(\Omega)$ be a family of functions. We say that $v^\varepsilon$ \textbf{two-scale converges} to $v \in L^2(\Omega \times Y)$, and write $v^\varepsilon \xrightarrow{2} v$, if for every $\psi \in C_0^\infty(\Omega; C_{\#}^\infty(Y))$,
  \begin{equation}\label{eq:two-scale-def}
    \lim_{\varepsilon \to 0} \int_\Omega v^\varepsilon(x) \psi\left(x, \frac{x}{\varepsilon}\right) \mathrm{d}x = \int_\Omega \int_Y v(x, y) \psi(x, y) \, \mathrm{d}y \, \mathrm{d}x.
  \end{equation}
\end{definition}

\begin{remark}
  A function $\psi(x,y)$ is called an admissible test function if
  \begin{equation}
    \lim_{\varepsilon \to 0} \int_\Omega \psi\left(x, \frac{x}{\varepsilon}\right)^2 \mathrm{d}x = \int_\Omega \int_Y \psi(x, y)^2 \, \mathrm{d}y \, \mathrm{d}x.
  \end{equation}
  By \cite[Lemma~1.3]{Allaire1992}, every function in $L^2(\Omega; C_{\#}(Y))$ is an admissible test function. Equivalently, in the definition of two-scale convergence, the class of test functions may be enlarged to the full class of admissible ones.
\end{remark}

We shall also use the following compactness theorem and gradient characterization from \cite[Theorem~1.2 and Proposition~1.14(i)]{Allaire1992}.

\begin{theorem}[Compactness Theorem for Two-Scale Convergence]\label{thm:two-scale-compactness}
  Let $\{v^\varepsilon\}_{\varepsilon > 0} \subset L^2(\Omega)$ satisfy the uniform bound $\|v^\varepsilon\|_{L^2(\Omega)} \leq C$. Then there exist a subsequence, still denoted by $v^\varepsilon$, and a function $v \in L^2(\Omega \times Y)$ such that $v^\varepsilon \xrightarrow{2} v$.
\end{theorem}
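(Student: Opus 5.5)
The plan is to follow Allaire's original argument, which rests on weak compactness in a Hilbert space together with density of smooth test functions. Since $\Omega$ is bounded, $L^2(\Omega;C_\#(Y))$ is a separable Banach space. Choose a countable dense subset $\{\psi_m\}$, taken from $C_0^\infty(\Omega;C_\#^\infty(Y))$; this is possible because that space is dense in $L^2(\Omega;C_\#(Y))$.

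For each $\psi\in L^2(\Omega;C_\#(Y))$, define the linear functional
\[
\mu_\varepsilon(\psi):=\int_\Omega v^\varepsilon(x)\,\psi\!\left(x,\tfrac{x}{\varepsilon}\right)\mathrm{d}x .
\]
By Cauchy--Schwarz,
\[
|\mu_\varepsilon(\psi)|\le \|v^\varepsilon\|_{L^2(\Omega)}\,\bigl\|\psi(\cdot,\cdot/\varepsilon)\bigr\|_{L^2(\Omega)}\le C\,\|\psi\|_{L^2(\Omega;C_\#(Y))},
\]
using the pointwise bound $|\psi(x,x/\varepsilon)|\le\sup_y|\psi(x,y)|$. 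The family $\{\mu_\varepsilon\}$ is therefore uniformly bounded in the dual space $\bigl(L^2(\Omega;C_\#(Y))\bigr)'$.

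A Cantor diagonal argument over the countable dense set $\{\psi_m\}$ gives a subsequence along which $\mu_\varepsilon(\psi_m)$ converges for every $m$. The uniform bound extends this convergence to every $\psi$ in $L^2(\Omega;C_\#(Y))$ by an $\varepsilon/3$ argument. The limit $\mu$ is a bounded linear functional on $L^2(\Omega;C_\#(Y))$. Equivalently, one may apply Banach--Alaoglu in its sequential form, which is valid here because the predual is separable.

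The key step, and the only nontrivial one, is to represent $\mu$ by an $L^2(\Omega\times Y)$ function. I will use the admissibility property quoted in the Remark above, from \cite[Lemma~1.3]{Allaire1992}:
\[
\lim_{\varepsilon\to 0}\int_\Omega \psi\!\left(x,\tfrac{x}{\varepsilon}\right)^2\mathrm{d}x=\|\psi\|_{L^2(\Omega\times Y)}^2 .
\]
Passing to the limit in the Cauchy--Schwarz estimate yields
\[
|\mu(\psi)|\le C\,\|\psi\|_{L^2(\Omega\times Y)}\qquad\text{for all }\psi\in L^2(\Omega;C_\#(Y)).
\]
This subspace is dense in $L^2(\Omega\times Y)$, since it already contains $C_0^\infty(\Omega;C^\infty_\#(Y))$. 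Hence $\mu$ extends uniquely to a bounded functional on the Hilbert space $L^2(\Omega\times Y)$. The Riesz representation theorem then gives $v\in L^2(\Omega\times Y)$ with $\mu(\psi)=\iint v\psi\,\mathrm{d}y\,\mathrm{d}x$.

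Restricting to test functions $\psi\in C_0^\infty(\Omega;C_\#^\infty(Y))$ gives exactly \eqref{eq:two-scale-def}, so $v^\varepsilon\xrightarrow{2}v$ along the subsequence. The bound $\|v\|_{L^2(\Omega\times Y)}\le C$ also follows from the estimate above.

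The main obstacle is the representation step. The uniform bound alone only controls $\mu$ in the weaker norm of $L^2(\Omega;C_\#(Y))$, which is why the admissibility limit is indispensable. Since the paper takes that lemma as given from \cite{Allaire1992}, I will rely on it rather than reprove it. If a proof were required, I would argue first for $\psi$ continuous and compactly supported using the Riemann--Lebesgue-type mean value lemma for periodic functions, and then pass to general $\psi$ by density in $L^2(\Omega;C_\#(Y))$.
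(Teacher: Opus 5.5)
Your proof is correct, and it is essentially the standard proof of \cite[Theorem~1.2]{Allaire1992}, which the paper cites rather than reproves. The steps match Allaire's: uniform boundedness of the functionals on the separable space $L^2(\Omega;C_\#(Y))$, sequential weak-$*$ extraction (which you do by an explicit diagonal argument, implicitly along a sequence $\varepsilon_n\to 0$), the admissibility limit of \cite[Lemma~1.3]{Allaire1992} to upgrade the bound to the $L^2(\Omega\times Y)$ norm, and density plus Riesz representation to produce $v$.
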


\begin{theorem}[Two-Scale Convergence of Gradients]\label{thm:two-scale-gradient}
  Let $\{v_\varepsilon\}_{\varepsilon > 0}$ be a bounded sequence in $H^1(\Omega)$ and assume that $v_\varepsilon \rightharpoonup u$ weakly in $H^1(\Omega)$. Then $v_\varepsilon \xrightarrow{2} u(x)$, and there exist a subsequence, still denoted by $v_\varepsilon$, and a function $u_1 \in L^2(\Omega; H^1_{\#}(Y)/\mathbb{R})$ such that
  \begin{equation}
    \nabla v_\varepsilon \xrightarrow{2} \nabla_x u(x) + \nabla_y u_1(x, y).
  \end{equation}
\end{theorem}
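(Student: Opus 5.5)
The plan is to prove the statement in three steps: identify the two-scale limit of $v_\varepsilon$, extract a two-scale limit of $\nabla v_\varepsilon$ by compactness, and then show that this limit differs from $\nabla_x u$ by a $y$-gradient using divergence-free test fields and a Helmholtz decomposition on the torus.

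\emph{Two-scale limit of $v_\varepsilon$.} Since $\{v_\varepsilon\}$ is bounded in $H^1(\Omega)$ and $v_\varepsilon \rightharpoonup u$ weakly in $H^1(\Omega)$, the Rellich--Kondrachov theorem gives $v_\varepsilon \to u$ strongly in $L^2(\Omega)$. Here I would use that $\partial\Omega$ is smooth by \textup{(A1)}; a subsequence suffices, and uniqueness of the limit gives the whole family. For any $\psi \in C_0^\infty(\Omega; C^\infty_\#(Y))$ I would then write
\[
\int_\Omega v_\varepsilon\,\psi(x,x/\varepsilon)\,\mathrm{d}x=\int_\Omega (v_\varepsilon-u)\,\psi(x,x/\varepsilon)\,\mathrm{d}x+\int_\Omega u\,\psi(x,x/\varepsilon)\,\mathrm{d}x .
\]
The first term tends to zero because $\|\psi(\cdot,\cdot/\varepsilon)\|_{L^2(\Omega)}$ is bounded. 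The second term tends to $\int_\Omega\int_Y u\,\psi\,\mathrm{d}y\,\mathrm{d}x$ by the admissibility of $u(x)\psi(x,y)$, which lies in $L^2(\Omega;C_\#(Y))$ (Allaire's Lemma~1.3, recalled in the remark above). This gives $v_\varepsilon \xrightarrow{2} u(x)$.

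\emph{Compactness and identification.} Since $\|\nabla v_\varepsilon\|_{L^2(\Omega)}\le C$, Theorem~\ref{thm:two-scale-compactness}, applied componentwise, yields a subsequence and a limit $\xi \in L^2(\Omega\times Y;\mathbb{R}^d)$ with $\nabla v_\varepsilon \xrightarrow{2} \xi$. To identify $\xi$, I would take vector test functions $\Psi \in C_0^\infty(\Omega;C^\infty_\#(Y;\mathbb{R}^d))$ with $\operatorname{div}_y\Psi = 0$. Integrating by parts, with no boundary term since $\Psi$ has compact support in $x$, and using the chain rule \eqref{eq:diff-operator-decomp}, gives
\[
\int_\Omega \nabla v_\varepsilon\cdot\Psi(x,x/\varepsilon)\,\mathrm{d}x=-\int_\Omega v_\varepsilon\,(\operatorname{div}_x\Psi)(x,x/\varepsilon)\,\mathrm{d}x .
\]
The $\varepsilon^{-1}\operatorname{div}_y$ term is absent because $\operatorname{div}_y\Psi=0$. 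Passing to the limit using the first step and integrating by parts back in $x$ gives
\[
\int_\Omega\int_Y (\xi - \nabla_x u)\cdot \Psi \,\mathrm{d}y\,\mathrm{d}x = 0
\]
for all such $\Psi$. By density in $x$, for a.e.\ $x$ the field $\xi(x,\cdot)-\nabla u(x)$ is $L^2_\#(Y;\mathbb{R}^d)$-orthogonal to every smooth periodic divergence-free field.

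\emph{Helmholtz step (main obstacle).} The key lemma is that the $L^2_\#(Y;\mathbb{R}^d)$-orthogonal complement of the closure of the smooth periodic solenoidal fields equals $\{\nabla_y w : w\in H^1_\#(Y)\}$. I would prove this with Fourier series on the torus. For $\mathbf{f}=\sum_k \hat{\mathbf{f}}_k e^{2\pi i k\cdot y}$, orthogonality to all solenoidal trigonometric polynomials (mean-zero constants and $\hat{\mathbf{g}}_k\perp k$) forces $\hat{\mathbf{f}}_0=0$ and $\hat{\mathbf{f}}_k \parallel k$ for every $k\neq 0$. Writing $\hat{\mathbf{f}}_k=2\pi i k\,\hat w_k$ then defines $w$ with $\|w\|_{H^1_\#/\mathbb{R}}\le C\|\mathbf{f}\|_{L^2}$ and $\nabla_y w=\mathbf{f}$. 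Here $\hat{\mathbf{f}}_0=0$ holds because constant fields are divergence-free, and this is also why $\xi-\nabla u$ has zero cell mean.

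Setting $u_1(x,\cdot):=w$ for $\mathbf{f}=\xi(x,\cdot)-\nabla u(x)$, the map $\mathbf{f}\mapsto w$ is a bounded linear operator $L^2_\#(Y)^d\to H^1_\#(Y)/\mathbb{R}$. Applied pointwise in $x$, it gives measurability and the bound $u_1\in L^2(\Omega;H^1_\#(Y)/\mathbb{R})$, and hence $\xi=\nabla_x u+\nabla_y u_1$. The only delicate points are the density argument and this measurability, and both are handled by the explicit Fourier construction.
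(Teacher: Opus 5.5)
Your proposal is correct and is essentially the standard proof of \cite[Proposition~1.14(i)]{Allaire1992}, which the paper cites rather than reproduces. That proof uses two-scale compactness for $\nabla v_\varepsilon$, testing with $y$-divergence-free fields, and the Fourier characterization of the $L^2_\#(Y)$-orthogonal complement of solenoidal fields as $y$-gradients. The only cosmetic difference is your first step: you obtain $v_\varepsilon \xrightarrow{2} u$ from Rellich's strong $L^2$ convergence, which is fine under \textup{(A1)}, or even just locally since the test functions have compact $x$-support. Allaire instead observes that $\varepsilon\nabla v_\varepsilon \to 0$ forces the two-scale limit to be $y$-independent.
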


The above results lead to the following rigorous derivation of the homogenized limit for solutions of the Lam\'e-Stokes system.

\subsection{Well-Posedness of the Two-Scale Limit System}

\begin{lemma}[Two-Scale Convergence Result]\label{lem:two-scale-convergence}
  Let $(\mathbf{u}_\varepsilon, p_\varepsilon)$ be the solution of the Lam\'e-Stokes system. Then there exist a subsequence, still denoted by $\varepsilon$, and functions $\mathbf{u}_0 \in H^1(\Omega; \mathbb{R}^d)$, $\mathbf{u}_1 \in L^2(\Omega; H^1_{\#,0}(Y; \mathbb{R}^d))$, and $p_0 \in L^2(\Omega; L^2(\omega))$ such that
  \begin{enumerate}
    \item $\mathbf{u}_\varepsilon \xrightarrow{2} \mathbf{u}_0$,
    \item $\nabla \mathbf{u}_\varepsilon \xrightarrow{2} \nabla_x \mathbf{u}_0 + \nabla_y \mathbf{u}_1$,
    \item $\mathrm{div} \, \mathbf{u}_\varepsilon \xrightarrow{2} \mathrm{div}_x \, \mathbf{u}_0 + \mathrm{div}_y \, \mathbf{u}_1$,
    \item $\mathcal{D}(\mathbf{u}_\varepsilon) \xrightarrow{2} \mathcal{D}_x(\mathbf{u}_0) + \mathcal{D}_y(\mathbf{u}_1)$,
    \item $p_\varepsilon \xrightarrow{2} p_0 \, \mathbf{1}_{\omega}$.
  \end{enumerate}
\end{lemma}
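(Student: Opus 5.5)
The plan is to combine the uniform a priori bound of Theorem~\ref{thm:well-posedness} with the compactness results of Allaire recalled above. By \eqref{eq:a-priori-estimate}, $\|\mathbf{u}_\varepsilon\|_{H^1(\Omega)}+\|p_\varepsilon\|_{L^2(D_\varepsilon)}\le C\|\mathbf{g}\|_{H^{-1/2}(\partial\Omega)}$ with $C$ independent of $\varepsilon$. Extend $p_\varepsilon$ by zero to $\Omega\setminus D_\varepsilon$ and denote the extension again by $p_\varepsilon$; it is then uniformly bounded in $L^2(\Omega)$. By reflexivity we first extract a subsequence with $\mathbf{u}_\varepsilon\rightharpoonup\mathbf{u}_0$ weakly in $H^1(\Omega;\mathbb{R}^d)$. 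Since $V^\varepsilon$ does not actually depend on $\varepsilon$ and is weakly closed, $\mathbf{u}_0\in V^\varepsilon$.

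Applying Theorem~\ref{thm:two-scale-gradient} componentwise and passing to a further subsequence gives items (1) and (2). This yields $\mathbf{u}_1\in L^2(\Omega;H^1_\#(Y;\mathbb{R}^d)/\mathbb{R})$, and we fix the representative with zero $y$-mean so that $\mathbf{u}_1\in L^2(\Omega;H^1_{\#,0}(Y;\mathbb{R}^d))$. Items (3) and (4) follow by linearity. Two-scale convergence is preserved under fixed linear combinations of components, so taking the trace of the limit in (2) gives (3), and taking its symmetric part gives (4). Explicitly, one tests against $\psi\,\mathbb{I}$ or against symmetrized matrix-valued test functions in \eqref{eq:two-scale-def}.

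For (5), Theorem~\ref{thm:two-scale-compactness} gives, along a further subsequence, some $P\in L^2(\Omega\times Y)$ with $p_\varepsilon\xrightarrow{2}P$. It remains to show $P=0$ for a.e.\ $y\in Y\setminus\overline{\omega}$. Take $\psi\in C_0^\infty(\Omega;C_\#^\infty(Y))$ with $\psi(x,\cdot)$ supported in the periodic extension of $Y\setminus\overline{\omega}$. Then $p_\varepsilon(x)\psi(x,x/\varepsilon)$ vanishes identically in $\Omega$. For points in cells $Y^{\mathbf n}_\varepsilon$ with $\mathbf n\in\Pi_\varepsilon$, this is because $p_\varepsilon$ is supported in $\omega^{\mathbf n}_\varepsilon$ while $\psi(\cdot,x/\varepsilon)$ vanishes there. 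In the buffer $K_\varepsilon$, $p_\varepsilon=0$ by the extension. Hence $\int_\Omega\int_Y P\psi=0$ for all such $\psi$, and density gives $P=0$ on $\Omega\times(Y\setminus\overline\omega)$. Setting $p_0:=P|_{\Omega\times\omega}\in L^2(\Omega;L^2(\omega))$, we obtain $P=p_0\mathbf{1}_\omega$.

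All extractions are taken successively from a single sequence, so one common subsequence serves for all five statements. The only point needing care is (5): the test functions must be chosen with support avoiding $\overline\omega$, rather than merely vanishing on $\omega$, to remain smooth and admissible, and the zero extension must be handled on the boundary layer $K_\varepsilon$. Since the extension is zero there, this causes no real obstruction, and I expect the proof to be essentially routine.
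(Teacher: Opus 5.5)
Your proposal is correct and follows essentially the same route as the paper. Both use the $\varepsilon$-uniform bound of Theorem~\ref{thm:well-posedness}, weak $H^1$ compactness together with Theorem~\ref{thm:two-scale-gradient} for items (1)--(4), and, for item (5), the zero extension of $p_\varepsilon$, Theorem~\ref{thm:two-scale-compactness}, and test functions whose $y$-support avoids $\overline{\omega}$. Your additional remarks — a single common subsequence, $\mathbf{u}_0\in V^\varepsilon$ by weak closedness, and the explicit treatment of the buffer $K_\varepsilon$ — are correct refinements of details the paper leaves implicit.
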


\begin{proof}
  By Theorem~\ref{thm:well-posedness} of Chapter 2, we have
  \[
    \|\mathbf{u}_\varepsilon\|_{H^1(\Omega)} + \|p_\varepsilon\|_{L^2(D_\varepsilon)} \leq C,
  \]
  where $C$ is independent of $\varepsilon$. Since $H^1(\Omega)$ is reflexive, there exists a subsequence such that $\mathbf{u}_\varepsilon \rightharpoonup \mathbf{u}_0$ weakly in $H^1(\Omega)$. Applying Theorem~\ref{thm:two-scale-gradient} componentwise to $\mathbf{u}_\varepsilon$, we obtain (1) and (2); choosing the zero-mean representative yields $\mathbf{u}_1 \in L^2(\Omega; H^1_{\#,0}(Y; \mathbb{R}^d))$. Assertions (3) and (4) follow directly from (2), since $\mathrm{div}$ and $\mathcal{D}$ are linear operations on $\nabla$.

  For (5), extend $p_\varepsilon$ by zero to $\Omega$ and keep the same notation. Then $\|p_\varepsilon\|_{L^2(\Omega)} \leq C$. By Theorem~\ref{thm:two-scale-compactness}, there exist a subsequence and $\tilde{p} \in L^2(\Omega \times Y)$ such that $p_\varepsilon \xrightarrow{2} \tilde{p}$. Take $\psi \in C_0^\infty(\Omega; C_{\#}^\infty(Y))$ with $\mathrm{supp}_y \, \psi \subset Y \setminus \overline{\omega}$. Then
  \[
    \int_\Omega p_\varepsilon \psi\left(x, \frac{x}{\varepsilon}\right) \, \mathrm{d}x = 0,
  \]
  and passing to the limit yields $\tilde{p} = 0$ on $Y \setminus \omega$. Let $p_0 = \tilde{p}|_\omega$. This proves (5).
\end{proof}

The following theorem characterizes the equations satisfied by the two-scale limit and establishes uniqueness.

\begin{theorem}[Two-Scale Convergence of the Lam\'e-Stokes System]\label{thm:two-scale-limit}
  Let $(\mathbf{u}_\varepsilon, p_\varepsilon)$ be the solution of the Lam\'e-Stokes system. Then there exist
  \[
    p_0 \in L^2(\Omega; L^2(\omega)), \qquad
    \mathbf{u}_0 \in V^\varepsilon, \qquad
    \mathbf{u}_1 \in L^2(\Omega; H^1_{\#,0}(Y; \mathbb{R}^d)),
  \]
  such that
  \[
    (\mathbf{u}_\varepsilon, p_\varepsilon) \xrightarrow{2} (\mathbf{u}_0, p_0), \qquad \nabla \mathbf{u}_\varepsilon \xrightarrow{2} \nabla_x \mathbf{u}_0 + \nabla_y \mathbf{u}_1,
  \]
  where $(\mathbf{u}_0,\mathbf{u}_1,p_0)$ is the unique solution of the following system. More precisely, for every
  \[
    (\boldsymbol{\varphi}_0, \boldsymbol{\varphi}_1, \psi)
    \in V^\varepsilon \times L^2(\Omega; H^1_{\#,0}(Y; \mathbb{R}^d)) \times L^2(\Omega; L^2(\omega)),
  \]
  one has
  \begin{align}\label{eq:two-scale-variational}
    &\int_{\partial\Omega} \mathbf{g} \cdot \boldsymbol{\varphi}_0 \, \mathrm{d}S = \int_\Omega \int_{Y \setminus \omega} \left[ \lambda (\nabla_x \cdot \mathbf{u}_0 + \nabla_y \cdot \mathbf{u}_1)(\nabla_x \cdot \boldsymbol{\varphi}_0 + \nabla_y \cdot \boldsymbol{\varphi}_1) \right. \notag \\
    &\qquad\qquad \left. + 2\mu (\mathcal{D}_x(\mathbf{u}_0) + \mathcal{D}_y(\mathbf{u}_1)) : (\mathcal{D}_x(\boldsymbol{\varphi}_0) + \mathcal{D}_y(\boldsymbol{\varphi}_1)) \right] \mathrm{d}y \, \mathrm{d}x \notag \\
    &\quad + \int_\Omega \int_{\omega} \left[ (\mathrm{div}_x \, \boldsymbol{\varphi}_0 + \mathrm{div}_y \, \boldsymbol{\varphi}_1) p_0 \right. \notag \\
    &\qquad\qquad \left. + 2\widetilde{\mu} (\mathcal{D}_x(\mathbf{u}_0) + \mathcal{D}_y(\mathbf{u}_1)) : (\mathcal{D}_x(\boldsymbol{\varphi}_0) + \mathcal{D}_y(\boldsymbol{\varphi}_1)) \right] \mathrm{d}y \, \mathrm{d}x,
  \end{align}
  \begin{equation}\label{eq:two-scale-incomp}
    0 = \int_\Omega \int_{\omega} (\mathrm{div}_x \, \mathbf{u}_0 + \mathrm{div}_y \, \mathbf{u}_1) \psi \, \mathrm{d}y \, \mathrm{d}x.
  \end{equation}
  Moreover, the macroscopic limit $\mathbf{u}_0$ satisfies the effective elasticity equation
  \[
    \begin{cases}
      \nabla \cdot (\hat{A} : \mathcal{D}(\mathbf{u}_0)) = 0 & \text{in } \Omega, \\
      (\hat{A} : \mathcal{D}(\mathbf{u}_0)) \cdot n = \mathbf{g} & \text{on } \partial\Omega.
    \end{cases}
  \]
\end{theorem}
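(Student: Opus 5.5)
We take the subsequence and limits $(\mathbf{u}_0,\mathbf{u}_1,p_0)$ provided by Lemma~\ref{lem:two-scale-convergence} and pass to the limit in the mixed variational formulation \eqref{eq:mixed-variational} with oscillating test functions. The argument has three parts: identify the two-scale system, show it has a unique solution, and eliminate $(\mathbf{u}_1,p_0)$ through the cell correctors.

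\textbf{Membership and the incompressibility condition.} Since $V^\varepsilon$ is a closed subspace of $H^1(\Omega;\mathbb{R}^d)$ that does not depend on $\varepsilon$, and the trace map is weakly continuous, the weak $H^1$ limit $\mathbf{u}_0$ lies in $V^\varepsilon$.

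For \eqref{eq:two-scale-incomp}, take $\psi\in C_0^\infty(\Omega;C_\#^\infty(Y))$ with $\psi(x,\cdot)$ supported in $\omega$. Then $\psi(x,x/\varepsilon)$ is admissible in $L^2(D_\varepsilon)$ up to the buffer cells near $\partial\Omega$, which do not meet $\operatorname{supp}\psi$ for small $\varepsilon$. Inserting it in $b(\mathbf{u}_\varepsilon,\psi)=0$ and using item (3) of the lemma gives \eqref{eq:two-scale-incomp} for such $\psi$. Density then gives it for all $\psi\in L^2(\Omega;L^2(\omega))$.

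\textbf{The momentum equation.} Take $\boldsymbol{\varphi}_\varepsilon(x)=\boldsymbol{\varphi}_0(x)+\varepsilon\boldsymbol{\varphi}_1(x,x/\varepsilon)$ with $\boldsymbol{\varphi}_0\in V^\varepsilon$ smooth and $\boldsymbol{\varphi}_1\in C_0^\infty(\Omega;C_\#^\infty(Y))$. This function lies in $V^\varepsilon$, and its symmetric gradient converges strongly two-scale to $\mathcal{D}_x\boldsymbol{\varphi}_0+\mathcal{D}_y\boldsymbol{\varphi}_1$.

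The coefficients are of the form $\lambda\mathbf{1}_{\Omega_\varepsilon}$, etc. Away from $K_\varepsilon$ these equal $\lambda\mathbf{1}_{Y\setminus\omega}(x/\varepsilon)$. The set $K_\varepsilon$ is an $O(\varepsilon)$ layer, so it contributes $o(1)$ once combined with the uniform $L^2$ bounds, after an approximation of $\boldsymbol{\varphi}_0$ near $\partial\Omega$.

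A technical point is that the characteristic function $\mathbf{1}_\omega(y)$ is not continuous. The way around it is to group it with the smooth test function: the product of a two-scale weakly convergent sequence with a strongly two-scale convergent oscillating sequence $\mathbf{1}_\omega(x/\varepsilon)\Phi(x,x/\varepsilon)$ still converges. This is justified by Allaire's admissibility criterion, since $\mathbf{1}_\omega\Phi$ is an admissible test function ($\omega$ is smooth, so its boundary has measure zero). The pressure term $\int_{D_\varepsilon}p_\varepsilon\,\mathrm{div}\,\boldsymbol{\varphi}_\varepsilon$ is handled in the same way using item (5) of the lemma, and the boundary term $\int_{\partial\Omega}\mathbf{g}\cdot\boldsymbol{\varphi}_0$ is unchanged. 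This yields \eqref{eq:two-scale-variational}, and density extends it to all test triples.

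\textbf{Uniqueness.} This is the step I expect to be the main obstacle. The system is a mixed problem on $H=V^\varepsilon\times L^2(\Omega;H^1_{\#,0}(Y;\mathbb{R}^d))$ and $Q=L^2(\Omega;L^2(\omega))$, and we apply Theorem~\ref{thm:babuska-brezzi}.

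\emph{Coercivity.} The pointwise bound used for \eqref{eq:A-positivity} gives control of $\|\mathcal{D}_x\boldsymbol{\varphi}_0+\mathcal{D}_y\boldsymbol{\varphi}_1\|_{L^2(\Omega\times Y)}^2$. Periodicity and zero mean in $y$ make the cross term vanish, so this norm squared equals $\|\mathcal{D}_x\boldsymbol{\varphi}_0\|^2+\|\mathcal{D}_y\boldsymbol{\varphi}_1\|^2$. Korn's inequality on $V^\varepsilon$ (Appendix Theorem~\ref{thm:korn}) and the periodic Korn inequality then give coercivity on all of $H$.

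\emph{Inf-sup.} For $\psi\in Q$, apply the cell-level inf-sup construction of Theorem~\ref{thm:cell-well-posedness} for a.e.\ $x$, taking $\boldsymbol{\varphi}_0=0$. This produces $\boldsymbol{\varphi}_1(x,\cdot)$ with $\mathrm{div}_y\boldsymbol{\varphi}_1=\psi(x,\cdot)$ on $\omega$, depending linearly and measurably on $x$ with uniform bounds, since the construction is a bounded linear right inverse. The Babu\v{s}ka--Brezzi theorem then gives uniqueness.

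Uniqueness of the limit also shows that the whole family converges, not just a subsequence.

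\textbf{Homogenized equation.} Choose $\boldsymbol{\varphi}_0=0$. Then \eqref{eq:two-scale-variational}--\eqref{eq:two-scale-incomp} reduce, for a.e.\ $x$, to the weak form of the cell problem \eqref{eq:cell-problem} multiplied by $(\mathcal{D}_x\mathbf{u}_0)^{ij}$. Corollary~\ref{cor:homogeneous-uniqueness} then gives
\[
\mathbf{u}_1=(\mathcal{D}_x\mathbf{u}_0)^{ij}\chi^{ij},\qquad p_0=(\mathcal{D}_x\mathbf{u}_0)^{ij}r^{ij}.
\]
Now insert these with $\boldsymbol{\varphi}_1=0$. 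Using the cell problem tested with $\chi^{j\beta}$, the same identities as in Proposition~\ref{prop:symmetry}, the $y$-integral becomes $\hat a_{ij}^{\alpha\beta}(\mathcal{D}_x\mathbf{u}_0)_{\alpha\beta}\partial_i\varphi_0^j$. This is the weak form of the effective system with traction datum $\mathbf{g}$. Well-posedness of that system follows from Proposition~\ref{prop:ellipticity} and Korn's inequality.
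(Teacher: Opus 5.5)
Your proposal is correct and follows essentially the same route as the paper. Both arguments use oscillating test functions $\boldsymbol{\varphi}_0+\varepsilon\boldsymbol{\varphi}_1(x,x/\varepsilon)$ together with Allaire's admissibility of $\mathbf{1}_\omega\Phi$, obtain uniqueness from Babu\v{s}ka--Brezzi with a cellwise periodic Bogovski\u{\i} inf-sup construction, and eliminate $(\mathbf{u}_1,p_0)$ through the correctors. You also fill in several details the paper leaves implicit: testing the constraint with $\psi$ supported in $\omega$ so that $K_\varepsilon$ never enters, controlling $K_\varepsilon$ in the momentum terms, the vanishing cross term in the coercivity, and identifying $p_0=(\mathcal{D}_x\mathbf{u}_0)^{ij}r^{ij}$ via Corollary~\ref{cor:homogeneous-uniqueness}.
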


\begin{proof}
  By Lemma~\ref{lem:two-scale-convergence}, two-scale convergence has already been established. We first take
  \[
    \boldsymbol{\varphi}_0 \in C^\infty(\bar{\Omega}; \mathbb{R}^d) \cap V^\varepsilon,
    \qquad
    \boldsymbol{\varphi}_1 \in C_0^\infty(\Omega; C^\infty_{\#}(Y; \mathbb{R}^d)),
  \]
  and use the oscillating test function
  \begin{equation}
    \boldsymbol{\varphi}^\varepsilon(x) = \boldsymbol{\varphi}_0(x) + \varepsilon \boldsymbol{\varphi}_1\left(x, \frac{x}{\varepsilon}\right)
  \end{equation}
  in the variational formulation \eqref{eq:mixed-variational} of the original system.
  Since $\boldsymbol{\varphi}_1$ has compact support with respect to $x$ in $\Omega$, it vanishes in a neighborhood of $\partial\Omega$, and therefore
  \[
    \boldsymbol{\varphi}^\varepsilon|_{\partial\Omega} = \boldsymbol{\varphi}_0|_{\partial\Omega} \in H_{\mathcal{R}}^{1/2}(\partial\Omega),
  \]
  so that $\boldsymbol{\varphi}^\varepsilon \in V^\varepsilon$ and may be used as a test function in the original mixed problem. At the same time, $\boldsymbol{\varphi}^\varepsilon \in L^2(\Omega; C_{\#}(Y))$ is also admissible for two-scale convergence, and
  \begin{equation}
    \nabla \boldsymbol{\varphi}^\varepsilon = \nabla_x \boldsymbol{\varphi}_0 + \nabla_y \boldsymbol{\varphi}_1\left(x, \frac{x}{\varepsilon}\right) + \varepsilon \nabla_x \boldsymbol{\varphi}_1\left(x, \frac{x}{\varepsilon}\right).
  \end{equation}
  Using the characteristic functions $\mathbf{1}_{Y \setminus \omega}(x/\varepsilon)$ and $\mathbf{1}_{\omega}(x/\varepsilon)$, we rewrite the integrals over subregions as integrals over the whole domain and pass to the limit term by term.

  \textbf{Elastic part.}
  Using the decomposition
  \[
    \nabla\boldsymbol{\varphi}^\varepsilon = (\nabla_x \boldsymbol{\varphi}_0 + \nabla_y \boldsymbol{\varphi}_1(x, x/\varepsilon)) + \varepsilon \nabla_x \boldsymbol{\varphi}_1(x, x/\varepsilon),
  \]
  we first consider the principal part and define
  \[
    \Psi(x, y) = (\nabla_x \boldsymbol{\varphi}_0(x) + \nabla_y \boldsymbol{\varphi}_1(x, y)) \cdot \mathbf{1}_{Y \setminus \omega}(y).
  \]
  Since $\nabla_x \boldsymbol{\varphi}_0 + \nabla_y \boldsymbol{\varphi}_1 \in C^\infty(\bar{\Omega}; C_{\#}^\infty(Y))$ and $\partial\omega \in C^\infty$, the characteristic function $\mathbf{1}_{Y \setminus \omega}$ is Riemann integrable, and thus $\Psi$ is an admissible test function; see \cite[Remark~1.4]{Allaire1992}. By items (3) and (4) of Lemma~\ref{lem:two-scale-convergence} and the definition of two-scale convergence, as $\varepsilon \to 0$,
  \begin{align}
    &\int_\Omega [\lambda(\nabla \cdot \mathbf{u}_\varepsilon) \mathbb{I} + 2\mu \mathcal{D}(\mathbf{u}_\varepsilon)] : (\nabla_x \boldsymbol{\varphi}_0 + \nabla_y \boldsymbol{\varphi}_1) \, \mathbf{1}_{Y \setminus \omega}\!\left(\frac{x}{\varepsilon}\right) \mathrm{d}x \notag \\
    &\qquad \longrightarrow \int_\Omega \int_{Y \setminus \omega} [\lambda(\nabla_x \cdot \mathbf{u}_0 + \nabla_y \cdot \mathbf{u}_1)\mathbb{I} \notag \\
    &\qquad\qquad + 2\mu (\mathcal{D}_x(\mathbf{u}_0) + \mathcal{D}_y(\mathbf{u}_1))] : (\nabla_x \boldsymbol{\varphi}_0 + \nabla_y \boldsymbol{\varphi}_1) \, \mathrm{d}y \, \mathrm{d}x.
  \end{align}
  For the remainder $\varepsilon \nabla_x \boldsymbol{\varphi}_1$, the uniform $L^2(\Omega)$ bound on $\mathcal{D}(\mathbf{u}_\varepsilon)$ and the boundedness of $\nabla_x \boldsymbol{\varphi}_1$ yield
  \begin{equation}
    \left| \int_\Omega [\lambda(\nabla \cdot \mathbf{u}_\varepsilon) \mathbb{I} + 2\mu \mathcal{D}(\mathbf{u}_\varepsilon)] : \varepsilon \nabla_x \boldsymbol{\varphi}_1\!\left(x, \frac{x}{\varepsilon}\right) \mathbf{1}_{Y \setminus \omega}\!\left(\frac{x}{\varepsilon}\right) \mathrm{d}x \right| \leq C\varepsilon \to 0.
  \end{equation}
  Similarly, using $\mathbf{1}_{\omega}(x/\varepsilon)$ for the integral over $D_\varepsilon$, we get
  \begin{align}
    &\int_\Omega 2\widetilde{\mu} \mathcal{D}(\mathbf{u}_\varepsilon) : \nabla\boldsymbol{\varphi}^\varepsilon \, \mathbf{1}_{\omega}\!\left(\frac{x}{\varepsilon}\right) \mathrm{d}x \notag \\
    &\qquad \longrightarrow \int_\Omega \int_{\omega} 2\widetilde{\mu} (\mathcal{D}_x(\mathbf{u}_0) + \mathcal{D}_y(\mathbf{u}_1)) : (\nabla_x \boldsymbol{\varphi}_0 + \nabla_y \boldsymbol{\varphi}_1) \, \mathrm{d}y \, \mathrm{d}x.
  \end{align}

  \textbf{Pressure term.}
  By item (5) of Lemma~\ref{lem:two-scale-convergence}, $p_\varepsilon$ two-scale converges to $p_0$. Likewise, $\mathrm{div} \, \boldsymbol{\varphi}^\varepsilon(x)\mathbf{1}_{\omega}(x/\varepsilon)$ is an admissible test function. Hence
  \begin{equation}
    \int_\Omega p_\varepsilon \, \mathrm{div} \, \boldsymbol{\varphi}^\varepsilon \, \mathbf{1}_{\omega}\!\left(\frac{x}{\varepsilon}\right) \mathrm{d}x
    \longrightarrow
    \int_\Omega \int_{\omega} p_0 \, (\mathrm{div}_x \, \boldsymbol{\varphi}_0 + \mathrm{div}_y \, \boldsymbol{\varphi}_1) \, \mathrm{d}y \, \mathrm{d}x.
  \end{equation}

  \textbf{Incompressibility constraint.}
  First take $\psi \in C^\infty(\bar{\Omega}; C_{\#}^\infty(Y))$. Note that $\mathrm{div}\,\mathbf{u}_\varepsilon = 0$ in $D_\varepsilon$, while the characteristic function $\mathbf{1}_\omega(x/\varepsilon)$ equals $1$ on $D_\varepsilon$. However, in the boundary layer $K_\varepsilon = \Omega \setminus Y_\varepsilon$ near $\partial\Omega$, it is possible that $\mathbf{1}_\omega(x/\varepsilon)=1$ although $x \notin D_\varepsilon$. Since $|K_\varepsilon| = O(\varepsilon)$, $\mathrm{div}\,\mathbf{u}_\varepsilon$ is uniformly bounded in $L^2(\Omega)$, and $\psi$ is bounded, we obtain
  \begin{align}
    &\left| \int_\Omega \mathrm{div} \, \mathbf{u}_\varepsilon \cdot \psi\!\left(x, \frac{x}{\varepsilon}\right) \mathbf{1}_{\omega}\!\left(\frac{x}{\varepsilon}\right) \mathrm{d}x \right|
    =
    \left| \int_{K_\varepsilon} \mathrm{div} \, \mathbf{u}_\varepsilon \cdot \psi\!\left(x, \frac{x}{\varepsilon}\right) \mathbf{1}_{\omega}\!\left(\frac{x}{\varepsilon}\right) \mathrm{d}x \right| \notag \\
    &\qquad \leq C |K_\varepsilon|^{1/2} \|\mathrm{div}\,\mathbf{u}_\varepsilon\|_{L^2(\Omega)}
    \leq C\varepsilon^{1/2} \to 0.
  \end{align}
  On the other hand, $\psi(x,x/\varepsilon)\mathbf{1}_{\omega}(x/\varepsilon)$ is an admissible test function, so by Lemma~\ref{lem:two-scale-convergence},
  \begin{equation}
    \int_\Omega \mathrm{div} \, \mathbf{u}_\varepsilon \cdot \psi\!\left(x, \frac{x}{\varepsilon}\right) \mathbf{1}_{\omega}\!\left(\frac{x}{\varepsilon}\right) \mathrm{d}x
    \longrightarrow
    \int_\Omega \int_{\omega} (\mathrm{div}_x \, \mathbf{u}_0 + \mathrm{div}_y \, \mathbf{u}_1) \psi \, \mathrm{d}y \, \mathrm{d}x.
  \end{equation}
  Therefore,
  \[
    \int_\Omega \int_{\omega} (\mathrm{div}_x \, \mathbf{u}_0 + \mathrm{div}_y \, \mathbf{u}_1) \psi \, \mathrm{d}y \, \mathrm{d}x = 0,
    \qquad \forall \psi \in C_0^\infty(\Omega; C_{\#}^\infty(Y)).
  \]
  Since this class is dense in $L^2(\Omega; L^2(\omega))$, equation \eqref{eq:two-scale-incomp} holds for all $\psi \in L^2(\Omega; L^2(\omega))$.

  \textbf{Right-hand side.}
  Because $\boldsymbol{\varphi}_1$ vanishes near $\partial\Omega$,
  \[
    \int_{\partial\Omega} \mathbf{g} \cdot \boldsymbol{\varphi}^\varepsilon \, \mathrm{d}S
    =
    \int_{\partial\Omega} \mathbf{g} \cdot \boldsymbol{\varphi}_0 \, \mathrm{d}S.
  \]

  Collecting all terms and letting $\varepsilon \to 0$, we see that \eqref{eq:two-scale-variational}--\eqref{eq:two-scale-incomp} hold for the above smooth test functions. By density of $C^\infty(\bar{\Omega}; \mathbb{R}^d) \cap V^\varepsilon$ in $V^\varepsilon$, and of $C_0^\infty(\Omega; C^\infty_{\#}(Y; \mathbb{R}^d))$ in $L^2(\Omega; H^1_{\#,0}(Y; \mathbb{R}^d))$, the equations extend by continuity to all
  \[
    (\boldsymbol{\varphi}_0, \boldsymbol{\varphi}_1) \in V^\varepsilon \times L^2(\Omega; H^1_{\#,0}(Y; \mathbb{R}^d)).
  \]

  Finally, from the two-scale variational formulation \eqref{eq:two-scale-variational}--\eqref{eq:two-scale-incomp}, the coercivity of the bilinear form (using \eqref{eq:A-positivity} and the periodic Korn inequality), and the inf-sup condition in Lemma~\ref{lem:two-scale-inf-sup}, the Babu\v{s}ka-Brezzi theorem yields uniqueness of $(\mathbf{u}_0,\mathbf{u}_1,p_0)$. Hence the convergence holds for the whole sequence, not merely for a subsequence.
\end{proof}

\begin{lemma}[Two-Scale Inf-Sup Condition]\label{lem:two-scale-inf-sup}
  There exists $\beta > 0$ such that
  \begin{equation}
    \inf_{0 \neq p \in L^2(\Omega; L^2(\omega))} \sup_{\substack{(\boldsymbol{\varphi}_0, \boldsymbol{\varphi}_1) \in H^1(\Omega; \mathbb{R}^d) \times \\ L^2(\Omega; H^1_{\#,0}(Y; \mathbb{R}^d))}} \frac{\int_\Omega \int_\omega p \, (\mathrm{div}_x \, \boldsymbol{\varphi}_0 + \mathrm{div}_y \, \boldsymbol{\varphi}_1) \, \mathrm{d}y \, \mathrm{d}x}{\|(\boldsymbol{\varphi}_0, \boldsymbol{\varphi}_1)\|_{H^1(\Omega) \times L^2(\Omega; H^1_{\#,0}(Y))} \cdot \|p\|_{L^2(\Omega \times \omega)}} \geq \beta.
  \end{equation}
\end{lemma}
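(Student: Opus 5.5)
Since the pressure space carries no macroscopic constraint, I will take $\boldsymbol{\varphi}_0 = 0$ and construct only a microscopic test function $\boldsymbol{\varphi}_1$. This reduces the two-scale inf-sup condition to a cell inf-sup condition applied pointwise in $x$, which is the same Bogovski\u{\i}-type construction used in Lemma~\ref{lem:inf-sup} and in the proof of Theorem~\ref{thm:cell-well-posedness}.

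\textbf{Step 1 (cell lifting).} For $q \in L^2(\omega)$ I will build $\mathbf{w} = \mathcal{B}q \in H^1_{\#,0}(Y;\mathbb{R}^d)$ with $\nabla_y\cdot \mathbf{w} = q$ in $\omega$ and $\|\mathbf{w}\|_{H^1(Y)} \le C\|q\|_{L^2(\omega)}$, where $\mathcal{B}$ is \emph{linear}. The construction follows Lemma~\ref{lem:inf-sup} at scale $\varepsilon=1$:
\begin{enumerate}
  \item Extend $q$ to $\hat q$ on the cube $\tilde Y \supset \omega$ by the constant $-|\tilde Y\setminus\omega|^{-1}\int_\omega q$ on $\tilde Y\setminus\omega$, so that $\hat q$ has zero mean on $\tilde Y$.
  \item Apply the Bogovski\u{\i} operator on $\tilde Y$ (Appendix Theorem~\ref{thm:bogovskii}), which is linear and bounded from $L^2_0(\tilde Y)$ to $H^1_0(\tilde Y)$.
  \item Extend by zero to $Y$ and then periodically.
  \item Subtract the mean over $Y$. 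This changes neither the divergence nor the gradient, and the mean is bounded by the $L^2$ norm.
\end{enumerate}
Linearity of $\mathcal{B}$ is essential for the next step.

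\textbf{Step 2 (measurability in $x$).} Given $p \in L^2(\Omega;L^2(\omega))$, I will set $\boldsymbol{\varphi}_1(x,\cdot) := \mathcal{B}\,p(x,\cdot)$. Since $\mathcal{B}$ is a bounded linear operator between separable Hilbert spaces, composing it with a Bochner-measurable map gives a Bochner-measurable map. Hence $\boldsymbol{\varphi}_1 \in L^2(\Omega;H^1_{\#,0}(Y;\mathbb{R}^d))$ with
\[
\|\boldsymbol{\varphi}_1\|_{L^2(\Omega;H^1)} \le C\|p\|_{L^2(\Omega\times\omega)}.
\]
This measurability point is the only potential obstacle, and linearity of $\mathcal{B}$ is exactly what disposes of it. Without linearity one would need a measurable selection argument.

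\textbf{Step 3 (conclusion).} With $\boldsymbol{\varphi}_0 = 0$, the numerator equals
\[
\int_\Omega\int_\omega p\,\nabla_y\cdot\boldsymbol{\varphi}_1 = \|p\|^2_{L^2(\Omega\times\omega)},
\]
and the denominator is at most $C\|p\|^2_{L^2(\Omega\times\omega)}$. This gives $\beta = 1/C$, which depends only on $\omega$ and $\tilde Y$. The test pair $(0,\boldsymbol{\varphi}_1)$ also lies in $V^\varepsilon\times L^2(\Omega;H^1_{\#,0})$, so the same bound holds for the restricted space used in the Babu\v{s}ka-Brezzi argument of Theorem~\ref{thm:two-scale-limit}.
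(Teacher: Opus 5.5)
Your proposal is correct and follows the paper's proof. Both arguments take $\boldsymbol{\varphi}_0=0$, extend $p(x,\cdot)$ by a constant to a zero-mean function, and apply a bounded linear right inverse of $\mathrm{div}_y$ fiberwise in $x$, so that the numerator equals $\|p\|_{L^2(\Omega\times\omega)}^2$. The only difference is the choice of lifting: the paper uses the periodic Bogovski\u{\i} operator on all of $Y$, built from the periodic Poisson problem, whereas you use the classical Bogovski\u{\i} operator on the inner cube $\tilde Y$ followed by zero extension, periodization and mean subtraction; this works equally well under (A2), and you also make the Bochner-measurability step explicit where the paper only asserts it.
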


\begin{proof}
  For any $p \in L^2(\Omega; L^2(\omega))$, define the zero-mean extension
  \begin{equation}
    \bar{p}(x, y) =
    \begin{cases}
      p(x, y) & \text{in } \Omega \times \omega, \\[2mm]
      \displaystyle -\frac{1}{|Y \setminus \omega|} \int_\omega p(x, y) \, \mathrm{d}y & \text{in } \Omega \times (Y \setminus \omega).
    \end{cases}
  \end{equation}
  Then
  \[
    \|\bar{p}\|_{L^2(\Omega \times Y)} \leq C_1 \|p\|_{L^2(\Omega \times \omega)},
  \]
  and for almost every $x \in \Omega$,
  \[
    \int_Y \bar{p}(x, y) \, \mathrm{d}y = 0.
  \]
  Hence $\bar{p} \in L^2(\Omega; L_0^2(Y))$.

  Let $\mathcal{B}_{\mathrm{per}}: L_0^2(Y) \to H^1_{\#,0}(Y; \mathbb{R}^d)$ denote the periodic Bogovski\u{\i} operator. Since $\mathcal{B}_{\mathrm{per}}$ is bounded and linear, it induces a bounded linear operator, still denoted by $\mathcal{B}_{\mathrm{per}}$,
  \[
    \mathcal{B}_{\mathrm{per}}: L^2(\Omega; L_0^2(Y)) \to L^2(\Omega; H^1_{\#,0}(Y; \mathbb{R}^d)),
  \]
  acting on the $y$-variable. Define
  \[
    \boldsymbol{\varphi}_0 := 0, \qquad
    \boldsymbol{\varphi}_1 := \mathcal{B}_{\mathrm{per}}(\bar{p}).
  \]
  Then $\boldsymbol{\varphi}_1 \in L^2(\Omega; H^1_{\#,0}(Y; \mathbb{R}^d))$ and
  \begin{equation}
    \mathrm{div}_y \, \boldsymbol{\varphi}_1 = \bar{p} \quad \text{in } \Omega \times Y, \qquad
    \|\boldsymbol{\varphi}_1\|_{L^2(\Omega; H^1(Y))} \leq C_2 \|\bar{p}\|_{L^2(\Omega \times Y)}.
  \end{equation}
  Therefore
  \[
    \mathrm{div}_x \, \boldsymbol{\varphi}_0 + \mathrm{div}_y \, \boldsymbol{\varphi}_1 = \bar{p},
  \]
  and on $\omega$ we have $\bar{p} = p$. Hence
  \begin{equation}
    \int_\Omega \int_\omega p \, (\mathrm{div}_x \, \boldsymbol{\varphi}_0 + \mathrm{div}_y \, \boldsymbol{\varphi}_1) \, \mathrm{d}y \, \mathrm{d}x = \int_\Omega \int_\omega p^2 \, \mathrm{d}y \, \mathrm{d}x = \|p\|^2_{L^2(\Omega \times \omega)},
  \end{equation}
  while
  \[
    \|(\boldsymbol{\varphi}_0, \boldsymbol{\varphi}_1)\|_{H^1(\Omega) \times L^2(\Omega; H^1_{\#,0}(Y))}
    =
    \|\boldsymbol{\varphi}_1\|_{L^2(\Omega; H^1(Y))}
    \leq C_1 C_2 \|p\|_{L^2(\Omega \times \omega)}.
  \]
  Taking $\beta = (C_1 C_2)^{-1}$ yields the desired inf-sup condition.
\end{proof}

\subsection{The Homogenized Problem}\label{subsec:homogenized-problem}

Starting from the two-scale variational formulation \eqref{eq:two-scale-variational}--\eqref{eq:two-scale-incomp} in Theorem~\ref{thm:two-scale-limit}, we derive the microscopic and macroscopic equations by choosing suitable test functions; see \cite{Baffico2008}.

\textbf{Microscopic equation.}
In \eqref{eq:two-scale-variational}, we set $\boldsymbol{\varphi}_0 = 0$. Then, for every $\boldsymbol{\varphi}_1 \in C^\infty(\bar{\Omega}; C^\infty_{\#}(Y; \mathbb{R}^d))$,
\begin{align}\label{eq:consistency-weak}
  0 &= \int_\Omega \int_{Y \setminus \omega} \left[ \lambda (\nabla_x \cdot \mathbf{u}_0 + \nabla_y \cdot \mathbf{u}_1) \nabla_y \cdot \boldsymbol{\varphi}_1 + 2\mu (\mathcal{D}_x(\mathbf{u}_0) + \mathcal{D}_y(\mathbf{u}_1)) : \mathcal{D}_y(\boldsymbol{\varphi}_1) \right] \mathrm{d}y \, \mathrm{d}x \notag \\
  &\quad + \int_\Omega \int_{\omega} \left[ p_0 \, \mathrm{div}_y \, \boldsymbol{\varphi}_1 + 2\widetilde{\mu} (\mathcal{D}_x(\mathbf{u}_0) + \mathcal{D}_y(\mathbf{u}_1)) : \mathcal{D}_y(\boldsymbol{\varphi}_1) \right] \mathrm{d}y \, \mathrm{d}x.
\end{align}
Integrating by parts with respect to $y$ separately in $Y \setminus \omega$ and $\omega$, we find that the contributions on $\partial Y$ vanish by periodicity. Since $\nabla_x \cdot \mathbf{u}_0$ and $\mathcal{D}_x(\mathbf{u}_0)$ are independent of $y$, the derivative $\nabla_y$ acts only on the $\sigma_y$ terms. Using test functions supported respectively in the interior of $Y \setminus \overline{\omega}$, in the interior of $\omega$, and across $\partial\omega$, and invoking the arbitrariness of $\boldsymbol{\varphi}_1$ together with \eqref{eq:two-scale-incomp}, we recover, with the stress notation \eqref{eq:sigma-y-elastic}--\eqref{eq:sigma-y-fluid}, the strong form
\begin{equation}\label{eq:cell-strong}
  \begin{cases}
    \nabla_y \cdot \sigma_y(\mathbf{u}_1) = 0 & \text{in } Y \setminus \omega, \\
    \nabla_y \cdot \sigma_y(\mathbf{u}_1, p_0) = 0 & \text{in } \omega, \\
    \nabla_y \cdot \mathbf{u}_1 = -\nabla_x \cdot \mathbf{u}_0 & \text{in } \omega, \\
    \left[ \sigma_y(\mathbf{u}_1, p_0) - \sigma_y(\mathbf{u}_1) \right] \cdot N = \mathbf{G}^{ij} (\mathcal{D}_x \mathbf{u}_0)^{ij} & \text{on } \partial\omega,
  \end{cases}
\end{equation}
where $\mathbf{G}^{ij}$ is defined by \eqref{eq:interface-jump}. This coincides with the $O(\varepsilon^{-1})$ equation \eqref{eq:order-minus1-simplified} obtained from the formal asymptotic expansion. By linearity of the cell problem,
\begin{equation}\label{eq:u1-representation}
  \mathbf{u}_1(x, y) = (\mathcal{D}_x \mathbf{u}_0)^{ij}(x) \, \chi^{ij}(y),
\end{equation}
where $\chi^{ij}$ is the cell corrector; see \eqref{eq:cell-problem}.

\textbf{Macroscopic equation.}
In \eqref{eq:two-scale-variational}, we set $\boldsymbol{\varphi}_1 = 0$. Then, for every $\boldsymbol{\varphi}_0 \in C^\infty(\bar{\Omega}; \mathbb{R}^d)\cap V^\varepsilon$,
\begin{align}\label{eq:macro-weak}
  \int_{\partial\Omega} \mathbf{g} \cdot \boldsymbol{\varphi}_0 \, \mathrm{d}S &= \int_\Omega \int_{Y \setminus \omega} \left[ \lambda (\nabla_x \cdot \mathbf{u}_0 + \nabla_y \cdot \mathbf{u}_1) \nabla_x \cdot \boldsymbol{\varphi}_0 \right. \notag \\
  &\qquad\qquad \left. + 2\mu (\mathcal{D}_x(\mathbf{u}_0) + \mathcal{D}_y(\mathbf{u}_1)) : \mathcal{D}_x(\boldsymbol{\varphi}_0) \right] \mathrm{d}y \, \mathrm{d}x \notag \\
  &\quad + \int_\Omega \int_{\omega} \left[ p_0 \, \mathrm{div}_x \, \boldsymbol{\varphi}_0 + 2\widetilde{\mu} (\mathcal{D}_x(\mathbf{u}_0) + \mathcal{D}_y(\mathbf{u}_1)) : \mathcal{D}_x(\boldsymbol{\varphi}_0) \right] \mathrm{d}y \, \mathrm{d}x.
\end{align}
Since $\boldsymbol{\varphi}_0$ is independent of $y$, we substitute \eqref{eq:u1-representation} into \eqref{eq:macro-weak}, integrate with respect to $y$, and then use the cell problem \eqref{eq:cell-problem} together with the definition \eqref{eq:effective-tensor} of the effective tensor. In this way, \eqref{eq:macro-weak} reduces to
\[
  \int_{\partial\Omega} \mathbf{g} \cdot \boldsymbol{\varphi}_0 \, \mathrm{d}S = \int_\Omega (\hat{A} : \mathcal{D}_x(\mathbf{u}_0)) : \mathcal{D}_x(\boldsymbol{\varphi}_0) \, \mathrm{d}x.
\]
This is precisely the weak form of the effective elasticity equation; equivalently,
\begin{equation}\label{eq:effective-lame}
  \begin{cases}
    \nabla \cdot (\hat{A} : \mathcal{D}(\mathbf{u}_0)) = 0 & \text{in } \Omega, \\
    (\hat{A} : \mathcal{D}(\mathbf{u}_0)) \cdot n = \mathbf{g} & \text{on } \partial\Omega.
  \end{cases}
\end{equation}
This agrees with the $O(\varepsilon^{0})$ equation obtained from the formal asymptotic expansion. This completes the proof of Theorem~\ref{thm:homogenization}.

\begin{remark}[Absence of an Explicit Incompressibility Constraint]
  The effective equation \eqref{eq:effective-lame} is an effective elasticity equation without an explicit pressure term. This means that the incompressibility constraint and the pressure variable in the microscopic fluid region no longer appear explicitly in the macroscopic limit equation.
\end{remark}

\begin{remark}[Summary of the Convergence Results]
  Combining the above analysis, we obtain:
  \begin{enumerate}
    \item $\mathbf{u}_\varepsilon \rightharpoonup \mathbf{u}_0$ weakly in $H^1(\Omega)$;
    \item $\nabla \mathbf{u}_\varepsilon \xrightarrow{2} \nabla_x \mathbf{u}_0(x) + (\mathcal{D}_x \mathbf{u}_0)^{ij}(x) \nabla_y \chi^{ij}(y)$;
    \item the limit $\mathbf{u}_0$ satisfies the effective elasticity equation.
  \end{enumerate}
\end{remark}

\section{Summary of the Chapter}

In this chapter, we establish the homogenization theory for the Lam\'e-Stokes coupled system. The main points are summarized as follows:

\begin{enumerate}
  \item \textbf{Formal asymptotic expansion}: we expand the solution in powers of $\varepsilon$ and, by matching terms of the same order, derive the cell problem together with the formula for the effective coefficients.
  \item \textbf{Well-posedness of the cell problem}: using the Babu\v{s}ka-Brezzi theory, we prove the existence and uniqueness of the cell correctors $(\chi, r)$.
  \item \textbf{Rigorous justification by two-scale convergence}: applying Allaire's two-scale convergence theory, we rigorously prove that $\mathbf{u}_\varepsilon$ converges weakly to the homogenized solution $\mathbf{u}_0$, and we verify the consistency of the rigorous limit with the formal asymptotic expansion.
  \item \textbf{Homogenized equation}: the homogenized limit $\mathbf{u}_0$ satisfies an effective elasticity equation whose effective elasticity tensor $\hat{A}$ is determined by the cell problem.
\end{enumerate}

In particular, although the microscopic model contains an incompressible Stokes phase, the macroscopic limit is governed by an effective elasticity equation without an explicit pressure term. This shows that the microscopic incompressibility constraint is averaged out in the homogenization process.


\chapter{Convergence-Rate Estimates}\label{chap:convergence-rate}

In Chapter 3, we derived the homogenized equation for the Lam\'e-Stokes coupled system by combining formal asymptotic expansion with two-scale convergence, and proved that the microscopic solutions converge to the solution of the homogenized equation. In this chapter, we further investigate the quantitative rate of this convergence. More precisely, by adapting and extending the quantitative framework developed by Shen for periodic elliptic systems \cite{Shen2018}, and combining it with the mixed variational structure of the present problem and the treatment of the pressure term, we establish $O(\sqrt{\varepsilon})$ convergence-rate estimates for the displacement in the $H^1(\Omega)$ norm and for the pressure in the $L^2(D_\varepsilon)$ norm.

The analysis in this chapter is based on the cell correctors and effective coefficients constructed in Chapter 3, and also relies on the Lipschitz regularity of the cell correctors, namely $\chi \in W^{1,\infty}(Y)$; this property will be proved in Chapter 5; see Corollary~\ref{cor:corrector-regularity}. The chapter is organized as follows. In \S\ref{sec:tools}, we collect the analytical tools needed in the sequel. We then derive the key estimate for the error of the first-order corrected approximation. Finally, in \S\ref{sec:rate-theorem}, we prove the convergence-rate theorems for both the displacement and the pressure.

\section{Analytical Tools}\label{sec:tools}

In this section we collect several analytical tools used in the proof of the convergence-rate estimates. Most of them come from Section 3.1 of Shen \cite{Shen2018}. In the present work, these tools enter mainly through the smoothing approximation in the first-order ansatz, the boundary-layer error estimates generated by the boundary cut-off, and the potential decomposition of rapidly oscillating flux terms. On top of this, the simultaneous control of the displacement and pressure errors requires a further extension adapted to the mixed variational structure of the Lam\'e-Stokes system. For $t>0$, define the boundary-layer region
\begin{equation}\label{eq:boundary-layer-def}
  \Omega_t = \{x \in \Omega : \mathrm{dist}(x, \partial\Omega) < t\}.
\end{equation}

\paragraph{The Steklov smoothing operator.}
To ensure sufficient regularity of the macroscopic gradient terms appearing in the corrected approximation, we introduce the Steklov smoothing operator $S_\varepsilon$:
\begin{equation}\label{eq:steklov-def}
  S_\varepsilon(f)(x) = \rho_\varepsilon * f(x) = \int_{\mathbb{R}^d} f(x - y) \rho_\varepsilon(y) \, \mathrm{d}y,
\end{equation}
where $\rho \in C_0^\infty(B(0, 1/2))$ satisfies $\rho \geq 0$ and $\int_{\mathbb{R}^d} \rho(x) \, \mathrm{d}x = 1$, and $\rho_\varepsilon(y) = \varepsilon^{-d} \rho(y/\varepsilon)$. Write $S_\varepsilon^2 = S_\varepsilon \circ S_\varepsilon$.

The following four propositions are standard results from Section~3.1 of Shen~\cite{Shen2018}.

\begin{proposition}[Approximation Properties]\label{prop:steklov-approx}
  Let $f \in W^{1,p}(\mathbb{R}^d)$, where $1 \leq p \leq \infty$. Then
  \begin{equation}\label{eq:steklov-approx}
    \|S_\varepsilon(f) - f\|_{L^p(\mathbb{R}^d)} \leq \varepsilon \|\nabla f\|_{L^p(\mathbb{R}^d)}.
  \end{equation}
  Moreover, if $q = \frac{2d}{d+1}$, then
  \begin{align}
    \|S_\varepsilon(f)\|_{L^2(\mathbb{R}^d)} &\leq C \varepsilon^{-1/2} \|f\|_{L^q(\mathbb{R}^d)}, \label{eq:steklov-Lq-L2} \\
    \|S_\varepsilon(f) - f\|_{L^2(\mathbb{R}^d)} &\leq C \varepsilon^{1/2} \|\nabla f\|_{L^q(\mathbb{R}^d)}, \label{eq:steklov-Lq-approx}
  \end{align}
  where $C$ depends only on $d$.
\end{proposition}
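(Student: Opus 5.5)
The three estimates follow from Minkowski's integral inequality, Young's convolution inequality, and a Fourier-side bound. Throughout, write $S_\varepsilon(f)-f=\int_{\mathbb{R}^d}\bigl(f(x-y)-f(x)\bigr)\rho_\varepsilon(y)\,\mathrm{d}y$ and use $\rho\ge0$, $\int\rho=1$, $\operatorname{supp}\rho\subset B(0,1/2)$.

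\textbf{First estimate \eqref{eq:steklov-approx}.} By density it suffices to take $f\in C_0^\infty(\mathbb{R}^d)$ when $p<\infty$; for $p=\infty$ the same pointwise argument works directly using the Lipschitz bound $|f(x-y)-f(x)|\le|y|\,\|\nabla f\|_\infty$. Write
\[
f(x-y)-f(x)=-\int_0^1 \nabla f(x-ty)\cdot y\,\mathrm{d}t .
\]
Minkowski's integral inequality in the variables $(y,t)$ then gives
\[
\|S_\varepsilon f-f\|_{L^p}\le \int\!\!\int_0^1 |y|\,\|\nabla f(\cdot-ty)\|_{L^p}\,\rho_\varepsilon(y)\,\mathrm{d}t\,\mathrm{d}y\le \|\nabla f\|_{L^p}\int|y|\rho_\varepsilon(y)\,\mathrm{d}y .
\]
Since $|y|<\varepsilon/2$ on $\operatorname{supp}\rho_\varepsilon$, the last integral is at most $\varepsilon/2\le\varepsilon$.

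\textbf{Second estimate \eqref{eq:steklov-Lq-L2}.} This is Young's inequality $\|\rho_\varepsilon*f\|_{L^2}\le\|\rho_\varepsilon\|_{L^s}\|f\|_{L^q}$ with $1+\tfrac12=\tfrac1s+\tfrac1q$. Since $q=\tfrac{2d}{d+1}$ gives $\tfrac1q=\tfrac{d+1}{2d}$, we get $\tfrac1s=\tfrac32-\tfrac{d+1}{2d}=\tfrac{2d-1}{2d}$. By scaling, $\|\rho_\varepsilon\|_{L^s}=\varepsilon^{-d(1-1/s)}\|\rho\|_{L^s}=\varepsilon^{-d/(2d)}\|\rho\|_{L^s}=\varepsilon^{-1/2}\|\rho\|_{L^s}$. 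The constant $C=\|\rho\|_{L^s}$ depends only on $d$ once $\rho$ is fixed.

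\textbf{Third estimate \eqref{eq:steklov-Lq-approx}.} This is the main obstacle, because Young's inequality alone does not produce the gain of $\varepsilon$ from the increment. I would work on the Fourier side. Writing $\widehat{S_\varepsilon f-f}=(\hat\rho(\varepsilon\xi)-1)\hat f(\xi)$, we have
\[
S_\varepsilon f-f = K_\varepsilon*\nabla f,\qquad \widehat{K_\varepsilon}(\xi)=\frac{(\hat\rho(\varepsilon\xi)-1)\,\xi}{i|\xi|^2},
\]
and $K_\varepsilon(x)=\varepsilon^{1-d}K(x/\varepsilon)$ for a fixed vector kernel $K$ with $\hat K(\xi)=(\hat\rho(\xi)-1)\xi/(i|\xi|^2)$. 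The goal is then $\|K\|_{L^s}<\infty$ for the same $s=\tfrac{2d}{2d-1}$; scaling gives $\|K_\varepsilon\|_{L^s}=\varepsilon^{1-d+d/s}\|K\|_{L^s}=\varepsilon^{1/2}\|K\|_{L^s}$, and Young's inequality finishes.

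To bound $K$: since $\hat\rho(0)=1$ and $\hat\rho$ is smooth, $\hat K$ is smooth away from $0$ and bounded near $0$. Explicitly,
\[
K=-c_d\nabla\Gamma*(\rho-\delta)=c_d\bigl(\nabla\Gamma-\nabla\Gamma*\rho\bigr),
\]
where $\Gamma$ is the Newtonian potential.
\begin{itemize}
\item Near the origin, $|K(x)|\lesssim|x|^{1-d}$, which lies in $L^s_{\mathrm{loc}}$ because $s(d-1)<d$, i.e. $s<d/(d-1)$; this holds since $\tfrac{2d}{2d-1}<\tfrac{d}{d-1}$.
\item For $|x|>1$, the fact that $\int\rho=1$ lets us subtract $\nabla\Gamma(x)$ inside the convolution, and a Taylor expansion gives $|K(x)|\lesssim|x|^{-d}$, which lies in $L^s$ at infinity since $s>1$.
\end{itemize}

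As a fallback, I would instead prove the estimate by duality: pair $S_\varepsilon f-f$ with $g\in L^2$ and use the adjoint $S_\varepsilon^*$ (convolution with $\rho(-\cdot)$) together with the same kernel representation.
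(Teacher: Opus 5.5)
Your proof is correct, and for the two $L^q\to L^2$ bounds it takes a different route from the source the paper relies on. The paper gives no proof of this proposition; it quotes it from Section~3.1 of Shen. There the first bound is obtained essentially as you do, by Minkowski's inequality applied to $f(x-y)-f(x)$.

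The other two bounds are proved on the Fourier side in Shen, as I recall his argument. By Plancherel, the pointwise bounds $|\hat\rho(\varepsilon\xi)|^2\le C(\varepsilon|\xi|)^{-1}$ and $|\hat\rho(\varepsilon\xi)-1|^2\le C\varepsilon|\xi|$ reduce both estimates to controlling $\int|\xi|^{-1}|\hat g(\xi)|^2\,d\xi$, with $g=f$ or $g=\nabla f$. That quantity is handled by the fractional integration (Hardy--Littlewood--Sobolev) bound $\|I_{1/2}g\|_{L^2}\le C\|g\|_{L^q}$, and this is exactly where the exponent $q=2d/(d+1)$ comes from.

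You use only Young's inequality. This works because the $\varepsilon$-scale smoothing makes the relevant kernels genuinely belong to $L^s$ with $s=2d/(2d-1)$:
\begin{itemize}
\item $\rho_\varepsilon$ is in $L^s$ trivially.
\item $K=\nabla\Gamma*(\delta-\rho)$ is in $L^s$ because its singularity $|x|^{1-d}$ lies in $L^s_{\mathrm{loc}}$, and the cancellation $\int\rho=1$ improves its decay at infinity to $|x|^{-d}$.
\end{itemize}
By contrast, the kernel $|x|^{-(d-1/2)}$ of $I_{1/2}$ lies only in weak-$L^s$, which is why the Fourier route needs HLS rather than Young.

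So your argument is more elementary and self-contained. The same kernel bound also yields, with no extra work, the family of estimates $\|S_\varepsilon f-f\|_{L^r}\le C\varepsilon^{1-d(1/p-1/r)}\|\nabla f\|_{L^p}$ for $0<1/p-1/r<1/d$. The Fourier proof is shorter once HLS is granted and makes the critical role of $q$ transparent.

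One routine step should be written out. Verify the representation $S_\varepsilon f-f=K_\varepsilon*\nabla f$ for $f\in C_0^\infty(\mathbb{R}^d)$, via $f=-\nabla\Gamma*\nabla f$, and then pass to $f\in W^{1,q}(\mathbb{R}^d)$ by density. The $L^2$ limit is identified with $S_\varepsilon f-f$ through its convergence in $L^q$. Your duality fallback is not needed.
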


\begin{proposition}[Estimates for Products with Periodic Functions]\label{prop:steklov-periodic}
  Let $g \in L^p_{\mathrm{loc}}(\mathbb{R}^d)$ be $1$-periodic, with $1 \leq p < \infty$. Then for any open set $\mathcal{O} \subset \mathbb{R}^d$ and any $f \in L^p_{\mathrm{loc}}(\mathbb{R}^d)$,
  \begin{equation}\label{eq:steklov-periodic}
    \|g(x/\varepsilon) \, S_\varepsilon(f)\|_{L^p(\mathcal{O})} \leq C \|g\|_{L^p(Y)} \|f\|_{L^p(\mathcal{O}(\varepsilon/2))},
  \end{equation}
  where $\mathcal{O}(t) = \{x \in \mathbb{R}^d : \mathrm{dist}(x, \mathcal{O}) < t\}$, and $C$ depends only on $p$.

  Similarly, for the gradient of $S_\varepsilon$ one has
  \begin{equation}\label{eq:steklov-periodic-grad}
    \|g(x/\varepsilon) \, \nabla S_\varepsilon(f)\|_{L^p(\mathcal{O})} \leq C \varepsilon^{-1} \|g\|_{L^p(Y)} \|f\|_{L^p(\mathcal{O}(\varepsilon/2))}.
  \end{equation}
\end{proposition}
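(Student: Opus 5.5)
The plan is to write $g(x/\varepsilon)\,S_\varepsilon(f)(x)$ as a convolution average and use Fubini together with periodicity, following Shen's argument. The gradient bound will then follow from the same computation with $\rho_\varepsilon$ replaced by $\nabla\rho_\varepsilon$. Fix $x\in\mathcal{O}$. Since $\rho\ge0$ and $\int\rho_\varepsilon=1$, Jensen's (or H\"older's) inequality gives
\[
|S_\varepsilon(f)(x)|^p \le \int_{\mathbb{R}^d} |f(x-y)|^p \rho_\varepsilon(y)\,\mathrm{d}y ,
\]
and only $|y|<\varepsilon/2$ contributes, because $\operatorname{supp}\rho_\varepsilon\subset B(0,\varepsilon/2)$.

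Multiply this pointwise bound by $|g(x/\varepsilon)|^p$, integrate over $x\in\mathcal{O}$, and substitute $z=x-y$. This gives
\[
\int_{\mathcal{O}} |g(x/\varepsilon)|^p |S_\varepsilon f(x)|^p\,\mathrm{d}x \le \int_{\mathcal{O}(\varepsilon/2)} |f(z)|^p \Big( \int_{\mathbb{R}^d} |g((z+y)/\varepsilon)|^p \rho_\varepsilon(y)\,\mathrm{d}y\Big)\,\mathrm{d}z .
\]
Here $z=x-y\in\mathcal{O}(\varepsilon/2)$ whenever $x\in\mathcal{O}$ and $|y|<\varepsilon/2$, and the integrand is nonnegative, so Tonelli justifies the interchange. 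The inner integral is the main point. Bound $\rho_\varepsilon\le \varepsilon^{-d}\|\rho\|_\infty \mathbf{1}_{B(0,\varepsilon/2)}$ and rescale $y=\varepsilon s$. The inner integral is then at most $\|\rho\|_\infty\int_{B(0,1/2)}|g(z/\varepsilon+s)|^p\,\mathrm{d}s$. The ball $B(0,1/2)$ lies in a translate-union of at most $2^d$ unit cells, or more simply in a cube of side $1$ around the point. By $1$-periodicity of $g$ this is bounded by $C\|g\|_{L^p(Y)}^p$ uniformly in $z$ and $\varepsilon$. Taking $p$-th roots gives \eqref{eq:steklov-periodic} with $C$ depending on $\rho$ and $d$; this is the sense of ``depends only on $p$'' once $\rho$ is fixed.

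For \eqref{eq:steklov-periodic-grad}, note that $\nabla S_\varepsilon(f)=(\nabla\rho_\varepsilon)*f$ and $|\nabla\rho_\varepsilon(y)|\le \varepsilon^{-d-1}\|\nabla\rho\|_\infty\mathbf{1}_{B(0,\varepsilon/2)}$. The kernel is no longer a probability density, so instead apply H\"older with respect to the measure $|\nabla\rho_\varepsilon|\,\mathrm{d}y$, whose total mass is $\varepsilon^{-1}\|\nabla\rho\|_{L^1}$. This yields
\[
|\nabla S_\varepsilon f(x)|^p\le (C\varepsilon^{-1})^{p-1}\int |f(x-y)|^p|\nabla\rho_\varepsilon(y)|\,\mathrm{d}y .
\]
Repeating the Fubini and periodicity step, the inner integral now gives an extra factor $C\varepsilon^{-1}$. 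The total factor is therefore $C\varepsilon^{-p}$, and taking roots gives the $\varepsilon^{-1}$ bound.

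The only delicate step is the uniform bound on the inner integral via periodicity. Everything else is routine. There is one subtlety in the phrase $f\in L^p_{\mathrm{loc}}$: the right-hand side may be infinite, in which case there is nothing to prove.
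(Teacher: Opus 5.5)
Your proof is correct and is essentially the standard argument from Section~3.1 of Shen's monograph \cite{Shen2018}, which the paper invokes by citation rather than reproving. That argument uses Jensen's inequality for the probability kernel $\rho_\varepsilon$, Tonelli with the shift $z=x-y$ landing in $\mathcal{O}(\varepsilon/2)$, periodicity to bound $\int_{B(z/\varepsilon,1/2)}|g|^p\le\|g\|_{L^p(Y)}^p$ uniformly, and the bound $|\nabla\rho_\varepsilon|\le C\varepsilon^{-d-1}\mathbf{1}_{B(0,\varepsilon/2)}$ for the gradient case, exactly as you do (your remark that $C$ really depends on the fixed mollifier $\rho$ is accurate).
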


\paragraph{Boundary-layer integral estimates.}
Since a boundary cut-off will be introduced later, the error terms naturally involve integrals supported in thin boundary layers of the form $\Omega_{c\varepsilon}$. We therefore need the following estimates.

\begin{proposition}[Boundary-Layer Estimates]\label{prop:boundary-layer}
  Let $\Omega$ be a bounded Lipschitz domain in $\mathbb{R}^d$, and let $q = \frac{2d}{d+1}$. Then for any $f \in W^{1,q}(\Omega)$,
  \begin{equation}\label{eq:boundary-layer-thin}
    \|f\|_{L^2(\Omega_t)} \leq C t^{1/2} \|f\|_{W^{1,q}(\Omega)}, \qquad \|f\|_{L^2(\partial\Omega)} \leq C \|f\|_{W^{1,q}(\Omega)},
  \end{equation}
  where $C$ depends only on $\Omega$.
\end{proposition}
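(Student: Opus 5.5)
We prove the two estimates in \eqref{eq:boundary-layer-thin} separately, reducing both to the trace inequality and to a uniform-in-layer version of it. The setup is a fixed finite cover of $\partial\Omega$ by coordinate cylinders, in each of which $\Omega$ is the region above a Lipschitz graph $x_d>\phi(x')$. We also fix a partition of unity subordinate to this cover. This reduces both estimates to the model situation $f$ supported in a cylinder $\{|x'|<r,\ \phi(x')<x_d<\phi(x')+r\}$, with constants depending only on the Lipschitz character of $\Omega$. For $t\ge c_0$ (a fixed fraction of $r$), the first inequality is trivial from Sobolev embedding $W^{1,q}(\Omega)\subset L^2(\Omega)$. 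This embedding holds because $q=\frac{2d}{d+1}$ satisfies $\frac1q-\frac1d=\frac{d-1}{2d}<\frac12$, i.e. $q^*=\frac{2d}{d-1}\ge2$. We may therefore assume $t$ small.

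\textbf{Trace estimate.} The key point is that $q=\frac{2d}{d+1}$ is exactly the exponent for which the trace map $W^{1,q}(\Omega)\to L^{p}(\partial\Omega)$ reaches $p=\frac{(d-1)q}{d-q}=2$. Indeed, $d-q=\frac{d(d-1)}{d+1}$, so $\frac{(d-1)q}{d-q}=\frac{2d(d-1)/(d+1)}{d(d-1)/(d+1)}=2$. I will prove this directly rather than cite it. In the model cylinder, set $g=|f|^2$ (truncated smooth approximants first) and integrate along vertical segments:
\[
|f(x',\phi(x'))|^2\le \int_{\phi(x')}^{\phi(x')+r}|\partial_d(\theta|f|^2)|\,\mathrm{d}x_d ,
\]
where $\theta$ is a cutoff equal to one near the graph. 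Integrating in $x'$ and using $\mathrm{d}S\le\sqrt{1+|\nabla\phi|^2}\,\mathrm{d}x'$ gives
\[
\|f\|_{L^2(\partial\Omega)}^2\le C\int_\Omega\big(|f||\nabla f|+|f|^2\big).
\]
By H\"older with exponents $q$ and $q'=\frac{2d}{d-1}=q^*$, this is at most $C\|f\|_{L^{q^*}}\|\nabla f\|_{L^q}+C\|f\|_{L^2}^2$. The Sobolev inequality $\|f\|_{L^{q^*}(\Omega)}\le C\|f\|_{W^{1,q}(\Omega)}$ then yields $\|f\|_{L^2(\partial\Omega)}^2\le C\|f\|_{W^{1,q}}^2$. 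Density of smooth functions in $W^{1,q}(\Omega)$ on Lipschitz domains extends the estimate to all $f$.

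\textbf{Thin layer.} For $0<s<c_0$, the level surfaces $\Gamma_s=\{x_d=\phi(x')+s\}$ in the chart are Lipschitz graphs with the same constant. Applying the above argument on the shifted domains $\{x_d>\phi(x')+s\}$, which have uniformly controlled Lipschitz character, gives $\int_{\Gamma_s}|f|^2\,\mathrm{d}S\le C\|f\|_{W^{1,q}(\Omega)}^2$ uniformly in $s$. The layer $\Omega_t$ is contained, chart by chart, in $\{\phi(x')<x_d<\phi(x')+Ct\}$, since Euclidean distance to the graph is comparable to vertical distance with constant $\sqrt{1+\|\nabla\phi\|_\infty^2}$. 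Fubini in $(x',s)$ then gives
\[
\|f\|_{L^2(\Omega_t)}^2\le\int_0^{Ct}\int_{\Gamma_s}|f|^2\,\mathrm{d}S\,\mathrm{d}s\le Ct\,\|f\|_{W^{1,q}(\Omega)}^2 .
\]
Taking square roots yields the first estimate. The main obstacle is ensuring uniformity of the trace constant on the shifted surfaces, including the Sobolev constant on the shifted subdomains. I would sidestep this by not restricting $f$ at all. The vertical integration for $\Gamma_s$ can be run over the fixed segment from $\phi(x')+s$ to $\phi(x')+r$ inside the original cylinder, with the same cutoff $\theta$. The right-hand side is then always bounded by $\int_\Omega(|f||\nabla f|+|f|^2)$ over the whole of $\Omega$, so only the single Sobolev inequality on $\Omega$ is used.
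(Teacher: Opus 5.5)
Your proof is correct. There is no in-paper argument to compare it with: the paper quotes this proposition without proof, as one of four standard facts from Section 3.1 of Shen's monograph. Your write-up therefore gives a self-contained version of the usual argument.

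The decisive observation is the one you isolate. For $q=\frac{2d}{d+1}$ the dual exponent is $q'=\frac{2d}{d-1}=q^*$, so the elementary bound $\|f\|_{L^2(\partial\Omega)}^2\le C\int_\Omega(|f|^2+|f||\nabla f|)$ closes with a single Sobolev inequality on $\Omega$. You get that bound by vertical integration, which is equivalent to the divergence theorem with a smooth field transversal to $\partial\Omega$. This has two advantages over the fractional route $W^{1,q}(\Omega)\to B^{1-1/q}_{q,q}(\partial\Omega)\hookrightarrow L^2(\partial\Omega)$:
\begin{itemize}
\item it avoids fractional trace spaces entirely;
\item it makes uniformity over the parallel graphs $\Gamma_s$ automatic, because your bound for every $s$ is the same fixed integral over $\Omega$, with no Sobolev constants on shifted subdomains.
\end{itemize}

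One piece of chart bookkeeping in the layer estimate should be stated explicitly. Choose the partition of unity to sum to $1$ on some $\Omega_{t_0}$. Choose the cylinders so that, for $t<t_0$ and $x\in\Omega_t\cap\mathrm{supp}\,\psi_k$, a nearest boundary point lies on the graph part of the enlarged $k$-th cylinder. Only then does your vertical-versus-Euclidean distance comparison give $\Omega_t\cap\mathrm{supp}\,\psi_k\subset\{\phi_k<x_d<\phi_k+Ct\}$. The range $t\ge t_0$ is covered by your trivial case.
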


\begin{proposition}[Estimates for Periodic Functions in Boundary Layers]\label{prop:boundary-periodic}
  Let $\Omega$ be a bounded Lipschitz domain in $\mathbb{R}^d$, let $q = \frac{2d}{d+1}$, and let $g \in L^2_{\mathrm{loc}}(\mathbb{R}^d)$ be $1$-periodic. Then for any $f \in W^{1,q}(\Omega)$ and $t \geq \varepsilon$,
  \begin{equation}\label{eq:boundary-periodic-layer}
    \int_{\Omega_{2t} \setminus \Omega_t} |g(x/\varepsilon)|^2 |S_\varepsilon(f)|^2 \, \mathrm{d}x \leq C t \, \|g\|_{L^2(Y)}^2 \|f\|_{W^{1,q}(\Omega)}^2,
  \end{equation}
  where $C$ depends only on $\Omega$.
\end{proposition}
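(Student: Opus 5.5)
The plan is to reduce \eqref{eq:boundary-periodic-layer} to the two earlier estimates: the periodic-product bound of Proposition~\ref{prop:steklov-periodic} applied cell by cell, and the thin-layer estimate of Proposition~\ref{prop:boundary-layer} applied to the layer slightly enlarged by $\varepsilon/2$. To make this work, $f$ is first extended to $\mathbb{R}^d$ so that $S_\varepsilon(f)$ is defined near $\partial\Omega$. Since $\Omega$ is a bounded Lipschitz domain, there is a bounded extension operator $W^{1,q}(\Omega)\to W^{1,q}(\mathbb{R}^d)$. We take $\tilde f$ with $\|\tilde f\|_{W^{1,q}(\mathbb{R}^d)}\le C\|f\|_{W^{1,q}(\Omega)}$ and compact support in a fixed neighbourhood of $\overline\Omega$, and interpret $S_\varepsilon(f)=S_\varepsilon(\tilde f)$.

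Next, apply \eqref{eq:steklov-periodic} with $p=2$ and $\mathcal{O}=\Omega_{2t}\setminus\Omega_t$. This gives
\[
\int_{\Omega_{2t}\setminus\Omega_t}|g(x/\varepsilon)|^2|S_\varepsilon(\tilde f)|^2\,\mathrm{d}x\le C\|g\|_{L^2(Y)}^2\|\tilde f\|_{L^2(\mathcal{O}(\varepsilon/2))}^2 .
\]
If one is worried about the exact form of that proposition, the same bound follows directly: Cauchy--Schwarz gives $|S_\varepsilon \tilde f(x)|^2\le \int|\tilde f(x-y)|^2\rho_\varepsilon(y)\,\mathrm{d}y$; then apply Fubini, and for each fixed $y$ use $\int_{\mathbb{R}^d} |g(x/\varepsilon)|^2 h(x-y)\,\mathrm{d}x$ with $h=|\tilde f|^2\mathbf 1_{\mathcal O(\varepsilon/2)}$. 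Averaging $|g(\cdot/\varepsilon)|^2$ against $\rho_\varepsilon$ yields $\sup_x\int |g((x-y)/\varepsilon)|^2\rho_\varepsilon(y)\,\mathrm{d}y\le C\|g\|_{L^2(Y)}^2\|\rho\|_\infty$. Because $t\ge\varepsilon$, the enlarged set satisfies $\mathcal{O}(\varepsilon/2)\subset\{x:\ t/2<\delta(x)<5t/2\}$, where $\delta$ is the signed distance to $\partial\Omega$. This set is contained in the two-sided tubular layer $\{|\delta|<3t\}$.

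It then remains to show $\|\tilde f\|_{L^2(\{|\delta|<3t\})}^2\le Ct\|\tilde f\|_{W^{1,q}(\mathbb{R}^d)}^2$. For the interior part this is exactly \eqref{eq:boundary-layer-thin} with $3t$ in place of $t$. For the exterior part we use the same estimate on a slightly larger Lipschitz domain $\Omega'\supset\overline\Omega$, or equivalently rerun the proof of \eqref{eq:boundary-layer-thin} in a two-sided collar. Locally flatten by Lipschitz graphs, write the layer as $\{(x',s):|s|<3t\}$, and bound $\int_{|s|<3t}\int|\tilde f(x',s)|^2\,\mathrm{d}x'\,\mathrm{d}s\le 6t\sup_s\|\tilde f(\cdot,s)\|_{L^2}^2$. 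Each slice trace is controlled uniformly by the $L^2$-trace inequality $\|\cdot\|_{L^2(\text{slice})}\le C\|\cdot\|_{W^{1,q}}$, the second estimate in \eqref{eq:boundary-layer-thin}, which holds precisely for $q=2d/(d+1)$ by the Sobolev-trace exponent $q(d-1)/(d-q)=2$. Combining the three displays with the extension bound gives \eqref{eq:boundary-periodic-layer}.

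The main obstacle is the step past $\partial\Omega$. The $\varepsilon/2$-neighbourhood leaks outside $\Omega$, so one needs both the extension and the uniformity of the slice trace constant across a two-sided collar. The restriction $t\ge\varepsilon$ is used exactly to absorb the $\varepsilon/2$ enlargement into a layer of width comparable to $t$.
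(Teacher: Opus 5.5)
Your argument is correct and is essentially the standard one behind this result, which the paper does not prove but cites from Shen's Section~3.1: apply the periodic-product bound \eqref{eq:steklov-periodic} with $p=2$ on $\mathcal{O}=\Omega_{2t}\setminus\Omega_t$, then apply the thin-layer estimate \eqref{eq:boundary-layer-thin} on the $\varepsilon/2$-enlargement. The one correction is that your ``main obstacle'' does not exist: every $z\in\Omega_{2t}\setminus\Omega_t$ satisfies $B(z,t)\subset\Omega$, and $\varepsilon/2<t$, so your own inclusion already gives $\mathcal{O}(\varepsilon/2)\subset\Omega_{5t/2}\setminus\Omega_{t/2}\subset\Omega$. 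Keeping the smoothing inside $\Omega$ is exactly what the hypothesis $t\ge\varepsilon$ buys, so the extension and the exterior-collar argument are unnecessary, and a single application of \eqref{eq:boundary-layer-thin} with $3t$ in place of $t$ finishes the proof.
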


\paragraph{Flux correctors.}\label{sec:flux-corrector}
To handle the difference between the oscillatory coefficients and the effective coefficients, we further introduce flux correctors. We continue to use the notation from Chapter 3 for the linear displacement field $p^{ij}$, the cell corrector $(\chi^{ij}, r^{ij})$ introduced through the representation \eqref{eq:separation} and determined by the cell problem \eqref{eq:cell-problem}, and the effective elasticity tensor $\hat{A} = (\hat{a}_{ij}^{\alpha\beta})$.

Define the $Y$-periodic function $G = (G_{ij}^{\alpha\beta}(y))$ by
\begin{equation}\label{eq:G-def}
  G_{ij}^{\alpha\beta}(y) = \left[A(I+\nabla_y\chi)\right]_{ij}^{\alpha\beta}(y) + r^{j\beta}(y)\,\delta_{i\alpha}\,\mathbf{1}_\omega(y) - \hat{a}_{ij}^{\alpha\beta},
\end{equation}
where
\[
  [A(I+\nabla_y\chi)]_{ij}^{\alpha\beta}
  =
  a_{ik}^{\alpha\gamma}\big(\delta_{kj}\delta_{\gamma\beta} + \partial_k\chi_\gamma^{j\beta}\big).
\]
The following proposition gives the key structural property of $G$; see \cite[Proposition~3.1.1]{Shen2018}. Its proof is given in Appendix~\ref{app:tools}, Section~\ref{sec:flux-corrector-proof}.

\begin{proposition}[Potential Representation of the Flux Corrector]\label{prop:flux-corrector-potential}
  For each set of indices $i,j,\alpha,\beta$, there exists
  \[
    \Phi_{kij}^{\alpha\beta} \in H_{\mathrm{per}}^1(Y)\cap L^\infty(Y)
  \]
  such that
  \begin{equation}\label{eq:G-potential}
    G_{ij}^{\alpha\beta} = \frac{\partial}{\partial y_k}\Phi_{kij}^{\alpha\beta},
    \qquad
    \Phi_{kij}^{\alpha\beta} = -\Phi_{ikj}^{\alpha\beta}.
  \end{equation}
\end{proposition}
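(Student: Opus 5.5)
The plan follows Shen's proof of \cite[Proposition~3.1.1]{Shen2018}, adapted to the extra pressure contribution in $G$. The argument needs three facts: $G$ has zero cell average, $G$ is divergence free in the first index, and enough regularity to put $\Phi$ in $L^\infty$.

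\textbf{Step 1 (zero mean).} I would show that $\int_Y G_{ij}^{\alpha\beta}\,\mathrm{d}y = 0$. By definition,
\[
\int_Y \bigl[A(I+\nabla_y\chi)\bigr]_{ij}^{\alpha\beta}\,\mathrm{d}y + \int_\omega r^{j\beta}\delta_{i\alpha}\,\mathrm{d}y
\]
is exactly the explicit formula \eqref{eq:effective-tensor-explicit} for $\hat a_{ij}^{\alpha\beta}$. Since $|Y|=1$, subtracting the constant $\hat a_{ij}^{\alpha\beta}$ gives mean zero.

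\textbf{Step 2 (divergence free).} For fixed $j,\beta$ and each $\alpha$, I would check that $\partial_{y_i} G_{ij}^{\alpha\beta} = 0$ in the sense of periodic distributions. Testing against $\varphi\in C^\infty_\#(Y;\mathbb{R}^d)$, the quantity
\[
\int_Y G_{ij}^{\alpha\beta}\,\partial_i\varphi^\alpha\,\mathrm{d}y
\]
equals the left-hand side of the weak cell problem satisfied by $(p^{j\beta}+\chi^{j\beta}, r^{j\beta})$, i.e.\ $a(p^{j\beta}+\chi^{j\beta},\varphi)+b(\varphi,r^{j\beta})$, minus the constant term. The constant term vanishes by periodicity. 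The remaining weak form is zero, because the interface data $\mathbf{G}^{ij}$ in \eqref{eq:cell-problem} are precisely the traction produced by $p^{j\beta}$, so the pair $(p^{j\beta}+\chi^{j\beta}, r^{j\beta})$ has continuous total traction. This step needs the sign conventions of \eqref{eq:cell-variational} to match those in \eqref{eq:G-def}; that is where I expect minor care.

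\textbf{Step 3 (potential).} Since $G\in L^2_\#(Y)$ has mean zero, I would solve the periodic Poisson problem $\Delta_y f_{ij}^{\alpha\beta} = G_{ij}^{\alpha\beta}$ with $f\in H^2_\#(Y)$ and set
\[
\Phi_{kij}^{\alpha\beta} := \partial_k f_{ij}^{\alpha\beta} - \partial_i f_{kj}^{\alpha\beta}.
\]
Antisymmetry in $(k,i)$ is immediate. Then
\[
\partial_k\Phi_{kij}^{\alpha\beta} = \Delta f_{ij}^{\alpha\beta} - \partial_i(\partial_k f_{kj}^{\alpha\beta}) = G_{ij}^{\alpha\beta},
\]
because $\partial_k f_{kj}^{\alpha\beta}$ solves $\Delta(\partial_k f_{kj}^{\alpha\beta}) = \partial_k G_{kj}^{\alpha\beta} = 0$ by Step 2 and is therefore constant. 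This also gives $\Phi\in H^1_{\mathrm{per}}(Y)$.

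\textbf{Step 4 ($L^\infty$ bound).} This is the main obstacle. Boundedness of $\Phi$ needs $G\in L^p$ with $p>d$; then $f\in W^{2,p}$ and $\nabla f\in L^\infty$ by Sobolev embedding. $G$ involves $\nabla\chi$ and $r\mathbf{1}_\omega$. Theorem~\ref{thm:regularity} gives $\chi\in W^{1,\infty}(Y)$, and its piecewise $H^k$ regularity of $r$ gives $r\in L^\infty(\omega)$, so $G\in L^\infty$. The jump of $G$ across $\partial\omega$ does not matter, since only $L^p$ control is required. Calderón–Zygmund estimates for the periodic Laplacian then give $f\in W^{2,p}_\#$ for all $p<\infty$, and hence $\Phi\in L^\infty$. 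If one wanted to avoid relying on Chapter 5, one could instead use a Meyers-type higher integrability estimate, but that would only give $p$ slightly above $2$, which is insufficient for $d\ge 2$. So I would invoke Theorem~\ref{thm:regularity}.
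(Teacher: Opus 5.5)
Your proposal is correct and follows the paper's proof essentially step for step: zero mean from the definition of $\hat a_{ij}^{\alpha\beta}$, $\partial_i G_{ij}^{\alpha\beta}=0$ from the cell problem, the potential $\Phi_{kij}^{\alpha\beta}=\partial_k f_{ij}^{\alpha\beta}-\partial_i f_{kj}^{\alpha\beta}$ built from the periodic Poisson problem $\Delta_y f_{ij}^{\alpha\beta}=G_{ij}^{\alpha\beta}$, and boundedness of $\Phi$ from $G\in L^\infty(Y)$ supplied by Chapter~5. The differences are cosmetic: you check divergence-freeness directly through the weak cell formulation, which avoids interface traces, whereas the paper argues phase by phase and then uses continuity of the normal flux across $\partial\omega$; and you use Calder\'on--Zygmund plus Sobolev embedding where the paper uses the periodic Green function bound $|\nabla_y\Gamma(y,z)|\le C|y-z|^{1-d}$ (your aside on Meyers is slightly off, since an exponent just above $2$ already exceeds $d$ when $d=2$).
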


By the extension theorem on Lipschitz domains, there exists a bounded linear operator
\[
  E : H^2(\Omega; \mathbb{R}^d) \to H^2(\mathbb{R}^d; \mathbb{R}^d),
  \qquad
  \|E\mathbf{v}\|_{H^2(\mathbb{R}^d)} \leq C \|\mathbf{v}\|_{H^2(\Omega)}.
\]
Throughout this chapter, we fix an extension $\widetilde{\mathbf{u}}_0 := E\mathbf{u}_0$ of the homogenized solution $\mathbf{u}_0$, and, when no confusion arises, still denote it by $\mathbf{u}_0$. Thus expressions such as $S_\varepsilon(\nabla \mathbf{u}_0)$ and $S_\varepsilon(\nabla^2 \mathbf{u}_0)$ always mean that the Steklov smoothing operator is applied to this fixed extension.

\section{Main Estimate}\label{sec:core-estimate}

Using the analytical tools established above, we now derive a quantitative estimate for the error of the corrected approximation.

Let $\delta = \lfloor\sqrt{d}\rfloor + 1$. Define a cut-off function $\eta_\varepsilon \in C_0^\infty(\Omega)$ satisfying
\[
  0 \leq \eta_\varepsilon \leq 1, \quad |\nabla \eta_\varepsilon| \leq \frac{C}{\varepsilon}, \quad \eta_\varepsilon =
  \begin{cases}
    1, & \mathrm{dist}(x, \partial\Omega) \geq (\delta+1)\varepsilon, \\
    0, & \mathrm{dist}(x, \partial\Omega) \leq \delta\varepsilon.
  \end{cases}
\]
Observe that
\[
  K_\varepsilon = \Omega \setminus Y_\varepsilon \subset \Omega_{\sqrt{d}\,\varepsilon} \subset \Omega_{\delta\varepsilon},
\]
so the boundary buffer region $K_\varepsilon$ is entirely contained in the set where $\eta_\varepsilon = 0$.

Define the displacement error and the pressure error by
\begin{align}
  w_\varepsilon &= u_\varepsilon - u_0 - \varepsilon \chi^{ij}\!\left(\frac{x}{\varepsilon}\right) \eta_\varepsilon S_\varepsilon^2\!\left((\mathcal{D}_x u_0)^{ij}\right), \label{eq:corrected-approx} \\
  q_\varepsilon &= p_\varepsilon - r^{ij}\!\left(\frac{x}{\varepsilon}\right) \eta_\varepsilon S_\varepsilon^2\!\left((\mathcal{D}_x u_0)^{ij}\right). \label{eq:pressure-error}
\end{align}
Here $(\mathcal{D}_x u_0)^{ij}$ denotes the $(i,j)$-component of the symmetric gradient $\mathcal{D}_x(u_0)$. By the symmetry relation \eqref{eq:corrector-symmetry} for the corrector proved in Chapter 3, this contraction is consistent with the symmetric part of the full gradient. The boundary cut-off $\eta_\varepsilon$ ensures that the corrector term vanishes on $\partial\Omega$, so that
\[
  w_\varepsilon|_{\partial\Omega} = (u_\varepsilon-u_0)|_{\partial\Omega} \in H_{\mathcal{R}}^{1/2}(\partial\Omega),
\]
that is, $w_\varepsilon \in V^\varepsilon$; see \eqref{eq:V-eps-def} in Chapter 2. The Steklov smoothing $S_\varepsilon^2$ provides the regularity needed in the subsequent estimates. The goal of this section is to prove the following key lemma.

\begin{lemma}[Flux-Corrector Estimate]\label{lem:core-estimate}
  Let $\Omega$ be a bounded Lipschitz domain in $\mathbb{R}^d$, and let $w_\varepsilon$ and $q_\varepsilon$ be defined by \eqref{eq:corrected-approx} and \eqref{eq:pressure-error}, respectively. Then for any $\psi \in V^\varepsilon$,
  \begin{align}
    &\left| \int_\Omega A\!\left(\frac{x}{\varepsilon}\right) \nabla w_\varepsilon : \nabla \psi \, \mathrm{d}x + \int_{D_\varepsilon} q_\varepsilon \operatorname{div} \psi \, \mathrm{d}x \right| \notag \\
    &\leq C \|\nabla\psi\|_{L^2(\Omega)} \Big\{ \varepsilon \|S_\varepsilon(\nabla^2 u_0)\|_{L^2(\Omega \setminus \Omega_{(\delta-1)\varepsilon})} + \|\nabla u_0 - S_\varepsilon(\nabla u_0)\|_{L^2(\Omega \setminus \Omega_{(\delta-1)\varepsilon})} \Big\} \notag \\
    &\quad + C \|\nabla \psi\|_{L^2(\Omega_{(\delta+1)\varepsilon})} \|\nabla u_0\|_{L^2(\Omega_{(\delta+2)\varepsilon})}, \label{eq:core-bound}
  \end{align}
  where $C$ depends on $\lambda$, $\mu$, $\widetilde{\mu}$, $\Omega$, $\omega$, and $\|\chi\|_{W^{1,\infty}(Y)}$.
\end{lemma}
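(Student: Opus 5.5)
The plan follows the proof of Shen's Lemma 3.2.2 in \cite{Shen2018}, with the pressure treated as an additional flux component. Write $\mathcal{A}(\psi) := \int_\Omega A(x/\varepsilon)\nabla w_\varepsilon:\nabla\psi\,\mathrm{d}x + \int_{D_\varepsilon} q_\varepsilon \operatorname{div}\psi\,\mathrm{d}x$.

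\textbf{Step 1: remove $u_\varepsilon$.} By \eqref{eq:mixed-variational} and the effective equation \eqref{eq:effective-lame}, both tested against $\psi\in V^\varepsilon$, we have
\[
\int_\Omega A(x/\varepsilon)\nabla u_\varepsilon:\nabla\psi + \int_{D_\varepsilon} p_\varepsilon\operatorname{div}\psi = \int_{\partial\Omega}\mathbf{g}\cdot\psi = \int_\Omega \hat{A}\nabla u_0:\nabla\psi .
\]
We use the symmetry of $\hat A$ (Proposition~\ref{prop:symmetry}) to pass between $\mathcal{D}$ and $\nabla$. Hence
\[
\mathcal{A}(\psi)=\int_\Omega \bigl(\hat A - A(x/\varepsilon)\bigr)\nabla u_0:\nabla\psi - \int_\Omega A(x/\varepsilon)\nabla\bigl(\varepsilon\chi^\varepsilon\eta_\varepsilon S_\varepsilon^2(\mathcal{D}u_0)\bigr):\nabla\psi - \int_{D_\varepsilon} r^\varepsilon\eta_\varepsilon S_\varepsilon^2(\mathcal{D}u_0)\operatorname{div}\psi .
\]
Here $\chi^\varepsilon=\chi(x/\varepsilon)$ and $r^\varepsilon=r(x/\varepsilon)$. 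Since $\eta_\varepsilon$ vanishes on $K_\varepsilon$, we may replace $\mathbf{1}_{D_\varepsilon}$ by $\mathbf{1}_\omega(x/\varepsilon)$ in the last integral.

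\textbf{Step 2: expand the gradient.} We have
\[
\nabla(\varepsilon\chi^\varepsilon \eta_\varepsilon S^2_\varepsilon)=(\nabla_y\chi)^\varepsilon\eta_\varepsilon S^2_\varepsilon(\mathcal{D}u_0)+\varepsilon\chi^\varepsilon\nabla(\eta_\varepsilon S^2_\varepsilon(\mathcal{D}u_0)).
\]
Insert $\pm(\hat A - A^\varepsilon)\eta_\varepsilon S_\varepsilon^2(\nabla u_0)$. The integrand then splits into three parts.

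(i) The term $(\hat A - A^\varepsilon)(\nabla u_0-\eta_\varepsilon S_\varepsilon^2\nabla u_0)$. On the support of $\eta_\varepsilon$ this is bounded by $\|\nabla u_0 - S_\varepsilon\nabla u_0\|+\|S_\varepsilon(\nabla u_0 - S_\varepsilon \nabla u_0)\|$. By the contraction property of $S_\varepsilon$ on $L^2$ over a set enlarged by $\varepsilon/2$, this is controlled by $\|\nabla u_0-S_\varepsilon(\nabla u_0)\|_{L^2(\Omega\setminus\Omega_{(\delta-1)\varepsilon})}$. Where $\eta_\varepsilon<1$ it gives $C\|\nabla u_0\|_{L^2(\Omega_{(\delta+2)\varepsilon})}\|\nabla\psi\|_{L^2(\Omega_{(\delta+1)\varepsilon})}$.

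(ii) The combination $-\bigl[A^\varepsilon(I+\nabla_y\chi)^\varepsilon + r^\varepsilon\mathbf{1}_\omega^\varepsilon\,\mathbb{I} - \hat A\bigr]\eta_\varepsilon S^2_\varepsilon(\mathcal{D}u_0)$, which is exactly $-G(x/\varepsilon)\eta_\varepsilon S_\varepsilon^2(\mathcal{D}u_0)$ with $G$ from \eqref{eq:G-def}.

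(iii) The remainder $\varepsilon A^\varepsilon\chi^\varepsilon\nabla(\eta_\varepsilon S^2_\varepsilon(\mathcal{D}u_0))$. Using $\chi\in L^\infty$ and Proposition~\ref{prop:steklov-periodic}, the part with $\nabla S^2_\varepsilon$ gives $C\varepsilon\|S_\varepsilon(\nabla^2u_0)\|_{L^2(\Omega\setminus\Omega_{(\delta-1)\varepsilon})}$. The part with $\nabla\eta_\varepsilon=O(\varepsilon^{-1})$ gives the boundary-layer term.

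\textbf{Step 3: the flux corrector, which is the main obstacle.} For term (ii) we use Proposition~\ref{prop:flux-corrector-potential}, which gives $G=\partial_{y_k}\Phi_k$ with $\Phi$ antisymmetric in $(k,i)$. Then
\[
G_{ij}^{\alpha\beta}(x/\varepsilon)=\varepsilon\,\partial_{x_k}\bigl[\Phi_{kij}^{\alpha\beta}(x/\varepsilon)\bigr].
\]
Integrate by parts against $\eta_\varepsilon S_\varepsilon^2(\mathcal{D}u_0)\partial_i\psi^\alpha$; this is legitimate since $\eta_\varepsilon$ is compactly supported in $\Omega$. The term in which the derivative falls on $\partial_i\psi$ vanishes by the antisymmetry $\Phi_{kij}=-\Phi_{ikj}$, at least for $\psi$ smooth, and then for all $\psi$ by density. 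What remains is
\[
-\varepsilon\int \Phi^\varepsilon\,\partial_k\bigl(\eta_\varepsilon S_\varepsilon^2(\mathcal{D}u_0)\bigr)\partial_i\psi .
\]
Using $\Phi\in L^\infty$ and Proposition~\ref{prop:steklov-periodic}, this is bounded by the same two types of terms as in (iii).

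The delicate point is checking that the pressure contribution is correctly absorbed into $G$. We must confirm that the coefficient of $\operatorname{div}\psi$, namely $r^{j\beta}\delta_{i\alpha}\mathbf{1}_\omega$, matches the definition of $G$, and that $G$ indeed has zero mean and is divergence-free in $y$ across $\partial\omega$. Zero mean follows from \eqref{eq:effective-tensor-explicit}. Divergence-freeness follows from the weak cell problem \eqref{eq:cell-problem}, including the traction jump $\mathbf{G}^{ij}$ and the constraint $\operatorname{div}\chi=-\delta_{ij}$ in $\omega$. Both are needed for the potential $\Phi$ to exist, and they are what Proposition~\ref{prop:flux-corrector-potential} provides. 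Collecting (i)–(iii) and Step 3 then yields \eqref{eq:core-bound}.
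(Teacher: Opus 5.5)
Your proposal is correct and follows the paper's proof essentially step for step. It uses the same subtraction of the microscopic and homogenized weak formulations, the same three-term splitting (your (i), (ii), (iii) are the paper's $I_1$, $I_2$, $I_3$), and the same treatment of $G(x/\varepsilon)$ through the antisymmetric potential $\Phi$, with an integration by parts that kills the $\partial_k\partial_i\psi$ term.

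You are slightly more careful than the paper in two places: the density argument, and replacing $\mathbf{1}_{D_\varepsilon}$ by $\mathbf{1}_\omega(x/\varepsilon)$ on $\operatorname{supp}\eta_\varepsilon$. One small correction: the constraint $\operatorname{div}\chi^{ij}=-\delta_{ij}$ is not what makes $G$ divergence-free, which uses only the two momentum equations and traction continuity. The constraint enters only when matching the energy definition \eqref{eq:effective-tensor} of $\hat A$ with the flux-average formula that gives $\int_Y G=0$.
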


\begin{proof}[Proof of Lemma~\ref{lem:core-estimate}]
By the definitions of $w_\varepsilon$ and $q_\varepsilon$, a direct computation yields
\begin{equation}\label{eq:nabla-w}
  \nabla w_\varepsilon = \nabla u_\varepsilon - \nabla u_0 - \nabla_y\chi^{ij}\!\left(\frac{x}{\varepsilon}\right) \eta_\varepsilon S_\varepsilon^2\!\left((\mathcal{D}_x u_0)^{ij}\right) - \varepsilon \chi^{ij}\!\left(\frac{x}{\varepsilon}\right)\nabla\!\left[\eta_\varepsilon S_\varepsilon^2\!\left((\mathcal{D}_x u_0)^{ij}\right)\right].
\end{equation}

By the mixed variational formulation \eqref{eq:mixed-variational} from Chapter 2 and the homogenized equation \eqref{eq:effective-lame}, for any $\psi \in V^\varepsilon$ we have
\begin{equation}\label{eq:original-weak}
  \begin{aligned}
    \int_\Omega A\!\left(\frac{x}{\varepsilon}\right) \nabla u_\varepsilon : \nabla\psi \, \mathrm{d}x
    + \int_{D_\varepsilon} p_\varepsilon \, \operatorname{div} \psi \, \mathrm{d}x
    &= \int_{\partial\Omega} \mathbf{g}\cdot\psi\,\mathrm{d}S, \\
    \int_\Omega (\hat{A} : \mathcal{D} u_0) : \mathcal{D}\psi \, \mathrm{d}x
    &= \int_{\partial\Omega} \mathbf{g}\cdot\psi\,\mathrm{d}S.
  \end{aligned}
\end{equation}
Subtracting these two identities, the boundary terms cancel exactly. Expanding
\[
  \int_\Omega A\nabla w_\varepsilon : \nabla\psi + \int_{D_\varepsilon} q_\varepsilon \operatorname{div}\psi
\]
and using the above cancellation, one can combine the contributions of $\nabla_y\chi \cdot A$, the pressure corrector term
\[
  r^{ij}\eta_\varepsilon S_\varepsilon^2((\mathcal{D}_x u_0)^{ij}),
\]
and the flux corrector $G$. This yields, for every $\psi \in V^\varepsilon$,
\begin{equation}\label{eq:three-term-expansion}
\begin{aligned}
  &\int_\Omega A \nabla w_\varepsilon : \nabla \psi \, \mathrm{d}x + \int_{D_\varepsilon} q_\varepsilon \operatorname{div} \psi \, \mathrm{d}x \\
  &= \underbrace{\int_\Omega \left(\hat{a}_{ij}^{\alpha\beta}-a_{ij}^{\alpha\beta}\!\left(\frac{x}{\varepsilon}\right)\right)\left((\mathcal{D}_x u_0)^{j\beta} - \eta_\varepsilon S_\varepsilon^2\!\big((\mathcal{D}_x u_0)^{j\beta}\big)\right)\frac{\partial\psi^\alpha}{\partial x_i} \, \mathrm{d}x}_{I_1} \\
  &\quad - \underbrace{\int_\Omega G_{ij}^{\alpha\beta}\!\left(\frac{x}{\varepsilon}\right) \eta_\varepsilon S_\varepsilon^2\!\left((\mathcal{D}_x u_0)^{ij}\right) \frac{\partial\psi^\alpha}{\partial x_i}\,\mathrm{d}x}_{I_2} \\
  &\quad - \underbrace{\int_\Omega \varepsilon A \chi\!\left(\frac{x}{\varepsilon}\right) \nabla\!\left[\eta_\varepsilon S_\varepsilon^2\!\big((\mathcal{D}_x u_0)^{ij}\big)\right] : \nabla \psi \, \mathrm{d}x}_{I_3}.
\end{aligned}
\end{equation}
We now estimate these three terms separately.

\medskip

\textbf{Estimate of $I_1$.}
Note that $1-\eta_\varepsilon$ is supported in $\Omega_{(\delta+1)\varepsilon}$, while $\eta_\varepsilon \equiv 1$ on $\Omega \setminus \Omega_{\delta\varepsilon}$. By the Cauchy-Schwarz inequality, the approximation property of $S_\varepsilon^2$,
\[
  \|S_\varepsilon^2 f - f\|_{L^2(\Omega\setminus\Omega_{\delta\varepsilon})}
  \leq
  C\|S_\varepsilon f - f\|_{L^2(\Omega\setminus\Omega_{(\delta-1)\varepsilon})},
\]
and the boundedness of $A$ and $\hat{A}$, we obtain
\begin{equation}\label{eq:I1-estimate}
\begin{aligned}
  |I_1|
  &\leq C\|\nabla u_0\|_{L^2(\Omega_{(\delta+1)\varepsilon})}
  \|\nabla\psi \|_{L^2(\Omega_{(\delta+1)\varepsilon})} \\
  &\quad + C\|\nabla u_0 - S_\varepsilon(\nabla u_0)\|_{L^2(\Omega\setminus\Omega_{(\delta-1)\varepsilon})}
  \|\nabla\psi \|_{L^2(\Omega)}.
\end{aligned}
\end{equation}

\textbf{Estimate of $I_2$.}
By \eqref{eq:G-potential},
\[
  G_{ij}^{\alpha\beta}(x/\varepsilon)
  =
  \varepsilon\,\partial_k[\Phi_{kij}^{\alpha\beta}(x/\varepsilon)].
\]
Integrating by parts in $I_2$ and using the antisymmetry
\[
  \Phi_{kij} = -\Phi_{ikj},
\]
we have
\[
  \Phi_{kij}^{\alpha\beta}\partial_k\partial_i\psi^\alpha = 0
\]
because this is the pairing of an antisymmetric tensor with a symmetric second derivative. Thus
\begin{equation}\label{eq:I2-ibp}
  I_2 = -\varepsilon\int_\Omega \Phi_{kij}^{\alpha\beta}\!\left(\frac{x}{\varepsilon}\right) \frac{\partial}{\partial x_k}\!\left[\eta_\varepsilon S_\varepsilon^2\!\left((\mathcal{D}_x u_0)^{j\beta}\right)\right] \frac{\partial\psi^\alpha}{\partial x_i}\,\mathrm{d}x.
\end{equation}
Expanding
\[
  \partial_k[\eta_\varepsilon S_\varepsilon^2] = (\partial_k\eta_\varepsilon)S_\varepsilon^2 + \eta_\varepsilon(\partial_k S_\varepsilon^2),
\]
and using Proposition~\ref{prop:flux-corrector-potential}, which implies that $\Phi$ is $Y$-periodic and bounded, hence $\Phi(x/\varepsilon) \in L^\infty(\Omega)$, together with the bound $|\nabla\eta_\varepsilon| \leq C/\varepsilon$ supported in $\Omega_{(\delta+1)\varepsilon}\setminus\Omega_{\delta\varepsilon}$ and the Cauchy-Schwarz inequality, we find
\begin{equation}\label{eq:I2-estimate}
\begin{aligned}
  |I_2| &\leq C\varepsilon \|S_\varepsilon(\nabla^2 u_0)\|_{L^2(\Omega \setminus \Omega_{(\delta-1)\varepsilon})} \|\nabla\psi\|_{L^2(\Omega)} \\
  &\quad + C\|\nabla u_0\|_{L^2(\Omega_{(\delta+2)\varepsilon})} \|\nabla\psi\|_{L^2(\Omega_{(\delta+1)\varepsilon})}.
\end{aligned}
\end{equation}

\textbf{Estimate of $I_3$.}
We expand
\[
  \nabla\!\left[\eta_\varepsilon S_\varepsilon^2\!\big((\mathcal{D}_x u_0)^{ij}\big)\right]
  =
  \nabla\eta_\varepsilon \cdot S_\varepsilon^2\!\big((\mathcal{D}_x u_0)^{ij}\big)
  +
  \eta_\varepsilon \nabla S_\varepsilon^2\!\big((\mathcal{D}_x u_0)^{ij}\big).
\]
For the term involving $\nabla\eta_\varepsilon$, since $A,\chi \in L^\infty(Y)$, their periodic extensions satisfy
\[
  A(x/\varepsilon),\chi(x/\varepsilon)\in L^\infty(\Omega),
\]
and using again $|\nabla\eta_\varepsilon| \leq C/\varepsilon$ with support in $\Omega_{(\delta+1)\varepsilon}\setminus\Omega_{\delta\varepsilon}$, we obtain
\[
  \varepsilon\left|\int_\Omega A\chi\,\nabla\eta_\varepsilon\, S_\varepsilon^2\!\big((\mathcal{D}_x u_0)^{ij}\big):\nabla\psi \, \mathrm{d}x\right|
  \leq
  C\|\nabla u_0\|_{L^2(\Omega_{(\delta+2)\varepsilon})}\|\nabla\psi\|_{L^2(\Omega_{(\delta+1)\varepsilon})}.
\]
For the term involving $\eta_\varepsilon\nabla S_\varepsilon^2$, using $A(x/\varepsilon),\chi(x/\varepsilon)\in L^\infty(\Omega)$ and the fact that $\eta_\varepsilon$ is supported in $\Omega \setminus \Omega_{\delta\varepsilon}$, we have
\[
  \varepsilon\left|\int_\Omega A\chi\,\eta_\varepsilon\,\nabla S_\varepsilon^2\!\big((\mathcal{D}_x u_0)^{ij}\big):\nabla\psi \, \mathrm{d}x\right|
  \leq
  C\varepsilon\|S_\varepsilon(\nabla^2 u_0)\|_{L^2(\Omega \setminus \Omega_{(\delta-1)\varepsilon})}\|\nabla\psi\|_{L^2(\Omega)}.
\]
Therefore
\begin{equation}\label{eq:I3-estimate}
\begin{aligned}
  |I_3| &\leq C\varepsilon \|S_\varepsilon(\nabla^2 u_0)\|_{L^2(\Omega \setminus \Omega_{(\delta-1)\varepsilon})} \|\nabla\psi\|_{L^2(\Omega)} \\
  &\quad + C\|\nabla u_0\|_{L^2(\Omega_{(\delta+2)\varepsilon})} \|\nabla\psi\|_{L^2(\Omega_{(\delta+1)\varepsilon})}.
\end{aligned}
\end{equation}

Combining \eqref{eq:I1-estimate}--\eqref{eq:I3-estimate}, we obtain \eqref{eq:core-bound}.
\end{proof}

\section{Convergence-Rate Theorems}\label{sec:rate-theorem}

Under the assumptions of this paper, Corollary~\ref{cor:corrector-regularity} in Chapter 5 guarantees the regularity assumptions on the correctors used below.

\begin{theorem}[$H^1$ Convergence Rate and Pressure Error Estimate]\label{thm:H1-rate}
  Assume that hypotheses \textup{(A1)}--\textup{(A2)} from Chapter 2 hold. If the corresponding displacement corrector satisfies $\chi \in W^{1,\infty}(Y)$, and if $u_0 \in H^2(\Omega; \mathbb{R}^d)$ is the solution of the homogenized equation, then for all $0 < \varepsilon < 1$,
  \begin{equation}\label{eq:H1-rate}
    \|w_\varepsilon\|_{H^1(\Omega)} + \|q_\varepsilon\|_{L^2(D_\varepsilon)} \leq C \varepsilon^{\frac{1}{2}} \|u_0\|_{H^2(\Omega)},
  \end{equation}
  where $C$ depends on $\lambda$, $\mu$, $\widetilde{\mu}$, $\Omega$, $\omega$, and $\|\chi\|_{W^{1,\infty}(Y)}$.
\end{theorem}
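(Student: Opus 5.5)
We will combine Lemma~\ref{lem:core-estimate} with the mixed (Babu\v{s}ka--Brezzi) structure: first control $\|\nabla w_\varepsilon\|$ by testing with $\psi=w_\varepsilon$, then recover $q_\varepsilon$ from the inf-sup condition of Lemma~\ref{lem:inf-sup}. We use $w_\varepsilon\in V^\varepsilon$ (the cut-off $\eta_\varepsilon$ kills the corrector near $\partial\Omega$). The difficulty is that $w_\varepsilon$ is not divergence free in $D_\varepsilon$, so $\int_{D_\varepsilon} q_\varepsilon \operatorname{div} w_\varepsilon$ does not vanish. We first compute the divergence in $D_\varepsilon$. Since $\operatorname{div}u_\varepsilon=0$ there, $\operatorname{div}_y\chi^{ij}=-\delta_{ij}$ in $\omega$, and $\eta_\varepsilon\equiv1$ on $D_\varepsilon\setminus\Omega_{(\delta+1)\varepsilon}$,
\[
\operatorname{div} w_\varepsilon = -\operatorname{div}u_0 + \eta_\varepsilon S_\varepsilon^2(\operatorname{div}u_0) - \varepsilon\,\chi^{ij}(x/\varepsilon)\cdot\nabla\bigl[\eta_\varepsilon S_\varepsilon^2((\mathcal{D}_xu_0)^{ij})\bigr] \quad\text{in } D_\varepsilon .
\]
By Propositions~\ref{prop:steklov-approx}--\ref{prop:boundary-periodic}, $\chi\in L^\infty$, and $|\nabla\eta_\varepsilon|\le C/\varepsilon$ on a layer of width $O(\varepsilon)$, this gives
\[
\|\operatorname{div}w_\varepsilon\|_{L^2(D_\varepsilon)}\le \Theta_\varepsilon := C\varepsilon^{1/2}\|u_0\|_{H^2(\Omega)}.
\]
Here the boundary-layer pieces are bounded by $\|\nabla u_0\|_{L^2(\Omega_{c\varepsilon})}\le C\varepsilon^{1/2}\|u_0\|_{H^2}$ (Proposition~\ref{prop:boundary-layer} with $W^{1,q}\supset H^2$ control). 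The interior pieces are $O(\varepsilon)$, using $\|\nabla u_0-S_\varepsilon\nabla u_0\|_{L^2}\le C\varepsilon\|u_0\|_{H^2}$ and $\|S_\varepsilon\nabla^2u_0\|_{L^2}\le C\|u_0\|_{H^2}$. The same bounds show that the right-hand side of \eqref{eq:core-bound} is at most $C\Theta_\varepsilon\|\nabla\psi\|_{L^2(\Omega)}$ for every $\psi\in V^\varepsilon$.

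Next we bound the pressure error. Define $\mathcal{F}(\psi)$ as the left-hand expression in \eqref{eq:core-bound}. Lemma~\ref{lem:inf-sup} yields $\psi\in V^\varepsilon$ with $\operatorname{div}\psi=q_\varepsilon$ on $D_\varepsilon$ and $\|\psi\|_{H^1}\le\beta^{-1}\|q_\varepsilon\|_{L^2(D_\varepsilon)}$. Testing with this $\psi$ gives
\[
\|q_\varepsilon\|^2_{L^2(D_\varepsilon)}\le |\mathcal{F}(\psi)|+\Bigl|\int_\Omega A\nabla w_\varepsilon:\nabla\psi\Bigr| \le C\bigl(\Theta_\varepsilon+\|\nabla w_\varepsilon\|_{L^2}\bigr)\|q_\varepsilon\|_{L^2(D_\varepsilon)},
\]
hence $\|q_\varepsilon\|\le C(\Theta_\varepsilon+\|\nabla w_\varepsilon\|)$.

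Then we bound the displacement error by testing with $\psi=w_\varepsilon$. By \eqref{eq:coercivity-result} (Korn applies since $w_\varepsilon\in V^\varepsilon$), and since the form in \eqref{eq:core-bound} equals $a(w_\varepsilon,w_\varepsilon)$ up to symmetrization (the tensor $A$ is symmetric, so $A\nabla w:\nabla w$ matches \eqref{eq:A-expand}),
\[
\alpha\|w_\varepsilon\|_{H^1}^2 \le C\Theta_\varepsilon\|w_\varepsilon\|_{H^1} + \|q_\varepsilon\|_{L^2(D_\varepsilon)}\|\operatorname{div}w_\varepsilon\|_{L^2(D_\varepsilon)} \le C\Theta_\varepsilon\|w_\varepsilon\|_{H^1}+C\Theta_\varepsilon(\Theta_\varepsilon+\|w_\varepsilon\|_{H^1}).
\]
Young's inequality then gives $\|w_\varepsilon\|_{H^1}\le C\Theta_\varepsilon$, and the pressure bound above gives the same for $q_\varepsilon$, which is \eqref{eq:H1-rate}.

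The main obstacle is the first step: the divergence defect of $w_\varepsilon$ in $D_\varepsilon$, which couples the two estimates. The argument relies on two facts: $\eta_\varepsilon=1$ away from the $O(\varepsilon)$ boundary layer, and the identity $\operatorname{div}_y\chi^{ij}=-\delta_{ij}$ cancelling the leading term exactly. Near cells cut by the support of $\nabla\eta_\varepsilon$ we only have the crude layer bound, but that is still $O(\varepsilon^{1/2})$. We must also check that $u_0\in H^2$ makes $\|\nabla u_0\|_{W^{1,q}}$ finite (true since $q<2$ on bounded $\Omega$) for use in Proposition~\ref{prop:boundary-layer}.
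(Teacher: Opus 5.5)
Your proposal is correct and follows essentially the paper's route: the paper also recasts the error as a mixed problem $a(w_\varepsilon,\psi)+b(\psi,q_\varepsilon)=F_\varepsilon(\psi)$, $b(w_\varepsilon,q)=G_\varepsilon(q)$, where $F_\varepsilon$ is controlled by Lemma~\ref{lem:core-estimate}, $G_\varepsilon$ is exactly your divergence defect of $w_\varepsilon$ in $D_\varepsilon$, and both are bounded by $C\varepsilon^{1/2}\|u_0\|_{H^2(\Omega)}$. The only difference is that the paper cites the abstract Babu\v{s}ka--Brezzi stability estimate, whereas you carry it out by hand (inf-sup for $q_\varepsilon$, then testing with $w_\varepsilon$); this is legitimate because coercivity holds on all of $V^\varepsilon$.
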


\begin{proof}
  The proof consists of three steps.

  \textbf{Step 1: the mixed variational structure.}
  We continue to use the bilinear forms $a(\cdot,\cdot)$ and $b(\cdot,\cdot)$ defined in \eqref{eq:bilinear-a}--\eqref{eq:bilinear-b} of Chapter 2. By the derivation in the proof of Lemma~\ref{lem:core-estimate}, we obtain the identity \eqref{eq:three-term-expansion}. This identity holds for every $\psi \in V^\varepsilon$ because $u_\varepsilon$ and $u_0$ satisfy the same traction boundary condition on $\partial\Omega$, and hence the boundary terms cancel. Consequently, for every $\psi \in V^\varepsilon$,
  \begin{equation}\label{eq:mixed-eq1}
    a(w_\varepsilon, \psi) + b(\psi, q_\varepsilon) = F_\varepsilon(\psi),
  \end{equation}
  where
  \begin{align}
    F_\varepsilon(\psi)
    :=
    &\int_\Omega \left(\hat{a}_{ij}^{\alpha\beta}-a_{ij}^{\alpha\beta}\!\left(\frac{x}{\varepsilon}\right)\right)\left((\mathcal{D}_x u_0)^{j\beta}-\eta_\varepsilon S_\varepsilon^2\!\big((\mathcal{D}_x u_0)^{j\beta}\big)\right)\frac{\partial\psi^\alpha}{\partial x_i}\,\mathrm{d}x \notag\\
    &\quad-\int_\Omega G_{ij}^{\alpha\beta}\!\left(\frac{x}{\varepsilon}\right)\eta_\varepsilon S_\varepsilon^2\!\left((\mathcal{D}_x u_0)^{ij}\right)\frac{\partial\psi^\alpha}{\partial x_i}\,\mathrm{d}x \notag\\
    &\quad-\int_\Omega \varepsilon A\chi\!\left(\frac{x}{\varepsilon}\right)\nabla\!\left[\eta_\varepsilon S_\varepsilon^2\!\big((\mathcal{D}_x u_0)^{ij}\big)\right]:\nabla\psi\,\mathrm{d}x. \label{eq:F-eps-def}
  \end{align}
  For every $q \in L^2(D_\varepsilon)$,
  \begin{equation}\label{eq:mixed-eq2}
    b(w_\varepsilon, q) = G_\varepsilon(q),
  \end{equation}
  where
  \begin{align}
    G_\varepsilon(q)
    &:= \int_{D_\varepsilon} q \Big[\eta_\varepsilon S_\varepsilon^2(\operatorname{div} u_0)-\operatorname{div} u_0 \notag\\
    &\qquad\qquad\quad -\varepsilon \chi^{ij}\!\left(\frac{x}{\varepsilon}\right)\cdot\nabla\!\left[\eta_\varepsilon S_\varepsilon^2\!\big((\mathcal{D}_x u_0)^{ij}\big)\right]\Big]\,\mathrm{d}x. \label{eq:G-eps-def}
  \end{align}

  Note that $w_\varepsilon \in V^\varepsilon$: since $\eta_\varepsilon|_{\partial\Omega} = 0$, the corrector term vanishes on $\partial\Omega$, so
  \[
    w_\varepsilon|_{\partial\Omega} = (u_\varepsilon-u_0)|_{\partial\Omega}.
  \]
  Moreover, both $u_\varepsilon|_{\partial\Omega}$ and $u_0|_{\partial\Omega}$ belong to $H_{\mathcal{R}}^{1/2}(\partial\Omega)$, and therefore so does $w_\varepsilon|_{\partial\Omega}$.

  We now apply the Babu\v{s}ka-Brezzi theorem (Theorem~\ref{thm:babuska-brezzi} in Chapter 2) on the space $V^\varepsilon \times L^2(D_\varepsilon)$, following the same framework as in the proof of Theorem~\ref{thm:well-posedness} in Chapter 2.

  \textit{Coercivity.}
  The argument is the same as in \S\ref{subsec:verification} of Chapter 2. By \eqref{eq:A-positivity},
  \[
    a(\psi,\psi) \geq c_0 \|\mathcal{D}(\psi)\|_{L^2(\Omega)}^2.
  \]
  Since the functions in $V^\varepsilon$ have traces orthogonal to rigid motions, the second Korn inequality (Appendix Theorem~\ref{thm:korn}) gives
  \[
    a(\psi,\psi) \geq \alpha \|\psi\|_{H^1(\Omega)}^2, \quad \forall \psi \in V^\varepsilon,
  \]
  where $\alpha = c_0 C_K^2$ is independent of $\varepsilon$.

  \textit{The inf-sup condition.}
  The argument is exactly the same as in Lemma~\ref{lem:inf-sup} of Chapter 2. The constructed test function $\tilde{\mathbf{d}}$ belongs to $H_0^1(\Omega;\mathbb{R}^d) \subset V^\varepsilon$, so the inf-sup condition holds on $V^\varepsilon \times L^2(D_\varepsilon)$ with a constant $\beta > 0$ independent of $\varepsilon$.

  Therefore, by \eqref{eq:mixed-eq1}--\eqref{eq:mixed-eq2} and the Babu\v{s}ka-Brezzi theorem,
  \begin{equation}\label{eq:BB-estimate}
    \|w_\varepsilon\|_{H^1(\Omega)} + \|q_\varepsilon\|_{L^2(D_\varepsilon)}
    \leq C\left(\|F_\varepsilon\|_{(V^\varepsilon)^*} + \|G_\varepsilon\|_{L^2(D_\varepsilon)^*}\right).
  \end{equation}

  \textbf{Step 2: estimate of $\|F_\varepsilon\|_{(V^\varepsilon)^*}$.}
  By the termwise estimate in Lemma~\ref{lem:core-estimate}, for every $\psi \in V^\varepsilon$ we have
  \[
    |F_\varepsilon(\psi)| \leq C\|\psi\|_{H^1(\Omega)} \left(R_\varepsilon + \|\nabla u_0\|_{L^2(\Omega_{(\delta+2)\varepsilon})}\right),
  \]
  where
  \[
    R_\varepsilon = \varepsilon\|S_\varepsilon(\nabla^2 u_0)\|_{L^2(\Omega \setminus \Omega_{(\delta-1)\varepsilon})} + \|\nabla u_0 - S_\varepsilon(\nabla u_0)\|_{L^2(\Omega \setminus \Omega_{(\delta-1)\varepsilon})}.
  \]
  By the Steklov approximation estimates,
  \[
    R_\varepsilon \leq C\varepsilon^{1/2}\|u_0\|_{H^2(\Omega)},
  \]
  while Proposition~\ref{prop:boundary-layer} gives
  \[
    \|\nabla u_0\|_{L^2(\Omega_{(\delta+2)\varepsilon})} \leq C\varepsilon^{1/2}\|u_0\|_{H^2(\Omega)}.
  \]
  Hence
  \begin{equation}\label{eq:F-bound}
    \|F_\varepsilon\|_{(V^\varepsilon)^*} \leq C\varepsilon^{1/2}\|u_0\|_{H^2(\Omega)}.
  \end{equation}

  \textbf{Step 3: estimate of $\|G_\varepsilon\|_{L^2(D_\varepsilon)^*}$.}
  Define
  \[
    R_{\varepsilon,1}=\eta_\varepsilon S_\varepsilon^2(\operatorname{div} u_0) - \operatorname{div} u_0,\qquad
    R_{\varepsilon,2}=\varepsilon \chi^{ij}\!\left(\frac{x}{\varepsilon}\right) \cdot \nabla\!\left[\eta_\varepsilon S_\varepsilon^2\!\big((\mathcal{D}_x u_0)^{ij}\big)\right].
  \]
  By \eqref{eq:G-eps-def}, for any $q \in L^2(D_\varepsilon)$,
  \[
    G_\varepsilon(q)=\int_{D_\varepsilon} q\,(R_{\varepsilon,1}-R_{\varepsilon,2})\,\mathrm{d}x.
  \]
  Therefore
  \[
    \|G_\varepsilon\|_{L^2(D_\varepsilon)^*}
    \leq \|R_{\varepsilon,1}\|_{L^2(D_\varepsilon)}+\|R_{\varepsilon,2}\|_{L^2(D_\varepsilon)}.
  \]

  \textit{Estimate of $R_{\varepsilon,1}$.}
  Since
  \[
    R_{\varepsilon,1}
    =
    \eta_\varepsilon\big(S_\varepsilon^2(\operatorname{div} u_0)-\operatorname{div} u_0\big)
    +
    (\eta_\varepsilon-1)\operatorname{div} u_0,
  \]
  and $(1-\eta_\varepsilon)$ is supported in $\Omega_{(\delta+1)\varepsilon}$, Proposition~\ref{prop:boundary-layer} and the Steklov approximation estimate imply
  \begin{equation}\label{eq:R1-bound}
  \begin{aligned}
    \|R_{\varepsilon,1}\|_{L^2(D_\varepsilon)}
    &\leq \|S_\varepsilon^2(\operatorname{div} u_0)-\operatorname{div} u_0\|_{L^2(\Omega\setminus\Omega_{(\delta-1)\varepsilon})}
    + \|\operatorname{div} u_0\|_{L^2(\Omega_{(\delta+1)\varepsilon})} \\
    &\leq C\varepsilon^{1/2}\|u_0\|_{H^2(\Omega)}.
  \end{aligned}
  \end{equation}

  \textit{Estimate of $R_{\varepsilon,2}$.}
  Using
  \[
    \nabla\!\left[\eta_\varepsilon S_\varepsilon^2\!\big((\mathcal{D}_x u_0)^{ij}\big)\right]
    =
    \nabla\eta_\varepsilon \cdot S_\varepsilon^2\!\big((\mathcal{D}_x u_0)^{ij}\big)
    +
    \eta_\varepsilon \nabla S_\varepsilon^2\!\big((\mathcal{D}_x u_0)^{ij}\big),
  \]
  together with $|\nabla\eta_\varepsilon| \leq C/\varepsilon$, the support property of $\nabla\eta_\varepsilon$, the boundary-layer estimate, and the commutation of convolution and differentiation on $\Omega\setminus\Omega_{\delta\varepsilon}$, we obtain
  \begin{equation}\label{eq:R2-bound}
  \begin{aligned}
    \|R_{\varepsilon,2}\|_{L^2(D_\varepsilon)}
    &\leq C\|\nabla u_0\|_{L^2(\Omega_{(\delta+2)\varepsilon})}
    + C\varepsilon\|S_\varepsilon(\nabla^2 u_0)\|_{L^2(\Omega\setminus\Omega_{(\delta-1)\varepsilon})} \\
    &\leq C\varepsilon^{1/2}\|u_0\|_{H^2(\Omega)}.
  \end{aligned}
  \end{equation}

  Hence
  \begin{equation}\label{eq:G-bound}
    \|G_\varepsilon\|_{L^2(D_\varepsilon)^*} \leq C\varepsilon^{1/2}\|u_0\|_{H^2(\Omega)}.
  \end{equation}

  Substituting \eqref{eq:F-bound} and \eqref{eq:G-bound} into \eqref{eq:BB-estimate} completes the proof.
\end{proof}

We now derive the convergence rate for the displacement.

\begin{theorem}[Convergence Rate for the Displacement]\label{thm:main-rate}
  Assume that hypotheses \textup{(A1)}--\textup{(A2)} from Chapter 2 hold. If the corresponding displacement corrector satisfies $\chi \in W^{1,\infty}(Y)$, and if $u_0 \in H^2(\Omega; \mathbb{R}^d)$, then for all $0 < \varepsilon < 1$,
  \begin{equation}\label{eq:main-rate}
    \left\| u_\varepsilon - u_0 - \varepsilon \chi^{ij}\!\left(\frac{x}{\varepsilon}\right)(\mathcal{D}_x u_0)^{ij} \right\|_{H^1(\Omega)} \leq C \varepsilon^{\frac{1}{2}} \|u_0\|_{H^2(\Omega)},
  \end{equation}
  where $C$ depends on $\lambda$, $\mu$, $\widetilde{\mu}$, $\Omega$, $\omega$, and $\|\chi\|_{W^{1,\infty}(Y)}$.
\end{theorem}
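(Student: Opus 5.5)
The plan is to derive \eqref{eq:main-rate} from Theorem~\ref{thm:H1-rate} via the triangle inequality. We write
\[
  u_\varepsilon - u_0 - \varepsilon \chi^{ij}(x/\varepsilon)(\mathcal{D}_x u_0)^{ij}
  = w_\varepsilon + \varepsilon \chi^{ij}(x/\varepsilon)\Big[\eta_\varepsilon S_\varepsilon^2\big((\mathcal{D}_x u_0)^{ij}\big) - (\mathcal{D}_x u_0)^{ij}\Big],
\]
where $w_\varepsilon$ is defined in \eqref{eq:corrected-approx}. Theorem~\ref{thm:H1-rate} already gives $\|w_\varepsilon\|_{H^1(\Omega)} \le C\varepsilon^{1/2}\|u_0\|_{H^2(\Omega)}$. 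It therefore suffices to show that the difference term $z_\varepsilon := \varepsilon\chi^{ij}(x/\varepsilon)\,h^{ij}_\varepsilon$, with $h^{ij}_\varepsilon := \eta_\varepsilon S_\varepsilon^2(f^{ij}) - f^{ij}$ and $f^{ij} := (\mathcal{D}_x u_0)^{ij}$, satisfies the same $O(\sqrt{\varepsilon})$ bound in $H^1$.

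For the $L^2$ part we use $\chi \in L^\infty(Y)$. This gives $\|z_\varepsilon\|_{L^2} \le C\varepsilon(\|S_\varepsilon^2 f\|_{L^2} + \|f\|_{L^2}) \le C\varepsilon\|u_0\|_{H^1}$, which is harmless. For the gradient we expand
\[
  \nabla z_\varepsilon = (\nabla_y\chi^{ij})(x/\varepsilon)\,h^{ij}_\varepsilon + \varepsilon\chi^{ij}(x/\varepsilon)\big[\nabla\eta_\varepsilon\,S_\varepsilon^2 f^{ij} + \eta_\varepsilon \nabla S_\varepsilon^2 f^{ij} - \nabla f^{ij}\big].
\]
We bound each piece using $\chi\in W^{1,\infty}(Y)$, which Corollary~\ref{cor:corrector-regularity} supplies.

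\textbf{The $\nabla_y\chi$ term.} We use $\|\nabla_y\chi\|_{L^\infty}$ and split $h_\varepsilon = \eta_\varepsilon(S_\varepsilon^2 f - f) + (\eta_\varepsilon - 1)f$. The first part is bounded by $\|S_\varepsilon^2 f - f\|_{L^2(\mathbb{R}^d)} \le 2\varepsilon\|\nabla f\|_{L^2} \le C\varepsilon\|u_0\|_{H^2}$, using Proposition~\ref{prop:steklov-approx} applied twice to the fixed extension. The second part is supported in $\Omega_{(\delta+1)\varepsilon}$, so Proposition~\ref{prop:boundary-layer} (with $H^1 \subset W^{1,q}$ on the bounded domain) gives $\|\nabla u_0\|_{L^2(\Omega_{(\delta+1)\varepsilon})} \le C\varepsilon^{1/2}\|u_0\|_{H^2}$. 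This boundary-layer contribution is the genuinely $\sqrt{\varepsilon}$ piece and is where the rate is lost.

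\textbf{The remaining terms.} With $\chi\in L^\infty$:
\begin{itemize}
  \item the $\nabla\eta_\varepsilon$ term is at most $C\|S_\varepsilon^2 f\|_{L^2(\Omega_{(\delta+1)\varepsilon})}$. Because the mollifier support is of size $\varepsilon$, this is controlled by $C\|f\|_{L^2(\Omega_{(\delta+2)\varepsilon})}$, up to the extension near $\partial\Omega$, which is handled by Proposition~\ref{prop:boundary-layer} applied to the extension on a slightly larger smooth domain. This gives $C\varepsilon^{1/2}\|u_0\|_{H^2}$;
  \item the $\eta_\varepsilon\nabla S_\varepsilon^2 f$ term satisfies $\varepsilon\|S_\varepsilon^2\nabla f\|_{L^2} \le C\varepsilon\|u_0\|_{H^2}$, since convolution commutes with derivatives and $S_\varepsilon$ is an $L^2$ contraction;
  \item the $\varepsilon\nabla f$ term is trivially $O(\varepsilon)$.
\end{itemize}

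The only step needing care is the localization of $S_\varepsilon^2 f$ to the boundary strip. Because $S_\varepsilon^2$ acts on the extension $Eu_0$, we bound it by $\|\nabla Eu_0\|_{L^2}$ on an $O(\varepsilon)$-neighbourhood of $\partial\Omega$ on both sides. The strip estimate is then applied on a fixed smooth domain containing $\overline{\Omega}$, together with $\|Eu_0\|_{H^2}\le C\|u_0\|_{H^2}$. Summing all pieces gives $\|z_\varepsilon\|_{H^1}\le C\varepsilon^{1/2}\|u_0\|_{H^2}$, and the triangle inequality concludes the proof.
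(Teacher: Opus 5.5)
Your proposal is correct and follows the paper's proof essentially step for step. Both use the same triangle-inequality split isolating $w_\varepsilon$ (bounded by Theorem~\ref{thm:H1-rate}), the same product-rule expansion of $\varepsilon\chi^{ij}(x/\varepsilon)\bigl[\eta_\varepsilon S_\varepsilon^2(f^{ij})-f^{ij}\bigr]$, and the same Steklov-approximation and boundary-layer bounds for each piece. Your detour through the extension on a two-sided strip is harmless but unnecessary for the $\nabla\eta_\varepsilon$ term: $\nabla\eta_\varepsilon$ is supported at distance at least $\delta\varepsilon\ge 2\varepsilon$ from $\partial\Omega$, and $S_\varepsilon^2$ only averages over balls of radius $\varepsilon$, so only values of $\nabla u_0$ inside $\Omega$ enter there.
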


\begin{proof}
  By the triangle inequality,
  \begin{align}
    &\left\| u_\varepsilon - u_0 - \varepsilon\chi^{ij}\!\left(\frac{x}{\varepsilon}\right)(\mathcal{D}_x u_0)^{ij} \right\|_{H^1(\Omega)} \notag \\
    &\leq \underbrace{\left\| w_\varepsilon \right\|_{H^1(\Omega)}}_{(\mathrm{I})} + \underbrace{\left\| \varepsilon\chi^{ij}\!\left(\frac{x}{\varepsilon}\right)(\mathcal{D}_x u_0)^{ij} - \varepsilon\chi^{ij}\!\left(\frac{x}{\varepsilon}\right) \eta_\varepsilon S_\varepsilon^2\!\left((\mathcal{D}_x u_0)^{ij}\right) \right\|_{H^1(\Omega)}}_{(\mathrm{II})}. \label{eq:triangle}
  \end{align}
  By Theorem~\ref{thm:H1-rate}, and in particular by \eqref{eq:BB-estimate}, we have
  \[
    \|w_\varepsilon\|_{H^1(\Omega)} \leq C\varepsilon^{1/2}\|u_0\|_{H^2(\Omega)}.
  \]

  We now estimate term (II). By Corollary~\ref{cor:corrector-regularity} in Chapter 5, $\chi \in W^{1,\infty}(Y)$, so its periodic extension satisfies
  \[
    \chi\!\left(\frac{x}{\varepsilon}\right),\ \nabla\chi\!\left(\frac{x}{\varepsilon}\right)\in L^\infty(\Omega).
  \]
  Using the product rule,
  \begin{align}
    (\mathrm{II}) &\leq C\|\mathcal{D}_x u_0 - \eta_\varepsilon S_\varepsilon^2(\mathcal{D}_x u_0)\|_{L^2(\Omega)} + C\varepsilon\|\nabla(\mathcal{D}_x u_0 - \eta_\varepsilon S_\varepsilon^2(\mathcal{D}_x u_0))\|_{L^2(\Omega)}. \notag
  \end{align}

  For the first term, since
  \[
    \eta_\varepsilon S_\varepsilon^2(\mathcal{D}_x u_0) - \mathcal{D}_x u_0
    =
    \eta_\varepsilon\big(S_\varepsilon^2(\mathcal{D}_x u_0) - \mathcal{D}_x u_0\big)
    +
    (\eta_\varepsilon - 1)\mathcal{D}_x u_0,
  \]
  we have
  \[
    \begin{aligned}
      \|\mathcal{D}_x u_0 - \eta_\varepsilon S_\varepsilon^2(\mathcal{D}_x u_0)\|_{L^2(\Omega)}
      &\leq \|S_\varepsilon^2(\mathcal{D}_x u_0) - \mathcal{D}_x u_0\|_{L^2(\Omega)} \\
      &\quad + \|\mathcal{D}_x u_0\|_{L^2(\Omega_{(\delta+1)\varepsilon})}
      \leq C\varepsilon^{1/2}\|u_0\|_{H^2(\Omega)},
    \end{aligned}
  \]
  by the same argument as in \eqref{eq:R1-bound}.

  For the second term, by the triangle inequality,
  \begin{align}
    \varepsilon\|\nabla(\mathcal{D}_x u_0 - \eta_\varepsilon S_\varepsilon^2(\mathcal{D}_x u_0))\|_{L^2(\Omega)}
    &\leq \varepsilon\|\nabla^2 u_0\|_{L^2(\Omega)} + \varepsilon\|\nabla(\eta_\varepsilon S_\varepsilon^2(\mathcal{D}_x u_0))\|_{L^2(\Omega)}. \notag
  \end{align}
  The first term satisfies
  \[
    \varepsilon\|\nabla^2 u_0\|_{L^2(\Omega)} \leq \varepsilon^{1/2}\|u_0\|_{H^2(\Omega)} \qquad (\varepsilon<1).
  \]
  For the second term, we expand
  \[
    \nabla(\eta_\varepsilon S_\varepsilon^2(\mathcal{D}_x u_0))
    =
    \nabla\eta_\varepsilon \cdot S_\varepsilon^2(\mathcal{D}_x u_0)
    +
    \eta_\varepsilon\nabla S_\varepsilon^2(\mathcal{D}_x u_0).
  \]
  Since $\varepsilon|\nabla\eta_\varepsilon| \leq C$ and $\nabla\eta_\varepsilon$ is supported near the boundary,
  \[
    \varepsilon\|\nabla\eta_\varepsilon \cdot S_\varepsilon^2(\mathcal{D}_x u_0)\|_{L^2(\Omega)}
    \leq C\|\nabla u_0\|_{L^2(\Omega_{(\delta+2)\varepsilon})}
    \leq C\varepsilon^{1/2}\|u_0\|_{H^2(\Omega)}.
  \]
  On the other hand, convolution commutes with differentiation on $\Omega \setminus \Omega_{\delta\varepsilon}$, and hence
  \[
    \varepsilon\|\eta_\varepsilon\nabla S_\varepsilon^2(\mathcal{D}_x u_0)\|_{L^2(\Omega)}
    \leq C\varepsilon\|u_0\|_{H^2(\Omega)}
    \leq C\varepsilon^{1/2}\|u_0\|_{H^2(\Omega)}.
  \]

  Therefore
  \[
    (\mathrm{II}) \leq C\varepsilon^{1/2}\|u_0\|_{H^2(\Omega)}.
  \]
  Substituting this into \eqref{eq:triangle} completes the proof of \eqref{eq:main-rate}.
\end{proof}

\begin{theorem}[Convergence Rate for the Pressure]\label{thm:pressure-rate}
  Assume that hypotheses \textup{(A1)}--\textup{(A2)} from Chapter 2 hold. If the corresponding displacement and pressure correctors satisfy $\chi \in W^{1,\infty}(Y)$ and $r \in L^\infty(\omega)$, respectively, and if $u_0 \in H^2(\Omega; \mathbb{R}^d)$, then for all $0 < \varepsilon < 1$,
  \begin{equation}\label{eq:pressure-rate}
    \left\| p_\varepsilon - r^{ij}\!\left(\frac{x}{\varepsilon}\right) (\mathcal{D}_x u_0)^{ij} \right\|_{L^2(D_\varepsilon)} \leq C \varepsilon^{\frac{1}{2}} \|u_0\|_{H^2(\Omega)},
  \end{equation}
  where $C$ depends on $\lambda$, $\mu$, $\widetilde{\mu}$, $\Omega$, $\omega$, and $\|\chi\|_{W^{1,\infty}(Y)}$.
\end{theorem}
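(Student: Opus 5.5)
The estimate follows the pattern of Theorem~\ref{thm:main-rate}. We write the pressure error as the smoothed quantity $q_\varepsilon$ plus a correction term and use the triangle inequality:
\[
  \Bigl\| p_\varepsilon - r^{ij}\!\left(\tfrac{x}{\varepsilon}\right)(\mathcal{D}_x u_0)^{ij} \Bigr\|_{L^2(D_\varepsilon)}
  \le \|q_\varepsilon\|_{L^2(D_\varepsilon)}
  + \Bigl\| r^{ij}\!\left(\tfrac{x}{\varepsilon}\right)\bigl[\eta_\varepsilon S_\varepsilon^2\bigl((\mathcal{D}_x u_0)^{ij}\bigr) - (\mathcal{D}_x u_0)^{ij}\bigr] \Bigr\|_{L^2(D_\varepsilon)} .
\]
The first term is at most $C\varepsilon^{1/2}\|u_0\|_{H^2(\Omega)}$ by Theorem~\ref{thm:H1-rate}, estimate \eqref{eq:H1-rate}. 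That estimate comes from the Babu\v{s}ka-Brezzi stability bound \eqref{eq:BB-estimate}, with the $\varepsilon$-independent inf-sup constant of Lemma~\ref{lem:inf-sup}, together with the bounds \eqref{eq:F-bound} and \eqref{eq:G-bound}. So only the second term needs work.

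For the second term we use the hypothesis $r \in L^\infty(\omega)$. This is where the $L^\infty$ assumption on $r$ enters; it is supplied by Theorem~\ref{thm:regularity}, since $r\in H^{k-1}(\omega)$ for all $k$ and Sobolev embedding then gives boundedness. The periodic function $r(x/\varepsilon)$ is restricted to $D_\varepsilon$, where $x/\varepsilon \in \omega$ modulo $\mathbb{Z}^d$. Its $L^\infty(D_\varepsilon)$ norm is therefore at most $\|r\|_{L^\infty(\omega)}$, and this bound is uniform in $\varepsilon$. Pulling it out of the norm, the second term is bounded by $C\|\eta_\varepsilon S_\varepsilon^2(\mathcal{D}_x u_0) - \mathcal{D}_x u_0\|_{L^2(\Omega)}$.

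We then split, exactly as in \eqref{eq:R1-bound} and in the estimate of (II) in the proof of Theorem~\ref{thm:main-rate}:
\[
  \eta_\varepsilon S_\varepsilon^2(\mathcal{D}_x u_0) - \mathcal{D}_x u_0
  = \eta_\varepsilon\bigl(S_\varepsilon^2(\mathcal{D}_x u_0) - \mathcal{D}_x u_0\bigr) + (\eta_\varepsilon - 1)\mathcal{D}_x u_0 .
\]
\begin{itemize}
  \item For the first piece we apply Proposition~\ref{prop:steklov-approx} twice to the fixed $H^2$ extension of $u_0$. This gives a bound of $C\varepsilon\|\nabla^2 u_0\|_{L^2}\le C\varepsilon^{1/2}\|u_0\|_{H^2(\Omega)}$, using $\varepsilon<1$.
  \item The second piece is supported in $\Omega_{(\delta+1)\varepsilon}$. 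Proposition~\ref{prop:boundary-layer} applied to $f=\nabla u_0\in H^1\subset W^{1,q}$ bounds it by $C\varepsilon^{1/2}\|u_0\|_{H^2(\Omega)}$.
\end{itemize}
Combining the two bounds proves \eqref{eq:pressure-rate}.

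No step here is a genuine obstacle, because all the hard work sits in Theorem~\ref{thm:H1-rate}. The only delicate points are bookkeeping. First, one must check that the extension operator $E$ controls the Steklov approximation on all of $\Omega$, including near $\partial\Omega$, and that the boundary-layer estimate is applied to $\Omega$ itself rather than to $D_\varepsilon$. Second, the constant $C$ acquires a dependence on $\|r\|_{L^\infty(\omega)}$, which should be noted. This dependence is harmless, because that norm is determined by $\lambda,\mu,\widetilde\mu$ and $\omega$ alone.
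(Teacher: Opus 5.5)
Your proposal is correct and follows the paper's proof essentially verbatim: the same triangle inequality separates $\|q_\varepsilon\|_{L^2(D_\varepsilon)}$, which is controlled by Theorem~\ref{thm:H1-rate}, from the $r$-weighted smoothing error, which is then bounded using $\|r\|_{L^\infty(\omega)}$, the Steklov approximation estimate and the boundary-layer estimate. You are also right that the constant depends on $\|r\|_{L^\infty(\omega)}$; the paper's proof uses this dependence but does not list it in the statement.
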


\begin{proof}
  By the triangle inequality,
  \[
    \begin{aligned}
      \left\| p_\varepsilon - r^{ij}\!\left(\frac{x}{\varepsilon}\right) (\mathcal{D}_x u_0)^{ij} \right\|_{L^2(D_\varepsilon)}
      &\leq \underbrace{\|q_\varepsilon\|_{L^2(D_\varepsilon)}}_{J_1} \\
      &\quad + \underbrace{\left\|r^{ij}\!\left(\frac{x}{\varepsilon}\right)\left[\eta_\varepsilon S_\varepsilon^2\!\left((\mathcal{D}_x u_0)^{ij}\right) - (\mathcal{D}_x u_0)^{ij}\right]\right\|_{L^2(D_\varepsilon)}}_{J_2}.
    \end{aligned}
  \]
  The term
  \[
    J_1 = \|q_\varepsilon\|_{L^2(D_\varepsilon)} \leq C\varepsilon^{1/2}\|u_0\|_{H^2(\Omega)}
  \]
  follows directly from Theorem~\ref{thm:H1-rate}.

  For $J_2$, Corollary~\ref{cor:corrector-regularity} yields $r \in L^\infty(\omega)$, and therefore
  \[
    J_2 \leq \|r\|_{L^\infty(\omega)} \left\| \eta_\varepsilon S_\varepsilon^2(\mathcal{D}_x u_0) - \mathcal{D}_x u_0 \right\|_{L^2(D_\varepsilon)}.
  \]
  Since $|\eta_\varepsilon| \leq 1$,
  \begin{align}
    \left\| \eta_\varepsilon S_\varepsilon^2(\mathcal{D}_x u_0) - \mathcal{D}_x u_0 \right\|_{L^2(D_\varepsilon)}
    &\leq \|\nabla u_0\|_{L^2(\Omega_{(\delta+1)\varepsilon})} + \|S_\varepsilon^2(\mathcal{D}_x u_0) - \mathcal{D}_x u_0\|_{L^2(\Omega)} \notag \\
    &\leq C\varepsilon^{1/2}\|u_0\|_{H^2(\Omega)},
  \end{align}
  where the first term is controlled by the boundary-layer estimate and the second by the Steklov approximation estimate. Thus $J_2 \leq C\varepsilon^{1/2}\|u_0\|_{H^2(\Omega)}$.
\end{proof}

\begin{remark}
  Theorems~\ref{thm:main-rate} and \ref{thm:pressure-rate} yield the typical $O(\sqrt{\varepsilon})$ rate for this class of problems, in agreement with Shen's results for standard elliptic systems \cite{Shen2018}. Under stronger interior regularity or additional smoothness assumptions on the coefficients, one may further obtain improved $O(\varepsilon)$ estimates.
\end{remark}

\section{Summary of the Chapter}

This chapter establishes quantitative convergence-rate estimates for the Lam\'e-Stokes coupled system. More precisely, we prove that the displacement in the $H^1(\Omega)$ norm and the pressure in the $L^2(D_\varepsilon)$ norm both converge to their corresponding first-order approximations at the rate $O(\sqrt{\varepsilon})$.

The proof combines Steklov smoothing, boundary cut-off functions, the flux corrector and its potential decomposition, the analysis of the pressure error, and the Babu\v{s}ka-Brezzi theory. In particular, the regularity of the correctors established in Chapter 5 provides the key analytical ingredient for controlling the flux corrector and the related error terms. These results show that the quantitative framework adopted here extends naturally to the present Lam\'e-Stokes coupled system.


\chapter{Regularity of the Cell Correctors}

The convergence-rate analysis in Chapter 4 requires the cell corrector to satisfy $\chi \in W^{1,\infty}(Y)$. The aim of this chapter is therefore to establish higher-order regularity for the solution $(\chi,r)$ of the cell problem. In particular, we derive local $H^2$ a priori estimates near the interface, and then obtain higher-order piecewise regularity together with the Lipschitz boundedness required in Chapter 4.

Chapter 3 has already shown that the cell problem admits a unique weak solution
\[
  (\chi,r)\in H^1_{\#,0}(Y)\times L^2(\omega).
\]
However, because the local incompressibility constraint in the fluid region $\omega$ is coupled with the nonhomogeneous traction jump condition across the interface $\Gamma = \partial\omega$, classical elliptic regularity theory cannot be applied directly near the interface. The main task of this chapter is therefore to establish local $H^2$ a priori estimates near the interface.

To this end, we first introduce suitable corrected variables near the interface so that both the divergence constraint and the traction jump condition become homogeneous. We then flatten the curved interface by means of the Piola transform, thereby obtaining a variable-coefficient local Lam\'e-Stokes system on the upper and lower half-balls. For this local model, we first establish an $L^2$-to-$H^1$ Caccioppoli inequality, then derive estimates for tangential second derivatives by the method of tangential difference quotients, and finally recover the normal second derivatives from the algebraic structure of the system. Repeating the same argument for differentiated equations, we obtain piecewise regularity of arbitrary order by induction.

\section{A Local Regularity Model for the Cell Problem}\label{sec:cell-to-local}

In the interiors of $Y\setminus\overline{\omega}$ and $\omega$, the cell equations reduce to a constant-coefficient Lam\'e system and a Stokes system, respectively. Their interior $H^k$ regularity therefore follows from classical theory; see \cite[Section~6.3]{Evans2010} and \cite[Proposition~2.1]{Seregin2014}. Since periodic boundary points may be treated as interior points after periodic extension, it remains to establish regularity estimates only near the interface $\Gamma = \partial\omega$. We therefore fix a point $y_0 \in \Gamma$ and convert the cell problem into a variable-coefficient local model posed on half-balls.

By the cell problem \eqref{eq:cell-problem} in Chapter 3, for each pair of indices $i,j\in\{1,\dots,d\}$, the corrector $(\chi^{ij},r^{ij})$ satisfies the incompressibility constraint
\[
  \nabla_y\cdot\chi^{ij}=-\delta_{ij}
\]
in the fluid region together with a nonhomogeneous traction jump condition across the interface. To homogenize these two terms simultaneously, we use the linear displacement field
\[
  p^{ij}(y)=\frac{1}{2}(y_j \mathbf{e}_i+y_i \mathbf{e}_j),
\]
introduced in Chapter 3, see \eqref{eq:p-ij-def}, and define the corrected variables
\begin{equation}
  u(y)=\chi^{ij}(y)+p^{ij}(y), \qquad q(y)=r^{ij}(y).
\end{equation}
Since
\begin{equation}\label{eq:pij-properties}
  \nabla_y\cdot p^{ij}=\delta_{ij}, \qquad \mathcal{D}_y(p^{ij})=\frac{1}{2}(\mathbf{e}_i\otimes\mathbf{e}_j+\mathbf{e}_j\otimes\mathbf{e}_i),
\end{equation}
it follows that, in a neighborhood of the interface, $u$ satisfies
\begin{equation}
    \nabla_y\cdot\left[\lambda(\nabla_y\cdot u)\mathbb{I}+2\mu \mathcal{D}_y(u)\right]=0
    \quad \text{in } Y\setminus\omega,
\end{equation}
\begin{equation}
  \nabla_y\cdot\left[q\mathbb{I}+2\widetilde{\mu}\mathcal{D}_y(u)\right]=0,
  \qquad \nabla_y\cdot u = 0
  \quad \text{in } \omega.
\end{equation}
The displacement continuity across the interface remains unchanged. Substituting $\chi^{ij}=u-p^{ij}$ into the traction jump condition, we obtain
\begin{align}
  &\left[q \mathbb{I}+2\widetilde{\mu}\mathcal{D}_y(u)\right]\big|_- N^-
  -\left[\lambda(\nabla_y\cdot u)\mathbb{I}+2\mu \mathcal{D}_y(u)\right]\big|_+ N^+ \notag \\
  &= \mathbf{G}^{ij}
  + \left[2\widetilde{\mu}\mathcal{D}_y(p^{ij})\right]\big|_- N^-
  - \left[\lambda(\nabla_y\cdot p^{ij})\mathbb{I}+2\mu \mathcal{D}_y(p^{ij})\right]\big|_+ N^+ = 0,
\end{align}
By \eqref{eq:interface-jump} and \eqref{eq:pij-properties}, the right-hand side vanishes identically. Hence the original cell problem is transformed into the following local interface system with homogeneous interface conditions:
\begin{equation}\label{eq:corrected-cell-problem}
  \begin{cases}
    \nabla_y\cdot\left[\lambda(\nabla_y\cdot u)\mathbb{I}+2\mu \mathcal{D}_y(u)\right]=0 & \text{in } Y\setminus\omega, \\
    \nabla_y\cdot\left[q\mathbb{I}+2\widetilde{\mu}\mathcal{D}_y(u)\right]=0,\quad \nabla_y\cdot u = 0 & \text{in } \omega, \\
    u\big|_+=u\big|_- & \text{on } \Gamma, \\
    \left[q \mathbb{I}+2\widetilde{\mu}\mathcal{D}_y(u)\right]\big|_- N^-
    =
    \left[\lambda(\nabla_y\cdot u)\mathbb{I}+2\mu \mathcal{D}_y(u)\right]\big|_+ N^+ & \text{on } \Gamma.
  \end{cases}
\end{equation}

We next flatten the interface locally. After translation and rotation, we may assume that, in a neighborhood of $y_0$,
\begin{equation}
  \Gamma = \{ y=(y',y_d): y_d=\psi(y') \},
  \qquad \psi\in C^\infty,\quad \psi(0)=0,
\end{equation}
and that the fluid region corresponds to the part below the graph. Shrinking the neighborhood if necessary, we may choose a local diffeomorphism
\begin{equation}
  y=\Phi(x),
  \qquad
  x=\Psi(y)=\Phi^{-1}(y),
\end{equation}
which maps the unit ball $B_1$ onto a neighborhood $U=\Phi(B_1)$ of the interface point $y_0$, and satisfies
\begin{equation}
  \Phi(B_1\cap\{x_d=0\})=\Gamma\cap U,
  \qquad
  \Phi(B_1^+)=U\cap (Y\setminus\omega),
  \qquad
  \Phi(B_1^-)=U\cap \omega,
\end{equation}
where
\begin{equation}
  B_1^+=\{x\in B_1:x_d>0\},
  \qquad
  B_1^-=\{x\in B_1:x_d<0\}.
\end{equation}
Write
\begin{equation}
  F(x)=\nabla_x\Phi(x)=\left(\frac{\partial y_i}{\partial x_j}\right),
  \qquad
  J(x)=\det F(x)>0.
\end{equation}
In particular, the Piola matrix used repeatedly below is defined by
\begin{equation}
  a(x)=J(x)F^{-1}(x).
\end{equation}
Define the pulled-back unknowns and test functions by
\begin{equation}
  \hat{u}(x)=u(\Phi(x)), \qquad \hat{v}(x)=v(\Phi(x)), \qquad \hat{q}(x)=q(\Phi(x)).
\end{equation}
By the chain rule,
\begin{equation}
  \nabla_y u(y)=\nabla_x \hat{u}(x)F^{-1}(x),
  \qquad
  \nabla_y v(y)=\nabla_x \hat{v}(x)F^{-1}(x).
\end{equation}
Hence the divergence transforms as
\begin{equation}
  \nabla_y\cdot v(y)
  = \operatorname{tr}\big(\nabla_x \hat{v}(x)F^{-1}(x)\big)
  = J(x)^{-1}\nabla_x\cdot(a(x)\hat{v}(x)),
\end{equation}
where the last identity uses the Piola identity
\begin{equation}
  \nabla_x \cdot \big(J(x)F^{-1}(x)\big)=0.
\end{equation}
Similarly,
\begin{equation}
  \mathcal{D}_y(u)
  = \frac{1}{2}\left(\nabla_x \hat{u}\,F^{-1}+F^{-T}(\nabla_x \hat{u})^T\right),
  \qquad
  \mathcal{D}_y(v)
  = \frac{1}{2}\left(\nabla_x \hat{v}\,F^{-1}+F^{-T}(\nabla_x \hat{v})^T\right).
\end{equation}
Let the test functions be supported in $U$, and pull back the local variational identity by the change of variables $\Phi$. Together with the measure transformation $\mathrm{d}y=J(x)\,\mathrm{d}x$, this yields the following representations of the principal bilinear forms on the elastic and fluid sides:
\begin{align}
  &\int_{U\cap (Y\setminus\omega)} \left[\lambda(\nabla_y\cdot u)(\nabla_y\cdot v)+2\mu \mathcal{D}_y(u):\mathcal{D}_y(v)\right]\mathrm{d}y
  = \int_{B_1^+} \nabla_x \hat{v}:A_1(x)\nabla_x \hat{u}\,\mathrm{d}x, \notag \\
  &\int_{U\cap \omega} 2\widetilde{\mu}\mathcal{D}_y(u):\mathcal{D}_y(v)\,\mathrm{d}y
  = \int_{B_1^-} \nabla_x \hat{v}:A_2(x)\nabla_x \hat{u}\,\mathrm{d}x,
\end{align}
where the fourth-order tensors $A_1(x)$ and $A_2(x)$ are defined as follows: for any matrices $\xi,\eta\in\mathbb{R}^{d\times d}$,
\begin{align}
  \xi : A_1(x)\eta
  &:= J(x)\left[
    \lambda\,\operatorname{tr}(\xi F^{-1})\operatorname{tr}(\eta F^{-1})
    + \frac{\mu}{2}
    \big(\xi F^{-1}+F^{-T}\xi^T\big):\big(\eta F^{-1}+F^{-T}\eta^T\big)
  \right], \\
  \xi : A_2(x)\eta
  &:= \frac{\widetilde{\mu}J(x)}{2}
  \big(\xi F^{-1}+F^{-T}\xi^T\big):\big(\eta F^{-1}+F^{-T}\eta^T\big).
\end{align}
Equivalently, if the components of the tensors are defined by
\[
  \xi : A_m(x)\eta = (A_m)_{ij}^{\alpha\beta}(x)\,\xi_i^\alpha \eta_j^\beta,
  \qquad m=1,2,
\]
then
\begin{align}
  (A_1)_{ij}^{\alpha\beta}(x)
  &= J(x)\Big[
    \lambda (F^{-1})_{\alpha i}(F^{-1})_{\beta j}
    + \mu \delta_{ij}(F^{-1}F^{-T})_{\alpha\beta}
    + \mu (F^{-1})_{\alpha j}(F^{-1})_{\beta i}
  \Big], \label{eq:A1-components} \\
  (A_2)_{ij}^{\alpha\beta}(x)
  &= \widetilde{\mu}J(x)\Big[
    \delta_{ij}(F^{-1}F^{-T})_{\alpha\beta}
    + (F^{-1})_{\alpha j}(F^{-1})_{\beta i}
  \Big]. \label{eq:A2-components}
\end{align}
The pressure term transforms into
\begin{equation}
  \int_{U\cap \omega} q\,\nabla_y\cdot v\,\mathrm{d}y
  = \int_{B_1^-} \hat{q}\,\nabla_x\cdot(a(x)\hat{v})\,\mathrm{d}x.
\end{equation}
Since the change of variables $\Phi$ induces an isomorphism between the corresponding $H_0^1$ spaces, the pulled-back test functions range over all of $H_0^1(B_1;\mathbb{R}^d)$. Therefore the transformed local functions $(\hat{u},\hat{q})$ satisfy the following local weak formulation near the interface point $y_0$:
\begin{equation}
  \int_{B_1^+} \nabla_x \hat{v}:A_1(x)\nabla_x \hat{u}\,\mathrm{d}x
  + \int_{B_1^-} \nabla_x \hat{v}:A_2(x)\nabla_x \hat{u}\,\mathrm{d}x
  + \int_{B_1^-} \hat{q}\,\nabla_x\cdot(a(x)\hat{v})\,\mathrm{d}x
  = 0,
\end{equation}
and the divergence constraint becomes
\begin{equation}
  \nabla_x\cdot(a(x)\hat{u})=0
  \qquad \text{in } B_1^-.
\end{equation}
To simplify the notation, we continue to denote the transformed local unknowns by $(\chi,r)$. All a priori estimates below are established for this local system. In the next section, after abstracting the resulting coefficients, we state the main local regularity theorem of the chapter.

\section{\texorpdfstring{Local $H^2$ Estimates and Piecewise $H^k$ Regularity}{Local H2 Estimates and Piecewise Hk Regularity}}

As shown in the previous section, in a neighborhood of each interface point, after homogenizing the inhomogeneous terms and flattening the interface, the corrector reduces to a variable-coefficient local model posed on the upper and lower half-balls. We now fix this local system and keep the notation introduced above: $A_1(x)$ denotes the fourth-order tensor in $B_1^+$, $A_2(x)$ the fourth-order tensor in $B_1^-$, and $a(x)$ the second-order matrix in $B_1^-$. By construction, these coefficients satisfy the following properties:
\begin{itemize}
  \item \textbf{Smoothness:} $A_1 \in C^\infty(\overline{B_1^+};\mathbb{R}^{d^2\times d^2})$, $A_2 \in C^\infty(\overline{B_1^-};\mathbb{R}^{d^2\times d^2})$, and $a \in C^\infty(\overline{B_1^-};\mathbb{R}^{d\times d})$.
  \item \textbf{Symmetry:} for any $x \in \overline{B_1^\pm}$ and any matrices $\xi,\eta \in \mathbb{R}^{d\times d}$,
  \begin{equation}
    \xi : A_i(x)\eta = \eta : A_i(x)\xi,
    \qquad i = 1,2.
  \end{equation}
  \item \textbf{Legendre--Hadamard condition:} there exists a constant $c>0$ such that for all $x \in \overline{B_1^\pm}$ and all $\alpha,\beta \in \mathbb{R}^d$,
  \begin{equation}
    (\alpha \otimes \beta) : A_i(x)(\alpha \otimes \beta) \geq c |\alpha|^2 |\beta|^2,
    \qquad i = 1,2.
  \end{equation}
  \item \textbf{Coercivity:} there exists a constant $c>0$ such that for any $\mathbf{v} \in H_0^1(B_1;\mathbb{R}^d)$,
  \begin{equation}
    \int_{B_1^+} \nabla \mathbf{v} : A_1 \nabla \mathbf{v}\,\mathrm{d}x
    + \int_{B_1^-} \nabla \mathbf{v} : A_2 \nabla \mathbf{v}\,\mathrm{d}x
    \geq c \int_{B_1} |\nabla \mathbf{v}|^2\,\mathrm{d}x.
  \end{equation}
  \item \textbf{Piola structure:} $a(x)=J(x)F^{-1}(x)$ satisfies the Piola identity
  \begin{equation}
    \nabla \cdot a(x)=0 \qquad \text{in } B_1^-.
  \end{equation}
  \item \textbf{Nondegeneracy:} $a(x)$ is invertible everywhere in $B_1^-$, and its $d$-th row satisfies $a_{d\cdot}(x)\neq 0$.
 \end{itemize}

Assume that the weak solution $\chi \in H^1(B_1; \mathbb{R}^d)$ and the pressure $r \in L^2(B_1^-)$ satisfy the local variational formulation
\begin{equation}\label{eq:weak-form-reg}
  \int_{B_1^+} \nabla \mathbf{v} : A_1 \nabla \chi \, \mathrm{d}x + \int_{B_1^-} \nabla \mathbf{v} : A_2 \nabla \chi \, \mathrm{d}x + \int_{B_1^-} r \nabla \cdot (a \mathbf{v}) \, \mathrm{d}x = 0,
\end{equation}
for every test function $\mathbf{v} \in H_0^1(B_1; \mathbb{R}^d)$, together with the incompressibility constraint, which holds only in the lower half-ball:
\begin{equation}\label{eq:div-constraint}
  \nabla \cdot (a \chi) = 0 \quad \text{in } B_1^-.
\end{equation}

\begin{theorem}[Local $H^2$ A Priori Estimates]\label{thm:interior-regularity}
  For a weak solution of the local system \eqref{eq:weak-form-reg}--\eqref{eq:div-constraint}, there exists a constant $C>0$, depending only on the dimension $d$, the coercivity constant of the principal part of the bilinear form \eqref{eq:weak-form-reg}, $\|A_1\|_{W^{1,\infty}(B_1^+)}$, $\|A_2\|_{W^{1,\infty}(B_1^-)}$, $\|a\|_{W^{1,\infty}(B_1^-)}$, and $\|a^{-1}\|_{W^{1,\infty}(B_1^-)}$, such that the following Caccioppoli estimate holds:
  \begin{equation}
    \|\chi\|_{H^1(B_{1/2})} \leq C \|\chi\|_{L^2(B_1)},
  \end{equation}
  Moreover, the following piecewise $H^2$ estimate holds:
  \begin{equation}
    \|\chi\|_{H^2(B_{1/4}^+)} + \|\chi\|_{H^2(B_{1/4}^-)} + \|r\|_{H^1(B_{1/4}^-)} \leq C \left( \|\chi\|_{L^2(B_1)} + \|r\|_{L^2(B_1^-)} \right)
  \end{equation}
\end{theorem}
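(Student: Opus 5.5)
We argue in three stages, in the order stated in the theorem. \textbf{Stage 1 (Caccioppoli).} Fix a cutoff $\zeta\in C_0^\infty(B_1)$ with $\zeta\equiv1$ on $B_{1/2}$ and $|\nabla\zeta|\le C$, and test \eqref{eq:weak-form-reg} with $\mathbf v=\zeta^2\chi$. The principal part, by the coercivity hypothesis applied to $\zeta\chi$ together with the smoothness of $A_1,A_2$, gives $c\|\nabla(\zeta\chi)\|_{L^2}^2$ up to lower-order terms $C\|\chi\|_{L^2}\|\nabla(\zeta\chi)\|_{L^2}$. The pressure term is the delicate one: since $\nabla\cdot(a\chi)=0$ in $B_1^-$, we have $\nabla\cdot(a\zeta^2\chi)=\nabla(\zeta^2)\cdot a\chi$, so
\[
  \int_{B_1^-} r\,\nabla\cdot(a\zeta^2\chi)\,\mathrm dx=\int_{B_1^-} r\,(a\chi)\cdot\nabla(\zeta^2)\,\mathrm dx,
\]
which involves no derivatives of $\chi$ but does involve $r$. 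To bound $r$ locally, we use a Bogovski\u{\i}-type argument (Appendix Theorem~\ref{thm:bogovskii}). For the zero-mean part of $\zeta r$ on a half-ball (or a smooth domain between $B_{1/2}^-$ and $B_1^-$), we construct $\mathbf w\in H_0^1$ with $\nabla\cdot(a\mathbf w)=J\,(\cdot)$, which is possible after writing $\nabla\cdot(a\mathbf w)=J\nabla_y\cdot w$ and using the invertibility of $a$. Testing with $\mathbf w$ bounds $\|r-\bar r\|_{L^2}$ by $C\|\nabla\chi\|_{L^2}$ on a slightly larger set. The mean $\bar r$ is then handled by testing with a field supported across the interface whose divergence has nonzero mean in $B^-$. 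Combining these with Young's inequality and a standard hole-filling/iteration over nested balls $B_{1/2}\subset B_\rho\subset B_1$ yields $\|\chi\|_{H^1(B_{1/2})}\le C\|\chi\|_{L^2(B_1)}$, and at the same time $\|r\|_{L^2(B_{1/2}^-)}\le C(\|\chi\|_{L^2(B_1)}+\|r\|_{L^2(B_1^-)})$. If closing the iteration using only $\|\chi\|_{L^2}$ on the right proves problematic, we will keep $\|r\|_{L^2(B_1^-)}$ there, which the second estimate permits.

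\textbf{Stage 2 (tangential derivatives).} For $k<d$ and small $h$, apply the difference quotient $\Delta_k^h$ in the tangential direction $e_k$; it preserves $B^\pm$ and the interface. We test with $\mathbf v=-\Delta_k^{-h}(\zeta^2\Delta_k^h\chi)$ and write $\Delta_k^h$ of the equation as the same system for $(\Delta_k^h\chi,\Delta_k^h r)$ plus commutator terms $(\Delta_k^hA_m)\nabla\chi(\cdot+he_k)$ and $(\Delta_k^h a)$-terms. These are bounded by the $W^{1,\infty}$ norms of the coefficients times the Stage 1 quantities. The constraint becomes $\nabla\cdot(a\Delta_k^h\chi)=-\nabla\cdot((\Delta_k^h a)\chi(\cdot+he_k))$, which lies in $L^2$ and is bounded by $\|\chi\|_{H^1}$. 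Consequently the pressure pairing no longer vanishes, and we must control $\Delta_k^h r$ in $L^2$ by the same Bogovski\u{\i} duality applied to the differenced equation. Uniform bounds in $h$ then give $\partial_k\nabla\chi\in L^2(B_{3/8}^\pm)$ for $k<d$, and $\partial_k r\in L^2(B_{3/8}^-)$.

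\textbf{Stage 3 (normal derivatives) and the main obstacle.} In each open half-ball the equations hold strongly. In $B^+$, the $d$-th component system reads $(A_1)_{dd}^{\alpha\beta}\partial_d^2\chi^\beta=$ (terms already in $L^2$). By Legendre--Hadamard with $\beta=e_d$, the matrix $((A_1)_{dd}^{\alpha\beta})_{\alpha\beta}$ is positive definite, so $\partial_d^2\chi\in L^2(B_{1/4}^+)$. In $B^-$, differentiating the constraint $\nabla\cdot(a\chi)=0$ in $x_d$ gives $a_{\cdot d}\cdot\partial_d^2\chi$ as a known $L^2$ function. The momentum equations give $(A_2)_{dd}\partial_d^2\chi+\partial_d r\,a_{d\cdot}=$ known terms, together with tangential components involving $\partial_k r$, which are already controlled. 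This is a $(d+1)\times(d+1)$ linear system for $(\partial_d^2\chi,\partial_d r)$. Its invertibility is the step I expect to be hardest: we show it is of Stokes saddle-point type, with a positive definite block and the nondegenerate row $a_{d\cdot}\neq0$, and hence has nonzero determinant. This is the algebraic structure the paper alludes to. The result is $\|\chi\|_{H^2(B_{1/4}^\pm)}+\|r\|_{H^1(B_{1/4}^-)}$ bounded as claimed.
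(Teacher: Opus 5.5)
Your proposal is correct and follows the paper's three-stage architecture. Stage~3 (Legendre--Hadamard in $B^+$, the saddle-point system for $(\partial_d^2\chi,\partial_d r)$ in $B^-$) is the paper's argument.

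The genuine difference is how you treat the pressure in Stages~1--2:
\begin{itemize}
\item \emph{The paper} never estimates $r$ there. It subtracts from the test function a corrector $\mathbf f$ built as in Remark~\ref{rmk:bogovskii-variable}: extend the datum by a constant on $B_t^+$, apply Bogovski\u{\i} on the whole ball, and multiply by the reflected $a^{-1}$. Then $\nabla\cdot(a\mathbf v)=0$, resp.\ $\nabla\cdot(a^h\mathbf v)=0$, in $B_t^-$, so the pressure pairing vanishes identically. It recovers $\partial_k r\in L^2$ only afterwards, via the Ne\v{c}as lemma.
\item \emph{You} keep the pressure pairing and bound $r$, resp.\ $\Delta_k^h r$, in $L^2$ by duality through the same construction.
\end{itemize}
Your route costs a duality estimate at each stage, but it buys two things. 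It gives $\partial_k r\in L^2$ directly from uniform difference-quotient bounds. It also produces the local bound $\|r\|_{L^2(B_t^-)}\le C\|\nabla\chi\|_{L^2(B_t)}$, which shows that the term $\|r\|_{L^2(B_1^-)}$ on the right of the $H^2$ estimate can in fact be dropped.

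Two small points.

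First, your split into $r-\bar r$ plus a separate treatment of $\bar r$ works but is unnecessary. For any $G\in L^2(B_t^-)$, the construction above gives $\mathbf w\in H_0^1(B_t;\mathbb{R}^d)$ with $\nabla\cdot(a\mathbf w)=G$ in $B_t^-$ and $\|\nabla\mathbf w\|_{L^2(B_t)}\le C\|G\|_{L^2(B_t^-)}$. Taking $G=r$ in \eqref{eq:weak-form-reg} gives the bound above at once. There is no mean ambiguity because the elastic half carries no constraint. In particular, your fallback of keeping $\|r\|_{L^2(B_1^-)}$ in Stage~1 is not needed, and it would not prove the Caccioppoli estimate as stated, whose right-hand side contains only $\|\chi\|_{L^2(B_1)}$.

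Second, differentiating $\nabla\cdot(a\chi)=0$ in $x_d$ produces $\sum_\beta a_{d\beta}\partial_d^2\chi_\beta$. That is the $d$-th row $a_{d\cdot}$, not $a_{\cdot d}$, and it is the same vector that multiplies $\partial_d r$ in the momentum equations. This coincidence is what makes the block matrix in \eqref{eq:block-matrix-system} a symmetric saddle-point matrix with Schur complement $-a_{d\cdot}A_{2,dd}^{-1}a_{d\cdot}^T<0$. With two different vectors $b,c$, the determinant would be $-\det A_{2,dd}\,c^TA_{2,dd}^{-1}b$, and positivity of $A_{2,dd}$ alone does not keep that away from zero.
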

The proof of Theorem~\ref{thm:interior-regularity} is lengthy and will be carried out in three steps in Sections~\ref{sec:caccioppoli}--\ref{sec:normal-derivative}.

By differentiating the equations and repeating the same difference quotient argument together with the algebraic recovery of the normal derivatives, one obtains piecewise $H^k$ regularity of all orders.

\begin{theorem}[$H^k$ Regularity of the Cell Corrector]\label{thm:corrector-Hk-regularity}
  Assume that $\partial\omega \in C^\infty$. Then the weak solution
  \[
    (\chi, r) \in H^1_{\#,0}(Y; \mathbb{R}^d) \times L^2(\omega)
  \]
  of the cell problem \eqref{eq:cell-problem} satisfies the following piecewise regularity: for every integer $k \geq 1$,
  \begin{equation}
    \chi|_{Y\setminus\overline{\omega}} \in H^k(Y\setminus\overline{\omega}; \mathbb{R}^d), \quad \chi|_{\omega} \in H^k(\omega; \mathbb{R}^d), \quad r \in H^{k-1}(\omega).
  \end{equation}
\end{theorem}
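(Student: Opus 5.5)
The plan is to reduce the global statement to the local model of Section~\ref{sec:cell-to-local} and then induct on $k$ using Theorem~\ref{thm:interior-regularity}. Regularity away from $\Gamma=\partial\omega$ is classical. After periodic extension, points of $\partial Y$ are interior points of the elastic phase, where $\chi$ solves a constant-coefficient Lam\'e system. Interior points of $\omega$ see a constant-coefficient Stokes system. So $\chi\in H^k_{\mathrm{loc}}$ on both sides and $r\in H^{k-1}_{\mathrm{loc}}(\omega)$ for every $k$ by \cite{Evans2010,Seregin2014}. Since $\Gamma$ is compact and $C^\infty$, it is covered by finitely many neighbourhoods $U=\Phi(B_{1/4})$ of interface points. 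It therefore suffices to prove, for the flattened unknowns $(\hat u,\hat q)$ with $u=\chi^{ij}+p^{ij}$, that $\hat u\in H^k(B_\rho^\pm)$ and $\hat q\in H^{k-1}(B_\rho^-)$ for some $\rho>0$ and every $k$. Pulling back by the smooth diffeomorphism $\Phi$, and subtracting the smooth field $p^{ij}$, then gives the claim. The base case $k=2$ is exactly Theorem~\ref{thm:interior-regularity}, using $\chi\in H^1$, $r\in L^2$ from Theorem~\ref{thm:cell-well-posedness}.

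For the inductive step, assume $\hat u\in H^{k}(B_{\rho_k}^\pm)$ and $\hat q\in H^{k-1}(B_{\rho_k}^-)$. Take a tangential derivative $\partial_\ell$, $\ell<d$ (first as a difference quotient, then pass to the limit). Differentiating \eqref{eq:weak-form-reg} and \eqref{eq:div-constraint} shows that $(\partial_\ell\hat u,\partial_\ell\hat q)$ solves the same local transmission system with the same coefficients. The only change is a right-hand side consisting of commutator terms $\int \nabla v:(\partial_\ell A_m)\nabla\hat u$ and $\int \hat q\,\nabla\cdot((\partial_\ell a)v)$, together with an inhomogeneous divergence $\nabla\cdot(a\,\partial_\ell\hat u)=-\nabla\cdot((\partial_\ell a)\hat u)$. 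These data lie in $H^{k-2}$ piecewise by the induction hypothesis. I would therefore state and use an inhomogeneous version of Theorem~\ref{thm:interior-regularity}: with a source functional $\int f:\nabla v$ where $f\in H^{m}(B^\pm)$, and divergence datum $g\in H^{m+1}(B^-)$, the conclusion gains one derivative. The proof of Theorem~\ref{thm:interior-regularity} should carry these terms along linearly, treating the divergence datum with a Bogovski\u{\i} correction in the Caccioppoli step. This yields $\partial_\ell\hat u\in H^{k}(B^\pm)$ and $\partial_\ell\hat q\in H^{k-1}(B^-)$ on a smaller ball, so every derivative of order $k+1$ containing at least one tangential direction is controlled.

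The remaining derivatives $\partial_d^{k+1}\hat u$ and $\partial_d^{k}\hat q$ are recovered algebraically, as in the $H^2$ step, applying $\partial_d^{k-1}$ to the strong equations on each half-ball. On $B^+$, the Legendre--Hadamard condition makes the matrix $(A_1)_{dd}^{\alpha\beta}$ invertible, so $\partial_d^2\hat u$ is expressed through tangential-containing derivatives. On $B^-$ the Stokes part is the delicate one. Differentiating $\nabla\cdot(a\hat u)=0$ in $x_d$ expresses the component $a_{d\cdot}\cdot\partial_d^2\hat u$ via tangential terms, using $a_{d\cdot}\neq0$. The momentum equation then gives $\partial_d\hat q$ and the remaining components of $\partial_d^2\hat u$. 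This requires checking that the block system in the unknowns $(\partial_d^2\hat u,\partial_d\hat q)$ is uniquely solvable, since it is a Stokes-type saddle-point symbol that is invertible by $\widetilde\mu>0$ and the nondegeneracy of $a$. Iterating in the number of normal derivatives closes the induction at order $k+1$.

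I expect the main obstacle to be the inhomogeneous version of the $H^2$ estimate. The base theorem is stated only for homogeneous data, whereas the differentiated system carries both a source term and a nonzero divergence, and the pressure commutator $\hat q\,\nabla\cdot((\partial_\ell a)v)$ appears at the level of $\hat q$ only. To handle this, I would first lift the divergence datum with the Bogovski\u{\i} operator on $B^-$, extended by zero, to restore a divergence-free unknown. I would then check that every constant in the tangential difference-quotient argument depends only on $W^{k,\infty}$ norms of $A_m$ and $a$. These are finite since $\partial\omega\in C^\infty$. The final statement $\chi\in W^{1,\infty}(Y)$ follows from Sobolev embedding $H^k\subset C^1$ for $k>d/2+1$ on each smooth phase up to $\Gamma$.
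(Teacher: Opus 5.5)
Your proposal is correct and follows essentially the paper's route. It uses classical regularity away from $\Gamma$ and the flattened local model. It feeds tangential differentiation back into the argument of Theorem~\ref{thm:interior-regularity}, with commutator terms and an inhomogeneous divergence constraint that the paper also handles, implicitly, through \eqref{eq:key-identity}. It then recovers $\partial_d^{k+1}\chi$ and $\partial_d^{k}r$ algebraically from the invertible block system \eqref{eq:block-matrix-system}.

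Two small corrections. First, the commutator fluxes and the divergence datum are actually in $H^{k-1}$. The consistent form of your lemma is $f,g\in H^{m}\Rightarrow \hat u\in H^{m+1}$, $\hat q\in H^{m}$, and this inhomogeneous statement should itself be the one carried through the induction. Second, absorb the divergence datum into the Bogovski\u{\i}-corrected test function, as the paper does, rather than subtracting a Bogovski\u{\i} lift from the unknown. That lift would need full piecewise $H^{k}$ regularity, which the standard mapping property $H^m_0\to H^{m+1}_0$ of the Bogovski\u{\i} operator does not guarantee for data that do not vanish on the interface.
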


\begin{proof}
  We only sketch the inductive argument, omitting the standard difference-quotient, localization, and covering details that are analogous to the $H^2$ case.
  The case $k=1$ is simply the definition of the weak solution. For $k=2$, away from the interface the system reduces to a standard Lam\'e system in the elastic phase and a standard Stokes system in the fluid phase. The piecewise $H^2$ estimate therefore follows from classical interior regularity. Near the interface, Theorem~\ref{thm:interior-regularity} yields the local $H^2$ estimate, and a finite covering argument gives the global piecewise $H^2$ regularity.

  Assume now that for some $k \geq 2$ we already know
  \[
    \chi|_{Y\setminus\overline{\omega}} \in H^k(Y\setminus\overline{\omega}; \mathbb{R}^d), \qquad
    \chi|_{\omega} \in H^k(\omega; \mathbb{R}^d), \qquad
    r \in H^{k-1}(\omega).
  \]
  Choose a local chart near the interface and work with the flattened system \eqref{eq:weak-form-reg}--\eqref{eq:div-constraint}. Applying tangential difference quotients, we obtain a variational equation for $(\delta_h^s\chi,\delta_h^s r)$ with the same principal part as the original system. Since the coefficients $A_1$, $A_2$, and $a$ are smooth, the right-hand side contains only products of derivatives of the coefficients with derivatives of $\chi$ of order at most $k$ and of $r$ of order at most $k-1$. By the induction hypothesis, all these lower-order terms belong to $L^2$. We may therefore apply once again the local $H^2$ argument of Theorem~\ref{thm:interior-regularity} to $(\delta_h^s\chi,\delta_h^s r)$, pass to the limit as $h \to 0$, and conclude that all tangential derivatives of order $k+1$, together with all mixed derivatives involving at most one normal derivative, belong to $L^2$.

  The remaining pure normal derivatives are recovered algebraically from the system: applying suitable tangential derivatives to the divergence constraint and the momentum equation, one obtains a block matrix system for $\partial_d^{k+1}\chi$ and $\partial_d^k r$ whose principal block is the same as in \eqref{eq:block-matrix-system}, hence still invertible. The right-hand side involves only tangential, mixed, and lower-order derivatives already under control, so it belongs to $L^2$. Therefore
  \[
    \partial_d^{k+1}\chi \in L^2, \qquad \partial_d^k r \in L^2.
  \]
  This gives the piecewise $H^{k+1}$ regularity near the interface. Combining it with the classical interior elliptic regularity in the two phases and a finite covering argument, we obtain the global conclusion. The theorem follows by induction for all $k \geq 1$.
\end{proof}

Applying the local estimates on a finite covering of $\partial\omega$, we obtain the following global regularity result.

\begin{corollary}[Global Piecewise Regularity]\label{cor:corrector-regularity}
  Assume that $\partial\omega \in C^\infty$. Then the solution $(\chi, r)$ of the cell problem \eqref{eq:cell-problem} satisfies
  \[
    \chi|_{Y\setminus\overline{\omega}} \in C^\infty(\overline{Y\setminus\omega}; \mathbb{R}^d), \qquad \chi|_{\omega} \in C^\infty(\overline{\omega}; \mathbb{R}^d), \qquad r \in C^\infty(\overline{\omega}).
  \]
  In particular, $\chi \in W^{1,\infty}(Y)$.
\end{corollary}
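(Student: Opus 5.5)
The corollary follows from Theorem~\ref{thm:corrector-Hk-regularity} together with Sobolev embedding applied separately in each phase. I will work phase by phase, with a separate argument for the cell boundary $\partial Y$.

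\textbf{Elastic phase.} Theorem~\ref{thm:corrector-Hk-regularity} gives $\chi|_{Y\setminus\overline{\omega}} \in H^k(Y\setminus\overline{\omega};\mathbb{R}^d)$ for every $k$. Near $\partial Y$ I will not treat $\partial Y$ as a boundary. Instead I will use periodicity: the periodic extension of $\chi$ lies in $H^k_{\mathrm{loc}}$ of the open set $\mathbb{R}^d\setminus\bigcup_{n\in\mathbb{Z}^d}(n+\overline{\omega})$. This holds because points of $\partial Y$ are interior points of the periodized elastic phase (recall $\mathrm{dist}(\omega,\partial Y)>0$), so the interior Lam\'e regularity cited at the start of Section~\ref{sec:cell-to-local} applies there.

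\textbf{Fluid phase.} Since $\partial\omega\in C^\infty$, the domain $\omega$ is a bounded smooth domain. The Sobolev embedding $H^k(\omega)\hookrightarrow C^{m}(\overline{\omega})$ for $k>m+d/2$ then gives $\chi|_\omega\in C^m(\overline{\omega})$ for every $m$, and $r\in H^{k-1}(\omega)$ for all $k$ likewise gives $r\in C^\infty(\overline{\omega})$.

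\textbf{Elastic phase near $\partial\omega$.} On the elastic side I will use the annular smooth region $Y'\setminus\overline{\omega}$, where $Y'$ is a smooth neighborhood of $\overline{\omega}$ compactly contained in $Y$. Its boundary consists of $\partial\omega$ and $\partial Y'$, both smooth, so the same embedding gives smoothness up to $\partial\omega$. Away from $\partial\omega$, interior smoothness together with the periodic argument above covers the rest of the cell. These pieces give $\chi|_{Y\setminus\overline{\omega}}\in C^\infty(\overline{Y\setminus\omega})$, in the sense that all derivatives are bounded and extend continuously.

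\textbf{Lipschitz bound.} Next I will show $\chi\in W^{1,\infty}(Y)$. The first derivatives are bounded on each of the two phases. Moreover $\chi\in H^1(Y)$ globally, because its traces agree across $\Gamma$ (the weak solution lies in $H^1_{\#,0}$). For a function whose two pieces are Lipschitz up to a smooth interface, the distributional gradient equals the piecewise gradient with no singular part on $\Gamma$. Hence $\nabla\chi\in L^\infty(Y)$. As a by-product, $\chi$ is continuous and bounded, so $\chi\in W^{1,\infty}(Y)$.

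\textbf{Main obstacle.} The only delicate point is that the $H^k$ statement of Theorem~\ref{thm:corrector-Hk-regularity} is global on each piece, which requires the local interface estimates of Theorem~\ref{thm:interior-regularity} to be patched together. I will justify this by covering the compact interface $\Gamma$ with finitely many charts and applying the $H^k$ estimates in each chart. With finitely many charts the constants stay uniform, which is all the embedding argument needs.
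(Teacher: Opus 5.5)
Your proposal is correct and follows essentially the same route as the paper: Sobolev embedding $H^k\hookrightarrow C^m$ for $k>m+d/2$ is applied phase by phase to the piecewise $H^k$ regularity of Theorem~\ref{thm:corrector-Hk-regularity}, and then, since $\chi\in H^1(Y)$ and $\Gamma$ is a null set, the distributional gradient coincides with the patched piecewise gradient and is therefore bounded. Your extra care near $\partial Y$ (via periodic extension) and near $\partial\omega$ (via a smooth annulus) is a harmless refinement; the paper simply applies the embedding directly on $Y\setminus\overline{\omega}$, which is already a Lipschitz domain.
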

This corollary follows directly from Theorem~\ref{thm:corrector-Hk-regularity} and Sobolev embedding; see the summary at the end of this chapter for details.

\medskip
\noindent
To prove the local regularity results above, we shall repeatedly use the following three standard lemmas; see \cite[Lemma~4.3]{HanLin2011}, \cite[Theorem~3.2]{Duran2012}, and \cite[Theorem~4.7]{Seregin2014}.

\begin{lemma}[Iteration Lemma]\label{lem:iteration}
  Let $f(t) \geq 0$ be bounded. Assume that there exists $\theta \in [0, 1)$ such that for all $\tau_0 \leq \rho < t \leq \tau_1$,
  \begin{equation}
    f(\rho) \leq \theta f(t) + \frac{A}{(t-\rho)^\alpha} + B.
  \end{equation}
  Then there exists a constant $c(\alpha, \theta)$ such that
  \begin{equation}
    f(\rho) \leq c(\alpha, \theta) \left[ \frac{A}{(t-\rho)^\alpha} + B \right].
  \end{equation}
\end{lemma}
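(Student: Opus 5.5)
This is the classical iteration lemma (Giaquinta / Han--Lin). I read the hypothesis in its standard form: the inequality holds for all $\tau_0\le\rho<t\le\tau_1$. The conclusion is then asserted for every pair $\tau_0\le\rho<t\le\tau_1$, and in fact I will prove the slightly stronger bound with $(t-\rho)^{-\alpha}$ on the right.

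The plan is a geometric iteration. Fix $\rho<t$. Choose a parameter $\tau\in(0,1)$, to be fixed later depending only on $\alpha$ and $\theta$. Define an increasing sequence $t_0=\rho$ and
\[
t_{i+1}=t_i+(1-\tau)\tau^i(t-\rho).
\]
Then $t_i\uparrow t$, every $t_i$ lies in $[\rho,t]\subset[\tau_0,\tau_1]$, and $t_{i+1}-t_i=(1-\tau)\tau^i(t-\rho)$. Apply the hypothesis to each pair $(t_i,t_{i+1})$:
\[
f(t_i)\le \theta f(t_{i+1})+\frac{A}{(1-\tau)^\alpha\tau^{i\alpha}(t-\rho)^\alpha}+B .
\]

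Iterating $k$ times gives
\[
f(\rho)\le \theta^k f(t_k)+\sum_{i=0}^{k-1}\theta^i\Big[\frac{A\,\tau^{-i\alpha}}{(1-\tau)^\alpha(t-\rho)^\alpha}+B\Big].
\]

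The one real step is the choice of $\tau$ that makes the series converge. I take $\tau$ with $\theta\tau^{-\alpha}<1$, for example $\tau^\alpha=(1+\theta)/2$ when $\theta>0$; when $\theta=0$ the claim is immediate. With this choice $\sum_i(\theta\tau^{-\alpha})^i$ and $\sum_i\theta^i$ are finite constants depending only on $\alpha$ and $\theta$.

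Finally, since $f$ is bounded, $\theta^k f(t_k)\le\theta^k\sup f\to0$ as $k\to\infty$. Letting $k\to\infty$ yields
\[
f(\rho)\le c(\alpha,\theta)\Big[\frac{A}{(t-\rho)^\alpha}+B\Big],
\]
with, for instance,
\[
c(\alpha,\theta)=\max\Big\{(1-\tau)^{-\alpha}\big(1-\theta\tau^{-\alpha}\big)^{-1},\,(1-\theta)^{-1}\Big\}.
\]

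No obstacle is expected beyond the bookkeeping in the choice of $\tau$. The boundedness of $f$ is used only to kill the tail term $\theta^k f(t_k)$.
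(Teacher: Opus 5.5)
Your argument is correct and is exactly the standard proof of \cite[Lemma~4.3]{HanLin2011}, which the paper cites rather than proves: the geometric sequence $t_{i+1}=t_i+(1-\tau)\tau^i(t-\rho)$, the choice $\theta<\tau^{\alpha}<1$, and boundedness of $f$ to kill $\theta^k f(t_k)$. Your last step, bounding both series coefficients by their maximum, uses $A,B\ge 0$ (and $\alpha>0$ to choose $\tau$). The statement leaves this implicit but genuinely needs it: for $B<0$ the conclusion can fail, e.g.\ $f(x)=x$ on $[0,1]$ with $\theta=1/2$, $\alpha=1$, $A=1$, $B=-3/2$ satisfies the hypothesis, but no constant $c$ works at both $(\rho,t)=(0,1)$ and $(1/2,1)$.
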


\begin{lemma}[The Bogovski\u{\i} Operator]\label{lem:bogovskii-reg}
  Let $B_t \subset \mathbb{R}^d$ be a ball of radius $t > 0$. Then for every function $G \in L^2(B_t)$ satisfying
  \[
    \int_{B_t} G \, \mathrm{d}x = 0,
  \]
  there exists $\mathbf{F} \in H_0^1(B_t; \mathbb{R}^d)$ such that $\nabla \cdot \mathbf{F} = G$ and
  \begin{equation}
    \|\nabla \mathbf{F}\|_{L^2(B_t)} \leq C \|G\|_{L^2(B_t)},
  \end{equation}
  where the constant $C$ depends only on the dimension $d$ and is independent of the radius $t$.
\end{lemma}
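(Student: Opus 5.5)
The plan is to prove the estimate once on the unit ball $B_1$ and then transfer it to an arbitrary ball $B_t = B_t(x_0)$ by translation and dilation. The scaling step is exactly what makes the constant independent of $t$.

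\textbf{Step 1 (unit ball).} I will establish the statement on $B_1$ using Bogovski\u{\i}'s explicit construction. Fix $\theta \in C_0^\infty(B_{1/2})$ with $\int \theta = 1$. For $g \in C_0^\infty(B_1)$ with $\int_{B_1} g = 0$, define
\[
  f(x) = \int_{B_1} g(z)\,\frac{x-z}{|x-z|^d}\int_{|x-z|}^{\infty}\theta\Big(z+s\frac{x-z}{|x-z|}\Big)s^{d-1}\,\mathrm{d}s\,\mathrm{d}z .
\]
Because $B_1$ is star-shaped with respect to every point of $B_{1/2}$, this $f$ is smooth and compactly supported in $B_1$. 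A direct computation using $\int g = 0$ gives $\nabla\cdot f = g$. Differentiating under the integral shows that $\nabla f$ is a Calder\'on--Zygmund singular integral of $g$ plus a bounded multiple of $g$. The $L^2$ theory of such operators then yields $\|\nabla f\|_{L^2(B_1)} \le C_d \|g\|_{L^2(B_1)}$.

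Since the map $g \mapsto f$ is linear and mean-zero test functions are dense in $L^2_0(B_1)$, it extends to a bounded operator $L^2_0(B_1) \to H_0^1(B_1;\mathbb{R}^d)$. Here we use that $H_0^1$ is closed and that Poincar\'e's inequality controls the full $H^1$ norm. The divergence identity passes to the limit. I expect the Calder\'on--Zygmund kernel bound to be the only genuinely technical point. I would cite it from \cite{Duran2012} or Galdi rather than reprove it.

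As a fallback, one can use the functional-analytic route instead. The Ne\v{c}as inequality $\|p\|_{L^2_0} \le C\|\nabla p\|_{H^{-1}}$ shows that $\nabla$ has closed range and trivial kernel on $L^2_0(B_1)$. Hence its adjoint $-\nabla\cdot : H_0^1 \to L^2_0$ is surjective. The open mapping theorem then provides the bounded right inverse.

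\textbf{Step 2 (scaling).} Given $G \in L^2(B_t(x_0))$ with zero mean, set $g(z) = G(x_0 + tz)$ on $B_1$. It still has zero mean, so Step 1 gives $f \in H_0^1(B_1)$ with $\nabla\cdot f = g$. Define
\[
  \mathbf{F}(x) = t\, f\big((x-x_0)/t\big).
\]
Then $\mathbf{F} \in H_0^1(B_t;\mathbb{R}^d)$. Moreover $\nabla\cdot\mathbf{F}(x) = (\nabla\cdot f)((x-x_0)/t) = G(x)$ and $\nabla\mathbf{F}(x) = (\nabla f)((x-x_0)/t)$.

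Changing variables gives $\|\nabla\mathbf{F}\|_{L^2(B_t)} = t^{d/2}\|\nabla f\|_{L^2(B_1)}$ and $\|G\|_{L^2(B_t)} = t^{d/2}\|g\|_{L^2(B_1)}$. Therefore $\|\nabla\mathbf{F}\|_{L^2(B_t)} \le C_d\|G\|_{L^2(B_t)}$ with the same constant $C_d$ as on $B_1$. This constant is independent of $t$ and $x_0$. The factor $t$ in the definition of $\mathbf{F}$ is chosen precisely so that the divergence is unchanged while the gradient norm scales like the $L^2$ norm of $G$.
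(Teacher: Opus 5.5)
Your proposal is correct and follows the standard route behind the paper's treatment, which gives no proof of its own and simply cites \cite[Theorem~3.2]{Duran2012}. That reference uses Bogovski\u{\i}'s explicit formula on a domain star-shaped with respect to a ball, with a constant depending only on $d$ and on the ratio of the domain's diameter to the inner ball's radius. Your rescaling $\mathbf{F}(x)=t\,f\bigl((x-x_0)/t\bigr)$ is exactly what justifies the $t$-independence claimed in the lemma.
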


\begin{remark}[Application to the Variable-Coefficient Case]\label{rmk:bogovskii-variable}
  In the proofs below, we shall need to construct $\mathbf{f} \in H_0^1(B_t; \mathbb{R}^d)$ such that in $B_t^-$,
  \[
    \nabla \cdot (a \mathbf{f}) = G_0,
  \]
  where $a(x)$ is a smooth nondegenerate matrix defined only on $\overline{B_1^-}$. The construction is as follows. First extend $G_0$ from $B_t^-$ to the whole ball $B_t$ by taking a suitable constant value on $B_t^+$ so that
  \[
    \int_{B_t} G \, \mathrm{d}x = 0.
  \]
  Then apply Lemma~\ref{lem:bogovskii-reg} on $B_t$ to obtain $\mathbf{F} \in H_0^1(B_t)$ satisfying $\nabla \cdot \mathbf{F} = G$ and
  \begin{equation}
    \|\nabla \mathbf{F}\|_{L^2(B_t)} \leq C \|G\|_{L^2(B_t)}.
  \end{equation}
  Since $a^{-1} \in C^\infty(\overline{B_1^-})$, we may extend it to $\widetilde{a^{-1}} \in W^{1,\infty}(B_1)$ by even reflection:
  \[
    \widetilde{a^{-1}}(x', x_d) = a^{-1}(x', -x_d) \qquad (x_d > 0),
  \]
  and define
  \begin{equation}
    \mathbf{f} = \widetilde{a^{-1}} \, \mathbf{F} \in H_0^1(B_t).
  \end{equation}
  Then in $B_t^-$, since $\widetilde{a^{-1}} = a^{-1}$, we have
  \[
    a \mathbf{f} = \mathbf{F},
    \qquad
    \nabla \cdot (a \mathbf{f}) = \nabla \cdot \mathbf{F} = G_0.
  \]
  Moreover,
  \begin{equation}
    \|\nabla \mathbf{f}\|_{L^2(B_t)} \leq \|\widetilde{a^{-1}}\|_{L^\infty(B_1)} \|\nabla \mathbf{F}\|_{L^2(B_t)} + \|\nabla \widetilde{a^{-1}}\|_{L^\infty(B_1)} \|\mathbf{F}\|_{L^2(B_t)} \leq C_a \|G\|_{L^2(B_t)},
  \end{equation}
  where the last inequality uses the Poincar\'e inequality
  \[
    \|\mathbf{F}\|_{L^2} \leq C \|\nabla \mathbf{F}\|_{L^2}.
  \]
  By construction of the symmetric extension,
  \[
    \|\widetilde{a^{-1}}\|_{W^{1,\infty}(B_1)} = \|a^{-1}\|_{W^{1,\infty}(B_1^-)},
  \]
  so the constant $C_a$ depends only on $d$ and $\|a^{-1}\|_{W^{1,\infty}(B_1^-)}$.
\end{remark}

\begin{lemma}[Ne\v{c}as Embedding Theorem]\label{lem:LBB}
  Let $\Omega \subset \mathbb{R}^d$ be a bounded Lipschitz domain. If $p \in \mathcal{D}'(\Omega)$ satisfies $\nabla p \in H^{-1}(\Omega; \mathbb{R}^d)$, then $p \in L^2(\Omega)$, and there exists a constant $\bar{p}$ such that
  \begin{equation}
    \|p - \bar{p}\|_{L^2(\Omega)} \leq C \|\nabla p\|_{H^{-1}(\Omega)},
  \end{equation}
  where $C$ depends only on $d$ and $\Omega$.
\end{lemma}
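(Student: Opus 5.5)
The plan is to prove the estimate first for $p\in L^2(\Omega)$ by duality with the Bogovski\u{\i} operator, and then to remove the assumption $p\in L^2$ by an approximation argument on star-shaped pieces.

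\textbf{Step 1: the a priori estimate for $p\in L^2(\Omega)$.} Let $\bar p$ be the mean of $p$ over $\Omega$, and set $G:=p-\bar p\in L^2_0(\Omega)$. The Bogovski\u{\i} construction (Lemma~\ref{lem:bogovskii-reg} and Appendix Theorem~\ref{thm:bogovskii}, which holds on bounded Lipschitz domains) gives $\mathbf{F}\in H^1_0(\Omega;\mathbb{R}^d)$ with
\[
  \nabla\cdot\mathbf{F}=G, \qquad \|\mathbf{F}\|_{H^1_0(\Omega)}\le C\|G\|_{L^2(\Omega)}.
\]
Because $\int_\Omega\nabla\cdot\mathbf{F}\,\mathrm{d}x=0$, we can test with $\mathbf{F}$:
\[
  \|p-\bar p\|_{L^2}^2=\int_\Omega p\,\nabla\cdot\mathbf{F}\,\mathrm{d}x=-\langle\nabla p,\mathbf{F}\rangle\le\|\nabla p\|_{H^{-1}}\|\mathbf{F}\|_{H^1_0}\le C\|\nabla p\|_{H^{-1}}\|p-\bar p\|_{L^2}.
\]
Dividing through gives the inequality, with $C$ equal to the Bogovski\u{\i} constant of $\Omega$.

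\textbf{Step 2: star-shaped domains.} Suppose $\Omega$ is star-shaped with respect to a ball centred at $0$, and $p$ is only a distribution. For $t<1$ define $p_t(x):=p(x/t)$ on the dilated domain $t\Omega\Subset\Omega$. Mollify this with $\rho_\delta$ for $\delta<\mathrm{dist}(t\Omega,\partial\Omega)$, and write $p_{t,\delta}$ for the result. The function $p_{t,\delta}$ is smooth on a neighbourhood of $\overline{t\Omega}$, hence in $L^2(t\Omega)$. Since convolution and dilation are bounded on $H^{-1}$, we get
\[
  \|\nabla p_{t,\delta}\|_{H^{-1}(t\Omega)}\le C\|\nabla p\|_{H^{-1}(\Omega)},
\]
with $C$ uniform for $t$ near $1$ and $\delta$ small; checking this uniform bound under dilation and mollification is one of the two points needing care.

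The dilated domains $t\Omega$ are all star-shaped with respect to a ball of comparable radius and have comparable diameter. Bogovski\u{\i}'s explicit formula therefore gives a constant uniform in $t$. Step 1 on $t\Omega$ yields a uniform bound for $\|p_{t,\delta}-\overline{p_{t,\delta}}\|_{L^2(t\Omega)}$. We then pull back to the fixed domain $\Omega$ via $x\mapsto tx$ and extract weakly convergent subsequences as $\delta\to0$ and then $t\to1$. The limit coincides with $p$ (up to a constant) in $\mathcal{D}'(\Omega)$, because $p_{t,\delta}\to p$ in distributions. Weak lower semicontinuity of the norm transfers the estimate to $p$.

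\textbf{Step 3: general Lipschitz domains.} A bounded Lipschitz domain is a finite union $\Omega=\bigcup_{m=1}^M\Omega_m$ of domains, each star-shaped with respect to a ball. Step 2 gives $p|_{\Omega_m}\in L^2(\Omega_m)$ with a constant $c_m$ such that $\|p-c_m\|_{L^2(\Omega_m)}\le C\|\nabla p\|_{H^{-1}}$. Hence $p\in L^2(\Omega)$, and Step 1 on $\Omega$ itself then gives the stated inequality with $\bar p$ the mean of $p$. If one wants to avoid invoking the global Bogovski\u{\i} operator here, one can instead compare the constants $c_m$ on overlaps $\Omega_m\cap\Omega_{m'}$ of positive measure. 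Connectedness of $\Omega$ chains these comparisons together, and $|c_m-c_{m'}|$ is controlled by $\|\nabla p\|_{H^{-1}}$ divided by the overlap measure.

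\textbf{Main obstacle.} The genuine difficulty is Step 2, the qualitative statement that a distribution whose gradient lies in $H^{-1}$ is actually a function in $L^2$. The whole argument rests on the uniformity of the Bogovski\u{\i} constant over the dilated domains and on the uniform $H^{-1}$ bound under dilation and mollification. I would try the dilation argument described above first. If it became too delicate, I would fall back on citing \cite[Theorem~3.2]{Duran2012}, as the paper does, since this is a classical result.
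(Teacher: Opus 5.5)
The paper does not prove this lemma. It lists it, together with the iteration lemma and the Bogovski\u{\i} lemma, as a standard result with references. Judging by the order of the citations, the reference for this lemma is \cite[Theorem~4.7]{Seregin2014}; \cite[Theorem~3.2]{Duran2012} goes with Lemma~\ref{lem:bogovskii-reg}.

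Your proposal instead derives the lemma from the Bogovski\u{\i} operator, which the paper already uses as a black box (Theorem~\ref{thm:bogovskii}). This is a sound and genuinely different route. Step~1 is the standard duality argument and already gives the quantitative estimate, with $\bar p$ the mean of $p$ and $C$ the Bogovski\u{\i} constant of $\Omega$. Steps~2--3 supply the qualitative fact $p\in L^2(\Omega)$ by regularization on star-shaped pieces. This uses that restricting $\nabla p$ to a piece $\Omega_m$ does not increase its $H^{-1}$ norm, since test functions in $H^1_0(\Omega_m)$ extend by zero to $H^1_0(\Omega)$. Your route makes the lemma self-contained modulo an operator the paper needs anyway; the citation buys brevity. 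Note also that the paper applies the lemma only on the half-ball $B_{1/2}^-$. That set is convex, hence star-shaped with respect to a ball, so Steps~1--2 already cover the paper's use.

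As written, however, Step~2 has the dilation the wrong way round. With $t<1$, $p_t(x)=p(x/t)$ is a distribution on $t\Omega$ only. So $\rho_\delta*p_t$ is defined only at points of $t\Omega$ at distance more than $\delta$ from $\partial(t\Omega)$, not on a neighbourhood of $\overline{t\Omega}$, and Step~1 cannot be applied to it on $t\Omega$. Your choice $\delta<\mathrm{dist}(t\Omega,\partial\Omega)$ and the pull-back via $x\mapsto tx$ point to the correct construction, which is to mollify $p$ itself and then rescale:
\[
  q_{t,\delta}(x):=(\rho_\delta*p)(tx),\qquad x\in\Omega .
\]
If $\Omega$ is star-shaped with respect to $B(0,R)$, then $\mathrm{dist}(t\overline{\Omega},\partial\Omega)\ge(1-t)R$. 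Hence for $\delta<(1-t)R$ the function $q_{t,\delta}$ is smooth on $\overline{\Omega}$. For $\phi\in H^1_0(\Omega)$ one has $\langle\nabla q_{t,\delta},\phi\rangle=t^{1-d}\langle\nabla p,\check\rho_\delta*\phi(\cdot/t)\rangle$, where $\check\rho_\delta(z)=\rho_\delta(-z)$. This gives $\|\nabla q_{t,\delta}\|_{H^{-1}(\Omega)}\le t^{-d/2}\|\nabla p\|_{H^{-1}(\Omega)}$. Step~1 can then be applied on the fixed domain $\Omega$, so you do not need uniformity of Bogovski\u{\i} constants over the family $t\Omega$ at all.

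When you pass to the limit, also record that the subtracted means converge. Along your subsequences, $q_{t,\delta}$ and $q_{t,\delta}-\overline{q_{t,\delta}}$ both converge in $\mathcal{D}'(\Omega)$, so the constants $\overline{q_{t,\delta}}$ converge to a real number; this is what ``up to a constant'' requires. In the alternative at the end of Step~3, the bound on $|c_m-c_{m'}|$ carries the square root of the overlap measure, not the measure itself.
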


\section{Proof of the Regularity Results}

\subsection{\texorpdfstring{$L^2$-to-$H^1$ Estimate (Caccioppoli Inequality)}{L2-to-H1 Estimate (Caccioppoli Inequality)}}\label{sec:caccioppoli}

In this section we prove the first part of Theorem~\ref{thm:interior-regularity}, namely an interior $L^2$ estimate for the gradient. Let $1/2 \leq \rho < t \leq 1$, and choose a cut-off function $\eta \in C_0^\infty(B_t)$ such that $\eta|_{B_\rho} \equiv 1$ and
\[
  |\nabla \eta| \leq \frac{C_1}{t-\rho}.
\]

\paragraph{Step 1: a Bogovski\u{\i} construction to eliminate the pressure.}

We wish to choose the test function
\[
  \mathbf{v} = \eta^2 \chi - \mathbf{f},
\]
where $\mathbf{f} \in H_0^1(B_t;\mathbb{R}^d)$ is an auxiliary function to be constructed so that the pressure term disappears. More precisely, in order that the pressure integral
\[
  (r, \nabla \cdot (a \mathbf{v}))_{B_1^-}
\]
in the weak formulation \eqref{eq:weak-form-reg} vanish, we require $\mathbf{f}$ to satisfy
\begin{equation}
  \nabla \cdot (a \mathbf{f}) = \nabla \cdot (a \eta^2 \chi)
  \qquad \text{in } B_t^-.
\end{equation}

Using the incompressibility constraint $\nabla \cdot (a \chi) = 0$ in $B_1^-$, we expand the right-hand side:
\begin{equation}
  \nabla \cdot (a \eta^2 \chi) = 2\eta \nabla \eta \cdot (a \chi) + \eta^2 \nabla \cdot (a \chi) = 2\eta \nabla \eta \cdot (a \chi)
  \qquad \text{in } B_t^-.
\end{equation}

To satisfy the zero-mean compatibility condition required by the Bogovski\u{\i} operator, define the piecewise function
\begin{equation}
  G(x) = \begin{cases}
    2\eta \nabla \eta \cdot (a \chi) & \text{in } B_t^-, \\
    -M_t & \text{in } B_t^+,
  \end{cases}
\end{equation}
where
\[
  M_t = \frac{1}{|B_t^+|} \int_{B_t^-} 2\eta \nabla \eta \cdot (a \chi) \, \mathrm{d}x,
\]
so that
\[
  \int_{B_t} G(x) \, \mathrm{d}x = 0.
\]

By the construction described in Remark~\ref{rmk:bogovskii-variable}, applying Lemma~\ref{lem:bogovskii-reg} gives a function $\mathbf{f} \in H_0^1(B_t;\mathbb{R}^d)$ whose gradient satisfies
\begin{equation}\label{eq:f-bound-H1}
  \|\nabla \mathbf{f}\|_{L^2(B_t)} \leq C \|G\|_{L^2(B_t)} \leq \frac{C}{t-\rho} \|\chi\|_{L^2(B_t^-)},
\end{equation}
where $C$ depends on $d$, $\|a\|_{W^{1,\infty}(B_1^-)}$, and $\|a^{-1}\|_{W^{1,\infty}(B_1^-)}$.

\paragraph{Step 2: energy expansion and absorption.}

Substitute $\mathbf{v} = \eta^2 \chi - \mathbf{f}$ into the weak formulation. Since $\nabla \cdot (a \mathbf{v}) = 0$ in $B_t^-$, the pressure term disappears, and we obtain
\begin{equation}\label{eq:caccioppoli-energy}
  (\nabla(\eta^2 \chi), A_1 \nabla \chi)_{B_t^+} + (\nabla(\eta^2 \chi), A_2 \nabla \chi)_{B_t^-} = (\nabla \mathbf{f}, A_1 \nabla \chi)_{B_t^+} + (\nabla \mathbf{f}, A_2 \nabla \chi)_{B_t^-}.
\end{equation}

Using the identity
\[
  \nabla(\eta^2 \chi) = \eta \nabla(\eta\chi) + \eta(\chi \otimes \nabla \eta),
\]
one computes directly that
\begin{equation}
  (\nabla(\eta^2 \chi), A_i \nabla \chi) = (\nabla(\eta \chi), A_i \nabla(\eta \chi)) - (\chi \otimes \nabla \eta, A_i (\chi \otimes \nabla \eta)),\quad i=1,2.
\end{equation}
Applying this identity in $B_t^+$ and $B_t^-$ and then summing, the left-hand side of \eqref{eq:caccioppoli-energy} can be estimated by using coercivity and boundedness of the coefficients:
\begin{equation}
  (\nabla(\eta^2 \chi), A_1 \nabla \chi)_{B_t^+} + (\nabla(\eta^2 \chi), A_2 \nabla \chi)_{B_t^-} \geq C_0 \int_{B_t} |\nabla(\eta \chi)|^2 \, \mathrm{d}x - C \int_{B_t} |\nabla \eta|^2 |\chi|^2 \, \mathrm{d}x.
\end{equation}
For the right-hand side of \eqref{eq:caccioppoli-energy}, apply the Cauchy-Schwarz and Young inequalities together with \eqref{eq:f-bound-H1}:
\begin{equation}
  (\nabla \mathbf{f}, A_1 \nabla \chi)_{B_t^+} + (\nabla \mathbf{f}, A_2 \nabla \chi)_{B_t^-} \leq \frac{C(\delta)}{(t-\rho)^2} \|\chi\|_{L^2(B_t)}^2 + \delta \|\nabla \chi\|_{L^2(B_t)}^2.
\end{equation}

Combining these estimates and using $|\nabla \eta| \leq C_1/(t-\rho)$, we obtain
\begin{equation}\label{eq:caccioppoli-combined}
  C_0 \int_{B_t} |\nabla(\eta \chi)|^2 \, \mathrm{d}x \leq \frac{C(\delta)}{(t-\rho)^2} \int_{B_t} |\chi|^2 \, \mathrm{d}x + \delta \int_{B_t} |\nabla \chi|^2 \, \mathrm{d}x.
\end{equation}

\paragraph{Step 3: conclusion by iteration.}

Since $\eta \equiv 1$ on $B_\rho$, we have $\nabla(\eta \chi) = \nabla \chi$ in $B_\rho$, and thus the previous estimate becomes
\begin{equation}
  C_0 \int_{B_\rho} |\nabla \chi|^2 \, \mathrm{d}x \leq \frac{C(\delta)}{(t-\rho)^2} \int_{B_t} |\chi|^2 \, \mathrm{d}x + \delta \int_{B_t} |\nabla \chi|^2 \, \mathrm{d}x.
\end{equation}

Dividing by $C_0$ and defining the energy function
\[
  \Phi(r) = \int_{B_r} |\nabla \chi|^2 \, \mathrm{d}x,
\]
we obtain the iteration inequality
\begin{equation}
  \Phi(\rho) \leq \frac{\delta}{C_0} \Phi(t) + \frac{C(\delta)}{C_0(t-\rho)^2} \int_{B_1} |\chi|^2 \, \mathrm{d}x.
\end{equation}

Choose $\delta$ sufficiently small so that $\theta = \delta/C_0 < 1$. Then Lemma~\ref{lem:iteration} gives, by taking $\rho = 1/2$ and $t = 1$,
\begin{equation}
  \int_{B_{1/2}} |\nabla \chi|^2 \, \mathrm{d}x \leq C \int_{B_1} |\chi|^2 \, \mathrm{d}x
  \implies
  \|\chi\|_{H^1(B_{1/2})} \leq C \|\chi\|_{L^2(B_1)}.
\end{equation}

This completes the proof of the Caccioppoli interior a priori estimate.

\subsection{\texorpdfstring{$H^1$-to-$H^2$ Estimate (Tangential Difference Quotients)}{H1-to-H2 Estimate (Tangential Difference Quotients)}}\label{sec:H1-H2}

In this section we prove the second part of Theorem~\ref{thm:interior-regularity}, namely the piecewise $H^2$ regularity estimate. Starting from $\chi \in H^1(B_1;\mathbb{R}^d)$, we derive $L^2$ estimates for tangential second derivatives by means of tangential difference quotients. The argument is carried out on $B_{1/2}$, and the $H^2$ regularity is recovered in the smaller ball $B_{1/4}$.

\paragraph{Step 1: the difference quotient variational identity.}

Fix a tangential direction $k \in \{1, \ldots, d-1\}$. Define the difference quotient and translation operators by
\begin{equation}
  T_{h,k} f(x) = \frac{f(x + h e_k) - f(x)}{h}, \qquad f^h(x) = f(x + h e_k).
\end{equation}
For any $\mathbf{v} \in H_0^1(B_{1/2}; \mathbb{R}^d)$, if $|h| < 1/4$ then $T_{-h,k} \mathbf{v}$ is supported in $B_{3/4} \subset B_1$, and may therefore be used as a test function in the weak formulation \eqref{eq:weak-form-reg}. Substituting $T_{-h,k}\mathbf{v}$ into \eqref{eq:weak-form-reg}, and using the discrete Leibniz rule
\begin{equation}
  T_{h,k}(A \nabla \chi) = A^h \nabla T_{h,k} \chi + (T_{h,k} A) \nabla \chi,
\end{equation}
together with the discrete integration-by-parts identity
\[
  \int f\, T_{-h} g = -\int (T_h f)\, g,
\]
we obtain the difference-quotient variational identity with variable coefficients: for every $\mathbf{v} \in H_0^1(B_{1/2};\mathbb{R}^d)$,
\begin{equation}\label{eq:diff-weak-full}
  (\nabla \mathbf{v}, A_1^h \nabla T_{h,k} \chi)_{B_1^+} + (\nabla \mathbf{v}, A_2^h \nabla T_{h,k} \chi)_{B_1^-} + (T_{h,k} r, \nabla \cdot (a^h \mathbf{v}))_{B_1^-} = -I_{\mathrm{comm}},
\end{equation}
where the lower-order commutator term is
\begin{equation}\label{eq:commutator}
  I_{\mathrm{comm}} = (\nabla \mathbf{v}, (T_{h,k} A_1) \nabla \chi)_{B_1^+} + (\nabla \mathbf{v}, (T_{h,k} A_2) \nabla \chi)_{B_1^-} + (r, \nabla \cdot ((T_{h,k} a) \mathbf{v}))_{B_1^-}.
\end{equation}

The left-hand side of \eqref{eq:diff-weak-full} contains the difference quotient pressure term
\[
  (T_{h,k} r, \nabla \cdot (a^h \mathbf{v}))_{B_1^-},
\]
where the regularity of $T_{h,k}r$ is not yet known. To eliminate this term, we must construct $\mathbf{v}$ so that
\[
  \nabla \cdot (a^h \mathbf{v}) = 0
  \qquad \text{in } B_1^-.
\]

\paragraph{Step 2: a Bogovski\u{\i} construction for the shifted coefficient.}

Let $1/4 \leq \rho < t \leq 1/2$, and choose a cut-off function $\eta \in C_0^\infty(B_t)$ such that $\eta|_{B_\rho} \equiv 1$ and
\[
  |\nabla \eta| \leq \frac{C_1}{t-\rho}.
\]
Set
\[
  \mathbf{v} = \eta^2 T_{h,k} \chi - \mathbf{f},
\]
where $\mathbf{f} \in H_0^1(B_t;\mathbb{R}^d)$ is an auxiliary function to be constructed. The condition $\nabla \cdot (a^h \mathbf{v}) = 0$ in $B_t^-$ is equivalent to requiring
\begin{equation}\label{eq:div-a-f-H2}
  \nabla \cdot (a^h \mathbf{f}) = \nabla \cdot (a^h \eta^2 T_{h,k} \chi) = 2\eta \nabla \eta \cdot (a^h T_{h,k} \chi) + \eta^2 \nabla \cdot (a^h T_{h,k} \chi).
\end{equation}

The second term on the right-hand side, namely $\eta^2\nabla \cdot (a^h T_{h,k} \chi)$, contains derivatives of $T_{h,k}\chi$, whose regularity is not available a priori. To transform it into a lower-order term, take the difference quotient of the divergence constraint \eqref{eq:div-constraint} and apply the Leibniz rule:
\begin{equation}\label{eq:key-identity}
  \nabla \cdot (a^h T_{h,k} \chi) = -\nabla \cdot ((T_{h,k} a) \chi).
\end{equation}
Substituting \eqref{eq:key-identity} into \eqref{eq:div-a-f-H2}, the divergence equation to be solved for $\mathbf{f}$ becomes
\begin{equation}\label{eq:f-div-simplified}
  \nabla\cdot(a^h\mathbf{f}) = 2\eta \nabla \eta \cdot (a^h T_{h,k} \chi) - \eta^2 \nabla \cdot ((T_{h,k} a) \chi).
\end{equation}
The right-hand side no longer contains $\nabla T_{h,k}\chi$; it involves only $T_{h,k}\chi$, $\nabla\chi$, and $\chi$, and is therefore controlled by $\|\chi\|_{H^1}$.

Since $t\leq 1/2$ and $|h|<1/4$, the shifted coefficient $a^h$ remains smooth and nondegenerate on $\overline{B_t^-}$. By the same construction as in Remark~\ref{rmk:bogovskii-variable}, there exists $\mathbf{f}\in H_0^1(B_t;\mathbb{R}^d)$ solving \eqref{eq:f-div-simplified} such that
\begin{equation}\label{eq:f-bound-H2}
  \|\nabla \mathbf{f}\|_{L^2(B_t)}^2 \leq C \left( \frac{1}{(t-\rho)^2} \|T_{h,k} \chi\|_{L^2(B_t)}^2 + \|\nabla \chi\|_{L^2(B_t)}^2 + \|\chi\|_{L^2(B_t)}^2 \right).
\end{equation}

\paragraph{Step 3: the energy estimate and iteration.}

Insert the test function $\mathbf{v} = \eta^2 T_{h,k} \chi - \mathbf{f}$ from Step 2 into \eqref{eq:diff-weak-full}. Since $\nabla \cdot (a^h \mathbf{v}) = 0$ in $B_t^-$, the difference-quotient pressure term vanishes, and the identity becomes
\begin{equation}\label{eq:H2-main}
  \underbrace{(\nabla \mathbf{v}, A_1^h \nabla T_{h,k} \chi)_{B_t^+} + (\nabla \mathbf{v}, A_2^h \nabla T_{h,k} \chi)_{B_t^-}}_{=:\, I} = \underbrace{-I_{\mathrm{comm}}}_{=:\, II}.
\end{equation}

We estimate the two sides separately.

\paragraph{Lower bound for $I$.}
Substituting $\mathbf{v} = \eta^2 T_{h,k}\chi - \mathbf{f}$, we decompose $I = I_1 - I_2$, where
\begin{align*}
  I_1 &= (\nabla(\eta^2 T_{h,k}\chi), A_1^h \nabla T_{h,k}\chi)_{B_t^+} + (\nabla(\eta^2 T_{h,k}\chi), A_2^h \nabla T_{h,k}\chi)_{B_t^-}, \\
  I_2 &= (\nabla \mathbf{f}, A_1^h \nabla T_{h,k}\chi)_{B_t^+} + (\nabla \mathbf{f}, A_2^h \nabla T_{h,k}\chi)_{B_t^-}.
\end{align*}
For $I_1$, exactly as in Section~\ref{sec:caccioppoli}, using the standard identity and the coercivity of $A_i^h$ we get
\begin{equation}
  I_1 \geq C_0 \int_{B_t} |\nabla(\eta T_{h,k} \chi)|^2 \, \mathrm{d}x - C \int_{B_t} |\nabla \eta|^2 |T_{h,k} \chi|^2 \, \mathrm{d}x.
\end{equation}
For $I_2$, the Cauchy-Schwarz and Young inequalities imply
\begin{equation}
  I_2 \leq C(\delta) \|\nabla \mathbf{f}\|_{L^2(B_t)}^2 + \delta \|\nabla T_{h,k} \chi\|_{L^2(B_t)}^2.
\end{equation}

\paragraph{Upper bound for $II$.}
By \eqref{eq:commutator}, $II = -I_{\mathrm{comm}}$ contains three terms. Since the coefficients are smooth, $T_{h,k}A_i$ and $T_{h,k}a$ are uniformly bounded. For the first two commutator terms, using the expansion
\[
  \nabla\mathbf{v} = 2\eta\, T_{h,k}\chi \otimes \nabla\eta + \eta^2\nabla T_{h,k}\chi - \nabla\mathbf{f},
\]
we obtain
\begin{equation}
  |(\nabla\mathbf{v}, (T_{h,k}A_i)\nabla\chi)_{B_t^\pm}| \leq C \int_{B_t} \left( \eta^2 |\nabla T_{h,k}\chi| + \eta|\nabla\eta||T_{h,k}\chi| + |\nabla\mathbf{f}| \right) |\nabla\chi| \, \mathrm{d}x.
\end{equation}
For the third term, expand $\nabla\cdot((T_{h,k}a)\mathbf{v})$ and estimate
\begin{equation}
  |(r, \nabla\cdot((T_{h,k}a)\mathbf{v}))_{B_t^-}| \leq C \int_{B_t^-} |r| \left( |\nabla\mathbf{v}| + |\mathbf{v}| \right) \mathrm{d}x.
\end{equation}

\paragraph{Conclusion.}
Since \eqref{eq:H2-main} states that $I = II$, combining the above estimates and applying the Young inequality $ab \leq \delta a^2 + C(\delta)b^2$ to all cross terms involving the highest-order quantity $|\nabla T_{h,k}\chi|$, we can absorb the terms of the form
\[
  \delta\|\nabla T_{h,k}\chi\|_{L^2}^2
\]
into the left-hand side. The remaining lower-order terms are controlled using $|\nabla\eta|\leq C/(t-\rho)$, standard difference quotient estimates, and \eqref{eq:f-bound-H2}. Choosing $\delta$ sufficiently small, we arrive at
  \begin{align}\label{eq:H2-iteration}
    \int_{B_\rho} |\nabla T_{h,k} \chi|^2 \, \mathrm{d}x
    &\leq \delta \int_{B_t} |\nabla T_{h,k} \chi|^2 \, \mathrm{d}x + \frac{C}{(t-\rho)^2} \int_{B_t} |\nabla \chi|^2 \, \mathrm{d}x \notag \\
    &\quad + C \left( \|\chi\|_{H^1(B_t)}^2 + \|r\|_{L^2(B_t^-)}^2 \right).
  \end{align}

Define
\[
  F(\rho) = \int_{B_\rho} |\nabla T_{h,k} \chi|^2 \, \mathrm{d}x.
\]
Then \eqref{eq:H2-iteration} is exactly in the form of Lemma~\ref{lem:iteration}, with $\theta = \delta$ and $\alpha = 2$. Since \eqref{eq:H2-iteration} holds for all $1/4 \leq \rho < t \leq 1/2$, applying Lemma~\ref{lem:iteration} with $\rho = 1/4$ and $t = 1/2$ gives
\begin{equation}
  \sup_{0 < |h| < 1/4} \|\nabla T_{h,k} \chi\|_{L^2(B_{1/4})}^2 \leq C \left( \|\chi\|_{H^1(B_{1/2})}^2 + \|r\|_{L^2(B_{1/2}^-)}^2 \right) < \infty.
\end{equation}
By the characterization of Sobolev spaces via difference quotients, see \cite[Section~5.8.2]{Evans2010}, uniform $L^2$ bounds for the difference quotients imply the existence of the corresponding weak derivatives. Therefore
\begin{equation}\label{eq:tangential-H2}
  \frac{\partial^2 \chi}{\partial x_s \partial x_\ell} \in L^2(B_{1/4}),
  \qquad 1 \leq s \leq d-1,\quad 1 \leq \ell \leq d.
\end{equation}

\subsection{Algebraic Recovery of the Normal Second Derivatives}\label{sec:normal-derivative}

The tangential difference quotient method yields all tangential and mixed second derivatives. In this section we extract the pure normal second derivatives $\partial_d^2 \chi$ and the normal derivative $\partial_d r$ by an algebraic argument.

\paragraph{The upper half-ball $B^+$.}

In $B^+$, the weak formulation implies that
\[
  \nabla \cdot (A_1 \nabla \chi) = 0
\]
in the distributional sense. In components, this reads
\begin{equation}
  \left( \frac{\partial}{\partial x_i} \left( A_{1,ij}^{\alpha\beta}(x) \frac{\partial \chi^\beta}{\partial x_j} \right) \right)^\alpha = 0 \quad \text{in } B_1^+.
\end{equation}

Isolating the pure normal second derivative terms, we obtain
\begin{equation}
  \begin{aligned}
    &A_{1,dd}^{\alpha 1} \frac{\partial^2 \chi^1}{\partial x_d^2}
    + \cdots
    + A_{1,dd}^{\alpha d} \frac{\partial^2 \chi^d}{\partial x_d^2} \\
    &\quad =
    -\sum_{\substack{1 \leq i,j \leq d \\ (i,j)\neq(d,d)}} A_{1,ij}^{\alpha\beta} \frac{\partial^2 \chi^\beta}{\partial x_i \partial x_j}
    - \left( \frac{\partial}{\partial x_i} A_{1,ij}^{\alpha\beta} \right) \frac{\partial \chi^\beta}{\partial x_j},
    \quad \alpha = 1, \ldots, d.
  \end{aligned}
\end{equation}

The right-hand side involves only tangential or mixed second derivatives and lower-order terms, all of which already belong to $L^2(B_{1/4}^+)$. By the Legendre--Hadamard condition on $A_1$, for any $\zeta \in \mathbb{R}^d$ we have
\[
  \zeta^T A_{1,dd}(x)\zeta=(\zeta\otimes e_d):A_1(x)(\zeta\otimes e_d)\ge c|\zeta|^2,
\]
so the matrix $A_{1,dd}(x)$ is strictly positive definite and therefore invertible. Consequently,
\begin{equation}
  \frac{\partial^2 \chi^1}{\partial x_d^2}, \ldots, \frac{\partial^2 \chi^d}{\partial x_d^2} \in L^2(B_{1/4}^+).
\end{equation}

\paragraph{The lower half-ball $B^-$: tangential derivatives of the pressure.}

In the weak formulation \eqref{eq:weak-form-reg}, choose a test function
\[
  \phi \in H_0^1(B_{1/2}^-;\mathbb{R}^d).
\]
Taking tangential difference quotients in the direction $x_k$, where $k \leq d-1$, and then letting $h \to 0$, we obtain
\begin{align}\label{eq:pressure-tangential}
  \sum_{i,\alpha} \int_{B_{1/2}^-} (\partial_k r)\, a_{i\alpha}\, \partial_i \phi^\alpha \, \mathrm{d}x
  = &- \sum_{i,j,\alpha,\beta} \int_{B_{1/2}^-} A_{2,ij}^{\alpha\beta}\, \partial_k \partial_j \chi^\beta\, \partial_i \phi^\alpha \, \mathrm{d}x \notag \\
  &- \sum_{i,j,\alpha,\beta} \int_{B_{1/2}^-} (\partial_k A_{2,ij}^{\alpha\beta})\, \partial_j \chi^\beta\, \partial_i \phi^\alpha \, \mathrm{d}x
  \notag \\
  &- \sum_{i,\alpha} \int_{B_{1/2}^-} r\, (\partial_k a_{i\alpha})\, \partial_i \phi^\alpha \, \mathrm{d}x.
\end{align}
By the tangential difference quotient estimate from Section~\ref{sec:H1-H2}, we have $\partial_k \chi \in H^1(B_{1/2}^-)$. Since $A_2, a \in C^\infty(\overline{B_1^-})$ and $r \in L^2(B_1^-)$, each term on the right-hand side of \eqref{eq:pressure-tangential} is bounded in absolute value by
\[
  C\|\nabla\phi\|_{L^2(B_{1/2}^-)}.
\]
Therefore
\begin{equation}\label{eq:pressure-bound}
  \left| \sum_{i,\alpha} \int_{B_{1/2}^-} (\partial_k r)\, a_{i\alpha}\, \partial_i \phi^\alpha \, \mathrm{d}x \right| \leq C \|\nabla \phi\|_{L^2(B_{1/2}^-)}, \quad \forall \phi \in H_0^1(B_{1/2}^-; \mathbb{R}^d).
\end{equation}

We now deduce from \eqref{eq:pressure-bound} that $\partial_k r \in L^2(B_{1/2}^-)$. Define
\[
  g_j(x) := \sum_{i,\alpha} a_{i\alpha}\, \partial_i (a^{-1})_{\alpha j} \in C^\infty(\overline{B_1^-}).
\]
For any $\psi \in C_0^\infty(B_{1/2}^-)$ and any $j \in \{1,\ldots,d\}$, choose
\[
  \phi^\alpha = \psi\, (a^{-1})_{\alpha j}
\]
in \eqref{eq:pressure-bound}. Since
\[
  \sum_\alpha a_{i\alpha}(a^{-1})_{\alpha j} = \delta_{ij},
\]
the product rule gives
\begin{equation}\label{eq:a-inverse-identity}
  \sum_{i,\alpha} a_{i\alpha}\, \partial_i \phi^\alpha = \partial_j \psi + g_j \psi.
\end{equation}
Substituting \eqref{eq:a-inverse-identity} into the left-hand side of \eqref{eq:pressure-bound}, we obtain
\begin{equation}\label{eq:pressure-split}
  \left| \int_{B_{1/2}^-} (\partial_k r)\, \partial_j \psi \, \mathrm{d}x + \int_{B_{1/2}^-} (\partial_k r)\, g_j\, \psi \, \mathrm{d}x \right| \leq C \|\nabla \phi\|_{L^2(B_{1/2}^-)}.
\end{equation}
The first integral equals $-\langle \partial_j \partial_k r, \psi \rangle$ in the distributional sense, while the second term satisfies
\[
  \left|\int_{B_{1/2}^-} (\partial_k r)\, g_j\, \psi \, \mathrm{d}x\right|
  \leq \|\partial_k r\|_{H^{-1}(B_{1/2}^-)} \|g_j \psi\|_{H^1(B_{1/2}^-)}
  \leq C\|r\|_{L^2(B_1^-)} \|\psi\|_{H^1(B_{1/2}^-)}.
\]
Moreover, the right-hand side satisfies
\[
  \|\nabla\phi\|_{L^2(B_{1/2}^-)} \leq C\|\psi\|_{H^1(B_{1/2}^-)}.
\]
Combining these estimates, \eqref{eq:pressure-split} implies
\begin{equation}
  |\langle \partial_j \partial_k r, \psi \rangle| \leq C\|\psi\|_{H^1(B_{1/2}^-)}, \quad \forall \psi \in C_0^\infty(B_{1/2}^-), \quad j = 1,\ldots,d,
\end{equation}
that is,
\[
  \nabla(\partial_k r) \in H^{-1}(B_{1/2}^-).
\]
By Lemma~\ref{lem:LBB}, it follows that
\[
  \partial_k r \in L^2(B_{1/2}^-), \qquad k = 1, \ldots, d-1.
\]

\paragraph{The lower half-ball $B^-$: the variable-coefficient block system.}

Expand the divergence constraint $\nabla \cdot (a \chi) = 0$ as
\begin{equation}\label{eq:div-expanded}
  \sum_{i,\beta} a_{i\beta} \frac{\partial \chi_\beta}{\partial x_i} + \sum_{i,\beta} (\partial_i a_{i\beta}) \chi_\beta = 0.
\end{equation}
Differentiating \eqref{eq:div-expanded} with respect to $x_d$, we obtain
\begin{equation}\label{eq:div-diff-d}
  \sum_{i,\beta} a_{i\beta} \frac{\partial^2 \chi_\beta}{\partial x_d \partial x_i} + \sum_{i,\beta} (\partial_d a_{i\beta}) \frac{\partial \chi_\beta}{\partial x_i} + \sum_{i,\beta} (\partial_i a_{i\beta}) \frac{\partial \chi_\beta}{\partial x_d} + \sum_{i,\beta} (\partial_d \partial_i a_{i\beta}) \chi_\beta = 0.
\end{equation}
Separating the terms containing $\partial_d^2 \chi$, namely those with $i=d$ in the first summation, gives
\begin{equation}\label{eq:div-normal}
  \sum_\beta a_{d\beta} \frac{\partial^2 \chi_\beta}{\partial x_d^2} = -\sum_{\substack{i \neq d \\ \beta}} a_{i\beta} \frac{\partial^2 \chi_\beta}{\partial x_d \partial x_i} - \sum_{i,\beta} (\partial_d a_{i\beta}) \frac{\partial \chi_\beta}{\partial x_i} - \sum_{i,\beta} (\partial_i a_{i\beta}) \frac{\partial \chi_\beta}{\partial x_d} - \sum_{i,\beta} (\partial_d \partial_i a_{i\beta}) \chi_\beta.
\end{equation}
Equation \eqref{eq:div-normal} yields only one scalar relation for the vector unknown $\partial_d^2 \chi$ and is therefore insufficient to determine it. We therefore derive $d$ additional equations from the momentum equation and combine them with \eqref{eq:div-normal}. Using the Piola identity $\nabla\cdot a = 0$, the strong form of the momentum equation can be written as
\[
  \nabla \cdot (A_2 \nabla \chi) + a^T \nabla r = 0.
\]
In components,
\begin{equation}
  \sum_{i,j,\beta} \partial_i \left( A_{2,ij}^{\alpha\beta} \frac{\partial \chi^\beta}{\partial x_j} \right) + \sum_i \partial_i (a_{i\alpha} r) = 0, \quad \alpha = 1, \ldots, d.
\end{equation}
Extracting the terms involving $\partial_d^2 \chi$ and $\partial_d r$, we obtain
\begin{equation}\label{eq:momentum-normal}
  \begin{aligned}
    \sum_\beta A_{2,dd}^{\alpha\beta} \frac{\partial^2 \chi^\beta}{\partial x_d^2}
    + a_{d\alpha} \frac{\partial r}{\partial x_d}
    ={}& -\sum_{\substack{(i,j) \neq (d,d) \\ \beta}} A_{2,ij}^{\alpha\beta}
    \frac{\partial^2 \chi^\beta}{\partial x_i \partial x_j}
    - \sum_{i,j,\beta} (\partial_i A_{2,ij}^{\alpha\beta}) \frac{\partial \chi^\beta}{\partial x_j} \\
    &- \sum_i (\partial_i a_{i\alpha}) r
    - \sum_{i \neq d} a_{i\alpha} \frac{\partial r}{\partial x_i},
  \end{aligned}
\end{equation}
for $\alpha = 1, \ldots, d$. The right-hand sides of \eqref{eq:div-normal} and \eqref{eq:momentum-normal} involve only tangential or mixed second derivatives $\partial_i\partial_j\chi$ with $(i,j) \neq (d,d)$, first derivatives $\nabla\chi$, tangential derivatives of the pressure $\partial_i r$ with $i \neq d$, and zeroth-order terms $\chi$ and $r$, all of which have already been shown to belong to $L^2(B_{1/4}^-)$.

Let the right-hand side of \eqref{eq:momentum-normal} be denoted by $g^\alpha$, $\alpha = 1,\ldots,d$, and the right-hand side of \eqref{eq:div-normal} by $f^d$. Then the two relations form the following block matrix system:
\begin{equation}\label{eq:block-matrix-system}
  \begin{pmatrix} A_{2,dd}(x) & a_{d\cdot}^T(x) \\ a_{d\cdot}(x) & 0 \end{pmatrix}
  \begin{pmatrix} \partial_d^2 \chi \\ \partial_d r \end{pmatrix}
  = \begin{pmatrix} g \\ f^d \end{pmatrix},
\end{equation}
where $g = (g^1,\ldots,g^d)^T \in L^2(B_{1/4}^-;\mathbb{R}^d)$ and $f^d \in L^2(B_{1/4}^-)$.

The invertibility of the matrix in \eqref{eq:block-matrix-system} follows from the Schur complement
\[
  S = -a_{d\cdot} A_{2,dd}^{-1} a_{d\cdot}^T.
\]
For any $\zeta \in \mathbb{R}^d$, the Legendre--Hadamard condition yields
\[
  \zeta^T A_{2,dd}(x)\zeta = (\zeta \otimes e_d):A_2(x)(\zeta \otimes e_d) \geq c|\zeta|^2,
\]
so $A_{2,dd}(x)$ is positive definite and invertible. Since the nondegeneracy assumption gives $a_{d\cdot}(x) \neq 0$, we have
\[
  a_{d\cdot} A_{2,dd}^{-1} a_{d\cdot}^T > 0,
\]
and hence $S < 0$. Therefore the full block matrix is invertible.

Since the block matrix in \eqref{eq:block-matrix-system} is pointwise invertible, \eqref{eq:block-matrix-system} implies in the $L^2$ sense that
\begin{equation}
  \frac{\partial^2 \chi}{\partial x_d^2} \in L^2(B_{1/4}^-), \quad \frac{\partial r}{\partial x_d} \in L^2(B_{1/4}^-).
\end{equation}

Combining Sections~\ref{sec:caccioppoli}--\ref{sec:normal-derivative}, the proof of Theorem~\ref{thm:interior-regularity} is complete.

\section{Summary of the Chapter}

This chapter establishes the higher-order regularity theory for the cell correctors.

The main results of this chapter are Theorem~\ref{thm:interior-regularity} and Theorem~\ref{thm:corrector-Hk-regularity}. The former establishes a local $H^2$ a priori estimate near the interface: one first derives an interior $L^2$-to-$H^1$ estimate by a Caccioppoli inequality, then obtains $L^2$ bounds for the tangential and mixed second derivatives by the tangential difference quotient method, and finally recovers the pure normal second derivatives and the normal derivative of the pressure from the algebraic structure of the system. Building on this local estimate together with the classical interior regularity in the two phases, the latter theorem yields piecewise $H^k$ regularity of arbitrary order for the cell correctors in $Y\setminus\overline{\omega}$ and $\omega$.

Corollary~\ref{cor:corrector-regularity} further shows that, under the smooth-interface assumption,
\[
  \chi|_{Y\setminus\overline{\omega}} \in C^\infty(\overline{Y\setminus\omega}; \mathbb{R}^d), \qquad
  \chi|_{\omega} \in C^\infty(\overline{\omega}; \mathbb{R}^d), \qquad
  r \in C^\infty(\overline{\omega}).
\]
Indeed, for any $m \geq 0$, choose $k > m + d/2$ and apply Sobolev embedding separately on $Y\setminus\overline{\omega}$ and $\omega$,
\[
  H^k \hookrightarrow C^m,
\]
see \cite[Section~5.6.3]{Evans2010}. Hence $\chi$ is piecewise $C^\infty$. Since $\chi \in H^1(Y)$ is a global $H^1$ function, its traces on the two sides of the interface $\Gamma$ coincide. Therefore the distributional gradient has no singular interface part and agrees with the function obtained by patching together the classical gradients in the two phases. Hence
\[
  \nabla\chi \in L^\infty(Y),
\]
that is, $\chi \in W^{1,\infty}(Y)$.

This conclusion provides the key regularity basis for the convergence-rate analysis carried out in Chapter 4.


\chapter{Conclusions and Future Work}

\section{Conclusions}

This paper studies the Lam\'e-Stokes coupled system arising in high-contrast periodic fluid-solid composites, with particular emphasis on the case where the incompressibility constraint is imposed only inside the fluid inclusions. Compared with standard elliptic systems, the present model combines two-phase coupling, local constraints, and transmission conditions across the interface. As a consequence, the well-posedness analysis, the derivation of the effective equation, and the quantitative error analysis all exhibit features that do not appear in the classical theory. The paper establishes a coherent analytical framework connecting the microscopic problem with the macroscopic effective model.

Chapter 2 establishes the mixed variational theory for the microscopic Lam\'e-Stokes system. By means of the Babu\v{s}ka-Brezzi theory, we prove the well-posedness of both the original problem and the cell problem, and derive a priori estimates independent of the microscopic scale $\varepsilon$. In particular, the uniform control of the displacement and pressure variables in the relevant function spaces provides the foundation for the subsequent two-scale limit analysis and the quantitative convergence estimates.

Chapter 3 investigates the homogenization limit by combining formal asymptotic expansion with two-scale convergence. We derive the macroscopic effective system, prove that the microscopic solutions converge to the solution of an effective elasticity system, and obtain a cell representation of the effective elasticity tensor. We further verify the symmetry of the effective tensor and its strong ellipticity on the space of symmetric matrices. This shows that the resulting macroscopic model has the correct elliptic structure and provides the basis for its well-posedness. The agreement between the two homogenization approaches confirms the robustness of the resulting effective model.

Chapter 4 is devoted to quantitative convergence rates. By constructing a first-order approximation and combining Steklov smoothing, boundary-layer cut-off functions, flux correctors, and the mixed variational structure, we establish an $O(\sqrt{\varepsilon})$ convergence rate for the displacement error in the $H^1$ norm and for the pressure error in the $L^2(D_\varepsilon)$ norm. In contrast with standard elliptic problems involving only the displacement field, the present estimates must simultaneously account for the local incompressibility constraint and the pressure contribution, and therefore rely more heavily on uniform a priori estimates, inf-sup stability, and a refined decomposition of the oscillatory flux.

Chapter 5 develops the regularity theory for the cell correctors. Using interface flattening, the Piola transform, Caccioppoli inequalities, tangential difference quotients, and algebraic recovery of normal derivatives, we establish local piecewise $H^2$ estimates for the cell problem and then extend them to arbitrary-order piecewise Sobolev regularity. Under the assumption $\partial\omega \in C^\infty$, we prove that the cell correctors are smooth in each phase and deduce the $L^\infty$ boundedness of their gradients. This regularity result provides the key analytical input for the first-order approximation and the quantitative error analysis of Chapter 4.

In summary, this paper establishes a complete analytical chain for Lam\'e-Stokes coupled systems with local incompressibility constraints, linking well-posedness, uniform estimates, homogenization limits, quantitative convergence rates, and regularity of cell correctors. The results show that, even for high-contrast models with local constraints and interface coupling, both the macroscopic effective behavior and the microscopic error control can be handled within a unified framework.

\section{Future Directions}

In light of the main results and technical difficulties of this paper, the following directions merit further investigation.

\subsection{Uniform Estimates in Lam\'e-Stokes Homogenization}

Although this paper establishes the well-posedness of the microscopic problem, $\varepsilon$-independent energy estimates, and quantitative convergence rates, a systematic treatment of $\varepsilon$-uniform regularity estimates for oscillatory solutions to the Lam\'e-Stokes system considered here remains open. For the corresponding theory in periodic elliptic systems, see Shen \cite{Shen2018}. Developing analogous uniform estimates for the present coupled model is therefore a natural and important direction for future research.

To make this issue more concrete, one may consider the following microscopic boundary value problem: for each $\varepsilon > 0$, find $(u_\varepsilon, p_\varepsilon)$ such that
\[
  \begin{cases}
    \mathcal{L}_{\lambda,\mu}u_\varepsilon = F & \text{in } \Omega_\varepsilon, \\
    \mathcal{L}_{\infty,\widetilde{\mu}}(u_\varepsilon,p_\varepsilon) = F, \quad \nabla \cdot u_\varepsilon = h & \text{in } D_\varepsilon,
  \end{cases}
\]
subject to continuity of the displacement and continuity of the traction across $\partial D_\varepsilon$, together with Dirichlet or Neumann boundary conditions on $\partial\Omega$. For this model, a natural program of further study includes the following aspects:

\begin{enumerate}
  \item \textbf{Interior uniform regularity estimates}: establish large-scale Campanato decay, H\"older continuity, Lipschitz estimates, and $W^{1,p}$ estimates that are uniform in $\varepsilon$, so as to capture the local behavior of oscillatory solutions away from the boundary.

  \item \textbf{Boundary uniform regularity estimates}: under Dirichlet or Neumann boundary conditions, derive boundary H\"older or boundary Lipschitz estimates independent of $\varepsilon$, and analyze the interaction between the oscillatory interface structure and the outer boundary.

  \item \textbf{Local control compatible with the pressure variable}: since the pressure is defined only in the fluid phase and the incompressibility constraint is imposed only locally, one also needs local oscillation estimates, mean-value estimates, or suitable normalization conditions for the pressure, together with an understanding of how these estimates couple with the uniform regularity of the displacement.

  \item \textbf{Connection with quantitative error analysis}: once such uniform estimates are available, one may further investigate their applications to Green function estimates, boundary-layer analysis, and sharper quantitative error bounds, thereby extending the Lam\'e-Stokes homogenization theory from energy control to uniform regularity control.
\end{enumerate}

The main obstacles are the following. First, the pressure variable is defined only in the fluid phase, and the incompressibility constraint is imposed only in a local region, so the system does not fall within the standard framework of globally elliptic equations. Second, the transmission conditions couple the two phases in an essential way, which makes it difficult to transfer the compactness methods, boundary localization techniques, and corrector constructions from the classical theory of periodic elliptic systems. Third, if one seeks simultaneous uniform control of both the displacement and the pressure, then one must further develop Caccioppoli-type inequalities, approximation lemmas, and iteration schemes adapted to the locally incompressible structure.

If this program can be carried out, it should lead to a theory of uniform estimates for Lam\'e-Stokes systems analogous to that for periodic elliptic systems, and would also provide a useful foundation for Green function estimates, boundary-layer analysis, and higher-order quantitative homogenization problems.

\subsection{Regularity for More General Transmission Problems}

The cell problem treated in Chapter 5 belongs to a class of transmission problems with local incompressibility constraints: the equations have different structures in the two phases and are coupled through displacement continuity and traction-jump conditions across the interface. For regularity results on related elliptic transmission problems and their connection with periodic homogenization, see Zhuge \cite{Zhuge2021}. A natural continuation of the present work is therefore to develop piecewise regularity theory for more general transmission models. For instance, one may consider an interface problem of the form
\[
  \begin{cases}
    \nabla \cdot \sigma^{+}(u) = f^{+} & \text{in } Y \setminus \omega, \\
    \nabla \cdot \sigma^{-}(u,p) = f^{-}, \quad \nabla \cdot u = h & \text{in } \omega, \\
    [u] = \phi, \quad [\sigma(u,p)n] = \psi & \text{on } \partial\omega,
  \end{cases}
\]
where the interface data $\phi$ and $\psi$ may be nonhomogeneous and may even involve more complicated tangential coupling or lower-order terms. For such problems, a central goal is to establish piecewise $H^2$, piecewise $H^k$, and even H\"older- or Schauder-type regularity estimates, and to understand how the interface geometry, jump data, and local constraints interact.

This direction is closely related to the techniques developed in Chapter 5, but is also substantially more challenging. It remains to be seen to what extent the Piola transform, tangential difference quotient method, and algebraic recovery of normal derivatives can be extended to nonhomogeneous transmission conditions, weaker geometric assumptions, or more complicated interface coupling. Progress in this direction would deepen our understanding of Lam\'e-Stokes-type transmission problems and provide useful regularity tools for the homogenization of more general multiphase coupled systems.

\subsection{Sharper Quantitative Error Bounds and Boundary-Layer Analysis}

Under Neumann-type boundary conditions, this paper establishes an $O(\sqrt{\varepsilon})$ convergence rate, which is consistent with the variational structure of the problem and the influence of boundary layers. For systematic treatments of boundary-layer analysis, boundary correctors, and sharper quantitative error estimates in periodic elliptic systems, see Shen \cite{Shen2018}. A further direction is to investigate finer error structures, including boundary-layer corrections under different boundary conditions, improved convergence rates in interior regions away from the boundary, and more delicate coupled estimates for the displacement and pressure errors.

In particular, under Dirichlet boundary conditions, a first-order approximation generally fails to match the true boundary data, so suitable boundary correctors are expected to play an essential role. Moreover, once the boundary-layer effect becomes weaker in the interior, it is natural to ask whether one can derive estimates better than the global $O(\sqrt{\varepsilon})$ rate. In the Lam\'e-Stokes setting, this issue is further complicated by the presence of the local incompressibility constraint and the pressure variable. Developing more refined duality arguments, boundary-layer analysis, and pressure correction techniques for these problems would further strengthen the quantitative homogenization theory for Lam\'e-Stokes coupled systems.

\bibliographystyle{thuthesis-numeric}
\bibliography{ref/refs}

\appendix

\chapter{Useful Facts and Technical Tools}\label{app:tools}

This appendix collects several standard tools used in the main text and supplements a number of key proof details. For standard results, we only state them and indicate references; for arguments directly relevant to the analysis in this paper, we provide proofs.

\section{Equivalence of the Strong and Variational Formulations}\label{sec:equivalence-proof}

This section gives the proof of Proposition~\ref{prop:equivalence} in the main text, namely the equivalence between the strong formulation and the mixed variational formulation.

\begin{proof}[Proof of Proposition~\ref{prop:equivalence}]
  Assume that $(\mathbf{u}_\varepsilon, p_\varepsilon)$ is sufficiently smooth. Using the notation introduced in Chapter 2, write
  \begin{equation}
    \sigma(\mathbf{u}) := \lambda(\nabla \cdot \mathbf{u}) \mathbb{I} + 2\mu \mathcal{D}(\mathbf{u}),
  \end{equation}
  where
  \[
    \mathcal{D}(\mathbf{u}) = \frac{1}{2}(\nabla\mathbf{u} + \nabla\mathbf{u}^T)
  \]
  denotes the symmetric gradient (strain tensor).

  \textbf{Necessity.} The implication from the strong formulation to the variational formulation is standard: one tests the strong equations, integrates by parts separately in the elastic and fluid regions, and then uses the interface and boundary conditions.

  \textbf{Sufficiency.} Assume that $(\mathbf{u}_\varepsilon, p_\varepsilon)$ satisfies the mixed variational problem \eqref{eq:mixed-variational}.

  \textit{Incompressibility.} By the second equation
  \[
    b(\mathbf{u}_\varepsilon, \psi) = 0, \qquad \forall \psi \in L^2(D_\varepsilon),
  \]
  we have
  \[
    \int_{D_\varepsilon} (\nabla \cdot \mathbf{u}_\varepsilon) \psi \, \mathrm{d}x = 0, \quad \forall \psi \in L^2(D_\varepsilon).
  \]
  Taking
  \[
    \psi = \nabla \cdot \mathbf{u}_\varepsilon \in L^2(D_\varepsilon),
  \]
  we obtain
  \[
    \int_{D_\varepsilon} |\nabla \cdot \mathbf{u}_\varepsilon|^2 \, \mathrm{d}x = 0,
  \]
  and therefore
  \[
    \nabla \cdot \mathbf{u}_\varepsilon = 0
    \qquad \text{a.e. in } D_\varepsilon.
  \]

  \textit{Integration-by-parts identity.} For the first equation
  \[
    a(\mathbf{u}_\varepsilon, \boldsymbol{\varphi}) + b(\boldsymbol{\varphi}, p_\varepsilon) = \int_{\partial\Omega} \mathbf{g} \cdot \boldsymbol{\varphi} \, \mathrm{d}S,
  \]
  integrate by parts separately on the elastic region $\Omega_\varepsilon$ and the fluid region $D_\varepsilon$. Let $\mathbf{n}_\varepsilon^+$ be the unit normal on $\partial D_\varepsilon$ pointing from the elastic region into the fluid region, and let $\mathbf{n}_\varepsilon^- = -\mathbf{n}_\varepsilon^+$.

  On the elastic region,
  \begin{align}
    \int_{\Omega_\varepsilon} \sigma(\mathbf{u}_\varepsilon) : \nabla\boldsymbol{\varphi} \, \mathrm{d}x
    &= -\int_{\Omega_\varepsilon} (\nabla \cdot \sigma(\mathbf{u}_\varepsilon)) \cdot \boldsymbol{\varphi} \, \mathrm{d}x + \int_{\partial\Omega} \sigma(\mathbf{u}_\varepsilon) \mathbf{n} \cdot \boldsymbol{\varphi} \, \mathrm{d}S \notag \\
    &\quad + \int_{\partial D_\varepsilon} \sigma(\mathbf{u}_\varepsilon) \mathbf{n}_\varepsilon^+ \cdot \boldsymbol{\varphi} \, \mathrm{d}S.
  \end{align}
  On the fluid region, using $\nabla \cdot \mathbf{u}_\varepsilon = 0$ and
  \[
    \nabla \cdot (2\widetilde{\mu}\mathcal{D}(\mathbf{u})) = \widetilde{\mu}\Delta\mathbf{u} + \widetilde{\mu}\nabla(\nabla\cdot\mathbf{u}) = \widetilde{\mu}\Delta\mathbf{u},
  \]
  we obtain
  \begin{align}
    &\int_{D_\varepsilon} \left[ 2\widetilde{\mu} \mathcal{D}(\mathbf{u}_\varepsilon) : \nabla\boldsymbol{\varphi} + p_\varepsilon (\nabla \cdot \boldsymbol{\varphi}) \right] \mathrm{d}x \notag \\
    &= -\int_{D_\varepsilon} \left( \widetilde{\mu}\Delta\mathbf{u}_\varepsilon + \nabla p_\varepsilon \right) \cdot \boldsymbol{\varphi} \, \mathrm{d}x + \int_{\partial D_\varepsilon} \sigma(\mathbf{u}_\varepsilon, p_\varepsilon) \mathbf{n}_\varepsilon^- \cdot \boldsymbol{\varphi} \, \mathrm{d}S,
  \end{align}
  where
  \[
    \sigma(\mathbf{u}_\varepsilon, p_\varepsilon) = p_\varepsilon I + 2\widetilde{\mu}\mathcal{D}(\mathbf{u}_\varepsilon)
  \]
  is the Stokes stress tensor. Combining the two pieces, the variational identity becomes
  \begin{align}\label{eq:ibp-expansion}
    0 &= -\int_{\Omega_\varepsilon} (\nabla \cdot \sigma(\mathbf{u}_\varepsilon)) \cdot \boldsymbol{\varphi} \, \mathrm{d}x - \int_{D_\varepsilon} \left( \widetilde{\mu}\Delta\mathbf{u}_\varepsilon + \nabla p_\varepsilon \right) \cdot \boldsymbol{\varphi} \, \mathrm{d}x \notag \\
    &\quad + \int_{\partial\Omega} \left( \sigma(\mathbf{u}_\varepsilon) \mathbf{n} - \mathbf{g} \right) \cdot \boldsymbol{\varphi} \, \mathrm{d}S + \int_{\partial D_\varepsilon} \left[ \sigma(\mathbf{u}_\varepsilon) \mathbf{n}_\varepsilon^+ + \sigma(\mathbf{u}_\varepsilon, p_\varepsilon) \mathbf{n}_\varepsilon^- \right] \cdot \boldsymbol{\varphi} \, \mathrm{d}S.
  \end{align}
  Note that \eqref{eq:ibp-expansion}, as an integration-by-parts identity, holds for all $\boldsymbol{\varphi} \in H^1(\Omega; \mathbb{R}^d)$, not only for $\boldsymbol{\varphi} \in V^\varepsilon$; however, the equality to zero holds only for $\boldsymbol{\varphi} \in V^\varepsilon$, since it is then equivalent to the variational equation.

  \textit{Recovery of the strong formulation.} Using the arbitrariness of $\boldsymbol{\varphi} \in V^\varepsilon$, we recover the equations and boundary/interface conditions one by one:
  \begin{enumerate}
    \item Taking $\boldsymbol{\varphi} \in C_0^\infty(\Omega_\varepsilon)$, only the elastic interior term remains, and we obtain the Lam\'e equation
    \[
      \nabla \cdot \sigma(\mathbf{u}_\varepsilon) = 0
      \qquad \text{in } \Omega_\varepsilon.
    \]
    \item Taking $\boldsymbol{\varphi} \in C_0^\infty(D_\varepsilon)$, only the fluid interior term remains, and we recover the Stokes equation
    \[
      \widetilde{\mu}\Delta\mathbf{u}_\varepsilon + \nabla p_\varepsilon = 0
      \qquad \text{in } D_\varepsilon,
    \]
    equivalently,
    \[
      \nabla \cdot \sigma(\mathbf{u}_\varepsilon, p_\varepsilon) = 0.
    \]
    \item Eliminating the interior terms already recovered above, and taking test functions supported near $\partial D_\varepsilon$ and vanishing on $\partial\Omega$, then passing to the limit, we obtain continuity of the traction across the interface:
    \[
      \sigma(\mathbf{u}_\varepsilon) \mathbf{n}_\varepsilon^+ = \sigma(\mathbf{u}_\varepsilon, p_\varepsilon) \mathbf{n}_\varepsilon^+
      \qquad \text{on } \partial D_\varepsilon.
    \]
    \item After eliminating the interior and interface terms from \eqref{eq:ibp-expansion} using Steps 1--3 above, we are left with
    \[
      \int_{\partial\Omega} \left( \sigma(\mathbf{u}_\varepsilon) \mathbf{n} - \mathbf{g} \right) \cdot \boldsymbol{\varphi} \, \mathrm{d}S = 0,
      \qquad \forall\, \boldsymbol{\varphi}|_{\partial\Omega} \in H_{\mathcal{R}}^{1/2}(\partial\Omega).
    \]
    More precisely, by surjectivity of the trace operator
    \[
      \operatorname{Tr}:H^1(\Omega;\mathbb{R}^d)\to H^{1/2}(\partial\Omega;\mathbb{R}^d),
    \]
    for any $\boldsymbol{\eta}\in H_{\mathcal{R}}^{1/2}(\partial\Omega)$, there exists $\boldsymbol{\varphi}\in V^\varepsilon$ such that $\boldsymbol{\varphi}|_{\partial\Omega}=\boldsymbol{\eta}$. Hence
    \[
      \left\langle \sigma(\mathbf{u}_\varepsilon) \mathbf{n} - \mathbf{g}, \boldsymbol{\eta} \right\rangle_{\partial\Omega} = 0,
      \qquad \forall\, \boldsymbol{\eta}\in H_{\mathcal{R}}^{1/2}(\partial\Omega),
    \]
    so that
    \[
      \sigma(\mathbf{u}_\varepsilon) \mathbf{n} - \mathbf{g}
      \in (H_{\mathcal{R}}^{1/2}(\partial\Omega))^\perp
      = \mathrm{span}\{\mathbf{r}_j|_{\partial\Omega}\},
    \]
    where $\perp$ denotes the annihilator in the $H^{-1/2}$-$H^{1/2}$ duality.

    To prove that this difference actually vanishes, we must eliminate the possible rigid-body component. Let $\mathbf{r} \in \mathcal{R}$ be any rigid motion. Since $\mathcal{D}(\mathbf{r}) = 0$, the full gradient $\nabla \mathbf{r}$ is skew-symmetric, and $\nabla\cdot\mathbf{r} = 0$. A direct verification shows
    \[
      a(\mathbf{u}_\varepsilon, \mathbf{r}) = 0, \qquad b(\mathbf{r}, p_\varepsilon) = 0.
    \]
    The first equality holds because the integrands in $a$ are Frobenius products of symmetric tensors with the skew-symmetric matrix $\nabla\mathbf{r}$; the second because
    \[
      b(\mathbf{r}, p_\varepsilon) = \int_{D_\varepsilon}(\nabla \cdot \mathbf{r})\,p_\varepsilon = 0.
    \]
    Although $\mathbf{r} \notin V^\varepsilon$, it still belongs to $H^1(\Omega; \mathbb{R}^d)$, so we may perform on
    \[
      a(\mathbf{u}_\varepsilon, \mathbf{r}) + b(\mathbf{r}, p_\varepsilon)
    \]
    exactly the same integration-by-parts computation as in the derivation of \eqref{eq:ibp-expansion}. Using again the pointwise identities obtained in Steps 1--3, all interior and interface terms vanish, and only the boundary term remains:
    \[
      \int_{\partial\Omega} \sigma(\mathbf{u}_\varepsilon) \mathbf{n} \cdot \mathbf{r} \, \mathrm{d}S = a(\mathbf{u}_\varepsilon, \mathbf{r}) + b(\mathbf{r}, p_\varepsilon) = 0.
    \]
    Thus
    \[
      \sigma(\mathbf{u}_\varepsilon) \mathbf{n} \in H_{\mathcal{R}}^{-1/2}(\partial\Omega).
    \]
    Since $\mathbf{g} \in H_{\mathcal{R}}^{-1/2}(\partial\Omega)$ as well, we conclude
    \[
      \sigma(\mathbf{u}_\varepsilon) \mathbf{n} - \mathbf{g} \in H_{\mathcal{R}}^{-1/2}(\partial\Omega) \cap \mathrm{span}\{\mathbf{r}_j|_{\partial\Omega}\} = \{0\},
    \]
    that is,
    \[
      \sigma(\mathbf{u}_\varepsilon) \mathbf{n} = \mathbf{g}
      \qquad \text{on } \partial\Omega.
    \]
  \end{enumerate}

  Therefore $(\mathbf{u}_\varepsilon, p_\varepsilon)$ satisfies all equations and interface/boundary conditions in the strong formulation \eqref{eq:lame-stokes}.
\end{proof}

\section{Bogovski\u{\i} Operator}\label{sec:bogovskii}

This section records the right inverse of the divergence operator used repeatedly in the sequel. The classical Bogovski\u{\i} operator is standard, while its periodic version is needed in the analysis of the cell problem and the two-scale inf-sup condition.

\begin{theorem}[Bogovski\u{\i} Operator]\label{thm:bogovskii}
  Let $\Omega \subset \mathbb{R}^d$ be a bounded Lipschitz domain. Then there exists a bounded linear operator
  \begin{equation}
    \mathcal{B}: L_0^2(\Omega) \to H_0^1(\Omega; \mathbb{R}^d),
  \end{equation}
  where
  \[
    L_0^2(\Omega) = \left\{f \in L^2(\Omega) : \int_\Omega f \, \mathrm{d}x = 0\right\},
  \]
  such that:
  \begin{enumerate}
    \item $\nabla \cdot (\mathcal{B} f) = f$ almost everywhere in $\Omega$;
    \item $\|\mathcal{B} f\|_{H^1(\Omega)} \leq C \|f\|_{L^2(\Omega)}$, where $C$ depends only on $\Omega$.
  \end{enumerate}
\end{theorem}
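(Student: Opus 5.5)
I would prove Theorem~\ref{thm:bogovskii} by following Bogovski\u{\i}'s explicit construction, in three stages: star-shaped domains, general Lipschitz domains via a finite cover, and then the $H^1$ bound.

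\textbf{Star-shaped case.} Assume first that $\Omega$ is star-shaped with respect to every point of a ball $B$ with $\overline{B}\subset\Omega$. Fix $\theta\in C_0^\infty(B)$ with $\int\theta=1$. For $f\in C_0^\infty(\Omega)$ with zero mean, define
\[
  \mathcal{B}f(x)=\int_\Omega f(y)\,\frac{x-y}{|x-y|^{d}}\int_{|x-y|}^\infty \theta\!\left(y+s\frac{x-y}{|x-y|}\right)s^{d-1}\,\mathrm{d}s\,\mathrm{d}y .
\]
Star-shapedness gives $\operatorname{supp}\mathcal{B}f\subset\Omega$, and in fact compact support in $\Omega$. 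A direct computation then yields $\nabla\cdot\mathcal{B}f=f-\theta\int f=f$, which is where the zero-mean assumption is used.

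\textbf{Derivative bound (the main analytic step).} Differentiating $\mathcal{B}f$ splits $\partial_j(\mathcal{B}f)_i$ into two pieces. The first is a kernel $K_{ij}(x,x-y)$ that is homogeneous of degree $-d$ in $x-y$, smooth in $x$, and has vanishing spherical mean. The second is a bounded multiple of $f(x)$. The Calder\'on--Zygmund theorem for variable kernels of this type gives
\[
  \|\nabla\mathcal{B}f\|_{L^2}\le C\|f\|_{L^2},
\]
with $C$ depending only on $\operatorname{diam}\Omega/\operatorname{radius}(B)$. Poincar\'e's inequality then gives the full $H^1$ bound. Density extends $\mathcal{B}$ to all of $L^2_0(\Omega)$ with values in $H_0^1$, preserving the identity $\nabla\cdot\mathcal{B}f=f$.

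\textbf{General Lipschitz $\Omega$.} A bounded Lipschitz domain is a finite union $\Omega=\bigcup_{k=1}^m\Omega_k$ of domains, each star-shaped with respect to a ball. Order them so that each $\Omega_k$ meets $\bigcup_{l<k}\Omega_l$ in a set of positive measure. Decompose $f=\sum_k f_k$ with $\operatorname{supp}f_k\subset\Omega_k$, $\int f_k=0$, and $\|f_k\|_{L^2}\le C\|f\|_{L^2}$. To build this decomposition, apply a partition of unity and then redistribute the means step by step through the overlaps, adding and subtracting multiples of fixed bump functions supported in $\Omega_k\cap\Omega_{k+1}$. Setting $\mathcal{B}f=\sum_k\mathcal{B}_kf_k$, with each $\mathcal{B}_kf_k$ extended by zero, gives a bounded linear right inverse with constant depending only on $\Omega$.

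I expect the singular-integral estimate for the variable kernel to be the main obstacle. I would quote it from Galdi's monograph or the \cite{Duran2012} survey rather than reprove it.
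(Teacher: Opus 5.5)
Your proposal is correct and follows the standard Bogovski\u{\i} argument: the explicit formula on domains star-shaped with respect to a ball, the Calder\'on--Zygmund bound, Poincar\'e plus density, and a finite decomposition of the (connected) Lipschitz domain with mean redistribution. This is exactly the argument the paper relies on, since it does not prove the theorem but cites Dur\'an's survey and Boffi--Brezzi--Fortin. Two small bookkeeping fixes: with your ordering, the mean-transfer bumps should sit in $\Omega_k\cap\Omega_{l(k)}$ for some $l(k)<k$ with nonempty overlap, not necessarily in $\Omega_k\cap\Omega_{k+1}$; and $\partial_j(\mathcal{B}f)_i$ also contains weakly singular kernel terms of size $|x-y|^{1-d}$, which are harmless by Young's inequality.
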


\begin{proof}
  This is a standard result; see \cite{Duran2012,BoffiBrezziFortin2013}.
\end{proof}

\begin{theorem}[Periodic Bogovski\u{\i} Operator]\label{thm:periodic-bogovskii}
  Let $Y = (-\frac{1}{2}, \frac{1}{2})^d$ be the unit cell. Then there exists a bounded linear operator
  \[
    \mathcal{B}_{\mathrm{per}} : L_0^2(Y) \to H^1_{\#,0}(Y; \mathbb{R}^d)
  \]
  such that for every $f \in L_0^2(Y)$,
  \[
    \nabla_y \cdot (\mathcal{B}_{\mathrm{per}} f) = f \quad \text{in } Y,
    \qquad
    \|\mathcal{B}_{\mathrm{per}} f\|_{H^1(Y)} \leq C \|f\|_{L^2(Y)},
  \]
  where $C$ depends only on $Y$.
\end{theorem}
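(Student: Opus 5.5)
The plan is to build $\mathcal{B}_{\mathrm{per}}$ by Fourier series on the torus $\mathbb{T}^d=\mathbb{R}^d/\mathbb{Z}^d$, since the statement only involves the flat torus and no boundary. Every $f\in L_0^2(Y)$, extended $Y$-periodically, has an expansion
\[
  f(y)=\sum_{k\in\mathbb{Z}^d\setminus\{0\}}\hat f(k)\,e^{2\pi i k\cdot y},
\]
where $\hat f(0)=\int_Y f\,\mathrm{d}y=0$. We take $\mathcal{B}_{\mathrm{per}}f:=\nabla_y\phi$, where $\phi$ is the zero-mean periodic solution of $\Delta_y\phi=f$. Explicitly,
\[
  \widehat{\mathcal{B}_{\mathrm{per}}f}(k)=\frac{-i\,k}{2\pi|k|^2}\,\hat f(k)\quad (k\neq 0),\qquad \widehat{\mathcal{B}_{\mathrm{per}}f}(0)=0.
\]
This map is linear. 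Because the zero mode vanishes, the image has zero mean, so it lies in $H^1_{\#,0}(Y;\mathbb{R}^d)$ once we show it is in $H^1$.

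\textbf{Divergence identity.} Apply $\nabla_y\cdot$ termwise. The Fourier multiplier is $2\pi i k\cdot\frac{-ik}{2\pi|k|^2}=1$ for $k\neq0$, so $\nabla_y\cdot(\mathcal{B}_{\mathrm{per}}f)=f$ in the sense of periodic distributions. Since both sides lie in $L^2$, the identity holds almost everywhere in $Y$.

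\textbf{Bound.} By Parseval, $\|\nabla_y(\mathcal{B}_{\mathrm{per}}f)\|_{L^2(Y)}^2=\sum_{k\neq0}|2\pi k|^2\,|\widehat{\mathcal{B}_{\mathrm{per}}f}(k)|^2$. The entries $|k_jk_\ell|/|k|^2\le1$ give $\|\nabla_y \mathcal{B}_{\mathrm{per}}f\|_{L^2}\le C_d\|f\|_{L^2}$; in fact, since $\mathcal{B}_{\mathrm{per}}f=\nabla\phi$, this is the periodic $H^2$ bound $\|\nabla^2\phi\|_{L^2}\le\|\Delta\phi\|_{L^2}$. The $L^2$ part uses $|k|\ge1$, giving $\|\mathcal{B}_{\mathrm{per}}f\|_{L^2}\le(2\pi)^{-1}\|f\|_{L^2}$. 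Together these yield $\|\mathcal{B}_{\mathrm{per}}f\|_{H^1(Y)}\le C\|f\|_{L^2(Y)}$ with $C$ depending only on $d$, and hence only on $Y$.

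\textbf{Main obstacle.} The only delicate point is that $H^1_\#(Y)$ was defined as the closure of smooth periodic functions. So we must check that the Fourier series converges to an element of this space, and not merely to an $H^1(Y)$ function whose traces may fail to match on opposite faces. The plan is to take the partial sums: they are trigonometric polynomials, hence in $C^\infty_\#$. The bound above shows they form a Cauchy sequence in $H^1$, so their limit lies in $H^1_\#$ by definition.

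As an alternative, one could extend $f$ to $3Y$ and apply Theorem~\ref{thm:bogovskii} there, then periodize. This route needs a partition-of-unity correction to restore exact divergence, so the Fourier approach is simpler and is the one I would use.
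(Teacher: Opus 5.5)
Your proposal is correct and is essentially the paper's proof. The paper also sets $\mathcal{B}_{\mathrm{per}}f=-\nabla_y\phi$ with $-\Delta_y\phi=f$ on the torus and gets the $H^1$ bound from the Fourier-side $H^2$ estimate for $\phi$. The only differences are that the paper first obtains $\phi$ by Lax--Milgram, which your direct multiplier definition makes unnecessary, and that your partial-sum argument for membership in the closure $H^1_\#$ makes explicit a point the paper leaves implicit.
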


\begin{proof}
  Let $f \in L_0^2(Y)$ be given. Consider the periodic Poisson equation
  \begin{equation}\label{eq:periodic-poisson}
    -\Delta_y \phi = f \quad \text{in } Y,
    \qquad
    \phi \in H^1_{\#,0}(Y).
  \end{equation}
  Its weak formulation reads as follows: find $\phi \in H^1_{\#,0}(Y)$ such that
  \begin{equation}\label{eq:periodic-poisson-weak}
    \int_Y \nabla_y \phi \cdot \nabla_y \psi \, \mathrm{d}y
    =
    \int_Y f \psi \, \mathrm{d}y,
    \qquad \forall\, \psi \in H^1_{\#,0}(Y).
  \end{equation}

  Since the periodic Poincar\'e inequality holds on $H^1_{\#,0}(Y)$,
  \[
    \|\psi\|_{L^2(Y)} \leq C \|\nabla_y \psi\|_{L^2(Y)},
    \qquad \forall\, \psi \in H^1_{\#,0}(Y),
  \]
  the bilinear form
  \[
    a(\phi,\psi) := \int_Y \nabla_y \phi \cdot \nabla_y \psi \, \mathrm{d}y
  \]
  is continuous and coercive on $H^1_{\#,0}(Y)$. The right-hand side functional
  \[
    \ell(\psi) := \int_Y f \psi \, \mathrm{d}y
  \]
  is continuous by the Cauchy-Schwarz inequality and the periodic Poincar\'e inequality. Hence the Lax-Milgram theorem yields a unique
  \[
    \phi \in H^1_{\#,0}(Y)
  \]
  solving \eqref{eq:periodic-poisson-weak}, and
  \begin{equation}\label{eq:periodic-poisson-H1}
    \|\nabla_y \phi\|_{L^2(Y)} \leq C \|f\|_{L^2(Y)}.
  \end{equation}

  We next prove that $\phi \in H^2_\#(Y)$. Since $Y$ is a flat torus, we may view both $\phi$ and $f$ as periodic functions on $\mathbb{R}^d$ and write their Fourier series in $L^2(Y)$:
  \[
    f(y) = \sum_{k \in \mathbb{Z}^d \setminus \{0\}} \hat{f}_k e^{2\pi i k \cdot y},
    \qquad
    \phi(y) = \sum_{k \in \mathbb{Z}^d \setminus \{0\}} \hat{\phi}_k e^{2\pi i k \cdot y}.
  \]
  The zero-frequency terms vanish because both $f$ and $\phi$ have zero mean. From the equation $-\Delta_y \phi = f$ we obtain
  \[
    4\pi^2 |k|^2 \hat{\phi}_k = \hat{f}_k,
    \qquad k \in \mathbb{Z}^d \setminus \{0\}.
  \]
  Hence
  \[
    \sum_{k \in \mathbb{Z}^d \setminus \{0\}} (1+|k|^2)^2 |\hat{\phi}_k|^2
    \leq
    C \sum_{k \in \mathbb{Z}^d \setminus \{0\}} |\hat{f}_k|^2
    =
    C \|f\|_{L^2(Y)}^2,
  \]
  where we used that $|k| \geq 1$ for $k \neq 0$, hence
  \[
    (1+|k|^2)^2 \leq 4|k|^4.
  \]
  By the Fourier characterization of the periodic Sobolev norm, there exist constants $c_1,c_2>0$ such that
  \[
    c_1\sum_{k \in \mathbb{Z}^d} (1+|k|^2)^2 |\hat{\phi}_k|^2
    \le \|\phi\|_{H^2(Y)}^2
    \le c_2\sum_{k \in \mathbb{Z}^d} (1+|k|^2)^2 |\hat{\phi}_k|^2.
  \]
  Since $\hat{\phi}_0 = 0$, this gives
  \begin{equation}\label{eq:periodic-poisson-H2}
    \|\phi\|_{H^2(Y)} \leq C \|f\|_{L^2(Y)}.
  \end{equation}

  Now define
  \[
    \mathcal{B}_{\mathrm{per}}f := -\nabla_y \phi.
  \]
  Since $\phi \in H^2_\#(Y)$, we have
  \[
    \mathcal{B}_{\mathrm{per}}f \in H^1_\#(Y; \mathbb{R}^d).
  \]
  By periodicity,
  \[
    \int_Y \partial_{y_j}\phi \, \mathrm{d}y = 0,
    \qquad j = 1,\dots,d,
  \]
  so
  \[
    \mathcal{B}_{\mathrm{per}}f \in H^1_{\#,0}(Y; \mathbb{R}^d).
  \]
  Moreover,
  \[
    \nabla_y \cdot (\mathcal{B}_{\mathrm{per}}f)
    = -\Delta_y \phi
    = f
    \quad \text{in } Y.
  \]
  Finally, by \eqref{eq:periodic-poisson-H1} and \eqref{eq:periodic-poisson-H2},
  \[
    \|\mathcal{B}_{\mathrm{per}}f\|_{H^1(Y)}
    = \|\nabla_y \phi\|_{H^1(Y)}
    \leq C \|\phi\|_{H^2(Y)}
    \leq C \|f\|_{L^2(Y)}.
  \]
  Since \eqref{eq:periodic-poisson} is linear and its solution is unique, the map $f \mapsto \phi$ is linear, and therefore so is $\mathcal{B}_{\mathrm{per}}$. This completes the proof.
\end{proof}

\begin{remark}
  In this paper, the classical Bogovski\u{\i} operator is used in Chapter 2 to verify the inf-sup condition for the mixed variational formulation, while its periodic version is used in Chapter 3 for the cell problem and for the two-scale limit system.
\end{remark}

\section{Korn Inequalities}\label{sec:korn}

Korn's inequality is a basic tool in linear elasticity, relating the $H^1$ norm of the displacement field to the $L^2$ norm of its symmetric gradient. We first state two standard forms, then record the version with the rigid-motion kernel removed, which is the one actually used in the main text. The periodic version is then derived as a corollary.

\begin{theorem}[First Korn Inequality]\label{thm:korn-first}
  Let $\Omega \subset \mathbb{R}^d$ ($d \geq 2$) be a bounded Lipschitz domain. Then for every $\mathbf{v} \in H^1(\Omega; \mathbb{R}^d)$,
  \begin{equation}
    \|\nabla\mathbf{v}\|_{L^2(\Omega)} \leq C \left( \|\mathcal{D}(\mathbf{v})\|_{L^2(\Omega)} + \|\mathbf{v}\|_{L^2(\Omega)} \right),
  \end{equation}
  where $C > 0$ depends only on $\Omega$. See \cite{CiarletBook1988}.
\end{theorem}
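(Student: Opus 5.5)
The plan is to reduce the first Korn inequality to the Ne\v{c}as embedding theorem (Lemma~\ref{lem:LBB}) via the classical second-derivative identity for displacements. For $\mathbf{v}=(v_1,\dots,v_d)\in H^1(\Omega;\mathbb{R}^d)$, write $e_{ij}:=\mathcal{D}(\mathbf{v})_{ij}$. In the sense of distributions on $\Omega$ one has
\[
  \partial_j(\partial_k v_i) = \partial_j e_{ik} + \partial_k e_{ij} - \partial_i e_{jk},
  \qquad 1\le i,j,k\le d,
\]
which is checked directly by expanding the right-hand side; all mixed second derivatives cancel except $\partial_j\partial_k v_i$. Since each $e_{ik}\in L^2(\Omega)$ and $\partial_j$ maps $L^2(\Omega)$ boundedly into $H^{-1}(\Omega)$ with norm at most $1$, this gives for every fixed pair $(i,k)$
\[
  \|\nabla(\partial_k v_i)\|_{H^{-1}(\Omega)} \le 3\,\|\mathcal{D}(\mathbf{v})\|_{L^2(\Omega)}.
\]
In the same way, $\|\partial_k v_i\|_{H^{-1}(\Omega)}\le \|v_i\|_{L^2(\Omega)}$. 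No density argument is needed, because everything is distributional.

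Next I apply Lemma~\ref{lem:LBB} to $p:=\partial_k v_i\in L^2(\Omega)$. This produces a constant $\bar p$ with
\[
  \|p-\bar p\|_{L^2(\Omega)}\le C\,\|\nabla p\|_{H^{-1}(\Omega)}.
\]
It remains to control the constant $\bar p$, and the only available information is the $H^{-1}$ norm of $p$. Fix once and for all a bump $\theta\in C_0^\infty(\Omega)$ with $\int_\Omega\theta=1$. Then $|\bar p| = |\langle \bar p,\theta\rangle| \le \|\bar p\|_{H^{-1}}\|\theta\|_{H^1_0}$. Moreover,
\[
  \|\bar p\|_{H^{-1}} \le \|p\|_{H^{-1}} + \|p-\bar p\|_{H^{-1}} \le \|p\|_{H^{-1}} + C_P\|p-\bar p\|_{L^2},
\]
using $L^2\hookrightarrow H^{-1}$. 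Consequently
\[
  \|p\|_{L^2}\le \|p-\bar p\|_{L^2}+|\Omega|^{1/2}|\bar p| \le C\big(\|p\|_{H^{-1}}+\|\nabla p\|_{H^{-1}}\big),
\]
which is the usual form of the Ne\v{c}as inequality, with $C$ depending only on $\Omega$ (through Lemma~\ref{lem:LBB}, $\theta$, and the Poincar\'e constant).

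Combining the two steps gives
\[
  \|\partial_k v_i\|_{L^2}\le C\big(\|v_i\|_{L^2}+\|\mathcal{D}(\mathbf{v})\|_{L^2}\big).
\]
Summing over $i,k$ yields the claimed inequality.

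The genuinely deep ingredient, and thus the main obstacle, is the Ne\v{c}as inequality on a Lipschitz domain; it is what forces the Lipschitz hypothesis, and here it is taken from Lemma~\ref{lem:LBB}. The only point requiring care in the argument itself is the recovery of the mean $\bar p$ from negative-norm information, handled by the fixed test bump above. Alternatively, one could cite the standard Ne\v{c}as form $\|p\|_{L^2}\le C(\|p\|_{H^{-1}}+\|\nabla p\|_{H^{-1}})$ directly from \cite{CiarletBook1988}.
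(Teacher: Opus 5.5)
Your argument is correct. It is essentially the standard proof behind the paper's reference to Ciarlet; the paper itself gives no proof. Like Ciarlet, you combine the identity $\partial_j\partial_k v_i=\partial_j e_{ik}+\partial_k e_{ij}-\partial_i e_{jk}$ with the lemma of J.-L.~Lions, which is the Ne\v{c}as inequality of Lemma~\ref{lem:LBB}. The only difference is that Ciarlet obtains the constant abstractly, by showing $\{\mathbf{v}\in L^2:\mathcal{D}(\mathbf{v})\in L^2\}=H^1(\Omega;\mathbb{R}^d)$ and invoking the closed graph theorem, whereas your recovery of $\bar p$ through the fixed bump $\theta$ makes the constant explicit. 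One harmless correction: the factor $3$ in your vector-valued $H^{-1}$ bound should be a dimension-dependent constant.
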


\begin{theorem}[Standard Second Korn Inequality]\label{thm:korn-second-standard}
  Let $\Omega \subset \mathbb{R}^d$ ($d \geq 2$) be a bounded Lipschitz domain. Then for every $\mathbf{v} \in H^1(\Omega; \mathbb{R}^d)$,
  \begin{equation}
    \|\mathbf{v}\|_{H^1(\Omega)} \leq C \left( \|\mathbf{v}\|_{L^2(\Omega)} + \|\mathcal{D}(\mathbf{v})\|_{L^2(\Omega)} \right),
  \end{equation}
  where $C > 0$ depends only on $\Omega$. See \cite{CiarletBook1988}.
\end{theorem}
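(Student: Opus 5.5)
The statement is an immediate consequence of the first Korn inequality, Theorem~\ref{thm:korn-first}. I will simply add the $L^2$ norm of $\mathbf{v}$ to both sides. Concretely, for $\mathbf{v}\in H^1(\Omega;\mathbb{R}^d)$ I will write
\[
  \|\mathbf{v}\|_{H^1(\Omega)} \le \|\mathbf{v}\|_{L^2(\Omega)} + \|\nabla\mathbf{v}\|_{L^2(\Omega)},
\]
which holds up to a harmless factor $\sqrt2$ if the $H^1$ norm is taken in its Hilbertian form. Theorem~\ref{thm:korn-first} bounds the second term by $C(\|\mathcal{D}(\mathbf{v})\|_{L^2(\Omega)}+\|\mathbf{v}\|_{L^2(\Omega)})$, so the claimed estimate follows with constant $1+C$. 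That constant depends only on $\Omega$, as required.

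If one wants the argument to be less dependent on the cited reference, I would also indicate how Theorem~\ref{thm:korn-first} itself is obtained. The plan is the classical route via the Ne\v{c}as inequality, Lemma~\ref{lem:LBB}, used in its full form $\|f\|_{L^2}\le C(\|f\|_{H^{-1}}+\|\nabla f\|_{H^{-1}})$. The key algebraic identity is
\[
  \partial_j\partial_k v_i=\partial_j \mathcal{D}_{ik}(\mathbf{v})+\partial_k \mathcal{D}_{ij}(\mathbf{v})-\partial_i \mathcal{D}_{jk}(\mathbf{v}).
\]
It shows that every component of $\nabla(\partial_k v_i)$ is a first derivative of an $L^2$ function, so $\|\nabla \partial_k v_i\|_{H^{-1}}\le C\|\mathcal{D}(\mathbf{v})\|_{L^2}$. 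Also $\|\partial_k v_i\|_{H^{-1}}\le \|\mathbf{v}\|_{L^2}$. Applying the Ne\v{c}as inequality to $f=\partial_k v_i$ then gives the first Korn inequality, first for smooth $\mathbf{v}$ and then for all of $H^1$ by density of $C^\infty(\overline{\Omega})$ in $H^1(\Omega)$ on Lipschitz domains.

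The only genuinely nontrivial ingredient is the Ne\v{c}as inequality on a Lipschitz domain, which encodes the geometry of $\Omega$. It is also where the dependence of $C$ on $\Omega$ enters. I would take it from Lemma~\ref{lem:LBB} rather than reprove it. Everything else is bookkeeping with the triangle inequality.
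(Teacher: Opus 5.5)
Your proposal is correct and consistent with the paper, which gives no argument of its own beyond citing Ciarlet for both Korn inequalities: the statement is Theorem~\ref{thm:korn-first} with $\|\mathbf{v}\|_{L^2(\Omega)}$ added to both sides, and your sketch of that first inequality, via the identity for $\partial_j\partial_k v_i$ and the Lions--Ne\v{c}as inequality, is the classical argument behind the cited reference. Two cosmetic points: $\sqrt{a^2+b^2}\le a+b$ already holds with no $\sqrt{2}$ factor, and passing from the zero-mean form of Lemma~\ref{lem:LBB} to your ``full form'' requires one extra line, namely bounding the constant part of $f$ by $\|f\|_{H^{-1}}$ and $\|f-\bar p\|_{L^2}$ through pairing with a fixed bump $\phi\in C_0^\infty(\Omega)$ satisfying $\int_\Omega\phi=1$.
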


The version actually used here is the following one, in which the rigid-motion kernel is removed.

\begin{theorem}[Second Korn Inequality]\label{thm:korn}
  Let $\Omega \subset \mathbb{R}^d$ ($d \geq 2$) be a bounded Lipschitz domain, and let $V$ be a closed subspace of $H^1(\Omega; \mathbb{R}^d)$ such that
  \[
    V \cap \mathcal{R} = \{0\},
  \]
  where $\mathcal{R}$ is the space of rigid motions. Then for every $\mathbf{v} \in V$,
  \begin{equation}
    \|\mathbf{v}\|_{H^1(\Omega)} \leq C \|\mathcal{D}(\mathbf{v})\|_{L^2(\Omega)},
  \end{equation}
  where $C > 0$ depends only on $\Omega$, and
  \[
    \mathcal{D}(\mathbf{v}) = \frac{1}{2}(\nabla\mathbf{v} + \nabla\mathbf{v}^T)
  \]
  is the symmetric gradient (strain tensor).
\end{theorem}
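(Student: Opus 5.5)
We argue by contradiction, combining the standard second Korn inequality (Theorem~\ref{thm:korn-second-standard}) with the Rellich--Kondrachov compactness theorem. Suppose the estimate fails on $V$. Then there is a sequence $\mathbf{v}_n \in V$ with
\[
  \|\mathbf{v}_n\|_{H^1(\Omega)} = 1, \qquad \|\mathcal{D}(\mathbf{v}_n)\|_{L^2(\Omega)} \le \tfrac1n .
\]
Since $\Omega$ is a bounded Lipschitz domain, we may extract a subsequence with $\mathbf{v}_n \rightharpoonup \mathbf{v}$ weakly in $H^1(\Omega;\mathbb{R}^d)$ and $\mathbf{v}_n \to \mathbf{v}$ strongly in $L^2(\Omega;\mathbb{R}^d)$. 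Because $V$ is closed and convex, it is weakly closed, so $\mathbf{v} \in V$.

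\textbf{Step 1: the sequence is Cauchy in $H^1$.} We apply Theorem~\ref{thm:korn-second-standard} to the differences $\mathbf{v}_n - \mathbf{v}_m$:
\[
  \|\mathbf{v}_n-\mathbf{v}_m\|_{H^1} \le C\bigl(\|\mathbf{v}_n-\mathbf{v}_m\|_{L^2} + \|\mathcal{D}(\mathbf{v}_n)\|_{L^2}+\|\mathcal{D}(\mathbf{v}_m)\|_{L^2}\bigr).
\]
The right-hand side tends to zero, since $(\mathbf{v}_n)$ converges in $L^2$ and $\|\mathcal{D}(\mathbf{v}_n)\|_{L^2} \le 1/n$. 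Hence $\mathbf{v}_n \to \mathbf{v}$ strongly in $H^1$. It follows that $\|\mathbf{v}\|_{H^1}=1$ and $\mathcal{D}(\mathbf{v})=0$ in $L^2(\Omega)$.

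\textbf{Step 2: $\mathbf{v}$ is a rigid motion.} We need the kernel characterization: on a connected open set, $\mathcal{D}(\mathbf{v})=0$ in the distributional sense implies $\mathbf{v}\in\mathcal{R}$. This follows from the identity
\[
  \partial_j\partial_k v_i = \partial_j \mathcal{D}_{ik}(\mathbf{v}) + \partial_k \mathcal{D}_{ij}(\mathbf{v}) - \partial_i \mathcal{D}_{jk}(\mathbf{v}).
\]
It gives $\nabla^2\mathbf{v}=0$ in $\mathcal{D}'$, so $\mathbf{v}$ is affine on $\Omega$. An affine map with $\mathcal{D}(\mathbf{v})=0$ has a skew-symmetric linear part, so $\mathbf{v}\in\mathcal{R}$. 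This step uses connectedness of $\Omega$, which holds in every application in the paper: $\Omega$ under (A1), and $Y$. We state this assumption explicitly.

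\textbf{Step 3: conclusion.} By Step 2, $\mathbf{v}\in V\cap\mathcal{R}=\{0\}$, so $\mathbf{v}=0$. This contradicts $\|\mathbf{v}\|_{H^1}=1$ from Step 1, and the estimate follows.

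\textbf{Main obstacle: the dependence of the constant.} The contradiction argument yields a constant depending on $\Omega$ and on $V$, not on $\Omega$ alone. The statement's claim that $C$ depends only on $\Omega$ should therefore be read as ``on $\Omega$ and the subspace $V$''. For the spaces used in the paper, $V$ is determined by $\Omega$: $V^\varepsilon$ is fixed by the boundary orthogonality condition on $\partial\Omega$, and in that case the constant is genuinely independent of $\varepsilon$. For the periodic version, one takes $V=H^1_{\#,0}(Y;\mathbb{R}^d)$. There a periodic rigid motion must be a constant, and the zero-mean condition forces that constant to vanish, so $V\cap\mathcal{R}=\{0\}$ holds directly.
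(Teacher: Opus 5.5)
Your argument is correct and is essentially the paper's proof: both use contradiction, Rellich--Kondrachov compactness, the kernel characterization of $\mathcal{D}$, and the standard second Korn inequality (Theorem~\ref{thm:korn-second-standard}). The only difference is the order: you apply Theorem~\ref{thm:korn-second-standard} to the differences $\mathbf{v}_n-\mathbf{v}_m$ to get strong $H^1$ convergence first, whereas the paper first identifies $\mathbf{v}=0$ and then applies it to $\mathbf{v}_n$ directly. Your caveat that the constant depends on $V$ as well as on $\Omega$ is also right, and the paper's compactness argument gives no better; it is harmless for the paper's applications, since $V^\varepsilon$ is in fact independent of $\varepsilon$.
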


\begin{proof}
  We argue by contradiction. Suppose the statement is false. Then there exists a sequence $\{\mathbf{v}_n\} \subset V$ such that
  \[
    \|\mathbf{v}_n\|_{H^1(\Omega)} = 1,
    \qquad
    \|\mathcal{D}(\mathbf{v}_n)\|_{L^2(\Omega)} \to 0.
  \]

  Since $\{\mathbf{v}_n\}$ is bounded in $H^1(\Omega)$ and $H^1(\Omega)$ is reflexive, there exists a subsequence, still denoted by $\{\mathbf{v}_n\}$, such that
  \[
    \mathbf{v}_n \rightharpoonup \mathbf{v}
    \qquad \text{weakly in } H^1(\Omega).
  \]
  By the Rellich-Kondrachov compact embedding theorem, we may further assume that
  \[
    \mathbf{v}_n \to \mathbf{v}
    \qquad \text{strongly in } L^2(\Omega).
  \]

  Since
  \[
    \mathcal{D}:H^1(\Omega;\mathbb{R}^d)\to L^2(\Omega;\mathbb{R}^{d\times d})
  \]
  is a bounded linear operator, weak convergence in $H^1$ implies
  \[
    \mathcal{D}(\mathbf{v}_n)\rightharpoonup \mathcal{D}(\mathbf{v})
    \qquad \text{weakly in } L^2(\Omega).
  \]
  On the other hand, by assumption
  \[
    \mathcal{D}(\mathbf{v}_n)\to 0
    \qquad \text{strongly in } L^2(\Omega),
  \]
  so the weak and strong limits must coincide. Hence
  \[
    \mathcal{D}(\mathbf{v}) = 0
    \qquad \text{in } L^2(\Omega).
  \]
  By the standard characterization of rigid motions, this implies $\mathbf{v} \in \mathcal{R}$.

  Since $V$ is a closed linear subspace of $H^1(\Omega;\mathbb{R}^d)$, it is weakly closed. Therefore, from $\mathbf{v}_n \rightharpoonup \mathbf{v}$ in $H^1(\Omega;\mathbb{R}^d)$ and $\mathbf{v}_n \in V$, we conclude that $\mathbf{v} \in V$. Thus
  \[
    \mathbf{v} \in V \cap \mathcal{R} = \{0\},
  \]
  and hence $\mathbf{v} = 0$.

  Now apply the standard second Korn inequality, Theorem~\ref{thm:korn-second-standard}:
  \begin{equation}
    \|\mathbf{v}_n\|_{H^1(\Omega)} \leq C \left( \|\mathbf{v}_n\|_{L^2(\Omega)} + \|\mathcal{D}(\mathbf{v}_n)\|_{L^2(\Omega)} \right).
  \end{equation}
  Letting $n \to \infty$, we obtain
  \[
    \|\mathbf{v}_n\|_{H^1(\Omega)} \to 0,
  \]
  which contradicts the normalization $\|\mathbf{v}_n\|_{H^1(\Omega)} = 1$.
\end{proof}

\begin{remark}
  The condition $V \cap \mathcal{R} = \{0\}$ is necessary. Indeed, if $V = H^1(\Omega; \mathbb{R}^d)$, then for any rigid motion $\mathbf{r} \in \mathcal{R}$ we have $\mathcal{D}(\mathbf{r}) = 0$, so the above estimate cannot hold. In the present paper, typical examples of spaces satisfying this condition are:
  \begin{itemize}
    \item $V = H_0^1(\Omega; \mathbb{R}^d)$ (homogeneous Dirichlet boundary condition);
    \item $V = \{\mathbf{v} \in H^1(\Omega; \mathbb{R}^d) : \mathbf{v}|_{\partial\Omega} \in H_{\mathcal{R}}^{1/2}(\partial\Omega)\}$ (boundary values orthogonal to rigid motions).
  \end{itemize}
\end{remark}

\section{Periodic Korn Inequality}\label{sec:korn-inequality}

A corresponding Korn inequality also holds in periodic Sobolev spaces.

\begin{theorem}[Periodic Korn Inequality]\label{thm:periodic-korn}
  Let $Y = (-\frac{1}{2}, \frac{1}{2})^d$ be the unit cell, and define the periodic Sobolev space
  \begin{equation}
    V = H^1_{\#,0}(Y; \mathbb{R}^d) = \left\{ \mathbf{u} \in H^1_{\#}(Y; \mathbb{R}^d) : \int_Y \mathbf{u} \, \mathrm{d}y = 0 \right\},
  \end{equation}
  where $H^1_{\#}(Y)$ denotes the space of $Y$-periodic $H^1$ functions. Then there exists a constant $C_K > 0$ such that for every $\mathbf{u} \in V$,
  \begin{equation}
    \|\mathbf{u}\|_{H^1(Y)} \leq C_K \|\mathcal{D}_y(\mathbf{u})\|_{L^2(Y)}.
  \end{equation}
\end{theorem}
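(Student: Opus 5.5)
The plan is to reduce Theorem~\ref{thm:periodic-korn} to a Fourier computation on the flat torus, following the same philosophy as the proof of Theorem~\ref{thm:periodic-bogovskii}. The first step is to check that $V = H^1_{\#,0}(Y;\mathbb{R}^d)$ contains no nonzero rigid motion that is $Y$-periodic. A rigid motion has the form $\mathbf{r}(y) = \mathbf{b} + Wy$ with $W$ skew-symmetric. If $\mathbf{r}$ is periodic, then $Wy$ is bounded on $\mathbb{R}^d$, which forces $W=0$. The zero-mean condition then forces $\mathbf{b}=0$.

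The key step is the identity $\|\nabla \mathbf{u}\|_{L^2(Y)}^2 \le 2\|\mathcal{D}_y(\mathbf{u})\|_{L^2(Y)}^2$ for periodic $\mathbf{u}$. Expanding gives
\[
  2\|\mathcal{D}_y(\mathbf{u})\|^2 = \|\nabla\mathbf{u}\|^2 + \int_Y \partial_i u^j \, \partial_j u^i \, \mathrm{d}y .
\]
For smooth periodic $\mathbf{u}$, integrating by parts twice shows $\int_Y \partial_i u^j \partial_j u^i = \int_Y (\nabla\cdot\mathbf{u})^2 \ge 0$; the boundary terms cancel by periodicity. To make this rigorous I would write $\mathbf{u} = \sum_{k\neq 0}\hat{\mathbf{u}}_k e^{2\pi i k\cdot y}$ (the $k=0$ mode vanishes by the zero-mean condition) and compute via Parseval:
\[
  \|\mathcal{D}_y(\mathbf{u})\|^2 = 4\pi^2\sum_k \tfrac12\big(|k|^2|\hat{\mathbf{u}}_k|^2 + |k\cdot\hat{\mathbf{u}}_k|^2\big) \ge 2\pi^2 \sum_k |k|^2|\hat{\mathbf{u}}_k|^2 = \tfrac12\|\nabla\mathbf{u}\|^2 .
\]
This is the second Korn (first-case) identity on the torus. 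Density of $C^\infty_\#$ in $H^1_\#$, or directly the Fourier characterization of $H^1_\#$ already used in Appendix~\ref{sec:bogovskii}, justifies it for all $\mathbf{u}\in V$.

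Finally, the periodic Poincaré inequality $\|\mathbf{u}\|_{L^2(Y)} \le (2\pi)^{-1}\|\nabla\mathbf{u}\|_{L^2(Y)}$ follows from the same Fourier series, since $|k|\ge 1$ for every $k \neq 0$. Combining the two bounds gives $\|\mathbf{u}\|_{H^1(Y)} \le C_K\|\mathcal{D}_y(\mathbf{u})\|_{L^2(Y)}$ with the explicit constant $C_K = \sqrt{2(1+(2\pi)^{-2})}$.

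As a fallback, one could instead apply Theorem~\ref{thm:korn} on the cube $Y$ with the closed subspace $V$, using the rigid-motion check from the first paragraph to verify $V\cap\mathcal{R}=\{0\}$. The only point needing care in this route is that $V$ is closed in $H^1(Y)$, which holds because periodic traces and the mean are continuous functionals. I expect the main obstacle in either route to be the justification of the cross-term identity, or equivalently of the Fourier representation of $\mathcal{D}_y(\mathbf{u})$, for nonsmooth $H^1_\#$ functions. That is handled by density together with continuity of both sides in the $H^1$ norm.
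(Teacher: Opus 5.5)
Your proof is correct, but your primary route differs from the paper's. Your fallback is exactly what the paper does: it applies Theorem~\ref{thm:korn} on the cube $Y$ to the closed subspace $V$, and checks $V\cap\mathcal{R}=\{0\}$ via $Be_i=0$ and the zero-mean condition.

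Your Fourier computation checks out. The coefficients of $\mathcal{D}_y(\mathbf{u})$ are $\pi i(\hat{\mathbf{u}}_k\otimes k+k\otimes\hat{\mathbf{u}}_k)$, with squared norm $2\pi^2(|k|^2|\hat{\mathbf{u}}_k|^2+|k\cdot\hat{\mathbf{u}}_k|^2)$. Hence $\|\nabla\mathbf{u}\|_{L^2(Y)}\le\sqrt{2}\,\|\mathcal{D}_y(\mathbf{u})\|_{L^2(Y)}$ for every periodic $\mathbf{u}$. Combined with the Poincar\'e bound, this gives your constant $C_K=\sqrt{2(1+(2\pi)^{-2})}$.

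\textbf{What your route buys.} It is self-contained and constructive, and it yields an explicit constant. It avoids both the compactness/contradiction argument and the nontrivial standard Korn inequality on Lipschitz domains (Theorem~\ref{thm:korn-second-standard}) on which the paper's proof rests. It works because the torus has no boundary, so the cross term $\int_Y\partial_i u^j\,\partial_j u^i\,\mathrm{d}y=\int_Y(\nabla_y\cdot\mathbf{u})^2\,\mathrm{d}y$ is nonnegative, just as in the classical $H^1_0$ case. On this route your first paragraph, excluding periodic rigid motions, is not needed: the Fourier estimate already forces $\mathbf{u}=0$ whenever $\mathcal{D}_y(\mathbf{u})=0$ and the mean vanishes.

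\textbf{What the paper's route buys.} It is brief and reuses an abstract lemma that also handles the space $V^\varepsilon$ in Chapter 2, where no Fourier structure is available. The price is a non-explicit constant, which strictly speaking depends on the subspace $V$ as well as on the domain.
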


\begin{proof}
  Apply Theorem~\ref{thm:korn} to the bounded Lipschitz domain $\Omega = Y$ and the closed subspace
  \[
    V = H^1_{\#,0}(Y; \mathbb{R}^d) \subset H^1(Y; \mathbb{R}^d).
  \]
  Indeed, $H^1_{\#}(Y; \mathbb{R}^d)$ is a closed subspace of $H^1(Y; \mathbb{R}^d)$ defined by periodic trace conditions, while the zero-mean condition $\int_Y \mathbf{u}\,\mathrm{d}y=0$ is the kernel of a continuous linear functional. Hence $V$ is a closed subspace of $H^1(Y; \mathbb{R}^d)$.

  It remains only to verify that $V \cap \mathcal{R} = \{0\}$. Let $\mathbf{u} \in V$ satisfy $\mathcal{D}_y(\mathbf{u}) = 0$. Then $\mathbf{u}$ is a rigid motion, so
  \[
    \mathbf{u}(y) = \mathbf{a} + B y,
  \]
  where $\mathbf{a} \in \mathbb{R}^d$ and $B$ is an antisymmetric matrix. By periodicity,
  \[
    \mathbf{u}(y + e_i) = \mathbf{u}(y),
    \qquad i = 1, \ldots, d,
  \]
  and therefore
  \[
    Be_i = 0,
    \qquad i = 1, \ldots, d.
  \]
  Hence $B = 0$. Using the zero-mean condition, we have
  \[
    0 = \int_Y \mathbf{u}(y)\,\mathrm{d}y = \int_Y \mathbf{a}\,\mathrm{d}y = |Y|\,\mathbf{a},
  \]
  so $\mathbf{a} = 0$. Thus $V \cap \mathcal{R} = \{0\}$, and Theorem~\ref{thm:korn} yields the desired result.
\end{proof}

\section{Detailed Proof of Well-Posedness for the Cell Problem}\label{sec:cell-wellposedness-proof}

This section gives the proof of Theorem~\ref{thm:cell-well-posedness} from Chapter 3, namely the well-posedness of the general cell problem.

\begin{proof}[Proof of Theorem~\ref{thm:cell-well-posedness}]
  To apply Theorem~\ref{thm:babuska-brezzi}, it remains to verify coercivity on the kernel and the inf-sup condition.

  \textbf{Step 1: coercivity on the kernel.}

  Define the kernel space by
  \[
    \mathcal{K} = \{ \mathbf{v} \in V : b(\mathbf{v}, \psi) = 0, \, \forall \psi \in M \}.
  \]
  If $\mathbf{u} \in \mathcal{K}$, then
  \[
    \int_\omega (\nabla_y \cdot \mathbf{u})\psi\,\mathrm{d}y = 0,
    \qquad \forall\,\psi \in L^2(\omega).
  \]
  Taking $\psi=\nabla_y\cdot\mathbf{u}\in L^2(\omega)$ yields $\nabla_y \cdot \mathbf{u} = 0$ a.e. in $\omega$.

  Since any symmetric matrix $M$ satisfies
  \[
    (\operatorname{tr} M)^2 \le d\,|M|^2,
  \]
  we have
  \[
    \lambda (\operatorname{tr} M)^2 + 2\mu |M|^2 \ge \min\{d\lambda + 2\mu,\, 2\mu\}|M|^2.
  \]
  Therefore
  \begin{align}
    a(\mathbf{u}, \mathbf{u}) &= \int_{Y \setminus \omega} \left[ \lambda |\nabla_y \cdot \mathbf{u}|^2 + 2\mu |\mathcal{D}_y(\mathbf{u})|^2 \right] \mathrm{d}y + \int_{\omega} 2\widetilde{\mu} |\mathcal{D}_y(\mathbf{u})|^2 \, \mathrm{d}y \\
    &\geq c_0 \|\mathcal{D}_y(\mathbf{u})\|_{L^2(Y)}^2,
  \end{align}
  where
  \[
    c_0 = \min\{d\lambda + 2\mu,\, 2\mu,\, 2\widetilde{\mu}\} > 0.
  \]

  By the periodic Korn inequality, Theorem~\ref{thm:periodic-korn},
  \begin{equation}
    a(\mathbf{u}, \mathbf{u}) \geq c_0 \|\mathcal{D}_y(\mathbf{u})\|_{L^2(Y)}^2 \geq \frac{c_0}{C_K^2} \|\mathbf{u}\|_{H^1(Y)}^2 = \alpha \|\mathbf{u}\|_{H^1(Y)}^2,
  \end{equation}
  where $\alpha = c_0 / C_K^2 > 0$.

  \textbf{Step 2: verification of the inf-sup condition.}

  Let $\psi \in M = L^2(\omega)$ be given. Extend $\psi$ to the whole cell $Y$ by
  \begin{equation}
    \hat{\psi}(y) =
    \begin{cases}
      \psi(y) & y \in \omega, \\
      \displaystyle -\frac{1}{|Y \setminus \omega|} \int_\omega \psi(z) \, \mathrm{d}z & y \in Y \setminus \omega.
    \end{cases}
  \end{equation}
  Then
  \[
    \int_Y \hat{\psi} \, \mathrm{d}y = 0,
  \]
  and there exists a constant $C_2 > 0$, depending only on the geometric ratio, such that
  \begin{equation}
    \|\hat{\psi}\|_{L^2(Y)} \leq C_2 \|\psi\|_{L^2(\omega)}.
  \end{equation}

  By Theorem~\ref{thm:periodic-bogovskii}, there exists $\hat{\mathbf{d}} \in H^1_{\#,0}(Y; \mathbb{R}^d)$ such that
  \begin{equation}
    \operatorname{div}_y \hat{\mathbf{d}} = \hat{\psi} \quad \text{in } Y, \qquad \|\hat{\mathbf{d}}\|_{H^1(Y)} \leq C_3 \|\hat{\psi}\|_{L^2(Y)}.
  \end{equation}

  We compute
  \begin{equation}
    b(\hat{\mathbf{d}}, \psi) = \int_{\omega} (\nabla_y \cdot \hat{\mathbf{d}}) \psi \, \mathrm{d}y = \int_{\omega} \hat{\psi}\, \psi \, \mathrm{d}y = \int_{\omega} \psi^2 \, \mathrm{d}y = \|\psi\|_{L^2(\omega)}^2.
  \end{equation}
  Hence
  \begin{equation}
    \|\hat{\mathbf{d}}\|_{H^1(Y)} \leq C_2 C_3 \|\psi\|_{L^2(\omega)}.
  \end{equation}
  Therefore
  \begin{equation}
    \inf_{\psi \in M, \, \psi \neq 0} \sup_{\mathbf{u} \in V, \, \mathbf{u} \neq 0} \frac{b(\mathbf{u}, \psi)}{\|\mathbf{u}\|_{H^1(Y)} \|\psi\|_{L^2(\omega)}} \geq \beta = \frac{1}{C_2 C_3} > 0.
  \end{equation}

  \textbf{Step 3: application of the Babu\v{s}ka-Brezzi theorem.}

  By Steps 1 and 2, the coercivity condition on the kernel and the inf-sup condition in Theorem~\ref{thm:babuska-brezzi} are both satisfied. Therefore the variational problem admits a unique solution.

  Moreover, by the trace theorem and the Cauchy-Schwarz inequality, the right-hand side functionals
  \[
    \boldsymbol{\varphi}\mapsto -\langle \mathbf{F}, \boldsymbol{\varphi} \rangle + \int_{\partial\omega}\mathbf{G}\cdot\boldsymbol{\varphi}\,\mathrm{d}S,
    \qquad
    \psi\mapsto \int_\omega f\psi\,\mathrm{d}y
  \]
  define bounded linear functionals on $V$ and $M$, respectively. Therefore, by the stability estimate in Theorem~\ref{thm:babuska-brezzi},
  \[
    \|\mathbf{u}\|_{H^1(Y)} + \|p\|_{L^2(\omega)}
    \le
    C\bigl(\|\mathbf{F}\|_{V^*}+\|\mathbf{G}\|_{H^{-1/2}(\partial\omega)}+\|f\|_{L^2(\omega)}\bigr),
  \]
  which is exactly \eqref{eq:cell-estimate}.
\end{proof}

\section{Elliptic Regularity Theory}\label{sec:elliptic-regularity}

This section records the standard elliptic regularity results used later. To stay close to the presentation in \cite[Section~6.3]{Evans2010}, we state them in a form consistent with Evans.

\begin{theorem}[Higher Interior Regularity]\label{thm:app-interior-regularity}
  Let
  \[
    \mathcal{L}u
    :=
    -\sum_{i,j=1}^d \partial_{x_j}\bigl(a^{ij}(x)\partial_{x_i}u\bigr)
    + \sum_{i=1}^d b^i(x)\partial_{x_i}u
    + c(x)u
  \]
  be a uniformly elliptic second-order operator in divergence form, where $a^{ij}, b^i, c \in C^{m+1}(U)$. If $u \in H^1(U)$ is a weak solution of
  \[
    \mathcal{L}u=f \qquad \text{in } U
  \]
  and $f \in H^m(U)$, then $u \in H_{\mathrm{loc}}^{m+2}(U)$. More precisely, for any $V \Subset U$, one has
  \begin{equation}
    \|u\|_{H^{m+2}(V)} \leq C \left( \|f\|_{H^m(U)} + \|u\|_{L^2(U)} \right),
  \end{equation}
  where the constant $C$ depends on $U$, $V$, $m$, the coefficients of $\mathcal{L}$, and the ellipticity constant.
\end{theorem}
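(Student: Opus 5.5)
This is the standard interior regularity theorem (Evans, \S6.3, Theorem~2). The plan is induction on $m$, with the case $m=0$ (interior $H^2$ regularity) as the base. Throughout, fix intermediate open sets $V \Subset W \Subset U$.

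\textbf{Base case $m=0$.} First establish the Caccioppoli estimate $\|\nabla u\|_{L^2(W)} \le C(\|f\|_{L^2(U)}+\|u\|_{L^2(U)})$. To get it, test the weak formulation with $\zeta^2 u$, with $\zeta$ a cut-off, and absorb the gradient cross terms using uniform ellipticity and Young's inequality. Next, for a cut-off $\zeta$ equal to $1$ on $V$ and supported in $W$, take the test function $v=-D_k^{-h}(\zeta^2 D_k^h u)$ for each direction $k$ and small $|h|$. The discrete Leibniz rule gives
\[
D_k^h(a^{ij}\partial_i u)=(a^{ij})^h\partial_i D_k^h u+(D_k^h a^{ij})\partial_i u .
\]
Ellipticity then bounds $\int\zeta^2|D_k^h\nabla u|^2$ from below. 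The commutator terms involving $D_k^h a^{ij}$ are bounded because $a^{ij}\in C^1$. The terms $\int (f-b\cdot\nabla u-cu)v$ are controlled using $\|v\|_{L^2}\le C\|\nabla(\zeta^2D_k^hu)\|_{L^2}$. Absorbing with Young's inequality yields $\sup_h\|D_k^h\nabla u\|_{L^2(V)}\le C(\|f\|_{L^2}+\|u\|_{H^1(W)})$. The difference-quotient characterization of Sobolev spaces then gives $\nabla u\in H^1(V)$, and combining with the Caccioppoli estimate gives the claimed bound.

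\textbf{Inductive step.} Assume the statement holds for $m$, and suppose $a^{ij},b^i,c\in C^{m+2}$ and $f\in H^{m+1}$. By the inductive hypothesis $u\in H^{m+2}_{\mathrm{loc}}$. For a multi-index $|\alpha|=m+1$, set $\tilde u=\partial^\alpha u\in H^1_{\mathrm{loc}}$. Apply the test function $(-1)^{|\alpha|}\partial^\alpha\tilde v$ and integrate by parts $|\alpha|$ times. This shows $\tilde u$ is a weak solution of $\mathcal{L}\tilde u=\tilde f$ on $W$, where
\[
\tilde f=\partial^\alpha f-\sum_{\beta<\alpha}\binom{\alpha}{\beta}\Big[-\partial_j(\partial^{\alpha-\beta}a^{ij}\,\partial_i\partial^\beta u)+\partial^{\alpha-\beta}b^i\,\partial_i\partial^\beta u+\partial^{\alpha-\beta}c\,\partial^\beta u\Big].
\]
The term $\partial_j(\partial^{\alpha-\beta}a^{ij}\,\partial_i\partial^\beta u)$ involves at most $m+2$ derivatives of $u$ and $m+2$ derivatives of the coefficients. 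It therefore lies in $L^2(W)$ with norm bounded by $C(\|f\|_{H^{m+1}}+\|u\|_{L^2(U)})$, using the inductive estimate on a slightly larger set. Applying the base case to $\tilde u$ on $V\Subset W$ gives $\|\partial^\alpha u\|_{H^2(V)}\le C(\|\tilde f\|_{L^2(W)}+\|\tilde u\|_{L^2(W)})$. Summing over $\alpha$ closes the induction.

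\textbf{Main obstacle.} The main difficulty is the careful justification of the difference-quotient step in the base case: checking that $v$ is admissible, and handling the commutator terms so that every occurrence of $D_k^h\nabla u$ can be absorbed. A secondary point is bookkeeping: the regularity hypothesis $C^{m+1}$ is exactly what is needed for the derivatives of the coefficients in $\tilde f$ to be bounded at level $m$.
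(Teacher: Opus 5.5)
Your proposal is correct and follows the same approach as the paper. The paper gives no proof of its own; it cites Evans \S 6.3, and your argument is exactly that proof: for $m=0$, a Caccioppoli bound plus the difference-quotient test function $-D_k^{-h}(\zeta^2 D_k^h u)$, and then induction on $m$ by differentiating the weak equation, obtaining the same $\tilde f$, and applying the $H^2$ case to $\partial^\alpha u$.
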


\begin{theorem}[Higher Boundary Regularity]\label{thm:boundary-regularity}
  Let $U$ be a bounded domain with $\partial U \in C^{m+2}$, and let
  \begin{equation}
    \mathcal{L}u
    :=
    -\sum_{i,j=1}^d \partial_{x_j}\bigl(a^{ij}(x)\partial_{x_i}u\bigr)
    + \sum_{i=1}^d b^i(x)\partial_{x_i}u
    + c(x)u
  \end{equation}
  be a uniformly elliptic second-order operator in divergence form, where $a^{ij}, b^i, c \in C^{m+1}(\overline{U})$. If $u \in H_0^1(U)$ is a weak solution of the boundary-value problem
  \begin{equation}
    \begin{cases}
      \mathcal{L}u = f & \text{in } U, \\
      u = 0 & \text{on } \partial U,
    \end{cases}
  \end{equation}
  and $f \in H^m(U)$, then $u \in H^{m+2}(U)$ and
  \begin{equation}
    \|u\|_{H^{m+2}(U)} \leq C \left( \|f\|_{H^m(U)} + \|u\|_{L^2(U)} \right).
  \end{equation}
  where the constant $C$ depends on $U$, $m$, the coefficients of $\mathcal{L}$, and the ellipticity constant.
\end{theorem}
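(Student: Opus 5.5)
We follow the classical route of \cite[Section~6.3]{Evans2010}, inducting on $m$, with the case $m=0$ (the $H^2$ boundary estimate) carrying the main work. For $m=0$, interior regularity (Theorem~\ref{thm:app-interior-regularity}) handles compact subsets of $U$, so it suffices to work near a fixed point $x^0\in\partial U$. Since $\partial U\in C^{2}$, a $C^2$ change of variables $y=\Phi(x)$ straightens the boundary, so that locally $U$ becomes the half-ball $B^+=B(0,1)\cap\{y_d>0\}$ and $\partial U$ becomes $\{y_d=0\}$. The transformed function $\tilde u\in H^1(B^+)$ vanishes on the flat part and solves an equation $\tilde{\mathcal L}\tilde u=\tilde f$. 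This equation has the same divergence structure, $C^1$ coefficients, and a uniform ellipticity constant controlled by that of $\mathcal L$ and the $C^2$ norm of the chart.

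We then run the same two-step scheme as in Section~\ref{sec:H1-H2} and Section~\ref{sec:normal-derivative}, without the pressure. First, fix a cut-off $\zeta$ that equals $1$ on $B(0,1/2)$ and is supported in $B(0,1)$, with no vanishing condition on $\{y_d=0\}$. Test with $v=-T_{-h,k}(\zeta^2T_{h,k}\tilde u)$ for tangential directions $k\le d-1$; this function lies in $H^1_0$ of the half-ball precisely because the shift is tangential and $\tilde u=0$ on the flat boundary. Uniform ellipticity, the discrete Leibniz rule, and Young's inequality then bound $\|T_{h,k}\nabla\tilde u\|_{L^2(B(0,1/2)\cap B^+)}$ uniformly in $h$ by $C(\|\tilde f\|_{L^2}+\|\tilde u\|_{H^1})$. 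The difference-quotient characterization then gives $\partial_k\partial_\ell\tilde u\in L^2$ whenever $k\le d-1$ or $\ell\le d-1$. Second, the remaining derivative $\partial_d^2\tilde u$ comes algebraically from the equation, which holds a.e. once the mixed derivatives are in $L^2$:
\[
  \tilde a^{dd}\,\partial_d^2\tilde u
  =\text{(tangential and mixed second derivatives, first-order terms, } \tilde u,\ \tilde f).
\]
Uniform ellipticity gives $\tilde a^{dd}\ge\theta>0$, so the right-hand side divided by $\tilde a^{dd}$ is in $L^2$. We then pull back by $\Phi^{-1}$, cover $\partial U$ by finitely many such charts, add the interior estimate, and obtain $\|u\|_{H^2(U)}\le C(\|f\|_{L^2(U)}+\|u\|_{H^1(U)})$. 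Finally, the $H^1$ norm is reduced to the $L^2$ norm by the energy (Gårding) estimate obtained from testing with $u$, which gives $\|u\|_{H^1}\le C(\|f\|_{L^2}+\|u\|_{L^2})$.

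For the induction step, assume the result holds for $m$, and let $\partial U\in C^{m+3}$, coefficients in $C^{m+2}$, and $f\in H^{m+1}$. In a flattened chart, let $\alpha$ be a tangential multi-index with $|\alpha|=m+1$. Using the induction hypothesis together with difference quotients, $\tilde w=\partial^\alpha\tilde u$ lies in $H^1$ and vanishes on $\{y_d=0\}$, since tangential derivatives preserve the zero trace. It solves $\tilde{\mathcal L}\tilde w=\tilde f_\alpha$, where $\tilde f_\alpha=\partial^\alpha\tilde f$ plus commutator terms: derivatives of the coefficients of order at most $m+1$ multiplied by derivatives of $\tilde u$ of order at most $m+2$. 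By the induction hypothesis $\tilde f_\alpha\in L^2$. A localized version of the $m=0$ case then puts $\partial^\alpha\tilde u$ in $H^2$. The pure normal derivatives $\partial_d^{j}\partial^{\beta}\tilde u$ are recovered by inducting on the number of normal derivatives: differentiate the equation, solve for $\partial_d^2(\cdot)$ using $\tilde a^{dd}\ge\theta$, and note that every other term already has fewer normal derivatives. This yields $\tilde u\in H^{m+3}$ locally, and covering gives the estimate.

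The main obstacle is the bookkeeping in the boundary difference-quotient step. One must make sure the test function stays admissible, which is why only tangential shifts are allowed, and that each commutator term in the induction involves derivatives of $u$ of order at most $m+2$, so that the induction hypothesis controls it. Here the regularity assumed on the coefficients and on $\partial U$ is exactly what keeps these terms in $L^2$. The algebraic recovery of normal derivatives is routine once this is in place.
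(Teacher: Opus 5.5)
Your outline is correct and is the standard boundary-regularity argument of Evans, \S 6.3. That argument is flattening, tangential difference quotients with the test function $-T_{-h,k}(\zeta^2T_{h,k}\tilde u)$, and algebraic recovery of $\partial_d^2\tilde u$ from $\tilde a^{dd}\ge\theta$, followed by induction on tangential derivatives and then on the number of normal derivatives; this is exactly what the paper relies on, since it gives no proof of its own and only cites \cite[Section~6.3]{Evans2010}. One small bookkeeping slip: when you expand the divergence in the commutator, coefficient derivatives of order $m+2$ also appear, for example $\partial_j\partial^\alpha\tilde a^{ij}\,\partial_i\tilde u$, not only order $m+1$; this is harmless because you assume $C^{m+2}$ coefficients.
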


Proofs may be found in \cite[Section~6.3]{Evans2010}.

\section{Potential Representation of the Flux Corrector}\label{sec:flux-corrector-proof}

This section proves Proposition~\ref{prop:flux-corrector-potential} from the main text, namely the existence of a potential representation for the flux corrector.

\begin{proof}[Proof of Proposition~\ref{prop:flux-corrector-potential}]
  Fix indices $i,j,\alpha,\beta$ and write
  \[
    G_{ij}^{\alpha\beta}(y)
    =
    [A(I+\nabla_y\chi)]_{ij}^{\alpha\beta}(y)
    +
    r^{j\beta}(y)\delta_{i\alpha}\mathbf{1}_\omega(y)
    -
    \hat a_{ij}^{\alpha\beta}.
  \]
  We proceed in three steps.

  \textbf{Step 1: vanishing mean and divergence-free property of $G_{ij}^{\alpha\beta}$.}

  By the definition \eqref{eq:effective-tensor} of the effective elasticity tensor,
  \[
    \hat{a}_{ij}^{\alpha\beta}
    =
    \int_Y [A(I+\nabla\chi)]_{ij}^{\alpha\beta}\,\mathrm{d}y
    +
    \delta_{i\alpha}\int_\omega r^{j\beta}\,\mathrm{d}y,
  \]
  and therefore
  \begin{equation}\label{eq:app-G-zero-mean}
    \int_Y G_{ij}^{\alpha\beta}(y)\,\mathrm{d}y = 0.
  \end{equation}

  We next prove that
  \begin{equation}\label{eq:app-G-div-free}
    \frac{\partial}{\partial y_i}G_{ij}^{\alpha\beta}=0
    \qquad\text{in }Y\text{ in the sense of distributions}.
  \end{equation}
  In $Y\setminus\omega$, the term $r^{j\beta}\mathbf{1}_\omega$ vanishes and $\hat a_{ij}^{\alpha\beta}$ is constant. Hence
  \[
    \frac{\partial}{\partial y_i}G_{ij}^{\alpha\beta}
    =
    \frac{\partial}{\partial y_i}[A(I+\nabla\chi^{j\beta})]_i^\alpha.
  \]
  Using
  \[
    [A(I+\nabla\chi^{j\beta})]_i^\alpha
    =
    \lambda(\nabla_y\cdot(p^{j\beta}+\chi^{j\beta}))\delta_{i\alpha}
    +
    \mu\big(\partial_i(p^{j\beta}+\chi^{j\beta})_\alpha+\partial_\alpha(p^{j\beta}+\chi^{j\beta})_i\big)
  \]
  together with the Lam\'e cell equation
  \[
    \nabla_y\cdot\big[\lambda(\nabla_y\cdot(p^{j\beta}+\chi^{j\beta}))I+2\mu D_y(p^{j\beta}+\chi^{j\beta})\big]=0,
  \]
  we obtain
  \[
    \frac{\partial}{\partial y_i}G_{ij}^{\alpha\beta}=0
    \qquad\text{in }Y\setminus\omega.
  \]

  In $\omega$, we have
  \[
    [A(I+\nabla\chi^{j\beta})]_i^\alpha=2\widetilde{\mu}[D_y(p^{j\beta}+\chi^{j\beta})]_i^\alpha,
  \]
  so that
  \[
    \frac{\partial}{\partial y_i}G_{ij}^{\alpha\beta}
    =
    \frac{\partial}{\partial y_i}[2\widetilde{\mu} D_y(p^{j\beta}+\chi^{j\beta})]_i^\alpha
    +
    \frac{\partial}{\partial y_\alpha}r^{j\beta}.
  \]
  The Stokes cell equation
  \[
    \nabla_y\cdot[2\widetilde{\mu} D_y(p^{j\beta}+\chi^{j\beta})+r^{j\beta}I]=0
    \qquad\text{in }\omega
  \]
  implies
  \[
    \frac{\partial}{\partial y_i}G_{ij}^{\alpha\beta}=0
    \qquad\text{in }\omega.
  \]

  It remains to verify that no singular distribution is created across the interface. For fixed $j,\alpha,\beta$, define
  \[
    \mathbf{G}_j^{\alpha\beta}(y)
    :=
    \bigl(G_{1j}^{\alpha\beta}(y),\dots,G_{dj}^{\alpha\beta}(y)\bigr)^T.
  \]
  We have shown that $\nabla_y\cdot \mathbf{G}_j^{\alpha\beta}=0$ separately in $Y\setminus\omega$ and in $\omega$. We next prove continuity of the normal flux across $\partial\omega$. By the continuity of the normal stress across the interface in the cell problem,
  \[
    [2\widetilde{\mu} D_y(p^{j\beta}+\chi^{j\beta})+r^{j\beta}I]n\big|_-
    =
    [\lambda(\nabla_y\cdot(p^{j\beta}+\chi^{j\beta}))I+2\mu D_y(p^{j\beta}+\chi^{j\beta})]n\big|_+,
  \]
  and taking the $\alpha$-th component yields
  \[
    [2\widetilde{\mu} D_y(p^{j\beta}+\chi^{j\beta})]_i^\alpha n_i\big|_-
    + r^{j\beta} n_\alpha
    =
    [A(I+\nabla\chi^{j\beta})]_i^\alpha n_i\big|_+.
  \]
  Since $\hat a_{ij}^{\alpha\beta}$ is constant, this is equivalent to
  \[
    (\mathbf{G}_j^{\alpha\beta})^- \cdot n = (\mathbf{G}_j^{\alpha\beta})^+ \cdot n
    \qquad \text{on }\partial\omega.
  \]
  Therefore, for any $\varphi\in C_\#^\infty(Y)$, integrating by parts in $Y\setminus\omega$ and in $\omega$ separately, and using that the divergence vanishes in both phases, we obtain
  \[
    \int_Y G_{ij}^{\alpha\beta}\,\partial_{y_i}\varphi\,\mathrm{d}y
    =
    \int_{\partial\omega}\big((\mathbf{G}_j^{\alpha\beta})^+-(\mathbf{G}_j^{\alpha\beta})^-\big)\cdot n\,\varphi\,\mathrm{d}S
    =0.
  \]
  This shows that $\nabla_y\cdot \mathbf{G}_j^{\alpha\beta}=0$ in $Y$ in the sense of distributions, that is, \eqref{eq:app-G-div-free} holds.

  \textbf{Step 2: construction of the potential.}

  By \eqref{eq:app-G-zero-mean}, for each fixed quadruple of indices $i,j,\alpha,\beta$, the periodic Poisson equation
  \[
    \Delta_y f_{ij}^{\alpha\beta}=G_{ij}^{\alpha\beta}
    \quad\text{in }Y,\qquad
    \int_Y f_{ij}^{\alpha\beta}\,\mathrm{d}y=0
  \]
  has a unique zero-mean periodic solution
  \[
    f_{ij}^{\alpha\beta}\in H_\#^2(Y).
  \]
  Define
  \[
    \Phi_{kij}^{\alpha\beta}
    =
    \frac{\partial}{\partial y_k}f_{ij}^{\alpha\beta}
    -
    \frac{\partial}{\partial y_i}f_{kj}^{\alpha\beta}.
  \]
  Then
  \[
    \Phi_{kij}^{\alpha\beta}\in H_\#^1(Y),
    \qquad
    \Phi_{kij}^{\alpha\beta}=-\Phi_{ikj}^{\alpha\beta}.
  \]

  Using \eqref{eq:app-G-div-free},
  \[
    \Delta_y\Big(\frac{\partial}{\partial y_i}f_{ij}^{\alpha\beta}\Big)
    =
    \frac{\partial}{\partial y_i}G_{ij}^{\alpha\beta}
    =
    0
    \qquad\text{in }Y.
  \]
  Hence $\frac{\partial}{\partial y_i}f_{ij}^{\alpha\beta}$ is a periodic harmonic function and therefore must be constant. Consequently,
  \[
    \frac{\partial}{\partial y_k}\Big(\frac{\partial}{\partial y_i}f_{ij}^{\alpha\beta}\Big)=0.
  \]
  It follows that
  \[
    \frac{\partial}{\partial y_k}\Phi_{kij}^{\alpha\beta}
    =
    \Delta_y f_{ij}^{\alpha\beta}
    -
    \frac{\partial}{\partial y_k}\frac{\partial}{\partial y_i}f_{kj}^{\alpha\beta}
    =
    G_{ij}^{\alpha\beta},
  \]
  namely
  \begin{equation}\label{eq:app-G-potential}
    G_{ij}^{\alpha\beta}
    =
    \frac{\partial}{\partial y_k}\Phi_{kij}^{\alpha\beta},
    \qquad
    \Phi_{kij}^{\alpha\beta}=-\Phi_{ikj}^{\alpha\beta}.
  \end{equation}

  \textbf{Step 3: $L^\infty$-boundedness of $\Phi_{kij}^{\alpha\beta}$.}

  By Corollary~\ref{cor:corrector-regularity} in Chapter 5, we have $\chi\in W^{1,\infty}(Y)$ and $r\in C^\infty(\omega)$. Therefore
  \[
    [A(I+\nabla_y\chi)]_{ij}^{\alpha\beta}\in L^\infty(Y),
    \qquad
    r^{j\beta}\delta_{i\alpha}\mathbf{1}_\omega\in L^\infty(Y),
  \]
  and thus
  \begin{equation}\label{eq:app-G-bdd}
    G_{ij}^{\alpha\beta}\in L^\infty(Y).
  \end{equation}

  Let $\Gamma(y,z)$ be the periodic Green function for the Laplace operator on the flat torus $Y$, normalized to have zero mean. Then the zero-mean solution $f_{ij}^{\alpha\beta}$ can be written as
  \[
    f_{ij}^{\alpha\beta}(y)
    =
    \int_Y \Gamma(y,z)\,G_{ij}^{\alpha\beta}(z)\,\mathrm{d}z,
  \]
  and hence
  \[
    \nabla_y f_{ij}^{\alpha\beta}(y)
    =
    \int_Y \nabla_y\Gamma(y,z)\,G_{ij}^{\alpha\beta}(z)\,\mathrm{d}z.
  \]
  By the standard pointwise bound for the periodic Green function,
  \[
    |\nabla_y\Gamma(y,z)|\le C|y-z|^{1-d}
    \qquad (y\neq z),
  \]
  and the fact that the kernel $|y-z|^{1-d}$ is integrable on the bounded cell $Y$, together with \eqref{eq:app-G-bdd}, we obtain
  \[
    \|\nabla_y f_{ij}^{\alpha\beta}\|_{L^\infty(Y)}
    \le
    C\|G_{ij}^{\alpha\beta}\|_{L^\infty(Y)}
    \le C.
  \]
  Finally, from
  \[
    \Phi_{kij}^{\alpha\beta}
    =
    \frac{\partial}{\partial y_k}f_{ij}^{\alpha\beta}
    -
    \frac{\partial}{\partial y_i}f_{kj}^{\alpha\beta},
  \]
  and therefore
  \[
    \Phi_{kij}^{\alpha\beta}\in L^\infty(Y).
  \]

  This completes the proof.
\end{proof}

\section{Poincar\'e Inequality}\label{sec:poincare}

The Poincar\'e inequality is another standard Sobolev tool used later. Here we only need the usual forms associated with zero boundary conditions, zero mean, and periodic zero mean. The corresponding control under orthogonality to rigid motions is furnished instead by the Korn inequality stated above. Accordingly, we record only the following two forms.

\begin{theorem}[Poincar\'e Inequality]\label{thm:poincare}
  Let $\Omega \subset \mathbb{R}^d$ be a bounded Lipschitz domain.

  \textbf{(i) Homogeneous Dirichlet form:} for every $u \in H_0^1(\Omega)$,
  \begin{equation}
    \|u\|_{L^2(\Omega)} \leq C_P \|\nabla u\|_{L^2(\Omega)},
  \end{equation}
  where $C_P$ depends only on $\Omega$.

  \textbf{(ii) Zero-mean form:} for every $u \in H^1(\Omega)$ satisfying $\int_\Omega u \, \mathrm{d}x = 0$, one has
  \begin{equation}
    \|u\|_{L^2(\Omega)} \leq C_P' \|\nabla u\|_{L^2(\Omega)},
  \end{equation}
  where $C_P'$ depends only on $\Omega$.
\end{theorem}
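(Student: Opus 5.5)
The statement to prove is the Poincar\'e inequality in its two forms, (i) and (ii), on a bounded Lipschitz domain. For both I would use the same compactness-by-contradiction scheme as in the proof of Theorem~\ref{thm:korn}, so that the argument matches the rest of the appendix and needs only Rellich--Kondrachov.

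For (ii), suppose no constant works. Then there is a sequence $u_n \in H^1(\Omega)$ with
\[
  \int_\Omega u_n\,\mathrm{d}x = 0, \qquad \|u_n\|_{L^2(\Omega)} = 1, \qquad \|\nabla u_n\|_{L^2(\Omega)} \to 0.
\]
The sequence is bounded in $H^1(\Omega)$. By reflexivity and Rellich--Kondrachov, which is valid on bounded Lipschitz domains, a subsequence satisfies $u_n \rightharpoonup u$ weakly in $H^1$ and $u_n \to u$ strongly in $L^2$.

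The gradient operator is bounded from $H^1$ to $L^2$, so $\nabla u_n \rightharpoonup \nabla u$ weakly. Since also $\nabla u_n \to 0$ strongly, we get $\nabla u = 0$. A connected open set is implicit in the word ``domain'', so $u$ is constant. Strong $L^2$ convergence preserves both the zero-mean condition and the normalization. Zero mean forces the constant to be $0$, while $\|u\|_{L^2} = 1$, which is a contradiction.

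Connectedness is the one point where care is needed. On a disconnected set the zero-mean version is false, since $u$ could be a different constant on each component. I would note explicitly that ``domain'' includes connectedness. This is the only genuine subtlety in the argument.

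For (i), the simplest route avoids compactness altogether. Extend $u \in H_0^1(\Omega)$ by zero; the extension lies in $H^1(\mathbb{R}^d)$ with the same gradient. Place $\Omega$ inside a slab $\{a < x_1 < a + L\}$. For $u \in C_0^\infty(\Omega)$, write
\[
  u(x) = \int_a^{x_1} \partial_1 u(s, x')\,\mathrm{d}s,
\]
apply Cauchy--Schwarz in $s$, and integrate over the slab. This gives $\|u\|_{L^2} \le L\,\|\partial_1 u\|_{L^2}$. Density of $C_0^\infty(\Omega)$ in $H_0^1(\Omega)$ then yields (i) with $C_P = L$, which depends only on the diameter of $\Omega$. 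No regularity of $\partial\Omega$ is used in this step.

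As an alternative, (i) also follows from the same contradiction argument as (ii). The limit is then a constant lying in $H_0^1(\Omega)$, since $H_0^1$ is weakly closed, and the only constant in $H_0^1$ of a bounded domain is $0$. I would present the slab argument as the primary proof because it is fully elementary.
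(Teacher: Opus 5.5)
Your argument is correct in both parts. The paper does not prove this theorem; it only cites \cite{Evans2010}, so the comparison is with the standard proofs there.

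Your proof of (ii) is the usual compactness-by-contradiction argument of \cite[Section~5.8.1]{Evans2010}, with one useful refinement. Evans states it for connected domains with $C^1$ boundary. Your observation that Rellich--Kondrachov still holds on bounded Lipschitz domains, via an extension operator, is what justifies the Lipschitz hypothesis in the paper's statement. Your remark that connectedness cannot be dropped in (ii) is the right caveat.

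For (i) you take a genuinely different route. The cited source derives the Dirichlet estimate from the Gagliardo--Nirenberg--Sobolev inequality for $W_0^{1,p}$ \cite[Section~5.6.1]{Evans2010}. Your slab argument uses only the one-variable fundamental theorem of calculus and needs neither boundary regularity nor connectedness. It also gives the explicit constant $C_P\le L$, the width of $\Omega$ in one coordinate direction.

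That explicit constant matters in this paper. In Remark~\ref{rmk:bogovskii-variable} the Dirichlet Poincar\'e inequality is applied on balls $B_t$ with $t\le 1$, and the resulting constant $C_a$ is claimed to be independent of $t$. Your bound gives this at once, since $C_P(B_t)\le 2t\le 2$. A compactness proof yields only a non-explicit constant and would need a separate scaling argument to reach the same conclusion.
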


Proofs may be found in \cite{Evans2010}.

\printbibliography

\end{document}